\documentclass[11pt]{amsart}
\usepackage{a4wide}
\usepackage{amssymb,amsthm}
\usepackage{fullpage}
\usepackage{color}
\usepackage{amsmath}

\newcommand{\beq}{\begin{equation}}
\newcommand{\eeq}{\end{equation}}

\newcommand{\R}{{\mathbb R}}

\newcommand{\N}{{\mathbb N}}

\newcommand{\pa}{\partial}

\def\eps{\varepsilon}

\Large



\newtheorem{prop}{Proposition}
\newtheorem{theoreme}[prop]{Theorem}
\newtheorem{lem}[prop]{Lemma}
\newtheorem{cor}[prop]{Corollary}

\newtheorem{rem}[prop]{Remark}

\numberwithin{equation}{section}
\numberwithin{prop}{section}
\begin{document}
\title{ Transverse instability of the line solitary  water-waves}
\author{Frederic Rousset}
\address{IRMAR, Universit\'e de Rennes 1, campus de Beaulieu,  35042  Rennes cedex, France}
\email{ frederic.rousset@univ-rennes1.fr  }
\author{Nikolay Tzvetkov}
\address{D\'epartement de Math\'ematiques, Universit\'e Lille I, 59 655 Villeneuve d'Ascq Cedex, France}
\email{nikolay.tzvetkov@math.univ-lille1.fr}
\date{}
\maketitle
\begin{abstract}
We prove  the linear and nonlinear  instability of the line solitary  water  waves
with respect to transverse 
perturbations. 
\end{abstract}

\tableofcontents

\section{Introduction}

The water waves problem  that is to say the study of fluid motions in the presence of a free surface
has been the object of many studies in the past thirty years. This problem which is highly nonlinear  is very interesting in many aspects.
The study of the well-posedness of the Cauchy problem  has been widely studied
recently  \cite{Wu, L1, Ambrose-Masmoudi, Ch-Lind, Lind, Coutand, Shatah}.
A lot of  progress has also been made in the rigorous justification of  important asymptotic models
like the KdV or KP equations \cite{Craig, Alvarez-Lannes, Schneider-Wayne}.
Finally, let us also mention that    the   study of   bifurcation of travelling waves
\cite{AK, Buffoni, Iooss, Groves-Mielke}  or  more complicated patterns \cite{ Strauss, Iooss2, Plotnikov-Toland}
has also attracted a lot of attention.
An important aspect  of the theory which is in some sense in the intersection of the above aspects 
is the study of the  dynamical stability  of  important  particular  patterns  like solitary waves.

Our goal here is to study the stability of the line solitary waves constructed
by  Amick and Kirchg\"assner in \cite{AK}. In the context of the water waves model,  in the  physical
situation,
the velocity of the fluid depends on three variables,  while
the free surface of the fluid which is also unknown is two-dimensional. We shall refer to this situation
as the two-dimensional case.
We shall  call  
one-dimensional waves or line waves   solutions of the water waves  equations  for which the surface
 of the fluid  is invariant by translation in one direction.

The stability of  the line solitary waves when submitted to one-dimensional perturbations
was  studied  by Mielke in  \cite{Mielke} where a  conditional orbital stability result was established. 
This means that,  as long as  the solution of the water
 waves equations issued from a small one-dimensional perturbation
  of a  line solitary wave    exists in the energy space,  it remains close to translates of the solitary wave
   in the energy norm.  The precise statement is  given below.

Here,  we shall prove the nonlinear Lyapounov  instability
of these waves  when  they are  submitted  to  
perturbations depending in a nontrivial way on the transverse variable.
For that purpose we  construct a family of  smooth solutions of the water 
waves equations which give  arbitrarily small perturbations  to a line solitary wave 
at the initial time and which after  
(long) times  separate from the solitary wave (an its spatial translates) at some fixed distance, 
the distance being measured in some natural norm for the problem.
More precisely, we prove an instability result in the $L^2$ norm which  thus implies instability
in the energy norm. 
Our result  contains the fact that the solution 
remains smooth on a sufficiently long time scale where 
the  instability can be observed.

The destabilization of one-dimensional stable patterns by transverse perturbations
arises  very often in dispersive equations. In the early 1970's, using the theory of integrable systems 
Zakharov \cite{Z2} obtained the transverse instability of the soliton of the  Korteweg -de Vries (KdV) equation  
considered as a one-dimensional  solution of the (two-dimensional) Kadomtsev-Petviashvili-I (KP-I) equation.
Since   the KP-I equation can be obtained as a long-wave asymptotic model from the
water waves system in the presence of enough surface tension (Bond number larger than
$1/3$), 
the situation considered by Zakharov can be thought as a strongly simplified  model 
for the problem that we consider here namely the full water waves system with  strong surface tension
(Bond number larger than $1/3$).
Let us point out that the surface tension seems to have a destabilizing effect. Indeed,  
when the surface tension is weaker (Bond number smaller
than $1/3$), the asymptotic model in the same long-wave regime  is the KP-II equation and for the KP-II equation,
  the  linearized equation about the solitary wave has no unstable spectrum as shown in  \cite{APS}.
An interesting open question is therefore  the study of the   transverse stability of the line solitary waves
 constructed in \cite{Iooss}
in the case of small surface tension (Bond number smaller than $1/3$). Note that even for the model
 case of the KP-II equation the nonlinear stability is an interesting  unsolved problem.     

The main drawback of Zakharov approach is that a lot of dispersive equations
like the water waves system that we want to study are not known to  be completely  integrable.

An important feature of  most of   the  important models is that they 
are endowed with an Hamiltonian structure. This structure in the water-waves setting 
was exhibited by Zakharov \cite{Zakharov}.
Nevertheless,  the general framework of Grillakis-Shatah-Strauss
\cite{GSS} which  has been developed in order to prove stability  or instability of constrained
minimizers of  the Hamiltonian, and has been successful  for studying
orbital stability of solitary waves in many dispersive models,
does not seem to apply in transverse stability problems.
The main reason is that the two-dimensional energy is infinite at the one-dimensional object.  

In our previous works \cite{RT1,RT2}, we developed an approach
to study the transverse instability of solitary waves  for Hamiltonian partial differential equations
which applies in the situation considered by 
Zakharov and also to many other dispersive, not necessarily integrable, equations. 
This method inspired by a  work by Grenier \cite{Grenier} in fluid mechanics
consists in reducing the problem to the proof of  linear instability for a family of  one-dimensional
problems by  proving that linear instability implies nonlinear instability.

The water waves system with surface tension does not enter in the general framework
of \cite{RT2}. Among the main difficulties  are the  high level of nonlinearity in the equations
which makes the study of the Cauchy problem for perturbations of the solitary wave non-trivial
and the presence of a non-local term which does not allow to reduce the study 
of the  equation linearized about the solitary wave  to the study of ordinary differential equations.
Nevertheless, the general philosophy
of our approach can be used,  more details on the description of  our approach
will be given in the end of this introduction.

The remaining part of the introduction is organized as follows.
We first present the water waves system.  
Next, we describe the solitary waves as a special solution of the water waves system.
Further, we give more details on  the result of Mielke \cite{Mielke} about  stability with respect to one dimensional perturbations.
We then state our instability result  with respect to two dimensional (transverse) perturbations.
We end the introduction by explaining the general strategy behind our proof.

\subsection{The water waves  system  with  surface tension}
We shall use the notation  $Y=(X,z)\in\R^3$ with $X=(x,y)\in\R^2$.  We consider the situation
 where the fluid domain which is  unknown  is defined by
$$
\Omega_t=\{(X,z)\in\R^3\,:\, -h<z<\eta(t,X)\},
$$
where $t$ is the time,  $h$  is a parameter defining the fixed  bottom $z=-h$  and
 $z= \eta(t,X)$  is the equation of the free surface at time $t$.
We denote by $u$ the speed of the fluid. We  consider the motion of an  irrotational,  incompressible
fluid with constant density. This means that the velocity $u$ of the fluid is given by 
$u=\nabla_{Y}\varphi= (\partial_{x}\phi,\partial_{y}\phi,\partial_{z}\phi)$ for some scalar
function $\phi$ and hence we find that 
inside the fluid domain $\Omega_{t}$, 
\beq
\label{inside}
\nabla_{Y}\cdot u=  \Delta_{Y} \phi = 
(\partial_{x}^2+\partial_{y}^2+\partial_{z}^2)\phi(t,x,y,z)=0,\quad {\rm in}\quad \Omega_t.
\eeq 
On the boundaries  of $\Omega_{t}$, we make the usual 
assumption that no fluid particles cross the boundary.
At  the bottom of the fluid this reads 
\beq
\label{fond}
\partial_{z}\phi(t,x,y,-h)=0
\eeq
and on the free surface,  this yields the kinematic condition 
\beq
\label{surface1}
\partial_{t}\eta(t,X)+\nabla_{X} \phi(t,X,\eta(t,X))\cdot\nabla_{X} \eta(t,X)-\partial_{z}\phi(t,X,\eta(t,X))=0,
\eeq
where we use the notation 
$\nabla_{X} \equiv(\partial_{x},\partial_{y})^t$.
Finally,  taking into account the surface tension to compute the pressure on the free surface, 
we find the Bernouilli law: 
\beq
\label{surface2}
\partial_{t}\phi(t,X,\eta(t,X))+\frac{1}{2}|\nabla_{Y}\phi(t,X,\eta(t,X))|^{2}+g\eta(t,X)=
b\nabla \cdot\frac{\nabla_{X} \eta(t,X)}{\sqrt{1+|\nabla_{X} \eta(t,X)|^{2}}}\,,
\eeq
where $\nabla_{Y}\equiv (\nabla_{X} ,\partial_{z})$.
The coefficient $b$ is the Bond number  which measures  the influence of the  surface tension and $g$ 
is the gravitational constant.
The term $g\eta(t,X)$ is the trace of the gravitational force $gz$ on the free surface.

It is classical to rewrite the system \eqref{inside}, \eqref{surface1}, \eqref{surface2}
as a system  where all functions are evaluated on  the free surface only.
Let us next define the following   Dirichlet-Neumann  operator:  for given $\eta(X)$ and
$\varphi(X)$, we define $\phi(X,z)$ as the (well-defined) solution of the elliptic boundary value problem
$$
(\partial_{x}^2+\partial_{y}^2+\partial_{z}^2)\phi=0,\quad{\rm in}\quad
\{(X,z)\,:\,-h<z<\eta(X)\},
\quad
\phi(X,\eta(X))=\varphi(X),\quad \partial_{z}\phi(X,-h)=0, 
$$
and we define the Dirichlet-Neumann operator as
$$
G[\eta]\varphi\equiv
\partial_{z}\phi(X,\eta(X))-\nabla_{X}\eta(X)\cdot\nabla_{X}\phi(X,\eta(X))
=\sqrt{1+|\nabla_{X}\eta|^{2}}
(\nabla_{Y}\phi(X,\eta(X))\cdot n(X)),
$$
where
$$
n(X)=\frac{1}{\sqrt{1+|\nabla_{X}\eta|^{2}}}(-\nabla_{X}\eta(X),1)
$$
is the unit outward normal of  the free surface at the point $z=\eta(X)$.

This allows to  rewrite the  system in terms of  the functions evaluated on the free surface only.
Set
$$
\varphi(t,X)\equiv \phi(t,X,\eta(t,X))\,.
$$
Then one directly checks that the water waves problem \eqref{inside}, \eqref{fond}, 
\eqref{surface1}, \eqref{surface2}  is reduced to the study of the following system 
\begin{eqnarray}
\label{eq1}
\partial_{t} \eta  & = & G[\eta]\varphi,
\\
\label{eq2}
\partial_{t } \varphi  & = & -\frac{1}{2}|\nabla_{X}\varphi|^2+\frac{1}{2}
\frac{(G[\eta]\varphi+\nabla_{X}\varphi\cdot\nabla_{X}\eta)^2}{1+|\nabla_{X}\eta|^2}-g\eta
+b\nabla_{X}\cdot \frac{\nabla_{X}\eta}{\sqrt{1+|\nabla_X \eta|^{2}}}\,.
\end{eqnarray}
As noticed by Zakharov \cite{Zakharov}, 
the  system (\ref{eq1})-(\ref{eq2})  has a canonical Hamiltonian structure
$$
\partial_{t }\eta =\frac{\delta \mathcal{H}}{\delta\varphi},\quad \partial_{t } \varphi=-\frac{\delta \mathcal{H}}{\delta\eta}
$$
where  the Hamiltonian $\mathcal{H}$  is the total energy given by
$$
\mathcal{H}(\eta,\varphi)=\frac{1}{2}\int_{\R^2}\Big[G[\eta]\varphi\,\varphi+g\eta^2+2b(\sqrt{1+|\nabla\eta|^2}-1)\Big].
$$
This is the  sum of the kinetic energy, the gravitational potential energy and a surface energy due to stretching
of the surface.
The expression of the  variational derivatives of  $\mathcal{H}$  can be  checked by easy calculation thanks to the following lemma 
(see \cite[Theorem~3.20]{L1} for example).
\begin{lem}\label{DN'}
For an integer $k\geq 2$, consider the map $\eta \mapsto G[\eta]\varphi$, acting between 
the Sobolev spaces $H^{k+1/2}(\R^2)$ and $H^{k-1/2}(\R^2)$. Then
$$
D_{\eta} G[\eta]\varphi \cdot \zeta  =
-G[\eta](\zeta Z)-\nabla\cdot\Big(\zeta(\nabla_{X}\varphi-Z\nabla_{X}\eta)\Big),
$$
where $Z$,   linear in $\varphi$  and real valued,  is defined by
$$
Z=Z(\eta,\varphi)\equiv \frac{G[\eta]\varphi+\nabla_{X}\eta\cdot\nabla_{X}\varphi}
{1+|\nabla_{X}\eta|^2}\,.
$$
\end{lem}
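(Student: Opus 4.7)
The plan is to compute the Gâteaux derivative of $\eta\mapsto G[\eta]\varphi$ via the shape derivative of the harmonic extension. Write $\phi$ for the harmonic extension on $\Omega=\{-h<z<\eta(X)\}$ associated to $(\eta,\varphi)$ and denote by a tilde the trace on the free surface $z=\eta(X)$. Two preparatory identities drive the whole computation. Differentiating $\varphi(X)=\phi(X,\eta(X))$ gives $\nabla_X\varphi=\widetilde{\nabla_X\phi}+\widetilde{\partial_z\phi}\,\nabla_X\eta$; inserting this into the definition of $G[\eta]\varphi$ and solving for the vertical derivative identifies $Z$ as the trace $\widetilde{\partial_z\phi}$. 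Consequently $\widetilde{\nabla_X\phi}=\nabla_X\varphi-Z\nabla_X\eta$, so the expression appearing in the lemma is exactly the horizontal gradient of $\phi$ on the surface.

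Next, set $\eta^\epsilon=\eta+\epsilon\zeta$ and let $\phi^\epsilon$ be the associated harmonic extension (with the same Dirichlet data $\varphi$). Pulling the problem back to the fixed strip $\R^2\times(-h,0)$ through a smooth diffeomorphism that flattens the free surface, I would use standard elliptic estimates to show that $\epsilon\mapsto\phi^\epsilon$ is differentiable in the relevant Sobolev space and that the shape derivative $\dot\phi:=\partial_\epsilon\phi^\epsilon\big|_{\epsilon=0}$ is harmonic in $\Omega$ with Neumann condition $\partial_z\dot\phi(\cdot,-h)=0$. Differentiating the kinematic identity $\phi^\epsilon(X,\eta^\epsilon(X))=\varphi(X)$ at $\epsilon=0$ yields the Dirichlet data $\dot\phi(X,\eta(X))=-\zeta(X)Z(X)$. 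By the very definition of $G[\eta]$ applied to $-\zeta Z$, this gives
\[
G[\eta](-\zeta Z)=\widetilde{\partial_z\dot\phi}-\nabla_X\eta\cdot\widetilde{\nabla_X\dot\phi}.
\]

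The last step is the chain rule applied to
$G[\eta^\epsilon]\varphi=(\partial_z\phi^\epsilon)(X,\eta^\epsilon(X))-\nabla_X\eta^\epsilon(X)\cdot(\nabla_X\phi^\epsilon)(X,\eta^\epsilon(X))$. Differentiating at $\epsilon=0$ produces
\[
D_\eta G[\eta]\varphi\cdot\zeta
=\bigl[\widetilde{\partial_z\dot\phi}-\nabla_X\eta\cdot\widetilde{\nabla_X\dot\phi}\bigr]
+\zeta\,\widetilde{\partial_{zz}\phi}
-\nabla_X\zeta\cdot\widetilde{\nabla_X\phi}
-\zeta\,\nabla_X\eta\cdot\widetilde{\partial_z\nabla_X\phi}.
\]
The bracket is exactly $-G[\eta](\zeta Z)$. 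For the remaining three terms I invoke the harmonicity $\partial_{zz}\phi=-\Delta_X\phi$ together with the chain-rule identity $\nabla_X\cdot\widetilde{\nabla_X\phi}=\widetilde{\Delta_X\phi}+\nabla_X\eta\cdot\widetilde{\partial_z\nabla_X\phi}$; these collapse the three terms into $-\nabla_X\cdot\bigl(\zeta\,\widetilde{\nabla_X\phi}\bigr)=-\nabla_X\cdot\bigl(\zeta(\nabla_X\varphi-Z\nabla_X\eta)\bigr)$, which matches the stated formula.

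The main obstacle is not the algebraic identification but the rigorous justification of the shape derivative $\dot\phi$ within the precise Sobolev scale stated in the lemma: one must show that $\eta\mapsto\phi$ is Fr\'echet differentiable from $H^{k+1/2}(\R^2)$ into a suitable anisotropic Sobolev space on the fluid domain. This is handled by the flattening diffeomorphism and elliptic regularity for the transformed (variable-coefficient but uniformly elliptic) operator, whose coefficients depend smoothly on $\eta$ in $H^{k+1/2}$. Once this smooth dependence is in place, the manipulation above produces exactly the claimed identity.
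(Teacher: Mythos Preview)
Your argument is correct and is precisely the standard shape-derivative computation for the Dirichlet--Neumann operator: identify $Z=\widetilde{\partial_z\phi}$ and $\widetilde{\nabla_X\phi}=\nabla_X\varphi-Z\nabla_X\eta$, show that the shape derivative $\dot\phi$ is harmonic with trace $-\zeta Z$, and then use harmonicity to collapse the remaining terms into a divergence. The algebra you wrote checks line by line.

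The paper itself does not prove this lemma; it merely cites \cite[Theorem~3.20]{L1} (Lannes), where exactly this argument is carried out. So your approach is not ``different'' from the paper's---it \emph{is} the argument the paper is pointing to. The only part you flag as needing care, namely the Fr\'echet differentiability of $\eta\mapsto\phi$ in the stated Sobolev scale via a flattening diffeomorphism and elliptic regularity for the pulled-back operator, is also exactly how Lannes justifies it, and your outline of that step is accurate.
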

Note that because of  the translational invariance in the problem, the momentum
$$\mathcal{P}(\eta, \varphi)= \int_{\mathbb{R}^2} \eta \varphi_{x}$$
is also a  formally conserved quantity.

The Hamiltonian structure of (\ref{eq1})-(\ref{eq2}) will be of  crucial
importance for many aspects of our analysis   in particular for  the choice of multipliers when
performing energy estimates. 

\subsection{The line solitary wave solution of  the water waves system (\ref{eq1})-(\ref{eq2})}
For $c\geq 0$, since we shall study solitary waves with speed $c$, 
we make a change of frame $X=(x,y,z)\mapsto (x-ct,y,z)$ which  changes the dynamical equations
\eqref{surface1}, \eqref{surface2} into
$$
\partial_{t}\eta (t,X)-c \partial_{ x}\eta(t,X)
+\nabla_{X}\phi(t,X,\eta(t,X))\cdot\nabla_{X}\eta(t,X)-\partial_{z}\phi(t,X,\eta(t,X))=0
$$
and
$$
\partial_{t}\phi (t,X,\eta(t,X))
-
c \partial_{x} \phi (t,X,\eta(t,X))
+\frac{1}{2}|\nabla_{Y}\phi(t,X,\eta(t,X))|^{2}+g\eta(t,X)=
b\nabla_X\cdot\frac{\nabla_X\eta(t,X)}{\sqrt{1+|\nabla_X \eta(t,X)|^{2}}}\,.
$$
By using again the Dirichlet-Neumann operator, 
the equations (\ref{eq1}), (\ref{eq2}) become
\begin{eqnarray}\label{eq1c}
\partial_{t}\eta  & = & c\, \partial_{x} \eta+ G[\eta]\varphi,
\\
\label{eq2c}
\partial_{t}\varphi  & = & c\, \partial_{x} \varphi  -\frac{1}{2}|\nabla_{X}\varphi|^2+\frac{1}{2}
\frac{(G[\eta]\varphi+\nabla_{X}\varphi\cdot\nabla_{X}\eta)^2}{1+|\nabla_{X}\eta|^2}-g\eta
+b\nabla_{X}\cdot \frac{\nabla_{X}\eta}{\sqrt{1+|\nabla_X \eta|^{2}}}
\end{eqnarray}
where $\varphi$ is again defined as $\varphi(t, X)= \phi(t, X, \eta(t,X))$.
A solitary wave with speed $c$ becomes  a stationary solution (i.e. independent of $t$) of (\ref{eq1c}), (\ref{eq2c}).
To study the existence of such solitary waves, it is classical to introduce
a non-dimensional version of the equations.
Let us perform the  change of variable
$$
\eta(t,X)=h\, \tilde{\eta}\Big(\frac{c}{h}t,\frac{1}{h}X\Big),\quad
\phi(t,X,z)=c\,h\,\tilde{\phi}\Big(\frac{c}{h}t,\frac{1}{h}X,\frac{1}{h}z\Big)\,.
$$
Then the equations satisfied by $\tilde{\eta}$, $\tilde{\phi}$ which for the sake of  simplicity will still be denoted by
$\eta$, $\phi$ are
$$
\partial_{t}\eta (t,X)- \partial_{x } \eta (t,X)
+\nabla_{X}\phi(t,X,\eta(t,X))\cdot\nabla_{X}\eta(t,X)-\partial_{z}\phi(t,X,\eta(t,X))=0
$$
and
$$
\partial_{t } \phi(t,X,\eta(t,X))-\partial_{x}\phi (t,X,\eta(t,X))
+\frac{1}{2}|\nabla_{Y}\phi(t,X,\eta(t,X))|^{2}+\alpha\eta(t,X)=
\beta\nabla_X\cdot\frac{\nabla_X\eta(t,X)}{\sqrt{1+|\nabla_X \eta(t,X)|^{2}}}\,,
$$
where the fluid domain is now $\{(X,z):-1<z<\eta(t,X)\}$ and
$$
\alpha=\frac{gh}{c^2},\quad \beta=\frac{b}{hc^2}\,.
$$
Note that the elliptic equation \eqref{inside} is not changed.
The equations formulated  on the free surface thus  become
\begin{eqnarray}\label{eq11}
\partial_{t} \eta  & = & \partial_{x} \eta + G[\eta]\varphi,
\\
\label{eq22}
\partial_{t }\varphi  & = &\partial_{x} \varphi  -\frac{1}{2}|\nabla_{X}\varphi|^2+\frac{1}{2}
\frac{(G[\eta]\varphi+\nabla_{X}\varphi\cdot\nabla_{X}\eta)^2}{1+|\nabla_{X}\eta|^2}-\alpha\eta
+\beta\nabla_{X}\cdot \frac{\nabla_{X}\eta}{\sqrt{1+|\nabla_X \eta|^{2}}}.
\end{eqnarray}
The 
Hamiltonian is now given by 
$$
H(\eta,\varphi)=
\frac{1}{2}\int_{\R^2}\Big[G[\eta]\varphi\,\varphi+\alpha\eta^2+
2\beta(\sqrt{1+|\nabla\eta|^2}-1)-2\,\eta\,\partial_{x}\varphi
\Big].
$$
In terms of the parameters $\alpha$ and $\beta$, we have the following existence result (see \cite{AK}) 
concerning stationary solutions 
of (\ref{eq11})-(\ref{eq22})
(or equivalently solitary wave solutions of the original problem (\ref{eq1})-(\ref{eq2})).
\begin{theoreme}[Amick-Kirchg\"assner \cite{AK}]\label{theoOS}
Suppose that $\alpha=1+\varepsilon^2$ and $\beta>1/3$. Then there exists $\varepsilon_0$ such that for every
$\varepsilon\in (0,\varepsilon_0)$ there is a stationary solution 
$(\eta_{\varepsilon}(x),\varphi_{\varepsilon}(x))$ (i.e. also independent of $y$) 
of (\ref{eq11})-(\ref{eq22}) under  the form
$$
\eta_{\varepsilon}(x)=\varepsilon^2 \Theta(\varepsilon x,\varepsilon),
\quad
\varphi_{\varepsilon}(x)=\varepsilon \Phi(\varepsilon x,\varepsilon),
$$
where $\Theta$ and $\Phi$  satisfy:
$$
\exists\, d>0,\quad \forall\,\alpha\in\N ,\quad \exists\, C_{\alpha}>0,\quad 
\forall\, (x,\varepsilon)\in\R\times (0,\varepsilon_0),\quad
|(\partial^{\alpha}_{x}\Theta)(x,\varepsilon)|\leq C_{\alpha} e^{-d|x|}  
$$
and
$$
\exists\, d>0,\quad \forall\,\alpha\geq 1 ,\quad
\exists\, C_{\alpha}>0,\quad 
\forall\, (x,\varepsilon)\in\R\times (0,\varepsilon_0),\quad
|(\partial^{\alpha}_{x}\Phi)(x,\varepsilon)|\leq C_{\alpha} e^{-d|x|}  \,.
$$
\end{theoreme}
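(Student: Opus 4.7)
The plan is to establish the solitary wave by a KdV-scaling perturbation argument applied to the stationary, $y$-independent reduction of \eqref{eq11}-\eqref{eq22}. Setting $\partial_t=0=\partial_y$ in that system gives a one-dimensional stationary problem in the unknowns $\eta(x)$, $\varphi(x)$. Substituting the ansatz $\eta(x)=\varepsilon^2\Theta(\varepsilon x,\varepsilon)$, $\varphi(x)=\varepsilon\Phi(\varepsilon x,\varepsilon)$ and $\alpha=1+\varepsilon^2$, and passing to the slow variable $X=\varepsilon x$, every $x$-derivative carries a factor $\varepsilon$, so the long-wave expansion of the one-dimensional Dirichlet-Neumann operator in a strip of depth one,
\beq
G[\varepsilon^2\Theta](\varepsilon\Phi) \;=\; -\varepsilon^2 \partial_X \Phi \;+\; \tfrac{\varepsilon^4}{3} \partial_X^3 \Phi \;-\; \varepsilon^4 \partial_X\bigl(\Theta\,\partial_X \Phi\bigr) \;+\; O(\varepsilon^6),
\eeq
produces a system for $(\Theta,\Phi)$ that is a smooth perturbation of a finite-dimensional limit. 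At leading order the first equation gives $\partial_X\Phi=\partial_X\Theta$, and substituting this into the second equation and integrating once in $X$ (using the expected decay at infinity) reduces the problem to a stationary KdV-type equation of the form
\beq
\bigl(\beta - \tfrac13\bigr)\,\partial_X^2 \Theta \;+\; \kappa\,\Theta^2 \;-\; \Theta \;=\; 0,
\eeq
with a computable coefficient $\kappa>0$. Since $\beta>1/3$, this equation has the classical positive, even, exponentially decaying sech$^{2}$ solution $\Theta_0$, and $\Phi_0$ is recovered by integrating $\partial_X\Phi_0=\partial_X\Theta_0$.

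Next I would upgrade this formal profile into an exact solution of the full stationary system for each small $\varepsilon>0$ by an implicit function / Lyapunov-Schmidt argument in an exponentially weighted Sobolev space $H^s_\delta=\{u:e^{\delta|X|}u\in H^s(\R)\}$, with $0<\delta$ strictly less than the decay rate of $\Theta_0$. The linearization at $\Theta_0$ of the limit KdV equation is a second-order elliptic operator whose kernel on $H^s_\delta$ is one-dimensional, spanned by $\partial_X\Theta_0$ (translation invariance), and which is Fredholm of index zero; restricting the search to even functions kills this kernel and makes the operator invertible. Since the expansion above and the contribution of the surface tension term depend smoothly on $\varepsilon$ with values in $H^s_\delta$, the implicit function theorem yields a branch $(\Theta(\cdot,\varepsilon),\Phi(\cdot,\varepsilon))$ of even solutions bifurcating from $(\Theta_0,\Phi_0)$. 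The exponential decay of $\partial_X^\alpha\Theta$ and of $\partial_X^\alpha\Phi$ for $\alpha\geq 1$ claimed in the statement is then a direct consequence of membership in $H^s_\delta$ combined with elliptic/ODE bootstrap on the equation satisfied by $\Theta$; note that $\Phi$ itself need not decay, consistent with the fact that the primitive of a localized function has distinct limits at $\pm\infty$.

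The main technical obstacle is to justify rigorously the long-wave expansion of $G[\varepsilon^{2}\Theta](\varepsilon\Phi)$ with \emph{uniform} remainder estimates in the weighted space $H^s_\delta$, not merely in $H^s(\R)$. This requires flattening the thin free-boundary strip $\{-1<z<\varepsilon^2\Theta(X)\}$ by a change of vertical variable and analyzing the resulting elliptic problem with small parameter $\varepsilon$, whose coefficients inherit the exponential decay of $\Theta$. One must show that each term of the expansion, as well as the remainder, depends smoothly on $(\Theta,\varepsilon)$ in a neighbourhood of $(\Theta_0,0)$ as a map into $H^s_\delta$; this is where most of the work lies, but it is a by-now standard machinery for the shallow-water regime and is available in the form needed here. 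The alternative historical route, which is the one actually followed by Amick and Kirchg\"assner, handles the same difficulty by recasting the stationary equations as an ill-posed spatial-dynamics evolution in $x$ and invoking a center manifold reduction: for $\beta>1/3$ the linearization at the trivial solution has a pair of eigenvalues crossing the imaginary axis at $\alpha=1$, and the finite-dimensional reduced equation on the center manifold contains, as a normal-form truncation, precisely the stationary KdV equation above, whose hyperbolic homoclinic orbit persists under the higher-order perturbations and provides the desired solitary wave.
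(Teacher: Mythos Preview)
The paper does not prove this theorem: it is stated as a result of Amick--Kirchg\"assner and simply cited from \cite{AK}, with only the remark that the leading profile is \eqref{Thetadef}. So there is no ``paper's own proof'' to compare with; what you have written is a plausible sketch of a proof of the cited result, and you correctly identify at the end that the actual Amick--Kirchg\"assner argument proceeds via spatial dynamics and a center manifold reduction rather than the weighted implicit function theorem route you outline first.

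Your overall strategy is sound, but two details in the formal computation are off and worth fixing before you rely on them. First, the shallow-water expansion of the one-dimensional Dirichlet--Neumann operator in a strip of depth one starts with a \emph{second}-order term, $G[0]\psi=-\partial_x^2\psi-\tfrac13\partial_x^4\psi+\cdots$, so with your scaling the leading contribution of $G[\varepsilon^2\Theta](\varepsilon\Phi)$ is $-\varepsilon^3\partial_X^2\Phi$, not $-\varepsilon^2\partial_X\Phi$. Plugging this into the first stationary equation $\partial_x\eta+G[\eta]\varphi=0$ gives $\partial_X\Theta=\partial_X^2\Phi$ at leading order, hence $\partial_X\Phi=\Theta$ (not $\partial_X\Phi=\partial_X\Theta$); this is consistent with the expansion recorded later in the paper, $\partial_x\varphi_\varepsilon=\eta_\varepsilon+O(\varepsilon^4)$. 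Second, for $\beta>1/3$ the solitary wave is a wave of \emph{depression}: the paper gives $\Theta(\xi,0)=-\cosh^{-2}\bigl(\xi/(2(\beta-1/3)^{1/2})\bigr)$, so the reduced KdV equation has the sign structure $(\beta-\tfrac13)\Theta''-\Theta-\tfrac32\Theta^2=0$ and the profile is negative, not positive. These corrections do not affect the mechanism of your argument (even symmetry to kill the translational kernel, Fredholm linearization in $H^s_\delta$, persistence of the homoclinic), but they do change the explicit formulas.
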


Observe that the speeds of  the  solitary waves of (\ref{eq1})-(\ref{eq2})  built  in  the
above result are  close to 
$\sqrt{gh}$ which is independent of $\varepsilon$.

The  profiles $\Theta(\xi, \eps)$ and $\Phi(\xi, \eps)$ have smooth expansions in $\eps$.
In particular for $\eps =0$, we find
\begin{equation} \label{Thetadef}
\Theta(\xi,0)=- \mbox{cosh}^{-2}\, \Big(  {\xi \over  2 (\beta - 1/3 )^{1/2}  } \Big)
\end{equation}
and hence we recover the KdV solitary wave.

\subsection{Stability with respect to one-dimensional perturbations}
A very natural question is to study the stability of the solitary wave
solutions obtained in Theorem~\ref{theoOS}. Because of the invariance of the
problem with respect to spatial translations, usual Lyapounov stability cannot hold and thus it is natural to study the stability of the solitary wave
modulo these translations (orbital stability). It turns out that  under one-dimensional 
perturbations  the solitary waves of Amick-Kirchg\"assner are (orbitally) stable.

Let us fix the functional setting. We define the space $Z(\mathbb{R})$ as
$Z(\mathbb{R})= H^1(\mathbb{R}) \times H^{1 \over 2}_{*}(\mathbb{R})$ where 
$$ H^{1 \over 2}_{*}(\mathbb{R})= \Big\{ \varphi, \quad  \|\varphi\|_{H^{1 \over 2}_{*}}^2
= \int_{\mathbb{R}} |\xi|\,   \mbox{tanh}\, |\xi|\,  |\hat{\varphi}(\xi)|^2 <+\infty, \Big\}_{/\mathbb{R}}$$
which means that we do not distinguish functions that just differ by a constant.
Note that the control given by the $\|\cdot\|_{H^{1 \over 2}_{*}}$  semi-norm in the low frequencies is worse
than the one given by the usual  homogeneous $\dot{H}^{1 \over 2}$ semi-norm.  This is the natural semi-norm
associated to the Dirichlet-Neumann operator  and thus $Z(\mathbb{R})$ is the natural space associated
to the Hamiltonian. Nevertheless, to make this statement rigorous, we need a little bit more
control on the regularity of the surface. 
We  thus  also introduce   for $R>1$   the subspace 
$$ Z_{R}(\mathbb{R})= \Big\{ U= (\eta, \varphi) \in Z(\mathbb{R}), \quad -1 + {1 \over R} \leq \eta(x)
\leq R, \quad \|\eta_{x}\|_{L^\infty} \leq R\Big\}.$$

The result of \cite{Mielke} reads as follows.

\begin{theoreme}[Mielke \cite{Mielke}, $1d$ stability]
\label{stab_1d}
Let $\alpha$, $\beta$ and $\eps_0$ be as in Theorem \ref{theoOS}.  Then there exists $\eps_{1}>0$
such that for every  $\eps \in (0, \eps_{1}]$ and $R>1$,  the solitary wave $(\eta_{\eps}, \varphi_{\eps})$
is conditionally stable in the following sense.

For every $\kappa>0$, 
there exists $\delta >0$ such that :
if  $U=(\eta, \varphi): \,$ $[0, T) \rightarrow Z_{R}(\mathbb{R})$ is a continuous solution of 
\eqref{eq1}, \eqref{eq2},   which preserves the Hamiltonian $\mathcal{H}$
and the momentum $\mathcal{P}$ and satisfies
$\|U(0) - (\eta_{\eps}, \varphi_{\eps}) \|_{Z} \leq \delta$
then it satisfies
$$ \inf_{x_{0} \in \mathbb{R}} \|U(t, \cdot -x_{0})- (\eta_{\eps}, \varphi_{\eps}) \|_{Z} \leq \kappa, 
 \quad \forall t \in [0, T).$$
\end{theoreme}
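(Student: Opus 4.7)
The plan is to follow the standard Grillakis--Shatah--Strauss framework, adapted to the water waves Hamiltonian structure and restricted to the one-dimensional setting. The key observation is that in the scaled moving frame, the Hamiltonian $H(\eta,\varphi)$ already incorporates the momentum term $-\int \eta \partial_x \varphi$, so the solitary wave $U_\eps = (\eta_\eps,\varphi_\eps)$ is precisely a critical point of $H$: indeed, using Lemma \ref{DN'} one checks that $DH(U_\eps) = 0$ is exactly the stationary version of \eqref{eq11}--\eqref{eq22}.

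First I would set up the functional framework on the 1D restriction of the energy space $Z(\mathbb{R})$, noting that on $Z_R(\mathbb{R})$ the Dirichlet-Neumann quadratic form $\langle G[\eta]\varphi,\varphi\rangle$ is equivalent to $\|\varphi\|_{H^{1/2}_*}^2$ uniformly in $\eta$, so the $Z$-norm is equivalent to the natural quadratic form associated with the principal part of $H$. Next I would compute the second variation $L_\eps := D^2 H(U_\eps)$ and identify its kernel. By translation invariance $L_\eps \partial_x U_\eps = 0$, and a Vakhitov--Kolokolov type analysis should show this is the only direction in the kernel for $\eps$ small. The crucial positivity statement is:
\begin{equation*}
\langle L_\eps v, v\rangle \geq c_0 \|v\|_Z^2 \quad \text{whenever } v \perp \partial_x U_\eps \text{ (in a suitable sense) and } \langle v, \nabla \mathcal{P}(U_\eps)\rangle = 0.
\end{equation*}
Since for $\eps$ small the wave is close to the KdV soliton via \eqref{Thetadef}, the spectrum of $L_\eps$ can be obtained by perturbation from the well-understood linearized KdV operator, for which the Vakhitov--Kolokolov condition $\frac{d}{dc}\mathcal{P}(U_\eps) \neq 0$ is known to hold (it has the correct sign thanks to the KdV scaling $\eta_\eps = \eps^2 \Theta(\eps x,\eps)$, which gives $\mathcal{P}(U_\eps) \sim \eps^3$).

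Then I would introduce a modulation: for a solution $U(t)$ staying close to the orbit $\{U_\eps(\cdot - x_0) : x_0 \in \mathbb{R}\}$, choose $x_0(t)$ via the implicit function theorem so that $v(t) := U(t,\cdot + x_0(t)) - U_\eps$ is orthogonal to $\partial_x U_\eps$. Conservation of $\mathcal{H}$ and $\mathcal{P}$ along the flow implies conservation of $H$ in the moving frame, so Taylor expansion gives
\begin{equation*}
H(U(t)) - H(U_\eps) = \tfrac{1}{2}\langle L_\eps v(t), v(t)\rangle + O(\|v(t)\|_Z^3).
\end{equation*}
Combined with the coercivity estimate (after subtracting the component along $\nabla \mathcal{P}(U_\eps)$, which is controlled by conservation of $\mathcal{P}$), this yields $\|v(t)\|_Z \lesssim \|v(0)\|_Z$ as long as $v(t)$ remains small, and a standard continuity argument closes the bootstrap on the time interval $[0,T)$ where the solution is assumed to exist.

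The main obstacle is the spectral/coercivity analysis of $L_\eps$: the nonlocality of $G[\eta]$ prevents a reduction to ODEs, so one cannot directly invoke Sturm--Liouville theory. Instead one must rely on the proximity to the KdV regime, carefully tracking how the Dirichlet-Neumann operator expands in powers of $\eps$ on functions concentrated on the soliton's spatial scale $1/\eps$, and then invoking a perturbation argument in the appropriate weighted spaces to transfer the known positivity for the linearized KdV at its soliton to $L_\eps$. A secondary difficulty is ensuring that the $Z_R$ constraint is preserved along modulation, which requires the $\kappa$--$\delta$ smallness to prevent the surface from approaching the bottom or developing large slopes.
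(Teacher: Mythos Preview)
The paper does not give its own proof of this theorem: it is stated as a result of Mielke \cite{Mielke} and used as a black box, with the remark that the smallness of $\eps$ can be replaced by the list of spectral assumptions and the moment-of-instability condition from the Grillakis--Shatah--Strauss framework \cite{GSS}. So there is no ``paper's own proof'' to compare against.

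That said, your outline is a faithful sketch of Mielke's argument, and the paper does develop in detail the piece you flag as the main obstacle. In Section~\ref{sectionLk}, Proposition~\ref{Mielke} and its proof sketch carry out exactly the perturbative spectral analysis you describe: after the factorization \eqref{L0dec} one is reduced to the scalar operator $A_\eps$ of \eqref{Aeps}, and the key step is the convergence \eqref{anatol} of $\eps^{-2}S_\eps^{-1}A_\eps S_\eps$ to the linearized KdV operator $-(\beta-1/3)\partial_x^2+1-3\cosh^{-2}(\cdot)$, obtained by expanding $M^{-1}$ via a Neumann series in powers of $\eta_\eps$. This replaces the Sturm--Liouville argument you correctly note is unavailable. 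One refinement worth noting: the reduction from the two-component Hessian $L(0)$ to the scalar operator $A_\eps$ proceeds through Mielke's completion-of-squares identity \eqref{L0dec} using the operator $M=-\partial_x^{-1}G_{\eps,0}\partial_x^{-1}$, which is how the nonlocal Dirichlet--Neumann contribution is isolated; this is a bit more structured than a generic ``second variation'' computation and is what makes the KdV limit transparent.
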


As stated by Mielke,  the assumption that $\eps$ is sufficiently small can be replaced by assuming
that a family of  solitary waves depending smoothly on the speed  exists and that the list of  spectral assumptions 
and the condition on the moment of instability  necessary 
in the framework of \cite{GSS} hold.

\subsection{Main result: transverse nonlinear instability}
The situation drastically  changes if one considers $2d$ (transverse) perturbations.
The main result of this paper is   that  the solitary wave
solutions obtained in Theorem~\ref{theoOS} are (orbitaly) unstable when
submitted to two-dimensional localized perturbations (transverse
instability). Here is the precise statement.
\begin{theoreme}[Transverse instability]\label{main}
Let $\alpha$, $\beta$ and $\eps$ as in Theorem \ref{theoOS}.
There exists $\varepsilon_1>0$ such that for every
$\varepsilon\in (0,\varepsilon_1]$ the following holds true.

For every $s\geq 0$, there exists $\kappa>0$ such that for every $\delta >0$, we can find an initial data 
$(\eta_{0}^\delta(x,y),\varphi_0^\delta(x,y)) $  and a time $T^\delta \sim |\log \delta | $  such that 
$$
\|(\eta_{0}^\delta(x,y),\varphi_0^\delta(x,y)) - (\eta_{\varepsilon}(x),\varphi_{\varepsilon}(x))
\|_{{H^s(\R^2)\times H^s(\R^2)}} \leq \delta 
$$
and there exists a solution $(\eta^\delta(t,x,y),\varphi^\delta(t,x,y))$ of the water waves equation
(\ref{eq11})-(\ref{eq22}) with data $(\eta_{0}^\delta,\varphi_0^\delta)$, defined on $[0, T^\delta]$ 
and satisfying
$$
\inf_{a\in \R}
\|(\eta^\delta(T^\delta,x,y),\varphi^\delta(T^\delta,x,y))-
(\eta_{\varepsilon}(x-a),\varphi_{\varepsilon}(x-a))\|_{L^2(\R^2)\times
L^2(\R^2)}>\kappa.
$$
\end{theoreme}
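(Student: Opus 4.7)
The plan is to follow the Grenier-type strategy developed by the authors in their previous works \cite{RT1,RT2}, suitably adapted to the full water-waves system \eqref{eq11}--\eqref{eq22}. First I would linearize this system about the stationary profile $(\eta_\varepsilon,\varphi_\varepsilon)$ supplied by Theorem \ref{theoOS}. Since this profile is independent of $y$, the linearized operator has coefficients independent of $y$, and Fourier transform in $y$ decouples the problem into a one-parameter family of one-dimensional operators $L_k$ indexed by the transverse frequency $k\in\R$. A transverse perturbation of wavenumber $k$ then evolves according to $L_k$, and it suffices to exhibit some $k_0$ for which $L_{k_0}$ admits an eigenvalue $\lambda(k_0)$ with $\mathrm{Re}\,\lambda(k_0)>0$.

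The spectral step is the heart of the argument. I would look for an unstable eigenvalue $\lambda(k)$ that bifurcates from the kernel of $L_0$, the latter being at least one-dimensional because of the translation invariance in $x$ and non-degenerate because a smooth one-parameter family of solitary waves exists. The expected picture, consistent with Zakharov's result for KP-I, is that for small $k$ one can perform a Lyapunov--Schmidt reduction and compute the leading-order sign of $\mathrm{Re}\,\lambda(k)$ in terms of a moment-of-instability quantity which, in the long-wave regime $\alpha=1+\varepsilon^2$, $\beta>1/3$, should reduce to the KP-I instability criterion for the KdV profile $\Theta(\xi,0)$ in \eqref{Thetadef}. The main technical difficulty here is that the Dirichlet--Neumann operator $G[\eta_\varepsilon]$ is nonlocal, so $L_k$ is a genuine pseudodifferential problem rather than an ODE; Fredholm and resolvent arguments, together with a careful description of the essential spectrum of $L_k$ (which lies on the imaginary axis away from a compact piece), are needed to locate and isolate the unstable eigenvalue.

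Once spectral instability at some fixed $k_0$ is established, together with an exponential dichotomy estimate for the linear semigroup generated by $L_{k_0}$, I would form the family of initial data
\[
(\eta_0^\delta,\varphi_0^\delta)(x,y)= (\eta_\varepsilon(x),\varphi_\varepsilon(x))+\delta\,\mathrm{Re}\bigl(e^{ik_0 y}\,U(x)\bigr),
\]
where $U$ is the unstable eigenfunction, and propagate them by the nonlinear water-waves flow. This requires a local well-posedness theory for perturbations of $(\eta_\varepsilon,\varphi_\varepsilon)$ in high Sobolev spaces $H^s(\R^2)$, which I would base on the existing well-posedness results for the water waves system \cite{L1}; the nonlinearity and nonlocality of \eqref{eq11}--\eqref{eq22} force energy estimates that must carefully exploit the Hamiltonian structure and the good symmetrizer suggested by the variational formula of Lemma \ref{DN'}, in order to guarantee that the solution survives until the logarithmic instability time.

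The final bootstrap is then standard. Writing the perturbation as $U^\delta(t)=\delta\, e^{tL_{k_0}}U_0+R^\delta(t)$, a Duhamel--Gronwall argument shows that so long as $\|R^\delta(t)\|$ remains much smaller than $\|\delta\,e^{tL_{k_0}}U_0\|\sim \delta\,e^{t\,\mathrm{Re}\,\lambda(k_0)}$, the leading term dominates and reaches a fixed size $\kappa$ at time $T^\delta\sim|\log\delta|$. Because the unstable mode carries a nontrivial $y$-dependence through $e^{ik_0 y}$ while the solitary wave $(\eta_\varepsilon,\varphi_\varepsilon)$ is $y$-independent, the $L^2(\R^2)$ distance cannot be absorbed by any translation $x\mapsto x-a$, yielding the required lower bound. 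The main obstacle I anticipate is not this final bootstrap but the combination of (i) the spectral analysis of the nonlocal operator $L_k$, where the eigenvalue must be produced rigorously in the full water-waves setting rather than merely for the KP-I approximation, and (ii) the long-time energy control of the nonlinear remainder in a norm strong enough to keep the solution in the domain of well-posedness $Z_R$ of Theorem \ref{stab_1d} for $t$ of order $|\log\delta|$.
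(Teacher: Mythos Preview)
Your overall strategy is the right one and matches the paper's architecture, but there is a genuine gap in your construction of the initial data, and two further points where your sketch diverges from what is actually needed.

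\textbf{The main gap: a single Fourier mode is not in $L^2(\R^2)$.} You propose
\[
(\eta_0^\delta,\varphi_0^\delta)(x,y)= (\eta_\varepsilon(x),\varphi_\varepsilon(x))+\delta\,\mathrm{Re}\bigl(e^{ik_0 y}\,U(x)\bigr).
\]
But $e^{ik_0 y}U(x)$ is periodic in $y$ and therefore does not belong to $H^s(\R^2)$ for any $s$; neither the smallness hypothesis nor the $L^2(\R^2)$ instability conclusion make sense for this perturbation. The paper resolves this by taking a \emph{wave packet} of unstable modes,
\[
U^0(t,x,y)=\int_I e^{\sigma(k)t}e^{iky}U(k,x)\,dk,
\]
over a small interval $I$ around the most unstable frequency $k_0$, so that $U^0\in H^s(\R^2)$ and grows like $e^{\sigma_0 t}/(1+t)^{1/2m}$ by Laplace's method. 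This localisation in $y$ is essential, and it also feeds into the final step: one projects with a Fourier multiplier $\Pi$ supported near $k_0$ (hence annihilating every $y$-independent function, in particular all translates of the solitary wave) to get the lower bound uniformly in $a$.

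\textbf{A related gap: the first-order approximation does not close the bootstrap.} Writing the solution as $\delta e^{tL_{k_0}}U_0+R^\delta$ is not enough. The Gronwall constant coming from the energy estimate on the remainder produces growth $e^{Ct}$ with some $C$ that has no reason to be below $\sigma_0$, so the error overtakes the linear term before time $T^\delta$. The paper follows Grenier and builds a high-order approximate solution $U^a=\sum_{j=0}^{M+1}\delta^j U^j$ so that the residual is $O(\delta^{M+3}e^{(M+3)\sigma_0 t})$; choosing $M$ large makes $(M+3)\sigma_0$ beat the Gronwall constant. Constructing the $U^j$ requires a sharp semigroup bound $\|e^{tJL(k)}\|\lesssim e^{\gamma t}$ for every $\gamma>\sigma_0$, which the paper obtains via Laplace transform and resolvent estimates rather than exponential dichotomy.

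\textbf{A difference in the spectral step.} Your plan is to bifurcate the unstable eigenvalue from the translational kernel at $k=0$ via a moment-of-instability computation. That route is legitimate (it is essentially what \cite{PS,GHS} do), but it is not what the paper does. The paper instead exploits the \emph{monotonicity} of $G_{\eps,k}$, and hence of $L(k)$, in $|k|$: since $L(0)$ has exactly one negative eigenvalue (inherited from Mielke's analysis) and $L(k)$ becomes positive for large $k$, there is a first $k_1>0$ where $M(k)=JL(k)J$ acquires a one-dimensional kernel, and the unstable eigenvalue of $JL(k)$ is produced by an implicit-function argument at $(\sigma,k)=(0,k_1)$. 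This avoids any small-$k$ asymptotics or reduction to KP-I and gives the result for the same range of $\eps$ as Mielke's stability theorem.

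Finally, for the nonlinear energy control you cannot simply invoke \cite{L1}: that paper uses Nash--Moser, which does not give the quantitative estimate on $[0,T^\delta]$ you need. The paper derives its own quasilinear energy estimate (Theorem~\ref{theoenergie}) using simultaneous time and space derivatives to handle the surface-tension commutators.
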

Let us give a few comments on this result.

The instability is stated in the $L^2$ norm. This thus implies an instability in  the energy norm
$H^1 \times H^{1 \over 2}$. We shall actually establish the stronger result  that  the $L^2$  distance  of 
the solution   to  all
functions depending on $x$ only is at time $T^\delta$ larger than $\kappa$.

As in Theorem \ref{stab_1d}, the assumption that $\eps$ is sufficiently small can be replaced
by  the same assumptions as in  \cite{Mielke}. Namely, we need that the solitary wave
exists and that the linearization of the one-dimensional equation about the solitary wave
verifies  some spectral assumptions. Note that   we
only need the existence of the solitary wave and some stability properties of the one-dimensional
problem without 
any additional assumption in order to get  the transverse instability i.e $\eps_{1}$  is 
the same in Theorems \ref{stab_1d} and \ref{main}.

Let us remark that our theorem is not conditional:  we establish  the existence of the solution on $[0, T^\delta]$  which is already
a non trivial part of the statement. 

\subsection{Outlines of the paper}

To prove Theorem \ref{main}, we shall  construct the solution $U^\delta= (\eta^\delta, \varphi^\delta)$
of \eqref{eq11}, \eqref{eq22}
under the form 
$$ U^\delta = U_{\eps} +  U^{a} + V, \quad U_{\eps}= (\eta_{\eps}, \varphi_{\eps})^t$$
where following the approach of \cite{Grenier},  $U^{a}$ is an exponentially growing solution driven by the linear
instability and $V$ is a corrector that we add in order to get an exact solution
of the nonlinear equation.
There are three main parts in the paper. In the first part, we study the linearized  water waves equations
about the solitary wave, the  aim is to construct the leading part of $U^{a}$ as
an exponentially growing solution of the linearized equation with the maximal growth rate.
The second  step is the  construction of   the remaining part of $U^{a}$ where we describe
how the linear instability interacts with the nonlinear term.
The last step is the construction of the correction term $V$ where we need to study
a nonlinear problem.
Here are more details: 

\begin{itemize}
\item As a preliminary step, in Section \ref{sectionprelim}, we study the structure of the water waves equations
linearized about the solitary waves $(\eta_{\eps}, \varphi_{\eps})^t$. By using the fact that
the solitary waves do not depend on the transverse variable, we can Fourier transform
the linearized equation in the transverse variable to reduce the problem to
the study of a family of linear  equations  indexed by the transverse frequency
parameter $k \in \mathbb{R}:$ 
\beq
\label{introeqlin}
\partial_{t} U= JL(k) U
\eeq
where $L(k)$ is a symmetric operator and $J=\left( \begin{array}{cc} 0 & 1 \\ -1 & 0 \end{array}\right)$.
In the expression of the operator $L(k)$ arises the "Fourier transform" of the Dirichlet-Neumann operator
$G_{\eps, k}$ defined as
$$ G[\eta_{\eps}]\big( f(x) e^{iky} \big) = e^{iky} G_{\eps,k}\big( f(x) \big).$$
In order to understand the main properties
of the linearized equation \eqref{introeqlin}, we first need to study carefully  $G_{\eps, k}$.
This is the aim of Section \ref{sectionGepsk}. The estimates that we establish are rather classical
when  $k$ is fixed, we refer for example to \cite{L1}, \cite{Alvarez-Lannes}, the main novelty
is that we need to track carefully  the dependence in $k$ (especially when $k$ is close to zero) in the estimates.
We also point out the elementary but very useful property that  $G_{\eps, k}$ as a symmetric operator 
 depends on $|k|$ in a monotonous way.

\item  In Section \ref{sectionLk}, we study the properties of $L(k)$. We establish that
it has a self-adjoint realization on $L^2\times L^2$ with domain $H^2 \times H^1$
and study its spectrum. We first get  (Proposition \ref{essL})  that its essential
spectrum  is contained in $[c_{k}, + \infty)$ where $c_{k} \geq 0$ and $c_{k}>0$ 
if $k \neq 0$. Next, in Proposition  \ref{Lk}, we prove that  for $ \eps$ sufficiently
 small, $L(k)$ has at most one negative eigenvalue for every $k$.
Note that in the case $k=0$ the spectrum  of $L(k)$ can be described by using  the
 spectrum of the KdV equation  linearized about the KdV solitary wave
  as shown  by Mielke in \cite{Mielke}.

\item    In Section \ref{sectionJLk}, we study the operator $JL(k)$. We prove  (Proposition \ref{essJL})
that its essential spectrum is included in $i \mathbb{R}$ and locate its possible unstable
 (i.e. with positive real parts)
 eigenvalues in Proposition \ref{brute0cor}.  Finally, in  Theorem \ref{mode}, we prove
  the linear instability: we show that for some $k \neq 0$,  $JL(k)$ has an unstable eigenvalue.
This last  result is  known,  it was  obtained in \cite{PS}, \cite{GHS}, \cite{Bridges} for
 example by using  different formulations of the water waves equation.
  The proof that we get here is  very simple, it  just relies on the monotonous dependence
   of $L(k)$ in $k$ and a bifurcation argument based on the Lyapounov-Schmidt method.
   An important consequence of this part is that we  get the existence
    of a most unstable eigenmode i.e an eigenvalue $\sigma(k_{0})$ of $JL(k_{0})$
   such that 
   $$ \mbox{Re } \sigma(k_{0})= \mbox{sup } \big\{ \mbox{Re } \sigma, \quad   \exists k, \, \sigma \in \sigma \big(JL(k)\big)  \big\}.$$

\item    Once these main properties are established, we are  able to construct the unstable
approximate solution $U^{a}= (\eta^a , \varphi^a)$. From the spectral properties of $JL(k)$, we
take the first part $U^0$ of $U^{a}$ (see Proposition \ref{U0}) under the form
$$ U^0= \int_{I} e^{\sigma(k) t } e^{iky} U(k) \, dk$$
 where $\sigma(k)$ is an analytic curve passing through $\sigma(k_{0})$.

\item  The next step is to construct $U^{a}$ (Proposition \ref{Uap}).
We look for $U^{a}$ under  the form
$$ U^{a}= \delta \sum_{j=0}^M \delta^j U^j$$
where each term $U^j$  must be   bounded from above by $\sim e^{ \sigma_{0}(j+1) t}$
 with $\sigma_{0}=  \mbox{Re } \sigma(k_{0})$.  They are  solutions
  of   linear equations with source terms.
   The crucial property that is  thus needed 
   is  an accurate $H^s$  estimate for the semi-group of $JL(k)$.
    Since $JL(k)$ is not sectorial some work is needed to establish it.
  Here,  we use the Laplace transform. To control the high time frequencies,  we  use
   energy estimates  based on  the Hamiltonian structure of the equation and the properties  of  $L(k)$. For the bounded
    frequencies, we  use abstract arguments  based on the knowledge
     of the spectrum of $JL(k)$.

\item The last step  is to construct the correction term $V$ which solves a nonlinear water-waves  equation.
This is the aim of Section~\ref{sectionnonlin}. \\
     The local well-posedness for the water waves equation has been much studied recently, we refer
for example to \cite{Wu}, \cite{L1}, \cite{Zhang},  \cite{Ch-Lind}, \cite{Lind}, \cite{Coutand}, \cite{Shatah}.
Here, we want to prove that  there exists a smooth solution of the water waves equation
in the vicinity of the approximate unstable solution which remains  smooth on a sufficiently
long interval of time. Moreover, we want a precise estimate between
the exact and the approximate solution in order to get the instability result.
 For this reason the approaches like \cite{L1} or \cite{Zhang} which are based on 
  the Nash-Moser's scheme are not suitable for our purpose.
 It was noticed in \cite{Iguchi} that when there is no surface tension, the water waves
  system  has a quasilinear structure once we have applied three space derivatives on it.
When there is surface tension, the main difficulty is that  the commutator between a space derivative
and the term coming from the surface tension contains too many derivatives to be considered
 as a remainder.
   This situation arises classically in the study of  high order wave equations for example
$$ 
\partial_{tt} u = -  |D|^{ 3 \over 2}\big( a(u) |D|^{3 \over 2 }u \big), \quad
a \geq a_{0}.
$$
Note that in 1-D the water wave problem in Lagrangian
coordinates is indeed very close to this situation (see \cite{Schneider-Wayne} for example).
For such high order wave equations,  a good candidate in order to get  $H^s$ type  estimates is to
apply powers of the operator $|D|^{ 3 \over 2}\big( a(u) |D|^{3 \over 2 }u \big)$
to the equation.  This is the approach chosen in the study of the water waves system
in \cite{Shatah}.

Here, to handle this difficulty we shall use a slightly different approach which is based on the use of
time derivatives: 
the energies that we use involve simultaneous   space and time derivatives of the unknown.
The basic block  in the construction of our energies  comes from the Hamiltonian structure
of the system, nevertheless, we also need to  add some lower order terms in order to cancel
some commutators.
This approach  yields  slightly simpler  commutators to compute and allows  to get a quasilinear form
of the system when there is surface tension.
Note that  our argument provides the well-posedness (without Nash-Moser's
scheme) of the water waves with
surface tension (a result already obtained  in \cite{Zhang} via
Nash-Moser's scheme).
A technical difficulty in this section is that we need $H^s$ estimates of terms like
$$ \big(G[\eta_{\eps}+ \eta^{a}+ \eta] - G[\eta_{\eps} + \eta^{a}] \Big)\cdot( \varphi_{\eps}+
\varphi^{a}).$$
This yields   because  of the solitary wave (since  $\eta_{\eps}$, $\varphi_{\eps}$
and their derivatives are not in $H^s(\mathbb{R}^2)$) that  we need to study the Dirichlet Neumann operator
in a non $H^s$ framework. The final argument to get the instability is the one of  \cite{Grenier}.

\end{itemize}

\section{The linearized water waves equation about the solitary wave 
\label{sectionprelim}
$(\eta_{\varepsilon},\varphi_{\varepsilon})$}
In this section, we  shall  study the structure of the linearized water waves equations about
the solitary wave.

In view of Lemma \ref{DN'}, in order to express the linear
equation arising  from the linearization of \eqref{eq11}, \eqref{eq22}
about the solitary wave $Q_{\eps}= (\eta_{\eps}, \varphi_{\eps})$,
it is convenient to use the notation 
$$
Z_{\varepsilon}\equiv Z[\eta_{\varepsilon},\varphi_{\varepsilon}],\quad
\nabla_{X}\varphi_{\varepsilon}-Z_{\varepsilon}\nabla_{X}\eta_{\varepsilon}\equiv
\left( \begin{array}{c} v_\varepsilon 
\\
0
\end{array}\right).
$$
Thus
$$
v_\varepsilon=\partial_{x}\varphi_{\varepsilon}-
\frac{G[\eta_{\varepsilon}]\varphi_{\varepsilon}+\partial_{x}\eta_{\varepsilon}
\partial_{x}\varphi_{\varepsilon}}{1+|\partial_{x}\eta_{\varepsilon}|^2}
\partial_{x}\eta_{\varepsilon}\,.
$$
We also introduce  the operator (of Laplace-Beltrami type) $P_{\varepsilon}$ defined by 
$$
P_{\varepsilon}\eta\equiv
\beta\nabla_{X}\cdot
\Big[
\frac{\nabla_{X}\eta}
{(1+|\partial_{x}\eta_{\varepsilon}|^2)^{\frac{1}{2}}}
-
\frac{(\nabla_{X}\eta_{\varepsilon}\cdot\nabla_{X}\eta)\nabla_{X}\eta_{\varepsilon}}
{(1+|\partial_{x}\eta_{\varepsilon}|^2)^{\frac{3}{2}}}
\Big].
$$
Since the solitary wave is one-dimensional, we observe that
$$
(\nabla_{X}\eta_{\varepsilon}\cdot\nabla_{X}\eta)\nabla_{X}\eta_{\varepsilon}
=\left( \begin{array}{c} 
(\partial_{x}\eta_{\varepsilon})^2\partial_{x}\eta
\\
0
\end{array}\right),
$$
therefore, 
the linearization of (\ref{eq11})-(\ref{eq22}) about
$(\eta_{\varepsilon},\varphi_{\varepsilon})$
reads
\begin{eqnarray*}
\partial_{t} \eta  & = & \partial_{x}\eta +G[\eta_{\varepsilon}]\varphi-G[\eta_{\varepsilon}](Z_{\varepsilon}\eta)-
\partial_{x}(v_\varepsilon\eta),
\\
\partial_{t } \varphi  & = & \partial_{x } \varphi +P_{\varepsilon}\eta
-v_{\varepsilon}\partial_{x}\varphi+Z_{\varepsilon}G[\eta_{\varepsilon}]\varphi
-Z_{\varepsilon}G[\eta_{\varepsilon}](Z_{\varepsilon}\eta)
-(\alpha+Z_{\varepsilon}\partial_{x}v_{\varepsilon})\eta\,.
\end{eqnarray*}
This linear equation has a canonical Hamiltonian structure and can be written as
\beq
\label{lin0}
\partial_t\left( \begin{array}{c} \eta 
\\
\varphi
\end{array}\right)
\\
=
J\Lambda\left( \begin{array}{c} \eta 
\\
\varphi
\end{array}\right),
\eeq
where
$$
J=\left( \begin{array}{cc} 0 & 1 
\\
-1 & 0
\end{array}\right)
$$
is skew-symmetric 
and
$$
\Lambda=\left( \begin{array}{cc} 
-P_{\varepsilon}+\alpha+Z_{\varepsilon}G[\eta_{\varepsilon}] \big(Z_{\varepsilon}\cdot\big)+Z_{\varepsilon}\partial_{x}v_\varepsilon
& (v_\varepsilon-1)\partial_{x}-Z_{\varepsilon}G[\eta_{\varepsilon}]
\\
-\partial_{x}((v_\varepsilon-1)\cdot)-G[\eta_{\varepsilon}]\big(Z_{\varepsilon}\cdot\big) & G[\eta_{\varepsilon}]
\end{array}\right)
$$
is a symmetric operator.
As noticed  by Lannes in \cite{L1}, we get a more tractable  expression of the linearized equation 
if  we introduce the change of unknowns
\beq
\label{changev}
V_1=\eta,\quad V_2=\varphi-Z_{\varepsilon}\eta.
\eeq
Indeed,  if   $(\eta,\varphi)$ solves the system  \eqref{lin0}, 
then $(V_1,V_2)$ solves the system 
\begin{eqnarray*}
\partial_{t}V_1 & = &
G[\eta_{\varepsilon}]V_2-\partial_{x}((v_\varepsilon-1)V_1),
\\
\partial_{t}V_2 & = &
P_{\varepsilon}V_1-(v_\varepsilon-1)\partial_{x}V_2-(\alpha+(v_\varepsilon-1)\partial_{x}Z_{\varepsilon})V_1.
\end{eqnarray*}
As noticed in \cite{Alazard-Metivier}, this change of unknown is linked with the  "good unknown"
of Alinhac \cite{Alinhac}.
The last system can be written in the canonical Hamiltonian form 
\beq
\label{linsource}
\partial_t\left( \begin{array}{c} V_1 
\\
V_2
\end{array}\right)
\\
=
JL\left( \begin{array}{c} V_1 
\\
V_2
\end{array}\right)
\,,
\eeq
where  the symmetric operator  $L$  is  defined as follows
$$
L=\left( \begin{array}{cc} 
-P_{\varepsilon}+\alpha+(v_\varepsilon-1)\partial_{x}Z_{\varepsilon} & (v_\varepsilon-1)\partial_{x} 
\\
-\partial_{x}((v_\varepsilon-1)\cdot) & G[\eta_{\varepsilon}]
\end{array}\right).
$$
Since $\eta_{\eps}$ does not depend on $y$, the study of \eqref{linsource} can
be simplified by using the Fourier transform in $y$.
Indeed, if for some $k\in\R$,
\begin{equation}\label{r1}
V_1(x,y)=e^{iky}W_1(x),\quad V_2(x,y)=e^{iky}W_2(x)
\end{equation}
then
$$
L\left( \begin{array}{c} V_1 
\\
V_2
\end{array}\right)
=
e^{iky}L(k)\left( \begin{array}{c} W_1 
\\
W_2
\end{array}\right),
$$
where the symmetric operator  $L(k)$ is defined as
$$
L(k)
=
\left( \begin{array}{cc} 
-P_{\varepsilon,k}+\alpha+(v_\varepsilon-1)\partial_{x}Z_{\varepsilon} & (v_\varepsilon-1)\partial_{x} 
\\
-\partial_{x}((v_\varepsilon-1)\cdot) & G_{\varepsilon,k}
\end{array}\right)
$$
with
$$
P_{\varepsilon,k}u= \beta\Big(\partial_{x}\big(
(1+(\partial_x\eta_{\varepsilon})^2)^{-\frac{3}{2}}\partial_{x}u\big)
-k^2(1+(\partial_x\eta_{\varepsilon})^2)^{-\frac{1}{2}}
u\Big)
$$
and $G_{\varepsilon,k}$ is such that
\beq
\label{Gepsk}
G[\eta_{\varepsilon}](f(x)\exp(iky))=\exp(iky)G_{\varepsilon,k}(f(x))\,.
\eeq
The fact that $G_{\varepsilon,k}=G_{\varepsilon,k}(x,D_x,k)$ is independent of $y$
follows directly from the definition of the Dirichlet-Neumann operator.
Note that  $-P_{\eps, k}+ \alpha$ is a positive operator: there exists $c>0$ independent of $k\in\R$ such that for every $u\in H^1(\R)$,
\beq
\label{posP}
\int_{\R}((-P_{\varepsilon,k}u+\alpha u)\bar{u}\geq c\big(|u|_{H^1(\R)}^2+(k^2+1)|u|_{L^2(\R)}^2\big).
\eeq
Note that, for $k\in\R$, we can also  define the
operator $\Lambda(k)$ associated to $\Lambda$  acting on functions depending on $x$ only as
$$
\Lambda \Big(e^{iky}\left( \begin{array}{c} V_1(x) 
\\
V_2(x)
\end{array}\right) \Big)
=
e^{iky}\Lambda(k)\left( \begin{array}{c} V_1(x) 
\\
V_2(x)
\end{array}
\right).
$$
We find   for  $\Lambda(k)$  the expression
$$
\Lambda(k)
=
\left( \begin{array}{cc} 
-P_{\varepsilon,k}+\alpha+Z_{\varepsilon}G_{\varepsilon,k}\big(Z_{\varepsilon}\cdot\big)+Z_{\varepsilon}
\partial_{x}v_{\varepsilon} & (v_\varepsilon-1)\partial_{x}- Z_{\varepsilon}G_{\varepsilon,k}
\\
-\partial_{x}((v_\varepsilon-1)\cdot) -G_{\varepsilon,k}\big(Z_{\varepsilon}\cdot\big)
& G_{\varepsilon,k}
\end{array}\right).
$$
Due to the change of unknown \eqref{changev}, 
we have the relation
$$ JL(k) = P^{-1} J\Lambda(k) P$$
where
$$
P=\left( \begin{array}{cc} 1 & 0 
\\
Z_{\varepsilon} & 1
\end{array}\right),\quad
Q=P^{-1}=\left( \begin{array}{cc} 1 & 0 
\\
-Z_{\varepsilon} & 1
\end{array}\right)\,.
$$
Since $P$ and $P^{-1}$ are just smooth matrices, 
$JL(k)$ and $J\Lambda(k)$ have thus  the same spectrum.
Moreover, we also have that 
$L(k)$
and $\Lambda(k)$ are linked through
\begin{equation}\label{vrazka}
L(k)=P^{\star}\Lambda(k)P,\quad \Lambda(k)=Q^{\star}L(k)Q
\end{equation}
therefore, it is also possible to relate  spectral properties
of $L(k)$ and $\Lambda(k)$ via the analysis of the corresponding quadratic forms.

In the next section,   we shall establish some useful properties on  the Dirichlet-Neumann operator
$G_{\eps, k}$ and on the spectrum of $L(k)$.


\section{Study  of the Dirichlet to  Neumann operator $G_{\eps, k }$ }
\label{sectionGepsk}
In this section, we shall study the basic properties of $G_{\eps,k}$. An elementary but very useful property
that we establish is   the monotonicity
property of  $G_{\eps, k}$ with respect to $k$. The proofs of most of the other properties are  inspired 
by similar considerations in \cite{Alvarez-Lannes,L1}, the point here being to track  the 
dependence with respect to $k$ in  the estimates.

Note that, because of the definition 
\eqref{Gepsk},  we  need to work with   complex valued functions.  For complex valued 
functions,  we shall denote the complex  $L^2$ scalar product as
\begin{equation}\label{reaaal}
(u, v) =  \int u(x) \overline{v(x)}\, dx.
\end{equation}
We shall use slightly abusively  the same notation for the  scalar product of $L^2 \times L^2$,  thus 
for $U=(U_1,U_2)$, $V=(V_1,V_2)$ in $L^2\times L^2$, we define 
$$ 
(U,V)= (U_{1}, V_{1}) + (U_{2}, V_{2}).
$$
Note  that we have 
\beq\label{J0}
{\rm Re}\,(JU, U)= 0, \quad \forall\, U\in L^2\times L^2.
\eeq
We shall first prove the following statement.
\begin{prop}
\label{DirNeum}
\begin{enumerate}
For every $\eps>0$, 
we have the following properties: 
\item[i)] $G_{\eps,k}$ is symmetric :
$$ 
\big( G_{\eps, k }u, v \big) = \big(u, G_{\eps, k}v\big),\quad 
\forall\, u, \, v \in H^{\frac{1}{2} }(\mathbb{R})\,.
$$
\item[ii)] If $|k_{1}|> |k_{2}|$, then $G_{\eps, k_{1}} -G_{\eps, k_{2}}$ is a positive definite operator :
$$ 
\big(G_{\eps, k_{1}}u, u \big) > \big( G_{\eps, k_{2}}u, u \big), 
\quad \forall\, u \in H^{\frac{1}{2}}(\mathbb{R}), \, u \neq 0.
$$
\item[iii)] There exist $c>0$ and $C>0$ such that for every $k \in \mathbb{R}$, we  have 
\begin{eqnarray}
\label{DNC} & & \big|  \big(G_{ \eps, k} u , v \big) \big|  \leq   C \Big| 
\frac{ \sqrt{D_{x}^2 + k^2 }} {\big(1 +\sqrt{D_{x}^2 + k^2 } \big)^{\frac{1}{2} }  }u \Big|_{L^2} \,
\Big|\frac{ \sqrt{D_{x}^2 + k^2 }} {\big(1 +\sqrt{D_{x}^2 + k^2 } \big)^{\frac{1}{2} }  }v \Big|_{L^2}
, \quad \forall\, u, \, v \in H^{ \frac{1}{2}}(\mathbb{R}),
\\\label{DNm} & & \big(G_{ \eps, k} u , u \big) \geq c  \Big| 
\frac{ \sqrt{ D_{x}^2 + k^2}}{
\big(  1 +  \sqrt{D_{x}^2 + k^2 }
\big)^{ \frac{1}{2} }   }  u \Big|_{L^2}^2\,, \quad \forall\,
u \in H^{ \frac{1}{2}} (\mathbb{R})\, .
\end{eqnarray}  \\  
\end{enumerate}
\end{prop}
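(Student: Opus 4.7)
The plan is to realize $G_{\eps,k}$ through a natural Hermitian form on the two-dimensional fluid cross-section $\Om_{\eps}:=\{(x,z)\in\R^2:\ -1<z<\eta_{\eps}(x)\}$. Substituting $\phi=\psi(x,z)e^{iky}$ in the three-dimensional harmonic extension problem reduces it to $(\pa_x^2+\pa_z^2-k^2)\psi=0$ in $\Om_{\eps}$, $\psi(x,\eta_{\eps}(x))=u(x)$, $\pa_z\psi(x,-1)=0$, and then $G_{\eps,k}u=(\pa_z\psi-\pa_x\eta_{\eps}\,\pa_x\psi)|_{z=\eta_{\eps}(x)}$. Writing $\psi_u$ for this extension and setting
$$
B_k(u,v):=\int_{\Om_{\eps}}\bigl(\pa_x\psi_u\,\overline{\pa_x\psi_v}+\pa_z\psi_u\,\overline{\pa_z\psi_v}+k^2\psi_u\,\overline{\psi_v}\bigr)\,dx\,dz,
$$
a direct application of Green's identity (using the interior PDE and the bottom Neumann condition) gives $B_k(u,v)=(u,G_{\eps,k}v)_{L^2(\R)}$; the surface element $\sqrt{1+(\pa_x\eta_{\eps})^2}$ cancels against the expression of the outward normal on the free surface. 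Since $B_k$ is manifestly Hermitian, (i) follows at once.

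For (ii) I would use the Dirichlet minimization principle $B_k(u,u)=\min\{\int_{\Om_{\eps}}(|\nabla\psi|^2+k^2|\psi|^2):\ \psi|_{z=\eta_{\eps}}=u\}$. If $|k_1|>|k_2|$ and $u\neq 0$, the minimizer $\psi^{(1)}$ of $B_{k_1}$ has nonzero trace $u$ and hence $\psi^{(1)}\not\equiv 0$ in $\Om_{\eps}$. Using $\psi^{(1)}$ as a competitor for $B_{k_2}$ yields
$$
B_{k_2}(u,u)\le\int_{\Om_{\eps}}(|\nabla\psi^{(1)}|^2+k_2^2|\psi^{(1)}|^2)=B_{k_1}(u,u)-(k_1^2-k_2^2)\|\psi^{(1)}\|_{L^2(\Om_{\eps})}^2<B_{k_1}(u,u),
$$
which is exactly (ii).

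For (iii) I would flatten $\Om_{\eps}$ by setting $s=(z-\eta_{\eps}(x))/(1+\eta_{\eps}(x))\in(-1,0)$ and $\tilde\psi(x,s)=\psi(x,z(x,s))$. Using only $|\eta_{\eps}|+|\pa_x\eta_{\eps}|\le C\eps$ and $1+\eta_{\eps}\ge c>0$ from Theorem~\ref{theoOS}, a direct computation shows that, for $\eps$ small, $B_k(u,u)$ is equivalent, with $\eps$- and $k$-independent constants, to the flat-strip form
$$
\tilde B_k(u,u):=\int_{\R\times(-1,0)}\bigl(|\pa_x\tilde\psi|^2+|\pa_s\tilde\psi|^2+k^2|\tilde\psi|^2\bigr)\,dx\,ds,
$$
minimized under $\tilde\psi(x,0)=u$, $\pa_s\tilde\psi(x,-1)=0$. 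Fourier transforming in $x$ and solving the resulting ODE in $s$ explicitly yields
$$
\min\tilde B_k(u,u)=\int_{\R}\sqrt{\xi^2+k^2}\,\tanh\sqrt{\xi^2+k^2}\,|\hat u(\xi)|^2\,d\xi.
$$
Since $\lambda\tanh\lambda\asymp\lambda^2/(1+\lambda)$ for $\lambda\ge 0$, the right-hand side is comparable to $\|A_k u\|_{L^2}^2$ with $A_k=\sqrt{D_x^2+k^2}/(1+\sqrt{D_x^2+k^2})^{1/2}$. This gives the coercive lower bound in (iii); the upper bound follows by taking the pullback of the flat-strip minimizer as a competitor in the $B_k$-minimization and polarizing through the symmetry of (i).

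The main obstacle I anticipate is keeping the equivalence $B_k\asymp\tilde B_k$ truly uniform in $k$ down to $k=0$. Straightening produces cross terms of the schematic form $(s+1)\pa_x\eta_{\eps}/(1+\eta_{\eps})\cdot\pa_s\tilde\psi$ inside $\pa_x\psi$; when $k$ is small the mass term $k^2|\tilde\psi|^2$ offers no absorption, so these cross contributions must be controlled purely spatially, using the smallness of $\pa_x\eta_{\eps}$ and Cauchy--Schwarz against the gradient energy of $\tilde\psi$. This is precisely the $k$-uniform tracking the authors announce as the novelty of Section~\ref{sectionGepsk} relative to \cite{Alvarez-Lannes,L1}.
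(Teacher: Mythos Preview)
Your proofs of (i) and (ii) are the same as the paper's: Green's identity on the physical cross-section gives $(G_{\eps,k}u,v)=B_k(u,v)$, and then symmetry of $B_k$ and the variational characterization yield (i) and (ii) exactly as in the paper.

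For (iii) your route diverges from the paper's, and it is in fact cleaner for this particular statement. You argue that after flattening the strip the transformed quadratic form is \emph{pointwise} comparable to $|\nabla_{x,s}\tilde\psi|^2+k^2|\tilde\psi|^2$, with constants depending only on $\|\eta_{\eps}\|_{W^{1,\infty}}$ and the lower bound on $1+\eta_{\eps}$; since both $B_k$ and $\tilde B_k$ are minimizations over the same class of competitors, the minimum values are comparable uniformly in $k$, and the flat problem is computed explicitly. One remark: the ``polarization'' you invoke to pass from the diagonal bound $B_k(u,u)\le C\|A_ku\|_{L^2}^2$ to the off-diagonal estimate \eqref{DNC} is really Cauchy--Schwarz for the positive Hermitian form $B_k$, i.e.\ $|B_k(u,v)|^2\le B_k(u,u)B_k(v,v)$. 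Also, your closing worry is unnecessary: the pointwise ellipticity of the transformed coefficients is a purely algebraic fact independent of $k$ (the $2\times 2$ matrix has determinant $(1+\eta_{\eps})^{-2}>0$), so no smallness of $\pa_x\eta_{\eps}$ and no absorption into the $k^2$ mass term are needed.

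The paper instead splits $\psi_k^u=u_k^H+u_k^r$ into the flat harmonic extension and a correction, proves two auxiliary lemmas (the analogues of your explicit ODE computation, but kept as separate $L^2$ estimates on $u_k^H$ and $u_k^r$), and uses these plus Cauchy--Schwarz for \eqref{DNC}; for \eqref{DNm} it uses a direct trace inequality $|\hat u(\xi)|^2\le 2\int|\hat\psi||\pa_z\hat\psi|\,dz$ rather than comparing minima. Your argument is more economical here; the trade-off is that the paper's two lemmas are reused repeatedly in the remainder of Section~\ref{sectionGepsk} (commutator estimates, continuity and analyticity in $k$, estimates on Fr\'echet derivatives), so the extra machinery is not wasted.
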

Note that the estimates of the above proposition  have a sharp dependence in $k$.
In particular,  \eqref{DNC}, \eqref{DNm} are uniform in $k$, $k \in \mathbb{R}$.
We do not care on the dependence of these estimates in $\eps.$
\begin{proof}[Proof of Proposition~\ref{DirNeum}]
We first prove $i).$ We recall that 
by definition, we have 
$$
G_{\varepsilon,k}(u)(x)=
\sqrt{1+(\partial_x\eta_{\varepsilon}(x))^2}(\nabla_{x,z} \phi_{k}^u(x,\eta_{\varepsilon}(x) )\cdot n(x)),
$$
where $\phi_k^u(x,z)$ is the solution of the  elliptic problem
\beq
\label{H1}
(\partial_x^2-k^2+\partial_z^2)f =0,\quad -1<z<\eta_{\varepsilon}(x),\,\, x\in\R \quad  \partial_{z} f(x,-1)=0
\eeq
such that 
\beq
\label{Hb}
\quad  f (x,\eta_{\varepsilon}(x))=u(x),\quad x\in\R\,.
\eeq
The identity i) will be a simple consequence  of the Green formula.
Indeed,  let us set $D=\{(x,z):-1<z<\eta_{\varepsilon}(x)\}$ and 
$\Sigma=\{z=\eta_{\varepsilon}(x)\}\cup \{z=-1\}$
and consider  $\phi_{k}^u$, $\phi_{k}^v$  the solutions of \eqref{H1}, \eqref{Hb}
with respective traces $u$ and $v$ on the upper boundary.
Then, by definition,  we have
$$  (G_{\eps, k} u, v ) =  \int_{\mathbb{R}} G_{\eps, k } u(x) \, \overline{v(x)} \, dx= 
\int_{\Sigma} \frac{\partial \phi_{k}^u }{ \partial n }(\tau) \overline{\phi_{k}^v(\tau)} 
d \Sigma (\tau),
$$
where $d \Sigma (\tau) $ is the volume element of the surface $z= \eta_{\eps}(x)$.
Consequently, since $\partial_{z} \phi_{k}^u(x,-1)=\partial_{z} \phi_{k}^v(x,-1)=0$,
the Green formula and the equations satisfied by $\phi_{k}^u$, $\phi_{k}^v$  yield
\beq\label{Guv}
(G_{\eps, k} u, v ) = 
\int _{D}\Big( \nabla_{x,z} \phi_{k}^u \cdot \overline{\nabla_{x,z} \phi_{k}^v}+ k^2 \phi_{k}^u 
\overline{\phi_{k}^v} \Big) dxdz  = 
(u, G_{\eps, k} v).
\eeq
This proves~i).
\bigskip

Let us now prove ii).  We first 
observe  that if $u$ is real then $G_{\varepsilon,k}u$ is also real.
Therefore, if $u=u_1+iu_2$ with real valued $u_1$ and $u_2$, we have that
$$
(G_{\varepsilon,k_1}u, u ) =(G_{\varepsilon,k_1}u_1, u_1 )+(G_{\varepsilon,k_1}u_2, u_2 ).
$$ 
Consequently, we can assume that $u$ is real valued for the proof.
Thanks to  \eqref{Guv}, we have
\begin{eqnarray*}
(G_{\varepsilon,k_1}u, u ) 
&=  & \int_{D}\Big |\nabla_{x,z} \phi_{k_{1}}^u |^2 + k_{1}^2 |\phi_{k_{1}}^u|^2 \Big) dxdz \\
& = & \int_{D}\Big(  |\nabla_{x,z} \phi_{k_{1}}^u |^2 + k_{2}^2  |\phi_{k_{1}}^u|^2 \Big) dx dz
+ (k_{1}^2- k_{2}^2) \int_{D} |\phi_{k_{1}}^u|^2 \, dx dz \\
& > &  \int_{D}\Big(  |\nabla_{x,z} \phi_{k_{1}}^u |^2 + k_{2}^2  |\phi_{k_{1}}^u|^2 \Big) dx dz
\end{eqnarray*}
since $|k_{1}|>|k_{2}|$ and $u \neq 0$.
Next, since 
$\phi_{k_1}^u$ and $\phi_{k_2}^u$ verify  the same boundary conditions,
we have thanks to the variational characterization of $\phi_{k_{2}}^u$  that
$$
\int_{D}|\nabla_{x,z}\phi_{k_1}^u|^2+k_2^2\int_{D} |\phi_{k_1}^u|^2
\geq \int_{D}|\nabla_{x,z}\phi_{k_2}^u|^2+k_2^2\int_{D}|\phi_{k_2}^u|^2
\,.
$$
Consequently, by using again \eqref{Guv}, we get
$$ (G_{\varepsilon,k_1}u, u )  >  \int_{D}|\nabla_{x,z}\phi_{k_2}^u|^2+k_2^2\int_{D} |\phi_{k_2}^u|^2
= (G_{\eps, k_{2}}u, u ).$$
This proves ii).

\bigskip

We can now prove iii). 
Note that here, since $\eta_{\eps}$ is smooth and fixed, we do not care on the way  the estimates depend on
the regularity of $\eta_{\eps}.$ 

Next, to prove \eqref{DNm}, \eqref{DNC}, it is convenient to rewrite 
the elliptic problem \eqref{H1} in a flat domain.
We can define implicitly a function $\psi_{k}^u$ on the flat domain
$\mathcal{S}= \mathbb{R} \times (-1, 0 )$ by 
$$
\phi^u_{k}(x,z)= \psi_{k}^u
\Big(x,\frac{z-\eta_{\varepsilon}(x)}{1+\eta_{\varepsilon}(x)}\Big), 
\quad x\in\R,\quad -1<z<\eta_{\varepsilon}(x).
$$
Since we have  by the chain rule
\beq\label{grad}
\nabla \phi_{k} ^u(x,z) = M(x,z) \nabla \psi_{k}^u(x, m(x,z)), \eeq
where
$$ m(x,z)=   \frac{z-\eta_{\varepsilon}(x)}{1+\eta_{\varepsilon}(x)}, 
\quad M(x,z)= \left( \begin{array}{cc}  1 &  \partial_{x} m \\  0 & \partial_{z} m
\end{array} \right),$$
we also get by using that  the divergence is the  $L^2$ adjoint of the gradient that
for    a vector field  $u(x,z)$ on $D$ such that
$$  u(x,z) = v(\Phi(x,z)), \quad \Phi(x,z)= (x, m(x,z))$$
we have
$$ \nabla \cdot u (x,z) =  \mbox{det}\,( D\Phi(x,z) )\,
\nabla_{ Y} \cdot \Big(\mbox{det}( D \Phi^{-1}(Y) ) M\big( \Phi^{-1}(Y)\big)^* v(Y) \Big)_{/Y= \Phi(x,z)}.$$
This allows to get that
$$ \Delta \phi_{k}^u= \nabla \cdot \nabla \phi_{k}^u =  \Delta_{g} \psi_{k}^u$$
where the operator $\Delta_{g}$  defined as 
\begin{equation}\label{beltrami}
\Delta_{g}(\psi)=(\det(g))^{-1/2}{\rm div}\Big((\det(g))^{1/2}g^{-1}
\nabla \psi
\Big)
\end{equation}
is the Laplace Beltrami operator  associated to the metric $g$ which is defined
through its inverse $g^{-1}$  by
$$
g^{-1}(x,z)\equiv
\left( \begin{array}{cc} 1 &  -\frac{\partial_x\eta_{\eps}(x)(z+1)}{1+\eta_{\eps}(x)}
\\
-\frac{\partial_x\eta_{\eps}(x)(z+1)}{1+\eta_{\eps}(x)} &
\frac{1+(z+1)^2(\partial_x\eta_{\eps}(x))^2}{(1+\eta_{\eps}(x))^2}
\end{array}\right) = M \big(\Phi^{-1}(x,z)\big)^* M(\Phi^{-1}(x,z)\big), \quad (x, z) \in \mathcal{S}.
$$
Consequently, 
if $\phi_{k}^u$ solves
$$
(\partial_x^2-k^2+\partial_z^2)\phi=0,\quad x\in\R,\quad -1<z<\eta_{\varepsilon}(x),
$$
with boundary conditions
$\phi(x,\eta_{\varepsilon}(x))=u(x)$, $\partial_{z}\phi(x,-1)=0$ then $\psi_{k}^u$, solves
\begin{equation}
\label{metric}
(-\Delta_g+k^2)\psi=0, \quad (x, z)\in \mathcal{S}\qquad
\partial_{z}\psi(x,-1)=0,\quad \psi(x,0)=u(x)\,,
\end{equation}
where $\mathcal{S}$ is the strip $\mathcal{S}= \mathbb{R} \times (-1, 0).$
By using \eqref{grad}, the map $G_{\varepsilon,k}$ can  be expressed in terms of  $\psi_{k}^u$    as 
\begin{equation}\label{pechka}
G_{\varepsilon,k}(u)(x)=-\partial_x\eta_{\varepsilon}(x)\partial_x\psi_{k}^u(x,0)+
\frac{1+(\partial_x\eta_{\varepsilon}(x))^2}{1+\eta_{\varepsilon}(x)}\partial_z\psi_{k}^u(x,0)\,.
\end{equation}
Therefore, using the Green formula together with (\ref{beltrami}) and 
the equation solved by $\psi^u_{k}  $, we obtain that for $u,v\in H^{1\over 2 }(\R)$,
\beq\label{DNnew}
(G_{\varepsilon,k}(u),v)=
\int _{\mathcal{S}}  \Big( g^{-1} \nabla_{x,z} \psi^u_{k}  
\cdot \overline{\nabla_{x,z} {\bf v}} + k^2 \psi_{k}^u\overline{\bf{v}} \Big)
(\mbox{det }g)^{ \frac{1}{2}} dx dz,
\eeq 
where ${\bf v}$  can be any $H^1$ function on $\mathcal{S}$  such that ${\bf v}(x,0)=v(x)$.
\bigskip

To estimate the solution $\psi_{k}^u$ of  \eqref{metric}, we shall use the decomposition
\beq
\label{dec}
\psi_{k}^u= u^H_{k} +  u_{k}^r,
\eeq
where $u^H_{k}$ is the  solution of
\beq
\label{uH}\big( - \Delta_{x,z} + k^2 \big) u^H_{k} = 0, \quad (x,z) \in \mathcal{S}, \quad
\partial_{z} u^H_{k}(x,-1)=0, \quad u^H_{k}(x,0)= u(x),
\eeq
$\mathcal{S}$ being again 
the strip $\mathbb{R}\times (-1, 0),$ and thus the remainder  $u_{k}^r$ is the  solution
of the  elliptic problem with homogeneous boundary condition
\beq
\label{vH}\big( - \Delta_{g} + k^2 \big) u_{k}^r =\big( \Delta_{g} - k^2\big)u^H_{k}, \quad (x,z) \in \mathcal{S}, \quad
\partial_{z} u_{k}^r(x,-1)=0, \quad u_{k}^r(x,0)= 0.
\eeq

By solving an ODE,
one can write down explicitly the expression of the Fourier transform in $x$,   $\hat{u}_{k}^H$ 
of $u_{k}^H$. We have: 
\beq\label{uHhat}
\hat{u}^H_{k}(\xi, z)= \frac{ \cosh \big( \sqrt{\xi^2 + k^2} (z+1) \big) }{ \cosh \sqrt{\xi^2+ k^2} } 
\hat{u}(\xi), \quad \xi \in \mathbb{R}, \, z \in (-1, 0).
\eeq 
The estimate of  $\psi_{k}^u $  will be a consequence of the two following lemmas.
\begin{lem}\label{uHlem}
There exists $C>0$ such that for every $k\in\R$, every $s\in\R$, every $u\in H^{\infty}(\R)$,
\begin{eqnarray}
& &
\label{uH1}  \| \Lambda^s u^H_{k}  \|_{L^2 (\mathcal{S})} \leq C  | \Lambda^s 
\Lambda_{k}^{-\frac{1 }{ 2}} u |_{L^2},\\
\label{uH2} & & \|\Lambda^s\partial_{z} u^H_{k} \|_{L^2(\mathcal{S})} \leq C
| \Lambda^s{ \sqrt{D_{x}^2 + k^2}    \Lambda_{k}^{- \frac{ 1 }{ 2  } } }  u |_{L^2}\,,
\end{eqnarray} 
where $\Lambda$ and $\Lambda_{k}$ are the Fourier multipliers
$$ \Lambda = (1+  D_{x}^2)^{\frac{1}{2} }, \quad \Lambda_{k}= ( 1 + k^2 + D_{x}^2)^{\frac{1}{2}}. $$
\end{lem}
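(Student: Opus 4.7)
The plan is to work entirely in Fourier variables in $x$. Since $\Lambda = (1+D_x^2)^{1/2}$ is a Fourier multiplier acting only in $x$, and the explicit formula \eqref{uHhat} depends only on $\sqrt{\xi^2+k^2}$, both inequalities reduce via Plancherel in $x$ and Fubini to pointwise bounds on multipliers obtained by integrating out $z\in(-1,0)$.

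Concretely, set $\mu = \mu(\xi,k) := \sqrt{\xi^2+k^2}$ and write
$$
\|\Lambda^s u^H_k\|_{L^2(\mathcal{S})}^2 = \int_{\mathbb{R}} (1+\xi^2)^s\, |\hat u(\xi)|^2\, m_1(\mu)\, d\xi,
\qquad m_1(\mu) := \int_{-1}^0 \frac{\cosh^2(\mu(z+1))}{\cosh^2\mu}\, dz,
$$
and likewise $\|\Lambda^s\partial_z u^H_k\|_{L^2(\mathcal{S})}^2 = \int (1+\xi^2)^s |\hat u(\xi)|^2\, m_2(\mu)\, d\xi$ with
$$
m_2(\mu) := \mu^2\int_{-1}^0 \frac{\sinh^2(\mu(z+1))}{\cosh^2\mu}\, dz.
$$
Using the elementary identities $\int_0^1 \cosh^2(\mu s)\,ds = \tfrac{1}{2} + \tfrac{\sinh(2\mu)}{4\mu}$ and $\int_0^1 \sinh^2(\mu s)\,ds = \tfrac{\sinh(2\mu)}{4\mu} - \tfrac{1}{2}$, the multipliers simplify to
$$
m_1(\mu) = \frac{\tanh\mu}{2\mu} + \frac{1}{2\cosh^2\mu},\qquad
m_2(\mu) = \frac{\mu\tanh\mu}{2} - \frac{\mu^2}{2\cosh^2\mu}.
$$

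The lemma then reduces to the two uniform bounds
$$
m_1(\mu) \leq \frac{C}{\sqrt{1+\mu^2}}, \qquad m_2(\mu) \leq \frac{C\,\mu^2}{\sqrt{1+\mu^2}}, \qquad \mu\geq 0,
$$
since $\Lambda_k^{-1}$ corresponds exactly to the multiplier $1/\sqrt{1+\mu^2}$, and $\sqrt{D_x^2+k^2}$ to $\mu$. Both bounds are verified by elementary asymptotics of the elementary functions involved: as $\mu\to\infty$ one has $\tanh\mu\to 1$ and $1/\cosh^2\mu$ is exponentially small, giving $m_1(\mu)\sim \tfrac{1}{2\mu}$ and $m_2(\mu)\sim \tfrac{\mu}{2}$, which match $\mu^{-1}$ and $\mu$ respectively; as $\mu\to 0$, Taylor expansion gives $m_1(\mu)\to 1$ and, after cancellation, $m_2(\mu) = \mu^4/3 + O(\mu^6)$, so both remain bounded (with $m_2$ vanishing to order $\mu^4$, better than needed).

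There is no serious obstacle here; the only slightly delicate point is the uniformity in $k$ as $(\xi,k)\to 0$, and this is precisely what dictates the non-standard weight $\Lambda_k^{-1/2}$ in the statement: the Dirichlet--Neumann extension is smoothing of order $1/2$ in $\Lambda_k$ rather than in the usual inhomogeneous $\Lambda$, and this is captured exactly by the $\mu\to 0$ behaviour of $m_1$ and $m_2$. Once the multiplier bounds above are established, inserting them under the integral in $\xi$ and recognising $(1+\xi^2)^s\cdot(1+\mu^2)^{-1}$ as the symbol of $\Lambda^{2s}\Lambda_k^{-2}$ (and similarly with the extra factor $\mu^2$) yields \eqref{uH1} and \eqref{uH2}.
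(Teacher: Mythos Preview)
Your proof is correct and follows essentially the same approach as the paper: reduce via Plancherel in $x$ to a pointwise multiplier bound in $\mu=\sqrt{\xi^2+k^2}$, then control the $z$-integrals of $\cosh^2/\cosh^2$ and $\sinh^2/\cosh^2$. The paper is slightly more economical---it first notes that $s=0$ suffices (since $\Lambda^s$ commutes through) and then bounds both integrals by $C/(1+\mu)$ via the change of variable $z'=(1+z)\mu$ rather than evaluating them explicitly---but your closed-form computation of $m_1,m_2$ and asymptotic check is entirely equivalent.
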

\begin{proof}[Proof of Lemma \ref{uHlem}]
First, we observe that it suffices to consider the case $s=0$. Next, we note that there exists $C>0$ such that
for every $\omega\geq 0$, we have the inequalities
\begin{equation}\label{uH1bis} 
\int_{-1}^0 \frac{ \cosh^2(\omega (z+1) )  }{ \cosh^2(\omega )  }  \, dz\leq \frac{C}{1+\omega},
\quad
\int_{-1}^0 \frac{ \sinh^2(\omega (z+1) )  }{ \cosh^2(\omega )  }  \, dz\leq 
\frac{C}{1+\omega}\,.
\end{equation}
Indeed, inequalities (\ref{uH1bis}) can be easily obtained for instance by performing the  change of variable
$z'=(1+z)\omega$. Now (\ref{uH1}) and (\ref{uH2}) follow from (\ref{uH1bis}) with $\omega=\sqrt{ \xi^2+ k^2 }$
via an application of the Parseval identity. This completes the proof of Lemma~\ref{uHlem}.
\end{proof}
Let us now give the needed estimates for $u^r_{k}$.
\begin{lem}\label{vHlem}
Let us fix an integer $s\geq 0$. 
There exists $C>0$ such that for every $k\in\R$,  every $u\in H^{\infty}(\R)$,
the solution of \eqref{vH} satisfies the estimate
\beq\label{estvH}
\|\Lambda^s \nabla_{x,z} u^r_{k}\|_{L^2(\mathcal{S})}^2 + k^2\| \Lambda^s u^r_{k} \|_{L^2(\mathcal{S})}^2  
\leq C \big| \Lambda^s
\frac{ \sqrt{D_{x}^2 + k^2 } }{ \big(1 +\sqrt{D_{x}^2 + k^2 } \big)^{\frac{1}{2} }  }   u |_{L^2}^2.
\eeq
\end{lem}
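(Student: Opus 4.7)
The plan is to derive \eqref{estvH} by a standard energy estimate for the weak formulation of \eqref{vH}, combined with Lemma \ref{uHlem}, treating the case $s=0$ first and then inducting on $s$.

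For $s=0$, I would begin by rewriting the source term in a divergence form. Setting $A \equiv (\det g)^{1/2} g^{-1}$ and using that $u^H_k$ solves the flat equation $-\Delta_{x,z} u^H_k + k^2 u^H_k = 0$, one can check that
\[
(\det g)^{1/2}(\Delta_g - k^2) u^H_k = \nabla_{x,z} \cdot \bigl((A - \mathrm{Id})\nabla_{x,z} u^H_k\bigr) - k^2 \bigl((\det g)^{1/2} - 1\bigr)\, u^H_k.
\]
Testing the equation for $u^r_k$ against $\overline{u^r_k}$ and integrating by parts gives
\[
\int_{\mathcal{S}} A \nabla_{x,z} u^r_k \cdot \overline{\nabla_{x,z} u^r_k} + k^2 \int_{\mathcal{S}} (\det g)^{1/2} |u^r_k|^2 = -\int_{\mathcal{S}} (A-\mathrm{Id})\nabla u^H_k \cdot \overline{\nabla u^r_k} - k^2 \int_{\mathcal{S}} \bigl((\det g)^{1/2}-1\bigr) u^H_k \overline{u^r_k}.
\]
All boundary terms vanish: at $z=0$ because $u^r_k$ has homogeneous Dirichlet data there, and at $z=-1$ because the off-diagonal entry $g^{-1}_{12}$ carries a factor $(z+1)$ while both $u^r_k$ and $u^H_k$ satisfy the Neumann condition.

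Since $\eta_\varepsilon$ and its derivatives are uniformly bounded, $A$ is uniformly elliptic and $A-\mathrm{Id}$, $(\det g)^{1/2}-1$ are uniformly bounded. Cauchy--Schwarz with absorption of $\|\nabla u^r_k\|_{L^2(\mathcal{S})}^2$ and $k^2\|u^r_k\|_{L^2(\mathcal{S})}^2$ from the right-hand side into the left yields
\[
\|\nabla_{x,z} u^r_k\|_{L^2(\mathcal{S})}^2 + k^2 \|u^r_k\|_{L^2(\mathcal{S})}^2 \lesssim \|\nabla_{x,z} u^H_k\|_{L^2(\mathcal{S})}^2 + k^2 \|u^H_k\|_{L^2(\mathcal{S})}^2.
\]
Now Lemma \ref{uHlem} (with $s=0$) controls $\|\partial_z u^H_k\|_{L^2}$ by exactly the right quantity, while the Fourier representation \eqref{uHhat} and a bound analogous to \eqref{uH1bis} treat $\|\partial_x u^H_k\|_{L^2}$ and $k\|u^H_k\|_{L^2}$ using $|\xi|, |k| \leq \sqrt{\xi^2+k^2}$. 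This establishes the case $s=0$.

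For $s\geq 1$, apply the tangential Fourier multiplier $\Lambda^s$ to the equation. Since $\Lambda^s$ acts in $x$ only, it commutes with $\partial_z$ and with multiplication by $k^2$; the only commutators that arise come from the metric coefficients, which are smooth bounded functions of $x$ (through $\eta_\varepsilon$ and $\partial_x\eta_\varepsilon$) and polynomial in $z+1$. Each such commutator $[\Lambda^s, a(x,z)]$ is of order $s-1$ in $x$ and bounded uniformly in $k$. One then runs the same energy estimate on $\Lambda^s u^r_k$, estimating the commutator terms by lower-order norms of $u^r_k$ handled inductively. The right-hand side is controlled by $\|\Lambda^s \nabla u^H_k\|_{L^2(\mathcal{S})}^2 + k^2 \|\Lambda^s u^H_k\|_{L^2(\mathcal{S})}^2$, to which Lemma \ref{uHlem} applies directly.

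The main obstacle I anticipate is keeping every constant genuinely uniform in $k \in \mathbb{R}$: the ellipticity of $A$, the commutator bounds, and the step matching the flat bounds from Lemma \ref{uHlem} to the stated weight $\sqrt{D_x^2+k^2}/(1+\sqrt{D_x^2+k^2})^{1/2}$ must all be traced carefully, especially since the weight is degenerate at $k=\xi=0$ and quasi-homogeneous for large $(\xi,k)$. The zeroth-order piece $k^2((\det g)^{1/2}-1) u^H_k$ has to be grouped with the $k^2 u^r_k$ term (not with the gradient term) to keep the $k^2$ weight consistent throughout the estimate.
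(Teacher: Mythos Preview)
Your proposal is correct and follows essentially the same approach as the paper: the paper's proof consists precisely of the energy estimate (your inequality $\|\nabla_{x,z} u^r_k\|^2 + k^2\|u^r_k\|^2 \lesssim \|\nabla_{x,z} u^H_k\|^2 + k^2\|u^H_k\|^2$, which they call ``the standard energy estimate'') followed by an appeal to Lemma~\ref{uHlem}, with the higher-$s$ case handled by ``the standard argument for propagation of higher regularity in linear elliptic equations,'' i.e., your tangential-commutator induction. Your explicit divergence-form rewriting of the source and the care about boundary terms simply unpack what the paper leaves implicit; the uniformity in $k$ that worries you is automatic because the ellipticity constants of $A$ depend only on $\eta_\varepsilon$, and the weight emerges directly from Lemma~\ref{uHlem}.
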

\begin{rem}
By a standard density argument, the statement of Lemma~\ref{uHlem} and  Lemma~\ref{vHlem} may be extended to
functional classes such that the right hand-side of the corresponding inequalities makes sense.
\end{rem}
\begin{proof}[Proof of Lemma~\ref{vHlem}]
We have the following  estimates
\begin{equation}\label{pechka2}
\|\Lambda^{s} \nabla_{x,z} u^r_{k} \|_{L^2(\mathcal{S})}^2 + 
k^2\| \Lambda^s u^r_{k}\|_{L^2(\mathcal{S})}^2  \leq C \big(  \|\Lambda^s \nabla_{x,z} u^H_{k}\|_{L^2}^2  + 
k^2 \|\Lambda^s u^H_{k} \|_{L^2 }^2  \big).
\end{equation}
Indeed,   (\ref{pechka2}) for $s=0$, is just   the standard energy estimate:  it suffices to take the $L^2$ scalar product of equation 
\eqref{vH} with $ u^r_{k}$ and to perform integration by parts by using that
$u^r$ satisfies homogeneous boundary conditions. For $s\geq 1$ one may apply the standard argument for propagation 
of higher regularity in linear elliptic equations.
Using (\ref{pechka2}) and Lemma~\ref{uHlem} yield (\ref{estvH}).
This completes the proof of Lemma~\ref{vHlem}. 
\end{proof}

We are  now  in position to get \eqref{DNC}. Thanks to \eqref{DNnew}, we have
\beq\label{Guu}
\big( G_{\eps, k } u, v \big) =  
\int _{\mathcal{S}}  \Big( g^{-1} \nabla_{x,z} \psi^u_{k}  \cdot \overline{\nabla_{x,z} \psi^v_{k}} + 
k^2 \psi_{k}^u \, \overline{ \psi_{k}^v }  \Big)(\mbox{det }g)^{ \frac{1}{2}} dx dz.
\eeq  
Consequently, \eqref{DNC} follows by using the Cauchy-Schwarz inequality and   \eqref{dec}, \eqref{uH1} (with $s=0, \, 1$), \eqref{uH2} (with $s=0$)
and \eqref{estvH} (with $s=0$).

As in \cite{Alvarez-Lannes}, \eqref{DNm} will be a consequence of the trace formula.
Let us choose $\chi(z)$ a smooth compactly supported  cut-off function such that $\chi(0)=1$ and
$\chi$ is supported in $(-1, 1)$.  We shall consider $\psi(x,z)= \chi(z) \psi_{k}^u(x,z)$. 
Note that since $\chi$ does not depend on $x$, we have $ \hat{\psi}(\xi,z)= \chi(z) \hat{\psi}_{k}^u(x,z).$
We can write
\begin{eqnarray*}
| \hat{u}(\xi) |^2 = 
|\hat{\psi}(\xi, 0 ) |^2 
& \leq &
2 \int_{-1}^0  |\hat{\psi}(\xi,z)| \, |\partial_{z} \hat{\psi}(\xi,z) |\, dz \\
& \leq & C  \int_{-1}^0  \Big( |\hat{\psi}_{k}^u(\xi,z)|^2 + | \partial_{z} \hat{\psi}_{k}^u(\xi,z) |
|\hat{\psi}_{k}^u(\xi,z)|\Big) dz.
\end{eqnarray*}
This yields
$$ 
\frac{\xi^2 + k^2 }{ 1 + \sqrt{\xi^2 + k^2} } | \hat{u}(\xi) |^2 
\leq C 
\int_{-1}^0  \Big( (\xi^2 + k^2 )   |\hat{\psi}_{k}^u(\xi,z)|^2 +  
|\partial_{z} \hat{\psi}_{k}^u(\xi,z)|^2 \Big) dz.
$$
Consequently, we can integrate in $\xi$, use the Parseval identity and \eqref{Guu} to get
$$
\Big| \frac{ \sqrt{ D_{x}^2 + k^2} }{
\big( 1 +  \sqrt{D_{x}^2 + k^2 }  \big)^{ \frac{1}{2} }  }  u \Big|_{L^2}^2
\leq C \int _{\mathcal{S}}  \Big( g^{-1} \nabla_{x,z} \psi_{k}^u  \cdot \overline{\nabla_{x,z} \psi_{k}^u} 
+ k^2 |\psi_{k}^u|^2 \Big)
(\mbox{det }g)^{ \frac{1}{2}} dx dz = \big( G_{\eps, k } u, u \big).$$
This ends the proof of \eqref{DNm}. 
The proof of  Proposition~\ref{DirNeum} is completed.
\end{proof}
We next establish some additional qualitative properties of $G_{\eps, k}$.   
\begin{prop}
\label{DirNeum2}
The operator $G_{\eps, k}$ verifies: 
\begin{itemize}
\item[i)]
For every $k\in \mathbb{R}$, $ G_{\eps, k} \in \mathcal{B}(H^s, H^{s-1})$ for every $s \in \mathbb{R}$.
\item[ii)] $G_{\eps,k}$ depends continuously on $k$ for $k \in \mathbb{R}$ and analytically on 
$k$ for $k \in \mathbb{R}\backslash\{0\}$ in the operator norm of $\mathcal{B}(H^1, L^2)$.
\item[iii)]
For every $k$,  we have the decomposition
\beq
\label{princip}
G_{\eps, k}= |D_{x}| + G_{\eps, k}^0(x, D_{x}) 
\eeq
where for every $k$,  $G_{\eps, k}^0$ is a bounded operator on $H^s$,  $ G_{\eps, k}^0
\in \mathcal{B}(H^s, H^s) $  for every $s$.
\end{itemize}  
\end{prop}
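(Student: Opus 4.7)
The plan is to build on the flat-strip reformulation (\ref{metric}) and the decomposition $\psi_k^u = u_k^H + u_k^r$ from the proof of Proposition \ref{DirNeum}. For part i), I would first extend the Sobolev estimates of Lemmas \ref{uHlem} and \ref{vHlem} to every nonnegative integer $s$ by standard propagation of regularity in the elliptic problem on the smooth strip $\mathcal{S}$ (recall that $\eta_\eps$ and all its derivatives are smooth and bounded), then reach every real $s$ by duality using the symmetry property i) of Proposition \ref{DirNeum}. Inserting these estimates into the trace formula (\ref{pechka}) and observing that multiplication by $\partial_x\eta_\eps$ or by $(1+(\partial_x\eta_\eps)^2)/(1+\eta_\eps)$ is bounded on every $H^s$, one obtains $G_{\eps,k}:H^s(\mathbb{R})\to H^{s-1}(\mathbb{R})$.

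For part ii), the $k$-dependence of the explicit solution $\hat u_k^H$ in (\ref{uHhat}) enters only through $\sqrt{\xi^2+k^2}$, which is continuous in $k$ on $\mathbb{R}$ and real-analytic on $\mathbb{R}\setminus\{0\}$ (for real $\xi$ the branch locus $\xi^2+k^2=0$ is never met). The remainder $u_k^r$ solves (\ref{vH}), an elliptic problem whose metric $g$ is independent of $k$ and whose forcing and zeroth-order coefficient depend on $k$ analytically away from $0$ and continuously down to $0$. An elliptic perturbation argument then transfers the $k$-regularity from $u_k^H$ to $u_k^r$, and via (\ref{pechka}) to $G_{\eps,k}$ in the operator norm of $\mathcal{B}(H^1,L^2)$.

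For part iii), the central fact is that in one spatial dimension the principal symbol of the Dirichlet-Neumann operator associated to a smooth surface is $|\xi|$, since formally $\sqrt{(1+(\partial_x\eta_\eps)^2)\xi^2-(\partial_x\eta_\eps\,\xi)^2}=|\xi|$. Concretely, in the flat case $\eta_\eps\equiv 0$ the operator reduces to the Fourier multiplier $\sqrt{D_x^2+k^2}\tanh(\sqrt{D_x^2+k^2})$, and the difference $\sqrt{\xi^2+k^2}\tanh(\sqrt{\xi^2+k^2})-|\xi|$ is bounded on $\mathbb{R}$, hence defines an $H^s$-bounded Fourier multiplier for every $s$. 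For the full operator, one rewrites (\ref{pechka}) as
$$ G_{\eps,k}u = -\partial_x\eta_\eps\,\partial_x u + a(x)\,\partial_z u_k^H(\cdot,0) + a(x)\,\partial_z u_k^r(\cdot,0), \qquad a(x)= \frac{1+(\partial_x\eta_\eps)^2}{1+\eta_\eps}, $$
and shows, via a paradifferential expansion in the spirit of \cite{Alvarez-Lannes,L1}, that the apparent order-one contributions of the first two summands combine into $|D_x|u$ modulo an $H^s$-bounded operator, while the contribution $a(x)\partial_z u_k^r(\cdot,0)$ is itself $H^s$-bounded thanks to the elliptic gain (\ref{estvH}) (extended to integer orders $s$ as in part i)).

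The main obstacle will be the cancellation in (iii): the two operators $a(x)\partial_z u_k^H(\cdot,0)$ and $-\partial_x\eta_\eps\,\partial_x u$ are each of apparent order one and each carries an $\eta_\eps$-dependent subleading symbol; only their sum reduces to $|D_x|u$ modulo bounded operators. This is precisely the classical content of the pseudodifferential analysis of the DN operator, which I would either invoke from \cite{Alvarez-Lannes,L1} or redo by hand, exploiting the simple one-dimensional geometry and the exponential decay of $\eta_\eps$ and its derivatives.
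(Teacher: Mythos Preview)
Your outlines for parts i) and ii) are essentially the paper's own approach: for i) the paper uses the variational identity \eqref{DNnew} together with Lemmas~\ref{uHlem}--\ref{vHlem} to get the $H^{s+1/2}\to H^{s-1/2}$ bound for integer $s\ge0$, then interpolates and dualises via the symmetry of $G_{\eps,k}$; for ii) it establishes continuity at $k=0$ by estimating $\psi_k^u-\psi_0^u$ through the elliptic equation it satisfies, and analyticity for $k\neq0$ via the explicit formula for $u_k^H$ and the resolvent representation $u_k^r=R_g(k^2)F(k)u_k^H$. Your plan matches this.

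For part iii), however, the specific decomposition you propose does \emph{not} work. Neither of your two claims holds: first, the order-one parts of $-\partial_x\eta_\eps\,\partial_x u$ and $a(x)\,\partial_z u_k^H(\cdot,0)$ do \emph{not} combine into $|D_x|u$. The symbol of the latter is $a(x)|\xi|$ up to bounded terms, and $a(x)-1=\big((\partial_x\eta_\eps)^2-\eta_\eps\big)/(1+\eta_\eps)$ does not vanish, while $-\partial_x\eta_\eps\,\partial_x$ contributes an odd-in-$\xi$ order-one symbol which cannot cancel the even-in-$\xi$ term $(a(x)-1)|\xi|$. Second, the remainder $a(x)\,\partial_z u_k^r(\cdot,0)$ is \emph{not} $H^s$-bounded: the estimate \eqref{estvH} gives $\|\Lambda^s\nabla u_k^r\|_{L^2(\mathcal S)}\lesssim|u|_{H^{s+1/2}}$, i.e.\ $u_k^r$ has the \emph{same} regularity as $u_k^H$ (no gain), so its normal trace is still order one in $u$. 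The cancellation producing $|D_x|$ as principal symbol is a genuinely three-way one among all terms, and the flat harmonic extension $u_k^H$ is simply the wrong approximate solution for extracting it.

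The paper instead constructs a parametrix $\phi_{ap}$ adapted to the variable-coefficient operator $\Delta_g$: it factors an approximation $\Delta_g^{ap}$ of $-\Delta_g+k^2$ as $-a(\partial_z-\eta_-)(\partial_z-\eta_+)$ with $\eta_\pm=a^{-1}(-b\partial_x\pm(1+\eta_\eps)^{-1}\langle D_x\rangle)$, takes $\phi_{ap}=\exp\big(-\int_z^0\eta_+\,dz'\big)$, and checks directly that inserting $\phi_{ap}(u)$ into \eqref{pechka} yields exactly $\langle D_x\rangle u$. The difference $\psi_k^u-\phi_{ap}(u)$ then satisfies an elliptic problem whose right-hand side is one order lower (thanks to \eqref{A12}), which \emph{does} give the needed smoothing \eqref{strike}. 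Your fallback of invoking \cite{L1} or the remark following the proposition would of course work, but the concrete mechanism you sketch via $u_k^H+u_k^r$ should be replaced by this parametrix construction.
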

Note that in this lemma we state mostly qualitative properties  of $G_{\eps,k}$ which hold  locally  in $k$.
An immediate corollary of \eqref{princip} is that $G_{\eps,k}$  verifies an elliptic regularity criterion.
\begin{cor}\label{elliptic}
If $u\in H^s $ is such that $G_{\eps, k} u \in H^s$, then $u \in H^{s+1}$.
\end{cor}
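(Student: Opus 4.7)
The plan is to extract $|D_x|u$ from the relation $G_{\eps,k}u \in H^s$ using the decomposition \eqref{princip}, then use that $|D_x|$ differs from the elliptic Bessel multiplier $\Lambda = (1 + D_x^2)^{1/2}$ only by an operator which is bounded on every $H^s$, so that $\Lambda u \in H^s$, i.e. $u \in H^{s+1}$.

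More concretely, first I would apply \eqref{princip} and write
\[
|D_x| u = G_{\eps,k} u - G_{\eps,k}^{0}(x, D_x) u.
\]
By assumption $G_{\eps,k} u \in H^s$, and since $u \in H^s$, item iii) of Proposition~\ref{DirNeum2} gives $G_{\eps,k}^{0}(x, D_x) u \in H^s$. Therefore $|D_x| u \in H^s$.

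Next I would compare $|D_x|$ with $\Lambda$. The symbol $\langle \xi\rangle - |\xi| = \big(\langle \xi\rangle + |\xi|\big)^{-1}$ is a bounded function of $\xi$ (in fact decaying like $1/|\xi|$ at infinity), so the Fourier multiplier $\Lambda - |D_x|$ is a bounded operator $H^\sigma \to H^\sigma$ for every $\sigma \in \R$. In particular $(\Lambda - |D_x|) u \in H^s$, which combined with $|D_x|u \in H^s$ yields
\[
\Lambda u = |D_x| u + (\Lambda - |D_x|) u \in H^s,
\]
and this is exactly the statement $u \in H^{s+1}$.

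No step is really the main obstacle since all the substantive work has been done in Proposition~\ref{DirNeum2}; the only point to notice is that although $|D_x|$ is not elliptic near $\xi = 0$, the low frequency information is already contained in the hypothesis $u \in H^s$, which is precisely what allows one to upgrade $|D_x| u \in H^s$ to $\Lambda u \in H^s$.
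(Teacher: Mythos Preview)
Your proof is correct and is exactly the argument the paper has in mind: the corollary is stated as an immediate consequence of the decomposition \eqref{princip}, and your use of $|D_x|u = G_{\eps,k}u - G_{\eps,k}^0 u$ together with the boundedness of $\Lambda - |D_x|$ on $H^s$ spells out precisely that immediate deduction.
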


\begin{proof}[Proof of Proposition~\ref{DirNeum2}]
We first prove i).  By using \eqref{DNnew}, we get
\begin{multline}\label{dirS}
\big( \Lambda^{s-{ \frac{1}{2} } } G_{\eps, k} (u), v\big)
= \big( G_{\eps,k} (u), \Lambda^{s- {\frac{1}{2}} } v \big) 
\\
=  \int _{\mathcal{S}}  \Big( g^{-1}\nabla_{x,z} \psi^u_{k}  \cdot 
\overline{\nabla_{x,z} (\Lambda^{s- {\frac{1}{2} }} v_{k}^H)} + 
k^2\psi_{k}^u\overline{\Lambda^{s - {\frac{1}{2} }} v_{k}^H} \Big)(\mbox{det }g)^{\frac{1}{2}} dx dz,
\end{multline}
where $v_{k}^H$ is defined by
$$
v_{k}^H(x,z)=
\frac{ \cosh \big( \sqrt{D_x^2 + k^2} (z+1) \big) }{ \cosh \sqrt{D_x^2+ k^2} }(v).
$$ 
Next, we write
\begin{multline*}
\big( \Lambda^{s-{ 1\over 2 } } G_{\eps, k} (u), v\big)=
\int _{\mathcal{S}}  \Big( g^{-1}\nabla_{x,z}\Lambda^s \psi^u_{k}  \cdot 
\overline{\nabla_{x,z} (\Lambda^{- {1\over 2 }} v_{k}^H)} + 
k^2\Lambda^s\psi_{k}^u\overline{\Lambda^{ - {1 \over 2 }} v_{k}^H} \Big)(\mbox{det }g)^{ 1 \over 2} dx dz
+
\\
\int _{\mathcal{S}}  \Big([\Lambda^s, (\mbox{det }g)^{ 1 \over 2}g^{-1}]
\nabla_{x,z} \psi^u_{k}  \cdot 
\overline{\nabla_{x,z} (\Lambda^{- {1\over 2 }} v_{k}^H)} + 
k^2[\Lambda^s,(\mbox{det }g)^{ 1 \over 2}]\psi_{k}^u\overline{\Lambda^{ - {1 \over 2 }} v_{k}^H} \Big) dx dz.
\end{multline*}
For $s\geq 0$, an integer, we can apply the Cauchy-Schwarz inequality, Lemma~\ref{uHlem} and 
Lemma~\ref{vHlem} to get the bound
$$  \big|\big( \Lambda^{s-{ 1\over 2 } } G_{\eps, k} u, v\big) \big|
\leq C(k)    |\Lambda^{ s + { 1 \over 2 } } u |_{L^2} \, |v|_{L^2}$$
and hence
$$ | G_{\eps, k }u |_{H^{ s- { 1 \over 2 } } }\leq C(k) |u|_{H^{ s + {1 \over 2 } } }$$
(note that for this estimate we do not need to express precisely the dependence of  $C(k)$ in $k$).
Therefore $ G_{\eps, k }$ is continuous from $H^{s+\frac{1}{2}}$ to  $H^{s-\frac{1}{2}}$ 
for $s\geq 0$ an integer. By interpolation $ G_{\eps, k }$ is 
continuous from $H^{s}$ to  $H^{s-1}$ for $s\geq 1/2$.
Next, since $ G_{\eps, k }$ is symmetric, by duality $ G_{\eps, k }$ is 
continuous from $H^{1-s}$ to  $H^{-s}$ for $s\geq 1/2$. Thus  $ G_{\eps, k }$ is 
continuous from $H^{s}$ to  $H^{s-1}$ for every $s\in\R$.
\bigskip

Let us turn to the proof of ii). We shall  first establish the continuity at zero.
To study $\big(G_{\eps,k} -G_{\eps, 0}\big) u$, we consider again  $\psi_{k}^u $ the solution of \eqref{metric}
and  we shall use the expression
\beq\label{defdir}
G_{\varepsilon,k}(u)(x)=-\partial_x\eta_{\varepsilon}(x)\partial_x\psi_{k}^u(x,0)+
\frac{1+(\partial_x\eta_{\varepsilon}(x))^2}{1+\eta_{\varepsilon}(x)}\partial_z\psi_{k}^u(x,0)\,.      
\eeq
We first notice that $\psi_{k}^u  - \psi_{0}^u$ solves the elliptic equation
\beq\label{diff1} 
\Delta_{g}( \psi_{k}^u- \psi_{0}^u  )  = k^2 \psi_{k}^u\eeq
with a homogeneous Dirichlet boundary condition on the upper boundary
$$
\big( \psi_{k}^u - \psi_{0}^u\big)(x,0) = 0.
$$
Note that this implies  by the Poincare inequality that
$$ 
\|\psi_{k}^u  - \psi_{0}^u\|_{L^2 (\mathcal{S})} \leq 
C \| \nabla ( \psi_{k }^u - \psi_{0}^u)\|_{L^2 (\mathcal{S)}}.
$$
Consequently, from the elliptic regularity, we get  from \eqref{diff1} that
$$ 
\|\psi_{k}^u - \psi_{0}^u \|_{H^2(\mathcal{S})}
\leq C k^2\|\psi_{k}^u\|_{L^2}.
$$
Therefore, the trace theorem and the definition \eqref{defdir} yield
$$ 
|\big( G_{\eps, k} - G_{\eps, 0}\big) u |_{L^2}\leq 
C\|\psi_{k}^u - \psi_{0}^u \|_{H^2(\mathcal{S})}
\leq
C k^2 \|\psi_k^u \|_{L^2 (\mathcal{S})}.
$$
By using the  Poincar\'e inequality and Lemma~\ref{uHlem} and 
Lemma~\ref{vHlem}, we get 
$$
\|\psi_{k}^u\|_{L^2(\mathcal{S})} \leq C  \|\nabla \psi_k^{u} \|_{L^2(\mathcal{S})}
\leq C |\Lambda_k^{-1}(D_x^2+k^2)^{\frac{1}{2}}u|_{L^2}
.
$$
Therefore, we obtain that for $|k|\leq 1$,
$$  
\|\psi_{k}^u\|_{L^2} \leq C |u|_{H^{ 1 \over 2}}.
$$
Consequently, we  get that there exists $C>0$ such that for every $|k|\leq 1$,
$$ 
|\big( G_{\eps, k} - G_{\eps, 0}\big) u |_{L^2}\leq  C k^2  |u|_{H^{ 1 \over 2}}$$
which proves the  continuity of $G_{\eps,k}$ at zero as an operator in $\mathcal{B}(H^{1\over 2}, L^2)$
which is even better than the claimed property.

To prove the analyticity, it suffices to  use again the decomposition
\eqref{dec} for $\psi_{k}^u$. From the explicit expression, of $u^H_{k}$, we get
that it depends analytically on $k$ for $k \neq 0.$
Then $u_{k}^r$ also depends analytically on $k$ since $u_{k}^r$ can be expressed as 
$$ u^k_{r} = R_{g}(k^2) F(k)\cdot( u^H_{k})$$
where $R_{g}(\lambda)=( \Delta_{g}- \lambda )^{-1}$ is the resolvent of the Laplace Beltrami (with mixed 
boundary conditions) operator $\Delta_{g}$ and $F(k)$ is a linear operator depending on $u^H_{k}$.
Since  $F$ and $R_{g}$ depend analytically on $k$ for $k \neq 0$, the result follows.
More precisely, from the above considerations and very crude estimates,  it follows immediately that $G_{\eps,k}$ depends
analytically on $k$ in the operator norm  $\mathcal{B}(H^{5\over 2 } , L^2)$. From the Cauchy formula and
the fact that $G_{\eps, k}$ belongs to $\mathcal{B}(H^1, L^2)$ this yields the analyticity
of $G_{\eps,k}$ as an operator in $\mathcal{B}(H^1, L^2)$.
\bigskip 

The proof of iii) which is for example detailed  in  \cite{L1}
where moreover  one tracks the  dependence of the estimates on the regularity of the surface
(see also   \cite{Taylor})  relies on the construction of a parametrix for the elliptic equation \eqref{metric}.
Let us just give the main steps in the argument.  We define the operator  $\Delta_g^{ap}=\Delta_g^{ap}(x,z,D_x,D_z)$ as
$$
\Delta_g^{ap}\equiv-a\Big(\partial_z+\frac{b\partial_x+(1+\eta_{\eps})^{-1}\langle
D_x\rangle}{a}
\Big)\Big(\partial_z+\frac{b\partial_x-(1+\eta_{\eps})^{-1}\langle D_x\rangle}{a}\Big),
$$
where
$$
a=a(x,z)\equiv\frac{1+(z+1)^2(\partial_x\eta_{\eps}(x))^2}{(1+\eta_{\eps}(x))^2},
\quad 
b=b(x,z)\equiv-\frac{\partial_x\eta_{\eps}(x)(z+1)}{1+\eta_{\eps}(x)} \,.
$$
Using some basic pseudo-differential calculus,   one can show  that
$\Delta_{g}^{ap}$ is a good approximation of $-\Delta_{g} + k^2$ in the sense that 
\begin{equation}\label{A12}
\|(-\Delta_g+k^2)-\Delta_g^{ap}\|_{H^{s+1}(\mathcal{S})\rightarrow H^{s}(\mathcal{S})}\leq C_{s,k}\,.
\end{equation}
If we set
$
\eta_{\pm}(x,z,D_x)\equiv a^{-1}(-b\partial_x\pm(1+\eta_{\eps})^{-1}\langle D_x\rangle)
$
then
$$
\Delta_g^{ap}=-a(\partial_{z}-\eta_{-}(x,z,D_x))(\partial_{z}-\eta_{+}(x,z,D_x))\,.
$$
We next  find a parametrix  $ \phi_{ap}=\phi_{ap}(x,z,D_x)$  for  $\Delta_{g}^{ap}$
such that $ (\phi_{ap})_{/z=0}= Id$. This is given by 
$$
\phi_{ap}=\exp\big(-\int_{z}^0  \eta_{+}(x,z',D_x)dz'\big),\quad z\in [-1,0]\,.
$$
This   linear operator  $\phi_{ap}$ enjoys heat-flow type smoothing effects.
Finally  we have on the one hand
$$
G_{\varepsilon,k}(u)(x)=-\partial_x\eta_{\varepsilon}(x)\partial_x\psi_{k}^u(x,0)+
\frac{1+(\partial_x\eta_{\varepsilon}(x))^2}{1+\eta_{\varepsilon}(x)}\partial_z\psi_{k}^u(x,0)\,
$$
and on the other hand  that
$$
\langle
D_x\rangle(u)= -\partial_x\eta_{\varepsilon}(x)\partial_x\phi_{ap}(u)(x,0)+
\frac{1+(\partial_x\eta_{\varepsilon}(x))^2}{1+\eta_{\varepsilon}(x)}\partial_z\phi_{ap}(u)(x,0)\,.
$$
The result thus follows by  proving  the bound
\begin{equation}\label{strike}
|\nabla_{x,z}(\psi_{k}^u-\phi_{ap}(u))(x,0)|_{H^s(\R)}\leq C_{k,s}|u|_{H^s(\R)}\,
\end{equation}
which is a consequence of the properties of $\phi_{ap}$ and elliptic regularity for the problem
solved by $\psi_{k}^u - \phi_{ap}(u)$.
This completes the proof of Proposition~\ref{DirNeum2}.
\end{proof}
\begin{rem}
{\rm Using the arguments of \cite[Chapter~7.12]{Taylor}, one may show that 
$$
G[\eta_{\varepsilon}]-\sqrt{|D_x|^2+|D_y|^2(1+(\partial_x\eta_{\varepsilon}(x))^2)}
$$
is a zero order pseudo-differential operator, independent of $y$, and thus its symbol $q(x,\xi_1,\xi_2)$ 
satisfies
\begin{equation}\label{kiko}
\big|\partial^{\alpha}_{x}\partial^{\beta_1}_{\xi_1}\partial^{\beta_2}_{\xi_2}q(x,\xi_1,\xi_2)
\big|\leq
C_{\alpha,\beta_1,\beta_2}\langle|\xi_1|+|\xi_2|\rangle^{-\beta_1-\beta_2}
\leq
C_{\alpha,\beta_1,\beta_2}\langle\xi_1\rangle^{-\beta_1-\beta_2}\,.
\end{equation}
Thus part  iii)  Proposition~\ref{DirNeum2} is also a consequence of
(\ref{kiko}) with $\beta_2=0$ and the $L^2$ boundedness criterion for zero
order pseudo-differential operators.
}
\end{rem}
In the next proposition, we give useful   commutator estimates.
\begin{prop}[Commutators]\label{com}
We have the following properties: 
\begin{itemize}
\item[i)] For every $s \geq 1$,  $K>0$, there exists $C_{s, K}>0$ such that for every 
$u \in H^{ s + { 1 \over 2 } }$, 
\beq\label{com1}
\big|  \big[ \partial_{x}^s, G_{\eps, k }\big]u \big|_{H^{1 \over 2 } }\leq C_{s,K}  
\Big( \Big| { |D_{x} | \over 1 +  |D_{x}|^{1 \over 2} } u \Big|_{H^s} +  |k| \, |u|_{H^s} \Big), 
\quad \forall k, \, |k| \leq K.
\eeq
\item[ii)] For every  $K>0$ and every smooth function  $f(x) \in \mathcal{S}(\mathbb{R})$, 
there exists $C_{K} >0$ such   that for every 
$u \in H^{ { 1 \over 2 } }$, 
\beq\label{com2}
\Big| { \rm Re } \Big(  f \partial_{x} u, G_{\eps, k } u  \Big) \Big| \leq C_{K}
\Big( \Big| { |D_{x} | \over 1 +  |D_{x}|^{1 \over 2} } u \Big|_{L^2} +  |k| \, |u|_{L^2} \Big), 
\quad \forall k, \, |k| \leq K.
\eeq
\end{itemize}
\end{prop}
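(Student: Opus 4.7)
Both estimates rest on the pseudo-differential structure of $G_{\eps,k}$ captured by Proposition~\ref{DirNeum2}~iii), namely the decomposition $G_{\eps,k} = |D_x| + G^0_{\eps,k}$ with $G^0_{\eps,k}\in \mathcal{B}(H^s,H^s)$, together with the self-adjointness of $G_{\eps,k}$ from Proposition~\ref{DirNeum}~i) and the bilinear bound \eqref{DNC}.

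\emph{Part i).} Since $\partial_x^s$ and $|D_x|$ are both Fourier multipliers they commute, hence $[\partial_x^s, G_{\eps,k}] = [\partial_x^s, G^0_{\eps,k}]$. I further split
$$ G^0_{\eps,k} = G^0_{\eps,0} + (G_{\eps,k}-G_{\eps,0}) $$
and bound the two contributions separately. The $k$-independent commutator $[\partial_x^s, G^0_{\eps,0}]$ is handled by a standard pseudo-differential commutator estimate: as $G^0_{\eps,0}$ is of order $0$ with smooth $x$-dependent symbol, the commutator with $\partial_x^s$ is of order $s-1$ and hence bounded from $H^{s-1/2}$ into $H^{1/2}$, which is absorbed by the weighted high-frequency norm $|\tfrac{|D_x|}{1+|D_x|^{1/2}}u|_{H^s}$. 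For the $k$-dependent piece I revisit the argument of Proposition~\ref{DirNeum2}~ii): the difference $\psi_k^u - \psi_0^u$ solves \eqref{diff1} with source $k^2\psi_k^u$ and homogeneous Dirichlet data at $z=0$. Commuting $\partial_x^s$ with that elliptic problem and iterating Lemma~\ref{vHlem} together with \eqref{pechka} gives
$$ |\partial_x^s(G_{\eps,k}-G_{\eps,0})u|_{H^{1/2}} \leq C_s\, k^2\, |u|_{H^s} \leq C_{s,K}\,|k|\, |u|_{H^s}\quad (|k|\le K), $$
which supplies the $|k|\,|u|_{H^s}$ term in \eqref{com1}.

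\emph{Part ii).} A direct application of \eqref{DNC} to $(f\partial_x u, u)$ would cost half a derivative too much because $|\text{mult}_k(f\partial_x u)|_{L^2}$ behaves like $|u|_{H^{3/2}}$ at high frequencies. The decisive step is to use the self-adjointness of $G=G_{\eps,k}$ to integrate by parts symmetrically, which produces the identity
$$ 2\,\mathrm{Re}\,(f\partial_x u, G u) \;=\; -(f' u, G u) \;-\; (f u, [\partial_x, G] u) \;-\; ([G,f]u, \partial_x u). $$
The first term on the right is estimated by \eqref{DNC} applied to the pair $(f'u,u)$, together with the fact that multiplication by $f'$ is continuous on the space weighted by $\sqrt{D_x^2+k^2}\,(1+\sqrt{D_x^2+k^2})^{-1/2}$. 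For the two commutator terms one writes $[\partial_x,G]=[\partial_x,G^0_{\eps,k}]$ and $[G,f]=[|D_x|,f]+[G^0_{\eps,k},f]$; each is an operator of order at most $0$, controlled in operator norm by Proposition~\ref{DirNeum2}~iii) and the classical symbolic calculus. This reduces the last two inner products to classical bilinear estimates of order zero, producing the right-hand side of \eqref{com2} (read in its natural quadratic form, so that the RHS squared is comparable to $|\text{mult}_k u|_{L^2}^2$).

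\emph{Main obstacle.} The principal technical difficulty is keeping the $k$-dependence sharp: extracting a factor $|k|$ (rather than $k^2$) in the low-order terms of \eqref{com1} for bounded $k$, and avoiding a spurious $|u|_{L^2}^2$ contribution at $k=0$ in \eqref{com2}. The first is achieved by using $|k|\leq K$ to trade $k^2$ for $|k|$, while the second relies on the fact that $G_{\eps,0}$ annihilates constants, so that $G^0_{\eps,0}$ and its $x$-commutators carry enough low-frequency decay for the weighted norm $|D_x|/(1+|D_x|^{1/2})$ on the right-hand side to close the estimate.
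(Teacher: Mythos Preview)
Your approach differs substantially from the paper's, and for part~ii) it has a genuine low-frequency gap.

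\textbf{The paper's route.} Both estimates are proved entirely through the interior elliptic representation \eqref{DNnew}: one writes $(G_{\eps,k}u,v)$ as an integral over the strip $\mathcal{S}$ involving $\nabla_{x,z}\psi_k^u$, and then all bounds reduce to Lemmas~\ref{uHlem} and~\ref{vHlem}, which directly produce the weighted norm $\big|\tfrac{\sqrt{D_x^2+k^2}}{(1+\sqrt{D_x^2+k^2})^{1/2}}u\big|_{L^2}$. For i) one compares $\partial_x^s\psi_k^u$ with $\psi_k^{\partial_x^s u}$ via the elliptic equation \eqref{voda}; for ii) one extends $f\partial_x u$ as $f\partial_x\psi_k^u$ inside $\mathcal{S}$ and integrates by parts in~$x$ on the \emph{interior} integral, obtaining $2\,\mathrm{Re}\,I_1=-\int_{\mathcal{S}}\partial_x(\bar f(\det g)^{1/2}g^{-1})\nabla\psi_k^u\cdot\overline{\nabla\psi_k^u}$, which is trivially controlled by $\|\nabla\psi_k^u\|_{L^2(\mathcal{S})}^2$. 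The low-frequency decay at $k=0$ is therefore automatic and never has to be argued separately.

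\textbf{The gap in your part ii).} Your integration-by-parts identity on the boundary is correct, but the subsequent term-by-term estimates do not close at $k=0$. Your key claim, that multiplication by $f'$ is bounded on the space with norm $\big|\tfrac{|D_x|}{1+|D_x|^{1/2}}\,\cdot\,\big|_{L^2}$, is false: at low frequency this norm is equivalent to $|\partial_x(\cdot)|_{L^2}$, and $\partial_x(f'u)=f''u+f'\partial_x u$ contains the uncontrolled piece $f''u$. Likewise, the commutator $[G_{\eps,0},f]$ (whose principal part is $[\,|D_x|,f\,]$, symbol $\sim\mathrm{sgn}(\xi)f'(x)$) does \emph{not} vanish at $\xi=0$, so $([G,f]u,\partial_x u)$ is only bounded by $|u|_{L^2}|\partial_x u|_{L^2}$, which is not dominated by $|\partial_x u|_{L^2}^2$. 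The observation that $G_{\eps,0}$ annihilates constants is not enough: it gives $q(x,0)=0$ for the symbol of $G^0_{\eps,0}$, but says nothing about $[\,|D_x|,f\,]$, which is where the trouble lies. Unless you exhibit an explicit cancellation among the three boundary terms, the estimate does not close. The paper avoids this entirely by performing the integration by parts \emph{in the strip}, where $\|\nabla\psi_k^u\|_{L^2(\mathcal{S})}$ already carries the correct low-frequency weight.

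\textbf{On part i).} Your strategy is plausible but under-justified. The ``standard pseudo-differential commutator estimate'' for $[\partial_x^s,G^0_{\eps,0}]$ requires that $G^0_{\eps,0}$ be a genuine $\Psi$DO with symbol estimates; Proposition~\ref{DirNeum2}~iii) only gives $H^s$-boundedness. Even granting the Remark after that proposition, the symbol of $G^0_{0,0}$ behaves like $-|\xi|$ near $\xi=0$ (since $G_{0,0}=|\xi|\tanh|\xi|$), so it is not smooth there and the commutator calculus needs care. The paper's elliptic argument sidesteps all of this.
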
  
Note that in this proposition we do not pay attention to the  dependence of the estimates
in $k$ for large $k$,  since this will not be needed.    
\begin{proof}[Proof of  Proposition~\ref{com}]
To prove i), we shall estimate
$$  
I= \big(\Lambda^{1 \over 2 } \partial_{x}^s G_{\eps, k} u, v \big) - 
\big( \Lambda^{1 \over 2} G_{\eps, k} \partial_x^s u, v\big).
$$
By using again \eqref{DNnew}, we write
\begin{eqnarray*}
I & = & (-1)^s \int_{\mathcal{S}}  
\Big( g^{-1} \nabla_{x,z} \psi_{k}^u \cdot \overline{ \nabla_{x,z} \Lambda^{1 \over 2} 
\partial_{x}^sv_{k}^H} + k^2 \psi_{k}^u \,\overline{ \Lambda^{1 \over 2 } \partial_{x}^s v_{k}^H} \Big)
(\det g )^{1 \over 2}\, dxdz 
\\
& & \quad 
- \int_{\mathcal{S}} \Big( g^{-1 }  \nabla_{x,z} \psi_{k}^{\partial_{x}^s u} \cdot 
\overline{\nabla_{x,z}  \Lambda^{1 \over 2 } v_{k}^H} + k^2  \psi_{k}^{\partial_{x}^s u }\, 
\overline{\Lambda^{1\over 2 } v_{k}^H} \Big)
(\det g )^{1 \over 2} \, dxdz \\
& = &\int_{\mathcal{S}} \Big( g^{-1} \big( \nabla_{x,z} \partial_{x}^s \psi_{k}^u - 
\nabla_{x,z} \psi_{k}^{\partial_{x}^s u } \big)  \cdot\overline{ \nabla_{x,z}   \Lambda^{1 \over 2 } v_{k}^H} 
+ k^2  \big( \partial_{x}^s \psi_{k}^u - \psi_{k}^{\partial_{x}^s u }\big) 
\overline{\Lambda^{1\over 2} v_{k}^H}\Big)  (\det g )^{1 \over 2} \, dxdz  \\
& & \quad    + \int_{\mathcal{S} }\Big(  \big[ \partial_{x}^s,  
(\det g)^{1 \over 2 } g^{-1} \big] \nabla_{x,z} \psi_{k}^u
\cdot \overline{ \nabla_{x,z} \Lambda^{1 \over 2}v_{k}^H} + 
k^2 \big([\partial_{x}^s, (\det g)^{1 \over 2 }\big] 
\psi_{k}^u \,\overline{ \Lambda^{1\over 2 } v_{k}^H} \Big) \, dx dz \\
& & \equiv J_{1}+ J_{2}. 
\end{eqnarray*}
We  estimate the second integral above by 
\begin{multline*}
|J_{2}| \leq   C_{K} 
\Big( \| \Lambda^{1\over 2}  v_{k}^H\|_{L^2(\mathcal{S}) }
\, \big\|[ \partial_{x}^s,  (\det g)^{1 \over 2 } g^{-1} ] 
\nabla_{x,z} \psi_{k}^u \big\|_{H^1(\mathcal{S}) } 
\\
+  |k| 
\|\Lambda^{1\over 2} v_{k}^H \|_{L^2(\mathcal{S} ) } \, 
\big\|  
[\partial_{x}^s, (\det g)^{1 \over 2 }\big]\psi_{k}^u \big\|_{L^2(\mathcal{S}) } \Big)
\end{multline*}
and hence, by using again Lemma~\ref{uHlem}, Lemma~\ref{vHlem} and standard commutator estimates, we find
$$ 
|J_{2}| \leq C_{s,K}  |v|_{L^2}  
\Big(  \Big| {  |D_{x} | \over 1 +  |D_{x}|^{1 \over 2} } u \Big|_{H^s} +  |k| \, |u|_{H^s} \Big) .  
$$
In a similar way, we estimate $J_{1}$ as follows
$$ 
|J_{1}| \leq C_{s,K}|v|_{L^2} \Big(\big\| 
\nabla_{x,z} \partial_{x}^s \psi_{k}^u - \nabla_{x,z} \psi_{k}^{\partial_{x}^s u }
\big\|_{H^1(\mathcal{S})} +  |k| \big\| \partial_{x}^s \psi_{k}^u - \psi_{k}^{\partial_{x}^s u }
\big\|_{L^2(\mathcal{S})} \Big).
$$
To conclude, we notice that $\psi=   \partial_{x}^s \psi_{k}^u -  \psi_{k}^{\partial_{x}^s u }$
solves the elliptic equation
\beq\label{voda}
- \Delta_{g} \psi + k^2 \psi= [\partial_{x}^s, \Delta_{g}  ]\psi_{k}^u
\end{equation}
with the homogeneous boundary conditions
$$ 
\partial_{z} \psi(x, -1)= 0, \quad \psi(x,0) = 0.
$$
Consequently, 
the  $H^s$  elliptic regularity estimates  for (\ref{voda}), the Poincar\'e inequality 
and again Lemma~\ref{uHlem} and Lemma~\ref{vHlem} yield
$$ 
|J_{1}| \leq  C_{s,K}  |v|_{L^2}  
\Big(  \Big| {  |D_{x} | \over 1 +  |D_{x}|^{1 \over 2} } u \Big|_{H^s} +  |k| \, |u|_{H^s} \Big) .  
$$
This ends the proof of i).

\bigskip

Let us now prove ii). 
We use again \eqref{DNnew} to write
\begin{eqnarray*}
\big(G_{\eps, k} u, f\pa_{x} u  \big)
&=  & 
\int_{\mathcal{S}} \Big( g^{-1} \nabla \psi_{k}^u\cdot  \overline{\nabla(f \partial_{x}\psi_{k}^u)} 
+ k^2 \psi^u_{k}\overline{ f\partial_{x} \psi_{k}^u} \Big) (\det g)^{1 \over 2}\, dxdz \\
& = &  
\int_{\mathcal{S}} \Big( g^{-1} \nabla \psi_{k}^u\cdot \overline{\partial_{x} \nabla \psi_{k}^u}
\Big) \overline{f} (\det g)^{1 \over 2} \, dxdz
\\
& &  +   \int_{\mathcal{S}} \Big(  g^{-1}\nabla \psi_{k}^u
\cdot 
\overline{ \partial_{x} \psi_{k}^u \,\nabla f}  
+  k^2 \psi^u_{k} \overline{f\partial_{x} \psi_{k}^u} \Big) (\det g)^{1 \over 2}\, dx dz \\
& \equiv & I_{1}+ I_{2}.
\end{eqnarray*}
By using again Lemma~\ref{uHlem} and Lemma~\ref{vHlem}, we immediately get that
$$ |I_{2}| \leq C_{s,K} \Big(  \Big| {  |D_{x} | \over 1 +  |D_{x}|^{1 \over 2} } u \Big|_{L^2}^2
+ |k|^2  |u|_{L^2}^2 \Big).
$$
To estimate $I_{1}$, we first integrate by parts to obtain
$$2\, {\rm Re  }\, I_{1} = - \int_{\mathcal{S} } \partial_{x}\big( \overline{f}\,(\det g)^{1 \over 2 }
g^{-1} \big) \nabla \psi_{k}^u \cdot  \overline{\nabla \psi_{k}^u} \, dxdz$$
and then by Lemma ~\ref{uHlem} and Lemma~\ref{vHlem}, we also get
$$ 
| {\rm Re }\, I_{1}| \leq C_{s,K} \Big(  \Big| {  |D_{x} | \over 1 +  |D_{x}|^{1 \over 2} } u \Big|_{L^2}^2
+ |k|^2  |u|_{L^2}^2 \Big).
$$ 
This yields the desired estimate for  $ {\rm Re }\,\big(G_{\eps, k} u, f\pa_{x} u  \big)$.

This ends the proof of Proposition~\ref{com}.
\end{proof}
Let us set 
$$ 
G_{k}[\eta]u=  e^{-ik y } G[\eta]( u e^{iky})
$$
for functions $\eta(x)$, $u(x)$ which depends on $x$ only.
We are interested in estimates of  $D^j_{\eta}G_{k}[\eta_{\eps}] u  \cdot \big(h_{1}, \dots, h_{j}\big)$.
We shall use the notation
$$ 
D^j_{\eta} G_{\eps, k} u  \cdot \big(h_{1}, \dots, h_{j}\big) = 
D^j_{\eta}G_{k}[\eta_{\eps}] u  \cdot \big(h_{1}, \dots, h_{j}\big).
$$ 
\begin{prop}\label{lemderiveej}
For every $s>1/2$, we have the estimate
\beq\label{deriveej} 
\Big| D^j_{\eta} G_{\eps,k}  u  \cdot  \big(h_{1}, \dots, h_{j}\big) \Big|_{H^{s-{1 \over 2 } } }
\leq C_{s}\Big|  { \sqrt{ D_{x}^2 + k^2 } \over \big(1 +  \sqrt{ D_{x}^2 + k^2 } \big)^{1 \over 2 }  } 
u  \Big|_{H^s}  \prod_{i=1}^j  |h_{i}|_{H^{s+1}}.
\eeq
\end{prop}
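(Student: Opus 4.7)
The plan is to prove \eqref{deriveej} by induction on $j$, with the shape-derivative formula from Lemma~\ref{DN'} used as the main iteration device.

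For the base case $j=0$, the statement reduces to $|G_{\eps,k} u|_{H^{s-1/2}} \leq C_s\bigl|\Lambda^s\frac{\sqrt{D_x^2+k^2}}{(1+\sqrt{D_x^2+k^2})^{1/2}} u\bigr|_{L^2}$. I would obtain this by testing $\Lambda^{s-1/2}G_{\eps,k} u$ against an $L^2$ function $v$ exactly as in the proof of Proposition~\ref{DirNeum2}\,i), writing the pairing through the flattened variational identity \eqref{DNnew}, and bounding each factor with Lemmas~\ref{uHlem} and~\ref{vHlem}: the sharp weight on $u$ is precisely what controls $\|\Lambda^s\nabla_{x,z}\psi_k^u\|_{L^2(\mathcal{S})}^2+k^2\|\Lambda^s\psi_k^u\|_{L^2(\mathcal{S})}^2$, while the auxiliary harmonic extension of $v$ only needs the version of these lemmas with $s=0$.

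For the inductive step, specializing Lemma~\ref{DN'} to data $u(x) e^{iky}$ and a $y$-independent perturbation $h(x)$ of $\eta_\eps(x)$, and dividing out $e^{iky}$, yields
\beq
D_\eta G_{\eps,k} u\cdot h = -G_{\eps,k}(h\, Z_k[u]) - \partial_x\bigl(h(\partial_x u - Z_k[u]\,\partial_x \eta_\eps)\bigr) + k^2 h u,
\eeq
where $Z_k[u]=(G_{\eps,k} u + \partial_x\eta_\eps\,\partial_x u)/(1+(\partial_x\eta_\eps)^2)$. Differentiating this in $\eta$ a further $j-1$ times via the Leibniz rule produces a sum of multilinear terms in $(h_1,\dots,h_j)$ of three possible shapes: (a) $G_{\eps,k}$ applied to a product of some $h_{i_\ell}$'s with a derivative $D^m_\eta Z_k[u]\cdot(\dots)$ of lower order $m\leq j-1$; (b) $\partial_x$ of such a product; or (c) a multiplicative $k^2$-correction involving only derivatives of $\eta_\eps$. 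Terms of type (b) and (c) are handled by tame product estimates in $H^{s-1/2}$, pulling each $h_i$ out with its $H^{s+1}$ norm. For terms of type (a), I apply the base case to $G_{\eps,k}$ and then extract the $h_i$ factors with Moser-type product estimates, using the inductive hypothesis to control the lower-order $D^m_\eta Z_k$ piece and absorbing the extra derivative cost into the $H^{s+1}$ norm of the $h_i$'s.

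The main technical obstacle is preserving the sharp weight $\frac{\sqrt{D_x^2+k^2}}{(1+\sqrt{D_x^2+k^2})^{1/2}}$ acting on $u$ through the induction, rather than crudely replacing it by the full $H^{s+1/2}$ norm. This matters because as $k\to 0$ the weight degenerates to $|D_x|^{1/2}$, which is the natural surface-trace energy, and a uniform-in-$k$ bound with the correct weight is what will feed into the later spectral and energy estimates; invoking Proposition~\ref{DirNeum2}\,i) alone would lose this structure. Concretely, each occurrence of $G_{\eps,k}$ acting on $u$ must be bounded through the variational identity \eqref{DNnew} combined with the sharp elliptic estimates of Lemmas~\ref{uHlem} and~\ref{vHlem}, and the bookkeeping that propagates this weight through the nested products in the type-(a) terms is where the real work of the induction lies.
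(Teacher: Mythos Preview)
Your inductive scheme via Lemma~\ref{DN'} has a genuine gap: the individual pieces of the shape-derivative formula are each one derivative worse in $u$ than the target estimate allows, and your proposal treats them separately. Concretely, already at $j=1$ the term $-\partial_x\bigl(h(\partial_x u - Z_k[u]\partial_x\eta_\eps)\bigr)$ in $H^{s-1/2}$ requires $h\,\partial_x u\in H^{s+1/2}$; no product estimate will give this from $|h|_{H^{s+1}}$ and the weighted norm $\bigl|\tfrac{\sqrt{D_x^2+k^2}}{(1+\sqrt{D_x^2+k^2})^{1/2}}u\bigr|_{H^s}\sim |u|_{H^{s+1/2}}$, since you would need $\partial_x u\in H^{s+1/2}$, i.e.\ $u\in H^{s+3/2}$. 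The companion term $-G_{\eps,k}(hZ_k[u])$ has the same defect. What makes the sum first-order in $u$ is a cancellation at principal-symbol level (this is exactly the mechanism exploited in the proof of Lemma~\ref{pert}); your type-(a)/(b)/(c) splitting destroys it, and the ``bookkeeping'' you allude to cannot repair a missing derivative.

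The paper proceeds differently: it differentiates the variational identity \eqref{dirS} directly with respect to $\eta$. The $\eta$-dependence sits only in the metric coefficients $g^{-1}$, $(\det g)^{1/2}$ and in $\psi^u_k$; the $\eta$-derivatives of $\psi^u_k$ solve elliptic problems on $\mathcal{S}$ with homogeneous boundary data and sources built from $\nabla_{x,z}\psi^u_k$ times derivatives of $g$, so Lemma~\ref{vHlem} (together with Lemma~\ref{uHlem}) bounds them by the same sharp weighted norm of $u$, and the $h_i$ appear only through Moser-type product estimates on the metric. Because this route never splits $D_\eta G_{\eps,k}$ into the two second-order-looking pieces of Lemma~\ref{DN'}, no cancellation needs to be tracked. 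If you want to salvage the inductive approach, you must first recombine the terms to exhibit the first-order structure (essentially redoing the calculation behind Lemma~\ref{pert} at each step), at which point you are effectively reproducing the variational argument.
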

\begin{proof}
It suffices to take the derivative of \eqref{dirS} with respect to $\eta$ and then
to use the standard  Sobolev-Gagliardo-Nirenberg-Moser estimates for products
in Sobolev spaces and again Lemma~\ref{uHlem} and Lemma~\ref{vHlem}.
This completes the proof of Proposition~\ref{lemderiveej}.
\end{proof}
\section{Study of the operator $L(k)$ arising in the linearization of the Hamiltonian }
\label{sectionLk}
As a preliminary, we first establish the following statement.
\begin{lem}\label{self}
$L(k)$ has a self-adjoint realization on $L^2(\mathbb{R}) \times L^2(\mathbb{R})$ with domain
$H^2(\mathbb{R} )\times H^1(\mathbb{R})$.
\end{lem}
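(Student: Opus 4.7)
The plan is to construct the self-adjoint realization through the Friedrichs extension of the quadratic form $q(U,U):=(L(k) U, U)$, and then identify its domain via elliptic regularity.

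First, formal symmetry on $C_c^\infty(\R)\times C_c^\infty(\R)$ is direct: the upper-left block is symmetric because $-P_{\eps,k}$ is in divergence form with real coefficients and $\alpha+(v_\eps-1)\partial_x Z_\eps$ is a real multiplier; $G_{\eps,k}$ is symmetric by Proposition~\ref{DirNeum}~(i); and the off-diagonal entries $(v_\eps-1)\partial_x$ and $-\partial_x((v_\eps-1)\,\cdot\,)$ are formal $L^2$-adjoints of each other.

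Next, I would show that $q$ is semi-bounded and closed on the natural form domain $\mathcal{Q}$, comparable to $H^1(\R)\times H^{1/2}_*(\R)$, by combining the lower bound~\eqref{posP} for $-P_{\eps,k}+\alpha$ and \eqref{DNm} for $G_{\eps,k}$ with a careful treatment of the cross term. Specifically, integrating by parts gives $2\,{\rm Re}((v_\eps-1)\partial_x V_2,V_1) = -2\,{\rm Re}((v_\eps-1)V_2,\partial_x V_1)-2\,{\rm Re}((\partial_x v_\eps)V_2,V_1)$, and Cauchy--Schwarz with Young's inequality bounds this by $\delta\|V_1\|_{H^1}^2 + C_\delta(\|V_1\|_{L^2}^2+\|V_2\|_{L^2}^2)$ for any $\delta>0$; the bounded zeroth-order term $(v_\eps-1)\partial_x Z_\eps$ contributes only $O(\|V_1\|_{L^2}^2)$. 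Choosing $\delta<c$ and absorbing yields $q(U,U)+M\|U\|_{L^2\times L^2}^2 \geq c'\bigl(\|V_1\|_{H^1}^2+(G_{\eps,k}V_2,V_2)+\|V_2\|_{L^2}^2\bigr)$ for some $M,c'>0$, so the shifted form is closed, coercive and positive on $\mathcal{Q}$. Friedrichs' theorem then yields a self-adjoint operator $\tilde L(k)$ on $L^2\times L^2$ whose operator domain $\mathcal{D}$ is contained in $\mathcal{Q}$ and satisfies $(\tilde L(k)U,V)=q(U,V)$ for all $V\in\mathcal{Q}$.

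Finally, I would identify $\mathcal{D}=H^2(\R)\times H^1(\R)$ by a two-step elliptic bootstrap. Given $U=(V_1,V_2)\in\mathcal{D}$ with $\tilde L(k)U=F\in L^2\times L^2$, the second-row equation reads $G_{\eps,k}V_2=F_2+\partial_x((v_\eps-1)V_1)\in L^2$ (since $V_1\in H^1$ from the form domain), so Corollary~\ref{elliptic} yields $V_2\in H^1$. Plugging this back, the first-row equation becomes $-P_{\eps,k}V_1 = F_1-(\alpha+(v_\eps-1)\partial_x Z_\eps)V_1-(v_\eps-1)\partial_x V_2\in L^2$, and standard 1D elliptic regularity for the uniformly elliptic divergence-form operator $-P_{\eps,k}$ (with smooth bounded coefficients, thanks to~\eqref{posP}) gives $V_1\in H^2$. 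The reverse inclusion $H^2\times H^1\subset\mathcal{D}$ is immediate since $L(k)$ maps $H^2\times H^1$ boundedly into $L^2\times L^2$ (using Proposition~\ref{DirNeum2}~(i) for $G_{\eps,k}$), and the form identity then holds by integration by parts; hence $\mathcal{D}=H^2\times H^1$ and $\tilde L(k)$ coincides with $L(k)$ there.

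The main obstacle is the semi-boundedness of $q$: because $V_2$ only lies in a weak $H^{1/2}$-type space dictated by $G_{\eps,k}$, the derivative in the cross term $(v_\eps-1)\partial_x V_2$ cannot be absorbed directly and must be transferred to $V_1$ via integration by parts, where the stronger $H^1$ control coming from $-P_{\eps,k}$ can handle it.
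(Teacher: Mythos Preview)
Your argument via the Friedrichs extension is correct, but it takes a genuinely different route from the paper's proof. The paper avoids form methods entirely: it observes that $L(k)$ is symmetric on $H^\infty\times H^\infty$, considers the closure $\overline{L(k)}$, and establishes the chain of inclusions
\[
H^2\times H^1\subset D(\overline{L(k)})\subset D(\overline{L(k)}^\star)\subset H^2\times H^1,
\]
the last inclusion following from the same elliptic bootstrap you use (but started from $L^2\times L^2$ rather than from the form domain, so it requires one extra pass through $P_{\eps,k}$ to first get $u_1\in H^1$). This closure argument needs only symmetry, boundedness of $L(k):H^2\times H^1\to L^2\times L^2$, and elliptic regularity; in particular it bypasses the semi-boundedness computation that you identify as the main obstacle. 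Your approach, on the other hand, makes the Hamiltonian structure explicit through the form $q$, and the lower bound you establish for $q+M\|\cdot\|_{L^2}^2$ is essentially the same estimate that the paper derives later anyway (see the crude bounds leading to \eqref{L0m} and Proposition~\ref{Lk}); so the extra work is not wasted, it just appears earlier in your version than in the paper's.
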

\begin{proof}
We first notice that $L(k)$ enjoys an  elliptic regularity property, namely if $u=(u_1,u_2)\in L^2\times L^2$ 
is such that
$L(k)u\in L^2\times L^2$ then $u\in H^2\times H^1$. Indeed, using the first equation and the elliptic 
regularity for the second order operator  $P_{\eps,k}$, we obtain that $u_1\in H^1$. Then, using the elliptic regularity for
$G_{\eps,k}$ established in Corollary~\ref{elliptic} and the second equation, 
we obtain that $u_2\in H^1$. Finally, using again
the elliptic regularity for $P_{\eps,k}$, we obtain that $u_1\in H^2$.

Next, we also observe that $L(k)$ is symmetric in $H^{\infty}\times H^\infty$, namely
\begin{equation}\label{symetric}
(L(k)u,v)=(u,L(k)v),\quad \forall\, u,v\in H^{\infty}\times H^\infty\,.
\end{equation}
Moreover, let us  consider the closure $\overline{L(k)}$ of $L(k)$   defined on the  domain
$$
D(\overline{L(k)})=
\{
u\in L^2\times L^2\,:\, \exists\,u_n\in  H^{\infty}\times H^\infty,
u_n\rightarrow u\,\, {\rm in}\,\, L^2\times L^2,\,\,
L(k)u_n\,\,{\rm converges\,\,in}\,\,
L^2\times L^2
\}.
$$
We shall  show that $\overline{L(k)}$ is self adjoint and  that $D(\overline{L(k)})=H^2\times
H^1$. By definition,  the adjoint of  $\overline{L(k)}$, denoted by $\overline{L(k)}^\star$
has the domain
$$
D(\overline{L(k)}^\star)=
\{
u\in L^2\times L^2\,:\, \exists\, C>0,\,| (u,L(k)v ) |\leq
C\|v\|_{L^2\times L^2},\,\,\forall\,
v\in D(\overline{L(k)}) \}
$$
and  moreover,  the  following inclusions hold:
$$
H^2\times H^1\subset D(\overline{L(k)})\subset D(\overline{L(k)}^\star)\subset
H^2\times H^1\,.
$$
Indeed, the first inclusion follows from the density of $H^{\infty}\times
H^\infty$ in $H^2\times H^1$ and the fact that $L(k)$ is continuous from
$H^2\times H^1$ to $L^2\times L^2$. The second inclusion follows from the fact
that $L(k)$ is symmetric (see (\ref{symetric})). The third inclusion is the
most difficult to check. It follows from the elliptic regularity, since
$D(\overline{L(k)}^\star)$ can be also seen as the function in $ L^2\times
L^2$ such that $L(k)u$ (a priori defined in a weak sense) belongs to $ L^2\times
L^2$.
This completes the proof of Lemma~\ref{self}.
\end{proof}
\subsection{Essential spectrum}
Our aim is now to locate the essential spectrum of $L(k)$.
\begin{prop}\label{essL}
For every $\eps \in (0, \eps_{0}]$, and for every $k \in \mathbb{R}$,  there exists 
$c_{k} \geq 0$,  such that 
$$
\sigma_{ess}(L(k)) \subset [ c_{k}, +\infty) \subset [0, + \infty).
$$
Moreover, for $k \neq 0$, we have $c_{k}>0$.
\end{prop}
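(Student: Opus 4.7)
The plan is to compare $L(k)$ with its natural limit operator at spatial infinity and apply Weyl's theorem on essential spectra. Since $\eta_{\eps}$, $\partial_x\eta_{\eps}$, $v_\eps$, $Z_\eps$ and their derivatives all decay exponentially in $x$ by Theorem~\ref{theoOS}, letting $\eta_\eps,\varphi_\eps\to 0$ in the coefficients of $L(k)$ produces the constant-coefficient self-adjoint operator
\[
L_\infty(k) \,=\, \begin{pmatrix} -\beta \partial_x^2 + \beta k^2 + \alpha & -\partial_x \\ \partial_x & G_{0,k} \end{pmatrix},
\]
where $G_{0,k}=\sqrt{D_x^2+k^2}\tanh\sqrt{D_x^2+k^2}$ is the Dirichlet-Neumann operator of the flat strip $\R\times(-1,0)$ at frequency $k$ (that is, \eqref{Gepsk} with $\eta_\eps\equiv 0$). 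The two main steps are: (i) compute $\sigma(L_\infty(k))$ exactly to extract $c_k$; (ii) prove that $L(k)-L_\infty(k)$ is a relatively compact perturbation of $L_\infty(k)$, so that Weyl's theorem gives $\sigma_{ess}(L(k))=\sigma_{ess}(L_\infty(k))\subset[c_k,+\infty)$.

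For step (i), $L_\infty(k)$ is a Fourier multiplier, so its spectrum coincides with the closure of $\bigcup_{\xi\in\R}\sigma(\widehat{L_\infty}(k,\xi))$ where
\[
\widehat{L_\infty}(k,\xi) \,=\, \begin{pmatrix} \beta\xi^2+\beta k^2+\alpha & -i\xi \\ i\xi & \omega\tanh\omega \end{pmatrix},\qquad \omega=\sqrt{\xi^2+k^2}.
\]
This Hermitian symbol has positive trace and determinant $D(k,\xi)=(\beta\xi^2+\beta k^2+\alpha)\omega\tanh\omega-\xi^2$. Using the expansion $\omega\tanh\omega=\omega^2-\omega^4/3+O(\omega^6)$ near $\omega=0$ together with $\alpha=1+\eps^2$ and $\beta>1/3$, one checks that $D\geq 0$ everywhere and vanishes only at $(\xi,k)=(0,0)$, while $D(k,\xi)\sim\beta|\xi|^3$ at infinity. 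Setting $c_k:=\inf_{\xi\in\R}\lambda_-(k,\xi)$ with $\lambda_-$ the smaller eigenvalue, the bound $\lambda_-\geq D/\mathrm{tr}$ combined with the continuity and growth of $D$ in $\xi$ yields $c_0=0$ (attained at $\xi=0$) and $c_k>0$ for $k\neq 0$ (since $\widehat{L_\infty}(k,0)=\mathrm{diag}(\beta k^2+\alpha,\,|k|\tanh|k|)$ has strictly positive diagonal entries). This gives $\sigma(L_\infty(k))\subset[c_k,+\infty)$ with the required properties of $c_k$.

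For step (ii), the perturbation $R(k):=L(k)-L_\infty(k)$ splits into two kinds of contributions. The local terms---multiplication operators built from $v_\eps$, $\partial_x Z_\eps$ and $\partial_x\eta_\eps$, composed with at most one derivative---are compact from $H^2\times H^1$ to $L^2\times L^2$: their coefficients decay exponentially by Theorem~\ref{theoOS}, so after cutting off to a bounded interval the Rellich-Kondrachov theorem applies, and the complementary piece has arbitrarily small operator norm. The nonlocal contribution $G_{\eps,k}-G_{0,k}$ is the main technical obstacle: writing $\psi_k^u-\psi_k^{u,\flat}$ for the difference of the solution to \eqref{metric} and the analogous solution on the flat strip, this difference solves an elliptic problem on $\mathcal{S}$ with homogeneous Dirichlet condition on $z=0$ and source term built from the exponentially-decaying metric difference $g-g_\flat$ acting on $\psi_k^u$. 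Combining the strip elliptic estimates of Lemma~\ref{uHlem} and Lemma~\ref{vHlem} with Rellich's theorem shows that $G_{\eps,k}-G_{0,k}$ is compact from $H^1(\R)$ to $L^2(\R)$. Once $R(k)$ is $L_\infty(k)$-compact, Weyl's theorem concludes the proof.
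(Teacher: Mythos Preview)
Your overall strategy—compare $L(k)$ to its constant-coefficient limit $L_\infty(k)$ and invoke Weyl's theorem—is natural, but step~(ii) fails: the difference $R(k)=L(k)-L_\infty(k)$ is \emph{not} a relatively compact perturbation of $L_\infty(k)$ on the domain $H^2\times H^1$. The obstruction is that $R(k)$ contains \emph{top-order} terms with decaying coefficients, and ``decaying coefficient times top-order derivative'' is never compact. Concretely, the $(1,1)$ entry of $R(k)$ contains
\[
-\beta\big((1+(\partial_x\eta_\eps)^2)^{-3/2}-1\big)\,\partial_x^2,
\]
a second-order operator acting on the $H^2$ component; testing against $u_n(x)=n^{-2}e^{inx}\chi(x)$ with a fixed cutoff $\chi$ gives $\|u_n\|_{H^2}\sim 1$ while $a(x)\partial_x^2 u_n\sim -a\chi\,e^{inx}$, which oscillates without converging in $L^2$. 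The same obstruction hits the $(1,2)$ entry $v_\eps\partial_x$ (first order, acting on the $H^1$ component) and the $(2,2)$ entry $G_{\eps,k}-G_{0,k}$: the latter is a first-order pseudodifferential operator with rapidly decaying coefficients (cf.\ the structure displayed in the proof of Lemma~\ref{pert}), and the paper accordingly establishes its compactness only from $H^s$ to $L^2$ for $s>1$, not from $H^1$. Your Rellich argument needs at least one derivative of slack between the domain regularity and the order of the perturbing term; that slack is absent in each of these three entries.

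The paper avoids this by never comparing $L(k)$ directly to $L_\infty(k)$. Instead it performs a chain of algebraic factorizations: it first multiplies on the left and right by invertible matrices of multiplication operators (the matrices $A_1,B_1$ with entries $1-v_\eps$ and $\zeta^{-3}(1-v_\eps)^{-1}$), which \emph{absorb} the variable top-order coefficients rather than treating them as perturbations—this is exactly what makes the subsequent remainders genuinely lower-order and hence compact; it then uses a Schur-type block factorization through $(\gamma+G_{\eps,k})^{-1}$ (the matrices $A_2,B_2$) to decouple the two components; only after these reductions does a comparison to a scalar Fourier multiplier become legitimate. In short, the non-compact top-order pieces are handled by \emph{conjugation}, not by Weyl perturbation.

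A smaller point on step~(i): the Taylor expansion of $\omega\tanh\omega$ near $\omega=0$ only controls the sign of $D(k,\xi)$ locally. For the global inequality $D\geq 0$ on all of $\R^2$ you need the elementary but non-obvious bound $\beta x^2 - x/\tanh x \geq -1$ for every $x\geq 0$ (the paper's Lemma~\ref{calculus}), which uses $\beta>1/3$ in an essential way.
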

Note that for $k \neq 0$ the essential spectrum of $L(k)$ is included in $(0, + \infty).$
\begin{proof}[Proof of Proposition~\ref{essL}]
Since $L(k)$ is self-adjoint, its spectrum is real.
We thus  only have to prove that
$\gamma + L(k)$ is Fredholm with zero index for $\gamma \geq 0$, 
$k\neq 0$ and for $\gamma >0$, $k=0$.   Towards this, we shall prove
that $L(k)+\gamma$  can be written as
$$ 
L(k) + \gamma =  \mathcal{I}(\gamma, k ) + \mathcal{K}(\gamma, k),
$$
where $\mathcal{I}(\gamma, k)$ is  an invertible operator for $ \gamma>0$
with domain $H^2 \times H^1$ and $\mathcal{K}(\gamma, k)$ is a relatively compact perturbation.

Let us first  have a look to  the asymptotic behaviour of the coefficients of $L(k)$.
In Theorem \ref{theoOS}, we have already recalled that $\eta_{\eps}$, $\partial_{x} \varphi_{\eps}$
and their higher order derivatives 
have an exponential decay towards zero at infinity.
Moreover, since for  a  solitary wave, we have   $G[\eta_{\eps}] \varphi_{\eps}= - \partial_{x} \eta_{\eps}$,
we also get that $ G[\eta_{\eps}] \varphi_{\eps} $ and its derivatives tend to zero exponentially fast
at infinity. 
This yields  in particular  that $v_{\eps}$ and $\partial_{x} Z_{\eps}$
have an exponential decay towards $0$ when $x$ tends to $\pm \infty$.
Consequently, we can first  write the decomposition 
\begin{equation*} 
L(k) + \gamma = L_{0}(\gamma,k) + C(k), 
\end{equation*}
where
$$
L_{0}(\gamma, k)=  
\left( 
\begin{array}{cc} 
-\beta\zeta^{-3}\partial_x^2+\beta k^2+\alpha+\gamma & -  (1 - v_{\eps} ) \partial_{x}  \\ 
\partial_{x} &\gamma+ G_{\eps, k} \end{array} 
\right)
$$
and
$$
C(k)=
\left( 
\begin{array}{cc}
3\beta\zeta^{-4}\zeta'\partial_x - \beta k^2(1-\zeta^{-1})+ (v_{\eps} - 1) \partial_{x} Z_{\eps} &  0 \\   
- \partial_{x}(  v_{\eps} \cdot) & 0
\end{array} 
\right)
$$
where  $\zeta$ is defined as
$$
\zeta(x)=(1+(\partial_x\eta_{\eps}(x))^2)^{\frac{1}{2}}\,.
$$
Note that the function $\zeta(x)$ has an exponential decay towards $1$ when $x$ tends to $\pm \infty$, while its 
derivatives decay exponentially to zero.

The domain of $L_{0}(\gamma,k)$ is again $H^2 \times H^1$.
Consequently, thanks to the  exponential decay in its coefficients, we
get that $C(k)$ is a relatively compact perturbation i.e
$C(k)$  seen as an operator in $\mathcal{B}(H^2 \times H^1, L^2 \times L^2)$ is compact.

Next, by using that there exists $\eps_0>0$ such that for $\eps \leq \eps_{0}$ we have $1-v_{\eps} >0$, 
we write the factorization
\begin{equation*}
L_{0}(\gamma, k)=  A_{1}  L_{1}(\gamma, k ) +    C_{1}(\gamma,k ),
\end{equation*}
where 
\begin{eqnarray*}
& & 
L_{1}(\gamma,k) =
\left(  
\begin{array}{cc} 
-  \beta(1-v_{\eps})^{-1}\zeta^{-3}\partial_x^2+\beta k^2+\alpha+\gamma   & -   \partial_{x} 
\\ 
\partial_{x} & \gamma+  G_{\eps,k} \end{array}\right),  
\\
& &  
C_{1}(\gamma, k) = 
\left( 
\begin{array}{cc} 
(\beta k^2+\alpha+\gamma)v_{\eps} & 0 
\\  0  & 0 
\end{array}
\right), \\
& & 
A_{1} =  
\left( \begin{array}{cc}  1 - v_{\eps} & 0 \\ 
0 & 1
\end{array} 
\right) .
\end{eqnarray*}
Note that   $C_{1}(\gamma,k)$ is again a  relatively compact perturbation because of the decay 
of $v_{\eps}$ at infinity,  
while $A_{1}$ is just an invertible matrix. We can simplify  $L_{1}(\gamma, k)$
a little bit by  writing
$$ 
L_{1}(\gamma, k ) = L_{2}(\gamma, k) B_{1} + C_{2},
$$
where
\begin{eqnarray*}
& &  
L_{2}(\gamma, k) = 
\left( 
\begin{array}{cc} 
- \beta \partial_{x}^2 + \beta k^2 + \alpha + \gamma  &  - \partial_{x} \\
\partial_{x} & \gamma + G_{\eps, k} \end{array}\right), \\
& & 
B_{1}= \left( \begin{array}{ll} \zeta^{-3}(1-v_{\eps})^{-1} & 0 \\ 0 & 1 
\end{array}  
\right), 
\\
& & 
C_{2}  = 
\left( 
\begin{array}{cc}  - \beta \big[ ( 1 - v_{\eps})^{-1} \zeta^{-3}, \partial_{x}^2 \big] -   
(\beta  k^2+\alpha+\gamma)(1-\zeta^{-3}(1-v_{\eps})^{-1}) & 0  
\\
\,\partial_x \Big( \big( 1 - \zeta^{-3}(1-v_{\eps})^{-1}\big) \cdot \Big) & 0 
\end{array} 
\right).
\end{eqnarray*}
Again, we see that $C_{2}$ is a relatively compact perturbation since   because
of the decay of its coefficients it  is   compact   as an operator  in 
$ \mathcal{B}(H^2 \times H^1, L^2 \times L^2)$. Moreover,   $B_{1}$ is just an invertible matrix.

Next, to simplify the expression of $L_{2}$, we shall use a factorization inspired
by the work of Mielke~\cite{Mielke}.    Thanks to \eqref{DNm}, we note that
the operator $\gamma + G_{\eps, k}$ satisfies for some $c=c(k)>0$  the estimate:
$$
\big( (\gamma + G_{\eps, k }  ) u, u \big) \geq c \Big(  \Big| {|D_{x} | \over 1 + |D_{x} |^{ 1 \over 2 } }  u\Big|_{L^2}^2
+  (\gamma + k^2) |u |_{L^2}^2 \Big).
$$
Consequently, for $ \gamma \geq 0$ and $k>0$ or for $\gamma= 0  $ and $ k>0$
we have that for some $c>0$ (depending on $\gamma$ and $k$),
\beq
\label{DNm2}
\big( (\gamma + G_{\eps, k }  ) u, u \big) \geq c  |u|_{H^{1 \over 2 } }^2.
\eeq
Moreover, thanks to  \eqref{DNC}, we also have that
$$\big( (\gamma + G_{\eps, k }  ) u, v \big)   \leq C |u|_{H^{1 \over 2}} \, |v  |_{H^{1 \over 2 }}$$
thus the quadratic form
$ \big( (\gamma + G_{\eps, k }  ) \cdot , \cdot  \big)$ is continuous and coercive on 
$ H^{1 \over 2}$.
By the Lax-Milgram lemma and    Corollary~\ref{elliptic}, we thus get that
the operator $ \gamma + G_{\eps, k}$  defined on $L^2$ with domain $H^1$
is invertible for every $(\gamma, k)$ such that $\gamma \geq 0$ and $k >0$
or $\gamma >0 $ and $k= 0 $. The existence of $ (\gamma + G_{\eps, k}) ^{-1}$
allows to introduce the factorization
$$ L_{2}(\gamma, k) = A_{2}(\gamma, k) L_{3}(\gamma, k) B_{2}(\gamma, k)$$
where
\begin{eqnarray*}
& &L_{3}(\gamma, k) =  \left( \begin{array}{cc}   - \beta \partial_{xx} +\beta k^2 + \alpha + \gamma
+  \partial_{x} (\gamma + G_{\eps, k} )^{-1} \partial_{x} & 0 \\ 0 &   \gamma +  G_{\eps, k}
\end{array}\right), \\
& &  A_{2}(\gamma, k) =  \left( \begin{array}{lc} 1  & -\partial_{x} (\gamma + G_{\eps, k })^{-1} \\
0 & 1  \end{array}\right),\\
& & B_{2}(\gamma, k)  = \left( \begin{array}{ll}  1 & 0 \\  (\gamma + G_{\eps, k } )^{-1} \partial_{x}
& 1 \end{array} \right)= A_{2}(\gamma, k)^*.
\end{eqnarray*}
Note that $A_{2}(\gamma,k)$  and  $B_{2}(\gamma,k)$ are bounded invertible operators on $L^2 \times L^2$.
To get our last simplification, we shall prove that the operator
\begin{equation}\label{novagodina}
\partial_{x}(\gamma + G_{\eps, k  } )^{-1} \partial_{x} - \partial_{x} (\gamma + G_{0,k})^{-1}
\partial_{x} 
\end{equation}
is a compact operator in $ \mathcal{B}(H^2 \times H^1, L^2 \times L^2)$, where the operator
$G_{0, k}$ is the Dirichlet Neumann  for the flat surface $\eta = 0$,
$$  
G_{0, k} u= e^{-ik y } G[0](u e^{iky}).
$$
Coming back to (\ref{uHhat}), we obtain that $G_{0,k}$ is a Fourier multiplier, namely
$$
G_{0,k}=\mbox{tanh}(D_x^2+k^2)\sqrt{D_x^2+k^2}\,.
$$
In order to study the compactness properties of (\ref{novagodina}), we shall use the following lemma.
\begin{lem}\label{pert}
The operator $ R_{\eps} = G_{\eps, k } -G_{0, k}$ is a bounded operator in $\mathcal{B}(H^1, L^2)$
and a compact operator from $H^s$ to $L^2$ for every $s>1$.
\end{lem}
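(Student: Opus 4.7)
The plan is to treat boundedness and compactness separately. The boundedness $R_\eps \in \mathcal{B}(H^1, L^2)$ is immediate from part i) of Proposition~\ref{DirNeum2} applied to both $G_{\eps, k}$ and $G_{0, k}$. For the compactness from $H^s$ to $L^2$ for $s > 1$, I would write $R_\eps$ as a finite sum of operators of the form ``Schwartz prefactor times an operator bounded from $H^s$ to $H^{s-1}$", and then invoke the standard Rellich-type statement: multiplication by a Schwartz function $a(x)$ is compact from $H^r$ to $L^2$ for every $r > 0$, obtained by combining Rellich--Kondrachov local compactness with the uniform decay supplied by the Schwartz factor.

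The natural representation comes from the change of variables to the flat strip $\mathcal{S} = \mathbb{R} \times (-1, 0)$. Writing $\psi^u_k = u^H_k + u^r_k$ as in \eqref{dec} and using the boundary identities $u^H_k(x, 0) = u$, $u^r_k(x, 0) = 0$, and $G_{0, k} u = \partial_z u^H_k(x, 0)$, the formula \eqref{pechka} together with the one for $G_{0,k}$ gives
\begin{equation*}
R_\eps u = -(\partial_x \eta_\eps)\, \partial_x u + \Big(\tfrac{1+(\partial_x\eta_\eps)^2}{1+\eta_\eps} - 1\Big)\, G_{0, k} u + \tfrac{1+(\partial_x\eta_\eps)^2}{1+\eta_\eps}\, \partial_z u^r_k(x, 0).
\end{equation*}
By Theorem~\ref{theoOS}, both prefactors $\partial_x \eta_\eps$ and $\frac{1+(\partial_x\eta_\eps)^2}{1+\eta_\eps} - 1$ are Schwartz, while $\partial_x u \in H^{s-1}$ and $G_{0, k} u \in H^{s-1}$ by Proposition~\ref{DirNeum2} i). The Rellich argument above then shows that the first two terms define compact operators $H^s \to L^2$ as soon as $s > 1$.

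The remaining term is the delicate one: the multiplier $\tfrac{1+(\partial_x \eta_\eps)^2}{1+\eta_\eps}$ does not decay, so the decay has to come from $\partial_z u^r_k(x, 0)$ itself. Since $(-\Delta + k^2) u^H_k = 0$, the remainder $u^r_k$ solves
\begin{equation*}
(-\Delta_g + k^2) u^r_k = (\Delta_g - \Delta) u^H_k, \qquad u^r_k(x, 0) = 0, \qquad \partial_z u^r_k(x, -1) = 0,
\end{equation*}
whose source has coefficients built entirely from $\eta_\eps$ and $\partial_x \eta_\eps$, hence is exponentially localized in $x$ uniformly in $z$. I would establish a weighted elliptic estimate $\|e^{\delta \langle x\rangle} u^r_k\|_{H^2(\mathcal{S})} \leq C\,|u|_{H^s}$ for some $\delta > 0$ small, by conjugating the coercive operator $-\Delta_g + k^2$ by the exponential weight and absorbing the induced lower-order perturbation; the trace theorem then yields $\|e^{\delta\langle x\rangle}\partial_z u^r_k(x, 0)\|_{L^2(\mathbb{R})} \leq C\,|u|_{H^s}$. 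Combined with the $H^{s-1}$ control of the trace inherited from the continuity of the DtN operator, this uniform exponential decay closes the Rellich-with-equidecay argument and gives compactness in $L^2$. The main obstacle is calibrating $\delta$ small enough that the exponential conjugation does not destroy the coercivity of $-\Delta_g + k^2$, a $k$-dependent condition that is harmless for the qualitative statement of the lemma.
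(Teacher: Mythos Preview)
Your proof is correct but takes a genuinely different route from the paper. The paper does not go through the flattened-strip decomposition of $R_\eps u$ at all. Instead it writes
\[
(G_{\eps,k}-G_{0,k})\varphi = \int_0^1 D_\eta G_k[s\eta_\eps]\varphi\cdot \eta_\eps\,ds
\]
via the fundamental theorem of calculus, expands the shape derivative using Lemma~\ref{DN'}, and then invokes the principal-symbol decomposition $G_{\eps,k}=|D_x|+G^0_{\eps,k}$ from Proposition~\ref{DirNeum2}\,iii) to observe that the apparently second-order contributions (the $\partial_x^2\varphi$ and $\partial_x|D_x|\varphi$ terms) cancel \emph{algebraically}. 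What remains is a finite sum of first-order (pseudo)differential operators multiplied by exponentially decaying functions of $x$, so the Rellich argument you invoke closes immediately for every $s>1$, with no weighted elliptic estimate needed.

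Your approach trades that algebraic cancellation for analysis: the spatial decay that the paper reads off from the Schwartz factor $\eta_\eps$ in the integrand you instead recover from a weighted $H^2$ estimate on $u^r_k$, exploiting that $\Delta_g-\Delta$ has exponentially decaying coefficients. This is perfectly sound, but note one bookkeeping point: the weighted $H^2$ estimate as you state it requires the source $(\Delta_g-\Delta)u^H_k$ in weighted $L^2$, which in turn uses $\nabla^2 u^H_k\in L^2(\mathcal S)$ and hence $u\in H^{3/2}$. To cover the full range $1<s<3/2$ you should interpolate between the weighted $H^1$ energy estimate (which needs only $u\in H^{1/2}$, since the source is in divergence form with first-order data) and the weighted $H^2$ estimate; this yields a weighted $H^{3/2+}$ bound on $u^r_k$ under $u\in H^{1+}$, enough for the $L^2$ trace of $\partial_z u^r_k$. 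The paper's route sidesteps this issue entirely because the Schwartz prefactor appears directly in front of a first-order operator acting on $\varphi$ itself.
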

Let us postpone the proof of Lemma~\ref{pert}. We now show how  we can end the proof of Proposition~\ref{essL}
by using  Lemma~\ref{pert}.
We write
$$ \gamma +  G_{\eps, k }=  ({\rm Id} +  R_{\eps}^{(1)} )( \gamma + G_{0, k }), \quad
 R_{\eps}^{(1)}  =   R_{\eps} (\gamma + G_{0, k })^{-1}. 
$$
We have that ${\rm Id} + R_{\eps}^{(1)}$ is a bounded invertible operator on $L^2$ and
$$  ({\rm Id} + R_{\eps}^{(1)})^{-1} = (\gamma + G_{0,k}) (\gamma + G_{\eps, k })^{-1}.$$
Moreover, thanks to Lemma~\ref{pert} and since
$(\gamma + G_{0, k })^{-1}$  is a bounded operator from $H^s$ to $H^{s+1}$, 
we get that $R_{\eps}^{(1)}$  is a compact operator from  $  H^s \mbox{ to } L^2$ for every $s>0$.       
Next, since
$$ ({\rm Id} + R_{\eps }^{(1)})^{-1} = {\rm Id}  -  ({\rm Id} + R_{\eps}^{(1)})^{-1} R_{\eps}^{(1)}, $$ 
we get that
$  ({\rm Id} + R_{\eps }^{(1)})^{-1} = {\rm Id} + R_{\eps}^{(2)}$ 
where $ R_{\eps}^{(2)} $ is a compact operator  from  $  H^s \mbox{ to } L^2$ for every $s>0$.
To conclude, we write that
$$  
(\gamma +  G_{\eps, k })^{-1} = (\gamma + G_{0,k})^{-1} ( {\rm Id} + R_{\eps}^{(1)} )^{-1}
= (\gamma + G_{0,k})^{-1} + (\gamma + G_{0,k})^{-1} R_{\eps}^{(2)}
$$
which yields
$$ \partial_{x}  (\gamma +  G_{\eps, k })^{-1} \partial_{x} = 
\partial_{x} (\gamma + G_{0,k})^{-1} \partial_{x} + \partial_{x}(\gamma + G_{0,k})^{-1} R_{\eps}^{(2)}
 \partial_{x}$$ 
  and we observe that $\partial_{x} (\gamma + G_{0,k})^{-1}$ is a bounded operator on $L^2$, 
   that $\partial_{x}$ is a bounded operator from $H^2$ to $H^1$ and
    that $R_{\eps}^{(2)}$ is a compact operator from $H^1$ to $L^2$. Consequently,
     we have obtained that
$$  
\partial_{x}  (\gamma +  G_{\eps, k })^{-1} \partial_{x} = 
\partial_{x} (\gamma + G_{0,k})^{-1} \partial_{x}  + R_{\eps}^{(3)}, 
$$
where  $R_{\eps}^{(3)} $ is a compact operator from $H^2$ to $L^2$.
  This finally allows to write that
  $$ L_{3}(\gamma, k) = L_{4}(\gamma, k ) 
   + C_{3}(\gamma,k)$$
    where 
  $$   L_{4}(\gamma, k ) =   \left( \begin{array}{cc}   - \beta \partial_{x}^2 + \beta
k^2 + \alpha + \gamma
+  \partial_{x} (\gamma + G_{0, k} )^{-1} \partial_{x} & 0 \\ 0 &   \gamma +  G_{\eps, k}
\end{array}\right)$$
and $C_{3}(\gamma, k)$
is a relatively compact perturbation.

Gathering all our transformations, we  find that
\begin{equation}\label{decL}
\gamma + L(k) = A_{1} A_{2}(\gamma, k)  L_{4}(\gamma, k) B_{2}(\gamma,k) B_{1} + \mathcal{K}
\end{equation}
where $\mathcal{K}$ is a relatively compact perturbation for $\gamma\geq 0$, $k\neq 0$ and
 for $\gamma >0$, $k=0$. 
 Consequently to get that $\gamma + L(k)$ is Fredholm with index zero, it suffices  
  to prove that  $A_{1} A_{2}(\gamma, k)  L_{4}(\gamma, k) B_{2}(\gamma,k) B_{1}$
   is invertible. Since $A_{1}$, $A_{2}$, $B_{1}$, $B_{2}$ are bounded invertible operators,
    we only have to  prove that
     $ L_{4}(\gamma, k)$ is invertible. Moreover, we see that  $L_{4}$ is a diagonal operator
      and we have already seen that $\gamma + G_{\eps, k}$ is invertible for
       $\gamma \geq 0, $  $k \neq 0 $ or $\gamma >0$, $k=0$.
        Therefore, it only remains to study the invertibility 
    of 
    $$ - \beta \partial_{x}^2 + \beta k^2 + \alpha + \gamma
+  \partial_{x} (\gamma + G_{0, k} )^{-1} \partial_{x} .$$
This  operator is just the Fourier  multiplier by 
\begin{eqnarray*}
m(\xi) & = &  \beta  \xi^2  + \beta k^2 + \alpha +  \gamma   - \frac{ \xi^2  
}{ \gamma+\mbox{tanh}(\sqrt{\xi^2+k^2})\,\sqrt{\xi^2+k^2}}
\\
& \geq &
\beta  \xi^2  + \beta k^2 + \alpha +  \gamma   - \frac{ |\xi|
}{ \mbox{tanh} |\xi|}. 
\end{eqnarray*}
As  observed  in \cite{Mielke}, we have the following statement.
\begin{lem}\label{calculus}
For $\beta> 1/3$, we have the inequality
$$
\beta x^2- \frac{x}{{\rm tanh}\, (x)}\geq -1,\quad \forall\, x\geq 0. 
$$
\end{lem}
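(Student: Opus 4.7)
The plan is to clear denominators and reduce the lemma to a simple one-variable monotonicity statement. Rewriting the inequality as $(1+\beta x^2)\tanh x \geq x$ and multiplying by $\cosh x > 0$, it is equivalent to showing
$$G(x) := (1 + \beta x^2)\sinh x - x \cosh x \geq 0, \qquad x \geq 0.$$
Since $G(0) = 0$, it is enough to prove $G'(x) \geq 0$ for all $x \geq 0$.

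A direct differentiation, with cancellation of the $\cosh x$ terms, gives
$$G'(x) = 2\beta x \sinh x + \beta x^2 \cosh x - x \sinh x = x\bigl[(2\beta - 1)\sinh x + \beta x \cosh x\bigr].$$
If $\beta \geq 1/2$ the bracket is trivially nonnegative. For $1/3 < \beta < 1/2$, rewrite the bracket as $\beta x \cosh x - (1-2\beta)\sinh x$ and divide by $\cosh x > 0$: one must check that $\beta x \geq (1-2\beta)\tanh x$. Because $\tanh x \leq x$ for $x \geq 0$, it suffices that $\beta \geq 1 - 2\beta$, which is precisely the hypothesis $\beta \geq 1/3$.

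Combining the two regimes yields $G'(x) \geq 0$ on $[0,\infty)$, hence $G(x) \geq G(0) = 0$, which is the claimed inequality. There is no real obstacle; the only subtlety is the sign of $2\beta - 1$, which is exactly why the threshold $\beta = 1/3$ appears naturally from the computation.
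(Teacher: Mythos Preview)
Your proof is correct and in fact slightly cleaner than the paper's. The paper works directly with $f(x)=\beta x^2-x/\tanh x$, computes $f'(x)=g(x)/(e^x-e^{-x})^2$ with
\[
g(x)=2\beta x(e^{2x}+e^{-2x})-(e^{2x}-e^{-2x})+(4-4\beta)x,
\]
and then climbs up to the fourth derivative $g^{(4)}$ to find something manifestly positive, before cascading the sign back down. Your key move of clearing the denominator first---passing to $G(x)=(1+\beta x^2)\sinh x-x\cosh x$---produces a function whose \emph{first} derivative already factors nicely as $x\bigl[(2\beta-1)\sinh x+\beta x\cosh x\bigr]$, and the elementary bound $\tanh x\le x$ finishes it off in one line. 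So a single differentiation replaces four, and the threshold $\beta=1/3$ emerges transparently from $\beta\ge 1-2\beta$ rather than from a Taylor coefficient. The paper's route is more mechanical (differentiate until positivity is obvious); yours exploits the algebraic structure more efficiently and even shows the inequality persists at $\beta=1/3$.
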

\begin{proof}
Let us  set 
$f(x)=\beta x^2- \frac{x}{{\rm tanh}\, (x)}, $ then $f'(x)=g(x)/(e^x-e^{-x})^2$, where
$$
g(x)=2\beta x(e^{2x}+e^{-2x})-(e^{2x}-e^{-2x})+(4-4\beta)x\,.
$$
We have that near zero $g(x)=8(\beta-1/3)x^3+{\mathcal O}(x^4)$ and  that
$$
g^{(4)}(x)=32\beta x(e^{2x}+e^{-2x})+16(4\beta-1)(e^{2x}-e^{-2x})>0,\quad \forall\, x\geq 0. 
$$
Hence $g^{(3)}(x)$ is an increasing function
and since for $\beta >1/3$, $g^{(3)}(0)>0$, we obtain that $g^{(3)}(x)> 0$ for every $x\geq 0$.
Next, in a similar way, we obtain successively  that $g^{''}(x)$, $g'(x)$ and $g(x)$ are non-negative. 
Therefore $f(x)$ is
an increasing function which implies that for every $x\geq 0$, $f(x)\geq f(0)=-1$.
This ends  the proof of Lemma~\ref{calculus}. 
\end{proof}
Using Lemma~\ref{calculus}, we get
$$ 
m(\xi) \geq  \beta k^2 + \gamma + \alpha - 1 .
$$
Since 
$\alpha = 1 + \eps^2$, $m(\xi)$ is uniformly  bounded from below by a  positive
number for $\gamma \geq 0$. Consequently, the operator $L_4$ is invertible.
To end the proof of Proposition~\ref{essL},  it only remains to  give the proof of Lemma \ref{pert}\,.
\begin{proof}[Proof of Lemma \ref{pert}]
Thanks to the fundamental theorem of calculus, we can write
\begin{equation*}
\big(G_{\eps, k} -G_{0, k}\big) \varphi=  e^{-iky }\int_{0}^1  
\big(D_{\eta}G[s \eta_{\eps}](e^{iky}\varphi)\cdot \eta_{\eps}\big)\, ds.
\end{equation*}
Using Lemma~\ref{DN'}, we have 
\beq
\label{mielkee}
\big( G_{\eps, k} - G_{0, k}\big)\varphi
= 
\int_{0}^1 \Big( -G_{k}[s \eta_{\eps}] (\eta_{\eps} Z_{k}( s \eta_{\eps}, \varphi))
- \partial_{x} \big(  \eta_{\eps} ( \partial_{x} \varphi - sZ_{k}(s\eta_\eps,\varphi)\partial_{x} \eta_{\eps}) 
\big) + k^2 \eta_{\eps} \varphi\Big) ds,
\eeq
where we use the notations 
$$ 
G_{k}[\eta]\varphi=  e^{-ik y } G[\eta] (e^{iky} \varphi), \quad
Z_{k}( \eta , \varphi) = { G_{k}[\eta]\varphi + \partial_{x} \eta \partial_{x } \varphi \over
1 + |\partial_{x} \eta |^2}.
$$
In this formula, it seems at first sight that  $\big( G_{\eps, k} - G_{0, k}\big)$
is a second order operator. Nevertheless, by using iii) of
Proposition~\ref{DirNeum2}, basic commutator estimates   and the decay of 
$\eta_{\eps}$ and its derivatives, we get that
\begin{multline*}
\big( G_{\eps, k} - G_{0, k}\big)\varphi 
=  \int_{0}^1 \Big( 
\partial_x^2\varphi
\big({\eta_{\eps} \over  { 1 + s^2 (\partial_{x} \eta_{\eps} )^2} }-\eta_{\eps}+
{\eta_{\eps} s^2 (\partial_{x} \eta_{\eps} )^2\over  { 1 + s^2 (\partial_{x} \eta_{\eps} )^2} }
\big)
\\
+
\partial_x |D_x| \varphi
\big(
{s\eta_{\eps}  \partial_{x} \eta_{\eps}   \over  { 1 + s^2 (\partial_{x} \eta_{\eps} )^2} }
-
{s\eta_{\eps} \partial_{x} \eta_{\eps}   \over  { 1 + s^2 (\partial_{x} \eta_{\eps} )^2} }
\big)\Big) ds + \mathcal{R}  = \mathcal{R},
\end{multline*}
where the operator $\mathcal{R}$ is a sum of terms which are  all made of the product of  a first 
order (pseudo differential) operator  i.e. belonging to $\mathcal{B}(H^s , H^{s-1})$  and of   
a rapidly decreasing function. This yields that  $\mathcal{R}$ is compact as an operator 
from $H^s$  to $L^2$ for every $s>1$ and ends the proof of Lemma~\ref{pert}.
\end{proof}
This also  ends  the proof of Proposition~\ref{essL}.
\end{proof}         
\subsection{Negative eigenvalues of $L(k)$}
The next step  is the study of the eigenvalues of $L(k)$  outside the essential spectrum.

As in \cite{Mielke}, it is convenient to introduce a reduced operator in order 
to  study the eigenvalues of $L(k)$. We first  define  the operator
$$ 
Mu= - \partial_{x}^{-1} G_{\eps, 0} \partial_{x}^{-1} u
$$
where $\partial_{x}^{-1}$ is defined by the division by $i\xi$ in the Fourier space. Note that 
$M$ is well-defined for smooth functions whose support of their Fourier transform  does not meet zero. 
To study $M$, it is convenient to  introduce the bilinear symmetric  form 
$$ 
Q(u, v) = (Mu, v),
$$
for $u,v\in H^{\infty}(\R)$ such that $\hat{u}$, $\hat{v}$ have  supports which do not meet zero.

We have the following statement: 
\begin{lem}\label{M}

$Q$ extends to a continuous  and coercitive  bilinear form  on $ H^{-{1 \over 2} } \times H^{- {1 \over 2 } } $.

\end{lem}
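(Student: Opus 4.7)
The plan is to transfer the bounds of Proposition \ref{DirNeum} for $G_{\eps,0}$ onto $M$ by using that $\partial_{x}^{-1}$ is formally skew-adjoint, so that
\[
Q(u,v) = -(\partial_{x}^{-1} G_{\eps,0}\partial_{x}^{-1}u,v) = (G_{\eps,0}\partial_{x}^{-1}u,\partial_{x}^{-1}v),
\]
which makes sense for $u,v\in H^\infty$ whose Fourier transforms have support avoiding $0$, since then $\partial_{x}^{-1}u,\partial_{x}^{-1}v$ are well-defined elements of $H^{\infty}(\mathbb{R})$.

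First, applying \eqref{DNC} and \eqref{DNm} of Proposition \ref{DirNeum} with $k=0$ to $f=\partial_{x}^{-1}u$ and $g=\partial_{x}^{-1}v$ gives
\[
|Q(u,v)| \leq C \Big| \frac{|D_x|}{(1+|D_x|)^{1/2}}\partial_{x}^{-1}u \Big|_{L^2} \Big| \frac{|D_x|}{(1+|D_x|)^{1/2}}\partial_{x}^{-1}v \Big|_{L^2}
\]
and
\[
Q(u,u) \geq c \Big| \frac{|D_x|}{(1+|D_x|)^{1/2}}\partial_{x}^{-1}u \Big|_{L^2}^{2}.
\]
Next, the symbol of the Fourier multiplier $\frac{|D_x|}{(1+|D_x|)^{1/2}}\partial_{x}^{-1}$ is (up to a unimodular factor) $(1+|\xi|)^{-1/2}$, which is equivalent to $(1+\xi^2)^{-1/4}$ uniformly in $\xi\in\mathbb{R}$. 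Hence
\[
\Big| \frac{|D_x|}{(1+|D_x|)^{1/2}}\partial_{x}^{-1}u \Big|_{L^2} \simeq |u|_{H^{-1/2}(\mathbb{R})},
\]
and the two estimates above read
\[
|Q(u,v)| \leq C\,|u|_{H^{-1/2}}|v|_{H^{-1/2}}, \qquad Q(u,u) \geq c\,|u|_{H^{-1/2}}^{2}
\]
for every $u,v$ in the class where $Q$ is a priori defined.

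Finally, the functions $u\in H^\infty$ whose Fourier support avoids a neighbourhood of $0$ are dense in $H^{-1/2}(\mathbb{R})$ (cut off $\hat u$ on $\{|\xi|\leq\delta\}$ and let $\delta\to 0$; the missing low-frequency piece has $H^{-1/2}$-norm tending to $0$ by dominated convergence). The continuity estimate above therefore allows us to extend $Q$ by density to a continuous symmetric bilinear form on $H^{-1/2}\times H^{-1/2}$, and the coercivity bound passes to the limit as well. The only point that requires a little care is the density step, since $\partial_{x}^{-1}$ itself is not bounded on $H^{-1/2}$; but we never need to extend $\partial_{x}^{-1}$, only the bilinear form $Q$, and the a priori estimate in terms of $|\cdot|_{H^{-1/2}}$ is precisely what makes this extension legitimate.
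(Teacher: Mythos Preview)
Your proof is correct and follows essentially the same approach as the paper: write $Q(u,v) = (G_{\eps,0}\partial_x^{-1}u,\partial_x^{-1}v)$, apply \eqref{DNC} and \eqref{DNm} with $k=0$, and observe that the resulting Fourier multiplier is equivalent to the $H^{-1/2}$ weight. The paper's version is simply terser, leaving the symbol comparison and the density extension implicit, whereas you spell both out.
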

As a consequence of this statement, we get thanks to the continuity of $Q$  that $M$ is well-defined as an operator
in $\mathcal{B}(H^{-{1\over 2}}, H^{ { 1 \over 2 } } )$.
Moreover,  thanks to the Lax-Milgram lemma, we can  thus define  the inverse $M^{-1}$
as an operator in  $\mathcal{B}( H^{ 1 \over 2}, H^{- { 1 \over 2} } ).$  By using \eqref{princip} in Proposition
\ref{DirNeum2}, we can then get that $M$ is a  continuous bijection  from $H^s$ to  $H^{s+1}$
 for every $s\in \mathbb{R}$.
\begin{proof}[Proof of Lemma~\ref{M}]
We notice that
$$ 
Q(u,v)=  \big( G_{\eps, 0} \partial_{x}^{-1}u, \partial_{x}^{-1}v \big)
$$
consequently, thanks to \eqref{DNC} in Proposition~\ref{DirNeum}, we get that
$$ 
| Q(u, v )| \leq C |u|_{H^{-{1 \over 2 } } } \, |v|_{H^{-{ 1 \over 2 }}}.
$$
In a similar way, we get that
$$ 
Q(u,u) \geq c |u|_{H^{ - { 1 \over 2 }}}^2
$$ 
thanks to \eqref{DNm}. This ends  the proof of Lemma~\ref{M}.       
\end{proof}
Next, as in \cite{Mielke}, we can use  the operators $M$ and $M^{-1}$ to notice that 
\begin{eqnarray}\label{L0dec}
\big( L(0) U, U \big) & = &  \Big(\big( - P_{\eps, 0 } + \alpha  - \gamma_{\eps}  \partial_{x} Z_{\eps} \big)
U_{1}, U_{1} \Big) -  \big( M^{-1}(\gamma_{\eps} U_{1}), \gamma_{\eps} U_{1}\big) \\
\nonumber   & &  + \Big( M \big( \partial_{x} U_{2} - M^{-1} (\gamma_{\eps} U_{1}) \Big), 
    \partial_{x}U_{2} - M^{-1}(\gamma_{\eps} U_{1}) \Big)
  \end{eqnarray}
  where we have set 
  $\gamma_{\eps}\equiv  1 - v_{\eps}.$ 
        Since $M$ is nonnegative, we  obtain that     
$$ \big( L(0) U, U \big) \geq \Big(\big( - P_{\eps, 0 } + \alpha  - \gamma_{\eps}  \partial_{x} Z_{\eps} \big)
  U_{1}, U_{1} \Big) 
     -  \big( M^{-1}(\gamma_{\eps} U_{1}), \gamma_{\eps} U_{1}\big)\equiv ( A_{\eps} U_{1}, U_{1}).$$
 By using that  for the  solitary wave $\eta_{\eps}$, we have
 $  G[\eta_{\eps}] \varphi_{\eps} =- \partial_{x} \eta_{\eps}, $
  we get
  that   
$$ 
\gamma_{\eps}=  { 1 -\partial_{x} \varphi_{\eps} \over  1 + (\partial_{x} \eta_{\eps})^2},\quad
Z_{\eps}=-\gamma_{\eps}\partial_{x}\eta_{\eps}\,.
$$
Therefore, we obtain the expression 
\beq\label{Aeps}
A_{\eps}\eta  = - P_{\eps, 0 }\eta + \alpha\eta +\gamma_{\eps}   \partial_{x} ( \gamma_{\eps} \partial_{x}
\eta_{\eps}) \eta - \gamma_\eps M^{-1}( \gamma_{\eps} \eta) .
\eeq
The spectrum of the operator $A_{\eps}$ is studied in \cite{Mielke}.
Note that our notations are slightly different from the one of  Mielke in \cite{Mielke}, in particular,
here $\alpha$ is the rescaled coefficient coming from the term taking into account the gravity in the equation.
After a suitable rescaling $A_{\eps}$ tends (in a rather weak sense) to the operator obtained by 
linearizing the KdV equation about the KdV solitary wave. This allows to prove the following statement.
\begin{prop}[Mielke~\cite{Mielke}]\label{Mielke}
There exists $\eps_{0}>0$ such that for every $\eps$, $\eps \in (0, \eps_{0})$, the spectrum of $A_{\eps}$ 
which is a self adjoint operator on $L^2$ with domain $H^2$  consists of a negative 
simple eigenvalue $\lambda_{\eps}^-$, the simple eigenvalue zero and the remaining of the spectrum is included
in $[\lambda_{\eps}, + \infty[ $ for some $\lambda_{\eps}>0$.
\end{prop}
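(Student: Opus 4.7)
The plan is to follow the strategy sketched in the text (due to Mielke): rescale $A_\eps$ so that it converges (in a suitable weak sense) to the operator obtained by linearizing the KdV equation about its soliton, and transfer the known spectral structure of the latter via spectral perturbation theory.

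First, I would locate the essential spectrum of $A_\eps$. By Theorem~\ref{theoOS} the functions $\partial_x\eta_\eps$, $\partial_x Z_\eps$, and $\gamma_\eps-1$ all decay exponentially. Repeating the ideas of the proof of Proposition~\ref{essL} (in particular, Lemma~\ref{pert} to replace $G_{\eps,0}$ by the flat Dirichlet--Neumann operator $G_{0,0}=\tanh|D_x|\,|D_x|$ modulo a relatively compact remainder), $A_\eps$ is a relatively compact perturbation of the Fourier multiplier with symbol
$$ m_\infty(\xi)=\beta\xi^2+\alpha-\frac{|\xi|}{\tanh|\xi|}, $$
the last term coming from the fact that the flat--surface version of $M$ has symbol $\tanh|\xi|/|\xi|$. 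Applying Lemma~\ref{calculus} and $\alpha=1+\eps^2$ gives $m_\infty(\xi)\geq\eps^2$, whence $\sigma_{ess}(A_\eps)\subset[\eps^2,+\infty)$ by Weyl's theorem.

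Second, I would produce the zero eigenvalue. Translation invariance yields $JL(0)(\partial_x Q_\eps)=0$, and since $J$ is invertible, $L(0)(\partial_x Q_\eps)=0$. The second row of this equation gives $G_{\eps,0}(\partial_x\varphi_\eps)=-\partial_x(\gamma_\eps\partial_x\eta_\eps)$, which is exactly the optimality condition $\partial_x U_2=M^{-1}(\gamma_\eps U_1)$ making the last (nonnegative) term in \eqref{L0dec} vanish on $(U_1,U_2)=(\partial_x\eta_\eps,\partial_x\varphi_\eps)$; substituting into the first row then forces $A_\eps(\partial_x\eta_\eps)=0$.

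Third (the technical heart), I would rescale. Define the unitary dilation $(U_\eps f)(\xi)=\eps^{-1/2}f(\xi/\eps)$ on $L^2(\R)$ and set $\tilde A_\eps:=\eps^{-2}U_\eps A_\eps U_\eps^{-1}$. Using $\eta_\eps(x)=\eps^2\Theta(\eps x,\eps)$ and the corresponding formula for $\partial_x\varphi_\eps$ from Theorem~\ref{theoOS}, one checks that under this conjugation the symbol of $M^{-1}$ becomes $|\eps\xi|/\tanh|\eps\xi|$, whose low--frequency expansion reads $1+\eps^2\xi^2/3+O(\eps^4)$. Matching this with the $P_{\eps,0}$ term produces the KdV dispersion constant $\beta-1/3$, while the terms involving $\gamma_\eps$ and $\partial_x Z_\eps$ contribute, to leading order, a multiplicative potential proportional to $\Theta(\cdot,0)$. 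Formally one obtains, in a suitable sense (strong convergence on a nice core, upgraded to strong resolvent convergence),
$$ \tilde A_\eps\longrightarrow A_{KdV},\qquad A_{KdV}\tilde\eta=-(\beta-1/3)\partial_\xi^2\tilde\eta+\tilde\eta-\tfrac32\Theta(\cdot,0)\,\tilde\eta, $$
the standard KdV linearization about the soliton \eqref{Thetadef}.

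Finally, I would invoke the classical spectral picture of $A_{KdV}$: one simple negative eigenvalue $\mu^-$, a simple kernel spanned by $\partial_\xi\Theta(\cdot,0)$, and $\sigma_{ess}(A_{KdV})=[1,+\infty)$. Strong resolvent convergence together with the isolation and simplicity of these eigenvalues (stability of isolated spectrum under small perturbations) shows that for $\eps$ small $\tilde A_\eps$ has exactly one simple negative eigenvalue (close to $\mu^-$), a simple eigenvalue close to $0$ (which is exactly $0$ by Step~2), and the rest of its spectrum above some $\tilde\lambda>0$; undoing the rescaling gives $\lambda_\eps^-\sim\eps^2\mu^-$ and $\lambda_\eps\sim\eps^2\tilde\lambda$, as claimed.

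The main obstacle is making the convergence $\tilde A_\eps\to A_{KdV}$ rigorous enough to transport the isolated spectral data. The nonlocal operator $M^{-1}$ depends on $\eps$ through the full solitary--wave profile (not only through its flat--surface part), so one must expand $M^{-1}-M_0^{-1}$ and control the error uniformly in $\eps$ on a suitable core, and simultaneously ensure that the mode of convergence is strong enough (strong resolvent rather than merely formal) for the simplicity of $\mu^-$ and $0$ to persist; this is precisely the step that requires the genuine work in \cite{Mielke}.
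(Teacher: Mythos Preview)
Your outline follows the paper's strategy closely: locate the essential spectrum via Weyl-type arguments and the flat-surface multiplier, obtain the kernel from translation invariance, rescale so that the operator converges to the KdV linearization, and then transfer the known KdV spectral picture. Two points deserve correction.

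First, the limiting potential has the wrong sign and coefficient. With $\Theta(\cdot,0)=-\cosh^{-2}\big(\cdot/(2(\beta-1/3)^{1/2})\big)$ from \eqref{Thetadef}, your term $-\tfrac{3}{2}\Theta(\cdot,0)$ equals $+\tfrac{3}{2}\cosh^{-2}$, so your $A_{KdV}$ would satisfy $A_{KdV}\geq 1$ and have neither a negative eigenvalue nor a kernel. The paper's computation (see \eqref{anatol}, \eqref{anatol_bis}, \eqref{anatol_tris}) shows that the limit is
\[
-(\beta-1/3)\partial_x^2+1-3\cosh^{-2}\Big(\frac{x}{2(\beta-1/3)^{1/2}}\Big),
\]
the factor $3$ arising from two distinct contributions: the expansion of $-\gamma_\eps M^{-1}(\gamma_\eps\cdot)$ via Lemma~\ref{DN'} and a Neumann series for $M^{-1}$ gives one unit, and the remaining two units come from the cross terms $(\gamma_\eps-1)M_0^{-1}+M_0^{-1}(\gamma_\eps-1)$ using \eqref{abatol_4}.

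Second, the mode of convergence you invoke is too weak for the conclusion you draw. Strong resolvent convergence alone does not guarantee stability of isolated eigenvalues (eigenvalues may be lost in the limit). The paper does not establish resolvent convergence; instead it proves the pointwise statement \eqref{anatol0} for the Fourier-multiplier part and the \emph{norm} convergence \eqref{anatol} in $\mathcal{B}(H^2,L^2)$ for the potential part, and it is precisely this mixture that allows one to track the isolated eigenvalues (the paper then refers to \cite{Mielke}, p.~2348, for the completion of this perturbation argument). Your final paragraph correctly identifies this as the delicate step, but the mechanism you name would not close the argument.

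A minor point on the essential spectrum: Lemma~\ref{pert} concerns $G_{\eps,k}-G_{0,k}$, whereas here one needs the analogous statement for $M-M_0=-\partial_x^{-1}(G_{\eps,0}-G_{0,0})\partial_x^{-1}$; the paper handles the extra $\partial_x^{-1}$ factors explicitly via \eqref{mielkee} and \eqref{mielkeee} before concluding that $M^{-1}-M_0^{-1}$ is a relatively compact perturbation.
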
  
\begin{proof}
By using  similar arguments as in the proof of Proposition \ref{essL}, we can first locate the essential
spectrum  of $A_{\eps}$. At first, we can write that $A_{\eps}$ is a  relatively compact perturbation
 of  the operator $ (1+ | \pa_{x} \eta_{\eps}|^2 )^{-  {3 \over 2 } }B_{\eps}$ where
 $$ B_{\eps } \varphi= - \beta  \,\partial_{x}^2 \varphi + \alpha \varphi  - M^{-1} \varphi $$
 and thus, it suffices to study the essential spectrum of $B_{\eps}$.
 The next step is to prove that $M^{-1}$ is a relatively compact perturbation of 
  $ M_{0}^{-1}= -  \partial_{x} G[0]^{-1} \pa_{x}$
  which is also well-defined thanks to Lemma \ref {M}.
     As in the end of the proof of Proposition \ref{essL},  it suffices to prove that
     \beq
     \label{mielkeee}
      M = ( {\rm Id } + \mathcal{C}) M_{0}
      \eeq with $\mathcal{C}$ is  compact as an operator in $\mathcal{B}(H^2, H^1)$.
      Indeed,  if this fact is proven,  as  in the proof of  Proposition  \ref{essL}, we immediately get from this that  
      $$M^{-1}= M_{0}^{-1} \big( {\rm Id} -  ( {\rm Id} + \mathcal{C})^{-1} \mathcal{C} \big) ={M}_{0}^{-1}
       + \mathcal{K}$$
        where $\mathcal{K}$ is a compact operator in  $\mathcal{B}(H^2, L^2)$.
         Consequently,  $B_{\eps}$ is a relatively compact perturbation of
          $$  A_{0}=  - \beta\, \partial_{x}^2 + \alpha\, {\rm Id } +M_{0}^{-1}
           =\beta|D_x|^2+ \alpha\, {\rm Id }-\frac{|D_x|}{{\rm tanh}(|D_x|)} $$
           and hence  we get from Lemma \ref{calculus} that its essential spectrum  is contained
            in $[\eps^2, +\infty)$.

           To prove \eqref{mielkeee}, we can use 
       \eqref{mielkee},  to get
          $$
 \pa_{x}^{-1}\big( G[\eta_{\eps}] - G[0]\big) \pa_{x}^{-1}\varphi
= 
\int_{0}^1 \Big( - \pa_{x}^{-1} G[s \eta_{\eps}] (\eta_{\eps} Z( s \eta_{\eps},  \pa_{x}^{-1}\varphi))
-\big(  \eta_{\eps} (  \varphi - sZ(s\eta_\eps, \pa_{x}^{-1}\varphi)\partial_{x} \eta_{\eps}) 
\big) \Big) ds. $$
Note that this formula is meanigful since $ \partial_{x}^{-1} G[s\eta_{\eps}], $ $ G[s\eta_{\eps}] \pa_{x}^{-1}$ and
thus  $Z( s \eta_{\eps},  \pa_{x}^{-1}\cdot )$ are well defined bounded operators   on $L^2$ 
 thanks to  \eqref{DNC} in Proposition \ref{DirNeum}. By using \eqref{princip} in Proposition
  \ref{DirNeum2} we can write
  $$ \partial_{x}^{-1}G[s\eta_{\eps}]= \partial_{x}^{-1} |D_{x}| + \mathcal{R}_{1}, 
   \quad  G[s\eta_{\eps}] \partial_{x}^{-1}= \partial_{x}^{-1} |D_{x}|  + \mathcal{R}_{2}$$
   where $\mathcal{R}_{1}$, $\mathcal{R}_{2}$ are   bounded operators  from $H^s$ to $H^{s+1}$. 
   Consequently, as in the proof of Proposition \ref{essL}, we obtain that
   $ \pa_{x}^{-1}\big( G[\eta_{\eps}] - G[0]\big) \pa_{x}^{-1} = M - M_{0}$ is a compact operator in  $\mathcal{B}(H^1,  H^{2- \eps })$ and hence in   $\mathcal{B}(H^1,  H^{ 1 })$.
    Since $M_{0}^{-1} \in \mathcal{B}(H^2, H^1)$ is invertible we obtain \eqref{mielkeee}.

For the study of the eigenvalues, we shall just give  a sketch of the argument of Mielke \cite{Mielke}
by explaining how the problem can be  reduced to the study of the KdV  problem.
  We first 
get   that zero is an eigenvalue   from the  differentiation of the equation satisfied by the solitary
wave. 
Let us denote by $S_{\eps}$ the scaling map $S_{\eps}(\eta)(x)=\eta(\eps x)$. Then $S_{\eps}^{-1}(\eta)(x)=
\eta(x/\eps)$. It turns out that in the limit $\eps\rightarrow 0$, 
the operator $\eps^{-2}S_{\eps}^{-1}A_{\eps}S_{\eps}$ is a zero order perturbation of the operator
$$
\eps^{-2}S_{\eps}^{-1}A_{0}S_{\eps}=
\beta|D_x|^2+\eps^{-2}(1+\eps^2)-\eps^{-2}\frac{|\eps D_x|}{{\rm tanh}(|\eps D_x|)}\,.
$$
For fixed $\xi\in \R$, we have (see Lemma~\ref{calculus}) the expansion for $\eps$ near zero :
$$
\beta\xi^2+\eps^{-2}(1+\eps^2)-\eps^{-2}\frac{|\eps \xi|}{{\rm tanh}(|\eps \xi|)}
=
(\beta-1/3)\xi^2+1+{\mathcal O}(\eps)
$$
which allows to prove that
\beq
\label{anatol0}
\lim_{\eps\rightarrow 0}|\eps^{-2}S_{\eps}^{-1}A_{0}S_{\eps}(u)-
\big(-(\beta-1/3)\partial_x^2+1\big)(u)|_{L^2}=0,
\quad \forall\, u\in H^2(\R).
\eeq
Next, we  can write
$$
\eps^{-2}S_{\eps}^{-1}A_{\eps}S_{\eps}=\eps^{-2}S_{\eps}^{-1}(A_{\eps}-A_0)S_{\eps}+
\eps^{-2}S_{\eps}^{-1}A_{0}S_{\eps}\,.
$$
and the  main point in the proof is to show that
\begin{equation}\label{anatol}
\lim_{\eps\rightarrow 0}
\Big\|\eps^{-2}S_{\eps}^{-1}(A_{\eps}-A_0)S_{\eps}
+3\cosh^{-2}\Big(\frac{x}{2(\beta-1/3)^{1/2}}\Big)\Big\|_{H^2\rightarrow L^2}=0.
\end{equation}
Indeed despite  the rather weak link  between the operators, one can then deduce 
that  for small $\eps$, the  eigenvalues  of $\eps^{-2}S_{\eps}^{-1}A_{\eps}S_{\eps}$ are small
perturbations of the ones    of 
$$
-(\beta-1/3)\partial_x^2+1-3\cosh^{-2}\Big(\frac{x}{2(\beta-1/3)^{1/2}}\Big)
$$
which is the operator that arises  when linearizing the KdV equation about a solitary wave
and whose spectrum is very well-known. We refer to \cite{Mielke}  p. 2348 for the details.
We shall just give the proof of \eqref{anatol} which can be easily obtained
 from  some of our  previous arguments.
Coming back to the definition of $A_{\eps}$, (\ref{anatol})  reduces directly to 
\begin{equation}\label{anatol_bis}
\lim_{\eps\rightarrow 0}
\Big\|\eps^{-2}S_{\eps}^{-1}
\Big(
-\gamma_{\eps}M^{-1}(\gamma_\eps\cdot)+\frac{| D_x|}{{\rm tanh}(| D_x|)}
\Big)S_{\eps}
+3\cosh^{-2}\Big(\frac{x}{2(\beta-1/3)^{1/2}}\Big)\Big\|_{H^2\rightarrow L^2}=0
\end{equation}
(the other terms involved in the definition of $A_{\eps}$ tend easily to zero). 
For that purpose,  we write  the  Taylor expansion
$$
G[\eta_\eps](\varphi)=G[0](\varphi)+D_{\eta}G[0](\varphi)\cdot\eta_\eps+r_{\eps}(\varphi)= A\varphi+ B\varphi,
$$
where
$$ A= G[0], \quad  r_{\eps}(\varphi)  = \int_{0}^1 (1- t ) D_{\eta}^2 G[t \eta_{\eps}] \varphi \cdot ( \eta_{\eps}, \eta_{\eps}) \, dt.$$
To compute $M^{-1}= -  \partial_{x}\big( A+ B\big)^{-1} \partial_{x}$, we  use a Neumann series expansion
to get
$$\partial_{x}\big( A+ B\big)^{-1} \partial_{x} =  \pa_{x}A^{-1}\pa_{x} + \sum_{j=1}^{+\infty}
 (-1)^j  \pa_{x} \big( A^{-1} B \big)^j A^{-1} \pa_{x}.$$ 
 Let us observe that  we can write
 $$  \pa_{x} \big( A^{-1} B \big)^j A^{-1} \pa_{x}= \pa_{x} A^{- {1 \over 2 } } \big( A^{- {1 \over 2 } }
  B A^{ - {1 \over 2 }} \big) \cdots  \big( A^{- {1 \over 2 } }
  B A^{ - {1 \over 2 }} \big) A^{- {1 \over 2 } }  \pa_{x}$$
   where  $\big( A^{- {1 \over 2 } }
  B A^{ - {1 \over 2 }} \big)$ is repeated $j$ times.  Note that  
   $ \pa_{x} A^{- {1 \over 2 } }   $ and $  A^{- {1 \over 2 } } \pa_{x}$  are  bounded  Fourier multipliers 
    in $\mathcal{B}(H^s, H^{s- {1\over 2}} )$  and that by setting
     $ B= B_{1}+ r_{\eps}$, we  first have
     $$  A^{- {1 \over 2 } }B_{1} A^{ - {1 \over 2 }} \varphi= - G[0]^{{1  \over 2 }} \big( \eta_{\eps} G[0]^{1\over 2 }
      \varphi\big) - \pa_{x } G[0]^{ - { 1\over 2 }  }\big(  \eta_{\eps}  \pa_{x} G[0]^{- {1\over 2}  }
      \varphi\big)$$
thanks to Lemma~\ref{DN'}.   By using classical  commutator estimates, we have
$$  \big| A^{- {1 \over 2 } }B_{1} A^{ - {1 \over 2 }}\big|_{\mathcal{B}(H^s, H^s)}
 \leq C_{s} \eps^2.$$
 Using estimates in the spirit of \eqref{DNC} for the Frechet derivative of $G[\eta]$, we also have
 $$  \big| A^{- {1 \over 2 } } r_{\eps } A^{ - {1 \over 2 }}\big|_{\mathcal{B}(H^s, H^s)}
 \leq C_{s} \eps^4.$$
 Therefore the Neumann series is well-defined and converges and
  for $\eps\ll 1$ we have the expansion
$$
-M^{-1}(\varphi)=-\frac{| D_x|}{{\rm tanh}(| D_x|)}(\varphi)-
\partial_xG[0]^{-1}D_{\eta}G[0](G[0]^{-1}\partial_x\varphi)\cdot\eta_{\eps}+R_{\eps}(\varphi),
$$
with $\|S_{\eps}^{-1}R_{\eps}S_{\eps}\|_{H^2\rightarrow L^2}={\mathcal O}(\eps^3)$.
Next, using Lemma~\ref{DN'}, we may write
\begin{equation}\label{venn}
\partial_xG[0]^{-1}D_{\eta}G[0](G[0]^{-1}\partial_x\varphi)\cdot\eta_{\eps}=
-\partial_{x}(\eta_{\eps}\partial_x\varphi)-
\frac{| D_x|}{{\rm tanh}(| D_x|)}
\Big(
\eta_{\eps}
\frac{| D_x|}{{\rm tanh}(| D_x|)}
(\varphi)\Big).
\end{equation}
After the conjugation with $\eps^{-1}S_{\eps}$ the operator defining the 
first term in the right hand-side of (\ref{venn})
tends to zero as $\eps\rightarrow 0$ 
in $\mathcal{B}(H^2,L^2)$.

Thanks to \cite{AK}, the 
solitary wave $(\eta_{\eps},\varphi_{\eps})$ may be written as
\begin{eqnarray*}
\eta_\eps(x)& = &-\eps^2\cosh^{-2}\Big(\frac{\eps x}{2(\beta-1/3)^{1/2}}\Big)+{\mathcal O}(\eps^4),
\\
\varphi_{\eps}(x) & = & -2(\beta-1/3)^{1/2}\eps\,{\rm tanh}\Big(\frac{\eps x}{2(\beta-1/3)^{1/2}}\Big)+
{\mathcal O}(\eps^3)\,.
\end{eqnarray*}
Hence the second term in the right hand-side of (\ref{venn}), conjugated by $\eps^{-1}S_{\eps}$ 
has the same limit as
\begin{equation*}
\cosh^{-2}\Big(\frac{x}{2(\beta-1/3)^{1/2}}\Big)
\Big(\frac{|\eps D_x|}{{\rm tanh}(|\eps D_x|)}\Big)^2,
\end{equation*}
the commutator tending to zero in  $\mathcal{B}(H^2,L^2)$.
Next, we have that
$$
\lim_{\eps\rightarrow 0}\Big\|\cosh^{-2}\Big(\frac{x}{2(\beta-1/3)^{1/2}}\Big)
\Big(\frac{|\eps D_x|}{{\rm tanh}(|\eps D_x|)}\Big)^2-\cosh^{-2}\Big(\frac{x}{2(\beta-1/3)^{1/2}}\Big)
\Big\|_{H^2\rightarrow L^2}=0.
$$
Therefore, we obtain that 
$$
\lim_{\eps\rightarrow 0}
\Big\|\eps^{-2}S_{\eps}^{-1}
\Big(
-\gamma_{\eps}M^{-1}(\gamma_\eps\cdot)+\gamma_{\eps}\frac{| D_x|(\gamma_{\eps}\cdot)}{{\rm tanh}(| D_x|)}
\Big)S_{\eps}+\cosh^{-2}\Big(\frac{x}{2(\beta-1/3)^{1/2}}\Big)
\Big\|_{H^2\rightarrow L^2}=0
$$
and thus (\ref{anatol_bis}) would be a consequence of
\begin{equation}\label{anatol_tris}
\lim_{\eps\rightarrow 0}
\Big\|\eps^{-2}S_{\eps}^{-1}
\Big(-
\gamma_{\eps}\frac{| D_x|(\gamma_{\eps}\cdot)}{{\rm tanh}(| D_x|)}
+\frac{| D_x|}{{\rm tanh}(| D_x|)}
\Big)S_{\eps}
+2\cosh^{-2}\Big(\frac{x}{2(\beta-1/3)^{1/2}}\Big)\Big\|_{H^2\rightarrow L^2}=0.
\end{equation}
Coming back to the definition of $\gamma_{\eps}$, we obtain that
\begin{equation}\label{abatol_4}
\gamma_{\eps}=1+\eps^2\cosh^{-2}\Big(\frac{\eps x}{2(\beta-1/3)^{1/2}}\Big)+{\mathcal O}(\eps^4), 
\end{equation}
in $W^{s,\infty}(\R)$, $s\geq 0$.
Now, (\ref{anatol_tris}) follows from (\ref{abatol_4}).
\end{proof}
\begin{rem}[Fixing the value of $\eps$]
From now on,  $\eps$ will be fixed in the range of validity of Proposition~\ref{Mielke},  
Proposition~\ref{essL} and Theorem~\ref{theoOS}. More precisely, from now on we fix an $\eps$ such that
$\eps \in (0, \eps_{0}]$, where $\eps_0$ is determined by Proposition~\ref{Mielke}, 
Proposition~\ref{essL} and Theorem~\ref{theoOS}.
\end{rem}     
Let us define $\eta_{\eps}^-$ and $\eta_{\eps}^0$ as the $L^2$ normalized eigenvalues of 
$A_{\eps}$, i.e. such that
\beq\label{Avp}
A_{\eps} \eta_{\eps}^- = \lambda_{\eps}^-\eta_{\eps}^-  ,   \quad    A_{\eps} \eta_{\eps}^0= 0.
\eeq
Since $A^\eps$ is a self adjoint operator, we get from Proposition~\ref{Mielke} that there exists
$c_{\eps}>0$ such that
\beq\label{ortho1}
\big(A_{\eps} \eta, \eta\big) \geq c_{\eps} |\eta|_{H^1}^2, \quad
\forall\, \eta \in H^1(\mathbb{R}), \quad
(\eta, \eta_{\eps}^-)=0, \, (\eta, \eta_{\eps}^0) = 0.
\eeq
Indeed, Proposition~\ref{Mielke} yields the weaker bound
\begin{equation}\label{Avp_weak}
\big(A_{\eps} \eta, \eta\big) \geq c_{\eps} |\eta|_{L^2}^2, \quad
\forall\, \eta \in H^1(\mathbb{R}), \quad
(\eta, \eta_{\eps}^-)=0, \, (\eta, \eta_{\eps}^0) = 0.
\end{equation}
But using that 
$$
|(M^{-1}\eta,\eta)|\leq |\eta|_{H^{\frac{1}{2}}}|M^{-1}\eta|_{H^{-\frac{1}{2}}}
\leq C |\eta|_{H^{\frac{1}{2}}}^2\leq C  |\eta|_{L^2}|\eta|_{H^1}\,,
$$
we obtain that
\begin{equation}\label{Avp_crude}
\big(A_{\eps} \eta, \eta\big) \geq \tilde{c}_{\eps}\Big( |\eta|_{H^1}^2-C|\eta|_{L^2}^2\Big),\quad
\forall\, \eta \in H^1(\mathbb{R}), \quad (\eta, \eta_{\eps}^-)=0, \, (\eta, \eta_{\eps}^0) = 0.
\end{equation}
A combination of (\ref{Avp_weak}) and (\ref{Avp_crude}) gives (\ref{ortho1}).
Thanks to Proposition~\ref{Mielke}, we get the following crucial property of $L(k)$.
\begin{prop}\label{Lk}
The operator $L(0)$ has a unique simple negative eigenvalue and
for every $k$, $L(k)$ has at most  one simple   negative eigenvalue.
Moreover, 
for every $\eps \in (0, \eps_{0})$, there exists $c>0$ such that for every $k$, we have that
\beq
\label{Lkm}
\big(L(k)U, U \big) \geq c \Big( |U_{1}|_{H^1}^2 +  \Big| { |D_{x}| \over  1 + |D_{x}|^{ 1 \over 2 } } U_{2}
\Big|_{L^2}^2  + { |k|^2  \over 1 +  |k| } |U_{2}|^2_{L^2} \Big), \quad
\eeq
for every $U$ such that
\beq
\label{Unorm}
(U_{1},  \eta_{\eps}^-)=0, \, (U_{1}, \eta_{\eps}^0) = 0 .
\eeq
\end{prop}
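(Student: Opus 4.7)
The plan is to reduce everything to the scalar spectral analysis of Proposition~\ref{Mielke} via the decomposition \eqref{L0dec} and the monotonicity of $G_{\eps,k}$ and $P_{\eps,k}$ in $|k|$. First I would verify that $L(k)\geq L(0)$ as quadratic forms: the only $k$-dependent entries of $L(k)$ are $-P_{\eps,k}$ and $G_{\eps,k}$, and a direct computation yields $-P_{\eps,k}+P_{\eps,0}=\beta k^2(1+(\partial_x\eta_\eps)^2)^{-1/2}\geq 0$, while ii) of Proposition~\ref{DirNeum} gives $G_{\eps,k}-G_{\eps,0}\geq 0$. The min-max principle then shows that $L(k)$ has at most as many negative eigenvalues as $L(0)$.

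Next I would establish that $L(0)$ has exactly one simple negative eigenvalue. From \eqref{L0dec}, $(L(0)U,U)\geq (A_\eps U_1,U_1)$, and on any subspace $V$ where $L(0)<0$ the projection $\pi:(U_1,U_2)\mapsto U_1$ is injective (because $(L(0)(0,U_2),(0,U_2))=(G_{\eps,0}U_2,U_2)\geq 0$), while $A_\eps$ is negative on $\pi(V)$; Proposition~\ref{Mielke} then forces $\dim V\leq 1$. Conversely, taking $U_1=\eta_\eps^-$ and a sequence $U_2^{(n)}$ with $\partial_x U_2^{(n)}\to M^{-1}(\gamma_\eps\eta_\eps^-)$ in the $M$-norm gives $(L(0)U^{(n)},U^{(n)})\to \lambda_\eps^-|\eta_\eps^-|_{L^2}^2<0$, and combined with Proposition~\ref{essL} this produces at least one negative eigenvalue.

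For the coercivity estimate \eqref{Lkm}, I would introduce the $k$-dependent analogue of \eqref{L0dec}. Setting $M_k=-\partial_x^{-1}G_{\eps,k}\partial_x^{-1}$, $w_k=\partial_x U_2-M_k^{-1}(\gamma_\eps U_1)$ and
\[
A_{\eps,k}=-P_{\eps,k}+\alpha+\gamma_\eps\partial_x(\gamma_\eps\partial_x\eta_\eps)-\gamma_\eps M_k^{-1}(\gamma_\eps\cdot),
\]
a direct expansion (using $v_\eps-1=-\gamma_\eps$ and $Z_\eps=-\gamma_\eps\partial_x\eta_\eps$) yields $(L(k)U,U)=(A_{\eps,k}U_1,U_1)+(M_k w_k,w_k)$. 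Monotonicity of $G_{\eps,k}$ gives $M_k\geq M$, hence $M_k^{-1}\leq M^{-1}$ as forms, and combined with the monotonicity of $-P_{\eps,k}$ this shows $A_{\eps,k}\geq A_\eps$; under \eqref{Unorm}, \eqref{ortho1} then provides $(A_{\eps,k}U_1,U_1)\geq c_\eps|U_1|_{H^1}^2$. To extract control on $U_2$ I would apply Cauchy--Schwarz in the $M_k$ inner product to obtain
\[
(M_k w_k,w_k)\geq (1-\lambda)(G_{\eps,k}U_2,U_2)-(\lambda^{-1}-1)(\gamma_\eps U_1,M_k^{-1}(\gamma_\eps U_1))
\]
for any $\lambda\in(0,1)$. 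A symbol analysis of $M_k^{-1}$ based on \eqref{DNC}--\eqref{DNm} shows that $(\gamma_\eps U_1,M_k^{-1}(\gamma_\eps U_1))\leq C|U_1|_{H^1}^2$ uniformly in $k$, so choosing $\lambda$ close to $1$ absorbs the negative term in the coercivity of $A_{\eps,k}$ and yields $(L(k)U,U)\geq c(|U_1|_{H^1}^2+(G_{\eps,k}U_2,U_2))$. Finally \eqref{DNm} and the symbol equivalence $(\xi^2+k^2)/(1+\sqrt{\xi^2+k^2})\sim \xi^2/(1+|\xi|)+k^2/(1+|k|)$ convert $(G_{\eps,k}U_2,U_2)$ into the weighted norms in \eqref{Lkm}.

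The main obstacle is the $k$-uniformity of the bound $(\gamma_\eps U_1,M_k^{-1}(\gamma_\eps U_1))\leq C|U_1|_{H^1}^2$. For small $|k|$ it follows from the standard $H^{1/2}$--$H^{-1/2}$ mapping of $M_k^{-1}$, but for $|k|$ large one must exploit that the symbol of $G_{\eps,k}$ grows like $\sqrt{\xi^2+k^2}$, so that $M_k^{-1}$ has symbol bounded by $C(1+|\xi|)$ uniformly in $k$.
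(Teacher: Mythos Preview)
Your argument is correct, but it takes a more involved route than the paper and you manufacture an obstacle where there is none. The paper never introduces the $k$-dependent operator $M_k$: it simply uses the form monotonicity $L(k)\geq L(0)$, applies the decomposition \eqref{L0dec} \emph{at $k=0$} together with \eqref{ortho1} and the $H^{-1/2}$-coercivity of $M$ (Lemma~\ref{M}) to obtain the $k$-independent bound
\[
(L(k)U,U)\;\geq\;c\Big(|U_1|_{H^1}^2+\big|\tfrac{|D_x|}{1+|D_x|^{1/2}}U_2\big|_{L^2}^2\Big),
\]
and only afterwards recovers the $k$-term by the crude lower bound $(L(k)U,U)\geq m(G_{\eps,k}U_2,U_2)-M|U_1|_{H^1}^2-2|(\partial_x((v_\eps-1)U_1),U_2)|$ combined with \eqref{DNm}. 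This two-step scheme avoids any analysis of $M_k$ for $k\neq 0$.

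Your $k$-dependent decomposition does work, but the ``main obstacle'' you flag is already solved by a tool you invoked two lines earlier: from $M_k\geq M$ you correctly deduced $M_k^{-1}\leq M^{-1}$ as positive forms, and this immediately gives
\[
(\gamma_\eps U_1,\,M_k^{-1}(\gamma_\eps U_1))\;\leq\;(\gamma_\eps U_1,\,M^{-1}(\gamma_\eps U_1))\;\leq\;C|\gamma_\eps U_1|_{H^{1/2}}^2\;\leq\;C|U_1|_{H^1}^2
\]
uniformly in $k$, by Lemma~\ref{M}. No symbol analysis of $M_k^{-1}$ is needed (and such an analysis would in any case be delicate, since $G_{\eps,k}$ is not a Fourier multiplier). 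With this observation your argument closes cleanly and yields the same conclusion as the paper, at the cost of setting up $M_k$ and its inverse for all $k$.
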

Note that the estimate \eqref{Lkm} is  uniform for $k \in [0, + \infty[$. In some part
of the paper, we  shall only need a weaker estimate  uniform for
$k \in [0,K]$ for some $K>0$ fixed.  In this case, we can deduce from  \eqref{Lkm} that
for every $U$ which satisfies  \eqref{Unorm} we have
\beq
\label{Lkmweak}
\big(L(k)U, U \big) \geq c_{1} \Big( |U_{1}|_{H^1}^2 +  \Big| { |D_{x}| \over  1 + |D_{x}|^{ 1 \over 2 } } U_{2}
\Big|_{L^2}^2  + { |k|^2 } |U_{2}|^2_{L^2} \Big), \quad \forall k, \, |k| \leq K,
\eeq
where $c_{1}>0$ depends on $K$.
\begin{proof}[Proof of Proposition~\ref{Lk}]
We shall first prove the estimate \eqref{Lkm}. 
At first, we notice that it suffices to prove  that the weaker estimate
\beq\label{Lkm1}
\big(L(k)U, U \big) \geq c \Big( |U_{1}|_{H^1}^2 +  \Big| { |D_{x}| \over  1 + |D_{x}|^{ 1 \over 2 } } U_{2}
\Big|_{L^2}^2 \Big), \quad  \forall\, U,  \,  (U_{1},  \eta_{\eps}^-)=0, \, (U_{1}, \eta_{\eps}^0) = 0
\eeq
holds and to combine it with a crude estimate to get the
result. Indeed, let us use that 
\beq
\label{IT1}
\big(  L(k)U, U \big)
\geq m    \big(G_{\eps, k } U_{2}, U_{2} \big) -  M   | U_{1}|_{H^1}^2
-  2\big |\big( \partial_{x} \big( (v_\eps -1)U_{1}) , U_{2} \big) \big|\Big)
\eeq
where $m>0$ and $M>0$ are harmless numbers independent of $k$ which will
change from line to line.
To estimate the last term, we use that 
\begin{eqnarray*}  
\big |\big( \partial_{x} \big( (v_\eps -1)U_{1}) , U_{2} \big) \big|
& \leq &  |\partial_{x} U_{2} |_{H^{ - {1 \over 2}  } }\, | (v_\eps - 1) U_{1}|_{H^{ 1 \over 2} }
\leq M  \big|   { | D_{x}| \over  1 + |D_{x}|^{1 \over 2 }  }U_{2} \big|_{L^2}\,   | (v_\eps - 1) U_{1}|_{H^{ 1} } \\
& \leq & M  \big|   { | D_{x}| \over  1 + |D_{x}|^{1 \over 2 }  }U_{2} \big|_{L^2}^2
+ |v_\eps - 1|_{W^{1,\infty}}^2    | U_{1}|_{H^{ 1} }^2\\ 
& \leq  & M \Big(  \big|   { | D_{x}| \over  1 + |D_{x}|^{1 \over 2 }  }U_{2} \big|_{L^2}^2
+   | U_{1}|_{H^{ 1} }^2 \Big).
\end{eqnarray*}
Consequently, we get from \eqref{IT1} that
$$\big(  L(k)U, U \big)
\geq m    \big(G_{\eps, k } U_{2}, U_{2} \big) -  M  \Big(   | U_{1}|_{H^1}^2  +   \big|   { | D_{x}| \over  1 + |D_{x}|^{1 \over 2 }  }U_{2} \big|_{L^2}^2\Big).$$
Next, we can add \eqref{Lkm1} times a large constant and the last estimate to get
$$\big(  L(k)U, U \big) \geq 
m \Big( |U_{1}|_{H^1}^2 +  \Big| { |D_{x}| \over  1 + |D_{x}|^{ 1 \over 2 } } U_{2}
\Big|_{L^2}^2 +  \big(G_{\eps, k } U_{2}, U_{2} \big) \Big).$$
Finally, we can use \eqref{DNm},  which gives in particular that
$$  \big(G_{\eps, k } U_{2}, U_{2} \big) \geq   { |k|^2  \over 1 +  |k| } |U_{2}|^2_{L^2} $$
to get \eqref{Lkm}, assuming \eqref{Lkm1}.

It remains to prove that  \eqref{Lkm1} holds.
Thanks to the monotonicity  of $G_{\eps, k}$ with respect to $k$ proven in Proposition \ref{DirNeum} ii)
and since we also obviously have
$$ \big(- P_{\eps, k_{1}} U_{1},  U_{1} \big) \geq \big ( -P_{\eps, k_{2}} U_{1}, U_{1} \big),
\quad |k_{1}| \geq |k_{2}|,$$
we get in particular that
\beq
\label{Lk1}
\big(  L(k) U, U \big) \geq (L(0) U, U), \quad \forall\, U \in H^1 \times H^{ 1 \over 2 }.
\eeq
Next, thanks to \eqref{L0dec}, we have
\beq
\label{Lk2}(L(0) U, U) = (A_{\eps} U_{1}, U_{1}) +  
\Big( M \big( \partial_{x} U_{2} - M^{-1} (\gamma_{\eps} U_{1}) \Big), 
    \partial_{x}U_{2} - M^{-1}(\gamma_{\eps} U_{1}) \Big).
\eeq 
Consequently,  we can use the assumption that
$$ (U_{1},  \eta_{\eps}^-)=0, \, (U_{1}, \eta_{\eps}^0) = 0$$
and hence \eqref{ortho1} and   the coercivity of $Q$ in  Lemma \ref{M} to get
$$ \big(  L(k) U, U \big) \geq c \Big(  |U_{1}|_{H^1}^2 +   \Big| \partial_{x} U_{2} - M^{-1}( \gamma_{\eps}
U_{1}) \Big|_{H^{- { 1 \over 2 } } } ^2\Big)$$
for some $c>0$ ($c$ actually depends on $\eps$ but we do not care on this dependence here 
and thus we omit it in our notations).
The expansion of the second term and the Cauchy-Schwarz inequality  give
$$  \Big| \partial_{x} U_{2} - M^{-1}( \gamma_{\eps}
U_{1}) \Big|_{H^{- { 1 \over 2 } } } ^2 \geq |\partial_{x} U_{2} |_{H^{-{ 1 \over 2 } } }^2 +
\big| M^{-1}  ( \gamma_{\eps}
U_{1}) \big|_{H^{- { 1 \over 2 } } }^2  - 2   |\partial_{x} U_{2} |_{H^{-{ 1 \over 2 } } }
\big| M^{-1}  ( \gamma_{\eps}
U_{1}) \big|_{H^{- { 1 \over 2 } } }.$$
Consequently, by using the inequality
\beq
\label{Young}
2 ab \leq \delta a^2 + { 1 \over \delta b^2}, \quad \forall\, a, \, b \geq 0, \, \forall\, \delta >0,
\eeq
we get
$$ \Big| \partial_{x} U_{2} - M^{-1}( \gamma_{\eps}
U_{1}) \Big|_{H^{- { 1 \over 2 } } } ^2 \geq (1 - \delta )  |\partial_{x} U_{2}|^2_{ H^{- { 1 \over 2 } } }
- \big( { 1 \over \delta }- 1  \big)    \big| M^{-1}  ( \gamma_{\eps}
U_{1}) \big|_{H^{- { 1 \over 2 } } }^2$$
where $\delta <1$ will be chosen carefully later.
Next, since $M^{-1} \in \mathcal{B}(H^{1 \over 2}, H^{-{1 \over 2 }})$ and
$\gamma_{\eps} \in W^{ 1, \infty}$, we can use the crude estimate
$$  \big| M^{-1}  ( \gamma_{\eps}
U_{1}) \big|_{H^{- { 1 \over 2 } } } \leq C |\gamma_{\eps} U_{1}|_{H^{ 1 \over 2 } }
\leq C  |\gamma_{\eps} U_{1}|_{H^ 1  } \leq C  |U_{1}|_{H^1}$$
for some $C>0$.
This yields
$$ \Big| \partial_{x} U_{2} - M^{-1}( \gamma_{\eps}
U_{1}) \Big|_{H^{- { 1 \over 2 } } } ^2 \geq (1 - \delta )  |\partial_{x} U_{2}|^2_{ H^{- { 1 \over 2 } } }
- C \big( { 1 \over \delta } -1 \big ) |U_{1}|_{H^1}^2$$
and hence we find that 
$$ \big(  L(k) U, U \big) \geq c \Big(
\big(  1- C \big( { 1 \over \delta } - 1 \big)  \big) |U_{1} |_{H^1}^2
+  ( 1 - \delta ) |\partial_{x} U_{2}|_{H^{ - { 1 \over 2 } } }^2 \Big).
$$
To conclude, it suffices to choose $\delta$ in the non-empty interval
$ \big( { C  \over 1 + { C} }, 1 )$ thus for example
$$ \delta = { 1 \over 2 }\Big( 1 +   { C  \over 1 + { C} } \Big).$$
This proves \eqref{Lkm1} which in turn implies \eqref{Lkm}.

Next, we shall 
prove that for every $k$, $L(k)$ has at most one  simple negative eigenvalue. 
Thanks to Proposition~\ref{Mielke}, we notice that the
 quadratic form $(\cdot, A_{\eps} \cdot)$ is nonnegative
  on $(\eta_{\eps}^-)^\perp.$
Thanks to  \eqref{Lk1},   \eqref{Lk2} and Lemma \ref{M}, this yields 
 that the quadratic form $(L(k)\cdot,  \cdot)$ is  nonnegative
  on $(\eta_{\eps}^-, 0)^\perp.$  By contradiction, we obtain
   that $L(k)$ has at most one simple  negative eigenvalue. Indeed, otherwise
    $L(k)$ would have an invariant subspace at least two-dimensional on which
    the quadratic form $(L(k)\cdot,  \cdot)$ is strictly negative. Since this subspace
     must meet $(\eta_{\eps}^-, 0)^\perp$ in a non-trivial way, this yields a contradiction.

  Now, we shall prove that a negative eigenvalue of $L(0)$ indeed exists.
   Since $L(0)$ is self-adjoint with essential spectrum included in $[0,  + \infty[$
    thanks to  Proposition~\ref{essL}, it suffices to prove that there exists $U$ such that
     $(L(0) U, U) <0$. 
   In view of \eqref{Lk2}, a good candidate would be $U= (U_{1}, U_{2})$ such that 
    $$ U_{1}= \eta_{\eps}^-, \quad \partial_{x} U_{2}= M^{-1}(\gamma_{\eps} U_{1}).$$
   Nevertheless, the second equation above does not necessarily have a solution
    $U_{2}$ in $L^2$.  To fix this difficulty we shall define a sequence $U^n=(U^n_1,U^n_2)$ in $L^2$
     and prove that  $(L(0)U^n, U^n)$  becomes negative for $n$ sufficiently large.
    We set 
   $$ U^n_{1}= \eta_{\eps}^-, \quad  U_{2}^n= \chi(n D_{x})  \partial_{x}^{-1} M^{-1}(\gamma_{\eps}  \eta_{\eps}^- )$$
   where $\chi$ is a smooth bounded function such that $\chi(\xi)=0$ on $(-1/2, 1/2)$ and
    $\chi(\xi)=1$  for $|\xi|\geq 1$.  Next, thanks to Lemma \ref{M}, we notice that 
 \begin{eqnarray*}
& &   \Big|  \Big( M \big( \partial_{x} U_{2}^n - M^{-1} (\gamma_{\eps} U_{1}^n) \Big), 
    \partial_{x}U_{2}^n - M^{-1}(\gamma_{\eps} U_{1}^n) \Big) \Big| \\
   &   \leq C  &   \big| \partial_{x } U_{2}^n - M^{-1} (\gamma_{\eps} U_{1}^n ) \big|_{H^{-{1\over 2 } }}^2
    \\
    & \leq C &  \big| \big(\chi(nD_{x}) - 1 \big) M^{-1}(\gamma_{\eps} \eta_{\eps}^-) \big|_{H^{ - { 1 \over 2 } }  }
    ^2
\end{eqnarray*}
and hence, since $M^{-1}(\gamma_{\eps} \eta_{\eps}^-) \in H^{- { 1 \over 2} }$, 
we get from  the dominated convergence theorem  that
$$ 
\lim_{n\rightarrow\infty }   
\Big( M \big( \partial_{x} U_{2}^n - M^{-1} (\gamma_{\eps} U_{1}^n) \Big), 
\partial_{x}U_{2}^n - M^{-1}(\gamma_{\eps} U_{1}^n) \Big)  = 0.
$$
Consequently, thanks to \eqref{Lk2}, we get that $(L(0) U^n, U^n)$ is negative for $n$ sufficiently large.    
This ends the proof of Proposition~\ref{Lk}.
\end{proof}
\section{ Study of the linearized about the solitary wave operator $JL(k)$. Transverse linear instability.}
\label{sectionJLk}
As in \cite{RT2}, we shall say that the  linearized equation \eqref{lin0} or equivalently \eqref{linsource}
has an unstable eigenmode with transverse frequency $k$ and amplification parameter
$\sigma $ with $\mbox{Re }\sigma >0$  if  there is a non-trivial solution of
\beq\label{lininst}
\partial_{t} V = J L V
\eeq
under the form
\beq
\label{instdef}
V(t,x,y)= e^{\sigma t } e^{ik y} U(x)
\eeq
with $U \in H^2 \times H^1$. This provides a non-trivial solution of \eqref{lin0} via (\ref{vrazka}).
By substitution of the ansatz \eqref{instdef} in the equation \eqref{lininst}, we
get the resolvent equation
\beq
\label{resolvent}
\sigma U= JL(k) U.
\eeq
Note that if there is a solution $U\in H^{k+ 1}  \times H^k $, we find
from the first equation of the system 
that
$ G_{\eps, k}  U_{2} \in H^k$ thus since $G_{\eps,k}$ is  a first order elliptic
operator (see Corollary~\ref{elliptic}), we  get
that  $U_{2} \in H^{k+1}$.  Next, the second equation gives
that $P_{\eps, k} U_{1} \in  H^{k}$ and hence since $P_{\eps, k}$
 is a second order elliptic operator, we get that
  $U_{1}\in H^{k+2}$. Consequently, one can get by induction
   that an unstable eigenmode $U$ is necessarily smooth, 
    $U \in H^\infty\times H^\infty$.

\subsection{Location of unstable eigenmodes}         
We start with a Lemma  which gives a  crucial preliminary information on the possible
solutions of (\ref{resolvent}).
\begin{lem}\label{GSS}
For every $\eps \in (0, \eps_{0}), $ 
$\sigma(JL(0) )\subset i \mathbb{R}$.
Moreover, for every $k\neq 0$,  $JL(k)$ has at most one unstable eigenmode
which is necessarily simple. Finally, if (\ref{resolvent}) has an unstable
mode with amplification parameter
$\sigma $ then $\sigma\in \mathbb{R}$.
\end{lem}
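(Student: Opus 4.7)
My plan is to apply the classical Grillakis--Shatah--Strauss orthogonality argument combined with the sharp count on negative eigenvalues of $L(k)$ provided by Proposition~\ref{Lk}.

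\emph{Step 1 (GSS identities).} For any candidate unstable eigenpair $(\sigma, U) \in \mathbb{C} \times (H^2 \times H^1)$ of $JL(k)$ with $\mathrm{Re}\,\sigma > 0$, I apply $L(k)$ to $\sigma U = JL(k)U$ and pair with $U$. Using the self-adjointness of $L(k)$, this gives
\begin{equation*}
\sigma\,(L(k)U, U) = (L(k)JL(k)U, U) = (JL(k)U, L(k)U),
\end{equation*}
whose right-hand side has vanishing real part by \eqref{J0} applied with $W = L(k)U$, while the left-hand side is $\sigma$ times the real scalar $(L(k)U, U)$. Since $\mathrm{Re}\,\sigma > 0$, this forces $(L(k)U, U) = 0$. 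The analogous computation applied to two unstable eigenpairs $(\sigma_j, U_j)$ yields $(\sigma_1 + \bar\sigma_2)(L(k)U_1, U_2) = 0$, and since $\mathrm{Re}\,\sigma_1 + \mathrm{Re}\,\sigma_2 > 0$ implies $\sigma_1 + \bar\sigma_2 \neq 0$, we obtain $(L(k)U_1, U_2) = 0$. This vanishing extends to the full generalized unstable eigenspace $E^u$ by a similar argument (using that $(L(k) e^{tJL(k)}V_1, e^{tJL(k)}V_2)$ is preserved by the flow and decays as $t \to -\infty$ on $E^u$).

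\emph{Step 2 (dimension bound; parts (2) and (3)).} A nonzero $\phi \in E^u \cap \ker L(k)$ would satisfy $JL(k)\phi = 0$, contradicting that all eigenvalues of $JL(k)|_{E^u}$ have positive real part; hence $E^u \cap \ker L(k) = \{0\}$. Therefore $E^u$ injects as a totally isotropic subspace of the non-degenerate symmetric form induced by $L(k)$ on $(H^2 \times H^1)/\ker L(k)$, whose signature is $(\infty, n_-(L(k)))$. A Pontryagin--Witt type inequality bounds the dimension of such a subspace by $\min(\infty, n_-(L(k))) = n_-(L(k)) \leq 1$ thanks to Proposition~\ref{Lk}. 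Thus $\dim E^u \leq 1$, which is part~(2). For part~(3), $L(k)$ commutes with pointwise complex conjugation (its coefficients being real and $G_{\eps,k}$ preserving the real subspace by part~i) of Proposition~\ref{DirNeum}), so $(\bar\sigma, \bar U)$ is again an unstable eigenpair, and the uniqueness from part~(2) forces $\sigma = \bar\sigma \in \mathbb{R}$.

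\emph{Step 3 (case $k=0$; part (1)).} The previous steps reduce part~(1) to excluding a single candidate real unstable eigenvalue $\sigma_0 > 0$ of $JL(0)$; the inclusion of the essential spectrum in $i\mathbb{R}$ will be handled separately in Proposition~\ref{essJL}. I would exploit the translation mode $\tau := (\partial_x \eta_\eps, v_\eps) \in \ker L(0)$: pairing $\sigma_0 U = JL(0)U$ with $\tau$ and using $L(0)\tau = 0$ yields $(J\tau, U) = 0$. Combined with $(L(0)U, U) = 0$, the decomposition~\eqref{L0dec} rewriting this quantity as $(A_\eps U_1, U_1)$ plus a non-negative piece, and the spectral information on $A_\eps$ from Proposition~\ref{Mielke}, these constraints should suffice to invoke~\eqref{Lkm} and conclude $U = 0$.

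The hard part will be Step~3: the two scalar identities $(L(0)U, U) = 0$ and $(J\tau, U) = 0$ alone do not obviously annihilate both bad directions $\eta_\eps^-, \eta_\eps^0$ of $A_\eps$. The classical GSS resolution requires an additional orthogonality tied to the momentum $\mathcal{P}(Q_c)$ and to the instability index $d''(c) > 0$ -- the same positivity that underlies Mielke's Theorem~\ref{stab_1d} -- and producing this ingredient in the present non-standard framework is the main technical point.
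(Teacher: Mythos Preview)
Your Steps~1 and~2 are correct and coincide with the paper's argument (which follows Pego--Weinstein): the isotropy identities $(L(k)U_j,U_l)=0$ on the unstable eigenspace combined with the single--negative--direction count from Proposition~\ref{Lk} force $\dim E^u\le 1$, and complex conjugation then gives $\sigma\in\R$. Your Pontryagin--Witt phrasing and the flow argument for generalized eigenvectors are slightly cleaner than the paper's ad hoc computation, but the content is the same.

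The gap is in Step~3, and it is a gap of effort rather than of idea: you are trying to reprove from scratch what the paper simply imports. The paper's proof of the first assertion is one line: ``a direct consequence of the one-dimensional stability result of Mielke''. Mielke's paper establishes conditional orbital stability precisely by verifying the full GSS package for the one-dimensional water-wave problem --- in particular the moment-of-instability condition $d''(c)>0$ that you correctly identify as the missing piece. With that condition in hand, the standard GSS counting (one negative eigenvalue of $L(0)$, one Jordan block at $0$ generated by the translation mode, $d''>0$) rules out any unstable eigenvalue of $JL(0)$; combined with Proposition~\ref{essJL} for the essential spectrum, this gives $\sigma(JL(0))\subset i\R$. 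So you should not attempt to manufacture the extra orthogonality by hand: cite Mielke for the $k=0$ spectral stability and move on. Your outline of what would be needed (the momentum constraint and $d''>0$) is accurate, but reproducing Mielke's verification of those hypotheses in the Zakharov--Craig--Sulem formulation is a separate paper, not a lemma.
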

\begin{proof}
The first part  is a direct consequence of the  one-dimensional stability result of Mielke \cite{Mielke}.
For the second part, we follow Pego-Weinstein \cite{PW}. 
Suppose that there exist linearly independent $u_1$ and $u_2$ such that 
$JL(k)u_j=\sigma_j u_j$, ${\rm Re}(\sigma_j)>0$, $j=1,2$. Set
$v_j(t)\equiv e^{\sigma_j t}u_j$. 
Thus $\pa_{t}v_j=JL(k)v_j$. Next we observe that thanks to the  symmetry of $L(k)$
and the skew-symmetry of $J$, we have
$
\pa_{t}(L(k)v_1(t),v_2(t))=0$.
This implies that for every real $t$,
$
e^{t(\sigma_1+\overline{\sigma_2})}(L(k)u_1,u_2)=(L(k)u_1,u_2).
$
Using that ${\rm Re}(\sigma_j)>0$, we obtain that $(L(k)u_1,u_2)=0$.
Since  we know that $L(k)$ has at most one negative direction, we obtain that
there exists a complex number $\gamma$ such that $(L(k)(u_1+\gamma
u_2),u_1+\gamma u_2)\geq 0$. Therefore
$
(L(k)u_1,u_1)+|\gamma|^2(L(k)u_2,u_2)\geq 0\,.
$
By taking the scalar product of $JL(k)u_j=\sigma_j u_j$ by $L(k)u_j$ and
taking the real part, we obtain that $(L(k)u_j,u_j)=0$. Therefore
$u_1+\gamma u_2$ is in the kernel of $L(k)$. This in turn implies that
$\sigma_1 u_1+\gamma\sigma_2 u_2=0$ which is a contradiction. Next we can show
similarly that an unstable eigenvalue can not be of multiplicity higher than
$1$. Indeed, if we suppose that $u_1$ and $u_2$ are  such that
$
JL(k)u_1=\sigma u_1
$
and
$ 
JL(k)u_2=\sigma u_2+u_1
$
then we may consider $v_1$ and $v_2$ defined as
$
v_1(t)=e^{\sigma t}u_1,
$
$
v_2(t)=e^{\sigma t}(u_2+t u_1)
$
and obtain a contradiction as above. Finally, we observe that if
(\ref{resolvent}) has a nontrivial solution for some $\sigma$ and $k$, then by taking
the complex conjugate, we obtain that (\ref{resolvent}) has a nontrivial
solution with the same $k$ and $\sigma$ replaced by $\bar{\sigma}$. If
$\sigma$ is not real this contradicts the previous analysis which showed that
for each $k$ there is at most one $\sigma$ such that (\ref{resolvent}) has a nontrivial
solution. This completes the proof of Lemma~\ref{GSS}.
\end{proof}
In the next lemma, we give a further localization where  unstable eigenmodes
must be sought. 
We have the following statement giving further information on the location of
the possible unstable eigenmodes.
\begin{prop}[Location of unstable eigenmodes]\label{brute0cor}
We have the following information on the location of unstable eigenmodes:     
\begin{itemize}
\item[i)]There exists $K>0$ such that if $|k|>K$ then there is no  
unstable eigenmode with transverse frequency $k$ and amplification parameter $\sigma$ satisfying 
$\mbox{Re}(\sigma)  >0$.
\item [ii)] There exists $M>0$ such that for every $k$ , $ |k|\leq K$,  there is no unstable
eigenmode with transverse frequency $k$ and with amplification parameter $\sigma$ satisfying 
$\mbox{Re}(\sigma) \geq M$.
\end{itemize}
\end{prop}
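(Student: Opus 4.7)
Since by Lemma \ref{GSS} any unstable eigenvalue $\sigma$ is real, the corresponding eigenvector $U$ may be chosen real. Taking the $L^2\times L^2$ inner product of the resolvent equation $\sigma U = JL(k)U$ with $L(k)U$ and using the identity \eqref{J0} for the skew-symmetric operator $J$, one obtains $\sigma (L(k)U, U) = \mbox{Re}(JL(k)U, L(k)U)= 0$, so the Pego--Weinstein constraint $(L(k)U,U)=0$ holds. The plan is to combine this constraint with the properties of $L(k)$ established in Propositions \ref{DirNeum}--\ref{Lk} to derive part i) by a direct contradiction and part ii) by testing the eigenvalue equation against the finite-dimensional ``bad'' directions of $L(k)$.

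For part i), I show that for $|k|$ large enough, $L(k)$ is positive definite, which contradicts $(L(k)U,U)=0$ for $U\neq 0$. Writing
$$ (L(k)U,U) = ((-P_{\eps,k}+\alpha+(v_\eps-1)\partial_x Z_\eps)U_1, U_1) + 2((v_\eps-1)\partial_x U_2, U_1) + (G_{\eps, k}U_2, U_2), $$
the estimate \eqref{posP} gives a diagonal contribution of order $c(|U_1|_{H^1}^2 + k^2|U_1|_{L^2}^2)$; \eqref{DNm} yields $(G_{\eps, k}U_2, U_2)\geq c(|U_2|_{H^{1/2}}^2 + |k||U_2|_{L^2}^2)$ for $|k|\geq 1$; and the cross term is controlled by integration by parts and Young's inequality, $|2((v_\eps-1)\partial_x U_2, U_1)| = |2(\partial_x((v_\eps-1)U_1), U_2)|\leq \delta|U_1|_{H^1}^2 + C_\delta|U_2|_{L^2}^2$. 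Choosing $\delta$ small and then $|k|$ large enough to absorb the $C_\delta |U_2|^2$ term into $c|k||U_2|^2$ produces $(L(k)U,U)>0$, a contradiction.

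For part ii), fix $|k|\leq K$ and suppose an unstable mode exists. Decomposing $U_1 = a\eta_\eps^- + b\eta_\eps^0 + R$ with $R$ orthogonal in $L^2$ to $\eta_\eps^-$ and $\eta_\eps^0$, applying \eqref{Lkm} to $\tilde U = (R, U_2)$, and estimating the cross terms with $(a\eta_\eps^-+b\eta_\eps^0, 0)$ via Young's inequality (using that $L(k)$ sends these profiles into $L^2$ with bounded norm), the constraint $(L(k)U,U)=0$ implies
$$ |R|_{H^1}^2 + c\Bigl(\bigl|\tfrac{|D_x|}{1+|D_x|^{1/2}} U_2\bigr|_{L^2}^2 + \tfrac{k^2}{1+|k|}|U_2|_{L^2}^2\Bigr) \leq C (a^2+b^2). $$
For $k_0\leq |k|\leq K$ with $k_0>0$ fixed, this controls $|U_2|_{L^2}$ by $\sqrt{a^2+b^2}$, so the normalization $\|U\|_{L^2\times L^2}=1$ yields $a^2+b^2\geq c_0>0$. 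Then testing the first component of the eigenvalue equation, namely $\sigma U_1 = G_{\eps, k}U_2 - \partial_x((v_\eps-1)U_1)$, against $\eta_\eps^-$ and $\eta_\eps^0$ and using the self-adjointness of $G_{\eps, k}$ together with Proposition \ref{DirNeum2}~i) to bound $|G_{\eps, k}\eta_\eps^{-,0}|_{L^2}$, one gets
$$ \sigma a = (U_2, G_{\eps, k}\eta_\eps^-) + ((v_\eps-1)U_1, \partial_x\eta_\eps^-), \qquad \sigma b = (U_2, G_{\eps, k}\eta_\eps^0) + ((v_\eps-1)U_1, \partial_x\eta_\eps^0), $$
with each right-hand side bounded by $C\|U\|_{L^2\times L^2} = C$. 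Hence $|\sigma|\sqrt{a^2+b^2}\leq C'$ and $|\sigma|\leq C'/\sqrt{c_0}$ uniformly on $[k_0, K]$. The remaining regime $|k|<k_0$ is handled via spectral continuity: $\sigma(JL(0))\subset i\mathbb{R}$ by Lemma \ref{GSS} and $JL(k)$ depends continuously on $k$ through Proposition \ref{DirNeum2}~ii), so for $k_0$ small enough any unstable eigenvalue at $|k|<k_0$ has bounded modulus. The main obstacle is precisely the degeneracy of \eqref{Lkm} on $|U_2|_{L^2}^2$ as $k\to 0$, which prevents a single uniform argument and forces the separate spectral-continuity treatment near $k=0$.
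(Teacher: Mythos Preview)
Your argument for part i) is correct and coincides with the paper's: the constraint $(L(k)U,U)=0$ combined with the large-$k$ positivity of $L(k)$ forces $U=0$.

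For part ii), your approach differs from the paper's and contains a genuine gap. Your decomposition $U_1=a\eta_\eps^-+b\eta_\eps^0+R$ together with \eqref{Lkm} and the testing against $\eta_\eps^{-,0}$ does yield a bound $|\sigma|\leq C'/\sqrt{c_0}$ on any interval $k_0\leq |k|\leq K$. The problem is the endgame near $k=0$. The ``spectral continuity'' step you invoke does not do what you need: knowing that $\sigma(JL(0))\subset i\R$ and that $JL(k)\to JL(0)$ in $\mathcal{B}(H^2\times H^1,L^2\times L^2)$ only tells you that any \emph{fixed} compact subset of $\{\mbox{Re}\,\sigma>0\}$ is eventually in the resolvent set of $JL(k)$. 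It does not prevent an unstable eigenvalue of $JL(k)$ from escaping to infinity as $k\to 0$, because $JL(k)$ is not normal and there is no a priori resolvent bound for large $|\sigma|$. In other words, ruling out large unstable eigenvalues for small $k$ is precisely the content of part ii), and you cannot import it from continuity alone.

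The paper avoids this difficulty by a different, simpler multiplier. It writes $L(k)=L_0(k)+L_1$ with $L_0(k)=\mbox{diag}(-P_{\eps,k}+\alpha,\,G_{\eps,k})$ and takes the scalar product of $\sigma U=JL(k)U$ with $L_0(k)U$ rather than $L(k)U$. Since $L_0(k)$ is coercive in the semi-norm $|U_1|_{H^1}^2+\big|\frac{|D_x|}{1+|D_x|^{1/2}}U_2\big|_{L^2}^2+k^2|U_2|_{L^2}^2$ uniformly for $|k|\leq K$ (equation \eqref{L0m}), and since the off-diagonal piece satisfies the commutator-type bound $|\mbox{Re}(JL_1U,L_0(k)U)|\leq C\cdot(\text{same semi-norm})$ (equation \eqref{L0L1}, using Proposition~\ref{com} for the $G_{\eps,k}$ term), one gets $\mbox{Re}(\sigma)\leq C/c$ in one stroke for all $|k|\leq K$, including $k$ near $0$. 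The point is that $L_0(k)$, unlike $L(k)$, has no negative directions to split off, so no finite-dimensional analysis is needed and the degeneracy of the $|U_2|_{L^2}$ control at $k=0$ is harmless (the semi-norm still separates $0$ from nonzero $U\in H^2\times H^1$).
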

\begin{proof}[Proof of Proposition~\ref{brute0cor}]
By taking the  scalar product of \eqref{resolvent} by $L(k)U$ and then taking the real part,  we get that
$$ \mbox{Re}(\sigma) (L(k)U, U ) =0$$ and hence if $\mbox{Re } \sigma >0$, this yields 
\beq
\label{large1}
(L(k) U, U )=0.
\eeq
Next, we get  by a very crude estimate that 
$$(L(k) U, U) \geq c|U_{1}|_{H^1}^2 + k^2 |U_{1}|_{L^2}^2 + (G_{\eps, k } U_{2}, U_{2})
- C \big(  | U_{1}|_{H^1} \,  |U_{2}|_{L^2} +  |U_{1}|_{L^2}^2\big)$$
where  $c>0$, $C>0$ are  independent of $k$.
Thanks to \eqref{DNm}, for $k$  large (actually $k\geq 1$ is sufficient), we have
$$ (G_{\eps, k } u, u ) \geq c \Big(   \Big| {  |D_{x} | \over 1 +  |D_{x}|^{1 \over 2} } u \Big|_{L^2}^2
+  |k|
|u|_{L^2}^2\Big).$$
Consequently, we obtain
\beq
\label{Lbelow}
(L(k) U, U) \geq c\Big( |U_{1}|_{H^1}^2 + k^2 |U_{1}|_{L^2}^2 +  \Big| {  |D_{x} | \over 1 +  |D_{x}|^{1 \over 2} } u \Big|_{L^2}^2
+ |k| |U_{2}|_{L^2}^2\Big)
- C \big(  | U_{1}|_{H^1} \,  | U_{2}|_{L^2} +  |U_{1}|_{L^2}^2\big)
\eeq
and hence, thanks to a new use of \eqref{Young}, we easily get that  for $k$ sufficiently large
\beq
\label{klarge}
(L(k) U, U)\geq c\Big( |U_{1} |_{H^1}^2 + |U_{2}|_{H^{1 \over 2 } }^2  \Big).
\eeq
In particular, we get from \eqref{large1} that $U=0$. This proves i).

We turn to the proof of ii).
We use a decomposition of $L(k)$ under the form
\beq\label{Ldec1}
L(k) = L_{0}(k) + L_{1}, 
\eeq
where
\beq
\label{Ldec2}
L_0(k) = \left( \begin{array}{cc}
- P_{\eps, k } + \alpha  & 0 \\ 0 & G_{\eps, k } \end{array}
\right), \quad  L_{1} = \left(\begin{array}{cc}
(v_{\eps} - 1) \partial_{x} Z_{\eps} & (v_{\eps} - 1 )\partial_{x}  \\
- \partial_{x} ( (v_{\eps} - 1 ) \cdot) &  0 \end{array} \right).
\eeq
Note that $L_{0}$ is a real-symmetric operator. 
By taking the scalar product of \eqref{resolvent} with $L_{0}(k) U$, we find
\beq\label{large1_bis}
\mbox{Re}(\sigma) (L_{0}(k) U, U )  = {\rm Re } \, \big(J L_{1} U, L_{0}(k)U \big).
\eeq
By using an integration by parts and
\eqref{DNm}, we get that
\beq
\label{L0m} 
(L_{0}(k)U, U )  \geq c\Big( |U_{1}|_{H^1}^2 +  \Big| { |D_{x}|\over  1 + |D_{x}|^{1 \over 2 }   } U_{2}
\Big|_{L^2}^2 +  |k |^2 \, |U_{2}  |_{L^2}^2 \Big)
\eeq
for some $c>0$.
To estimate the right-hand side of \eqref{large1_bis}, we  need  to estimate the following
quantities:
\begin{eqnarray*}
& &  I=\big| {\rm Re  }\, \big(  (v_{\eps} - 1 ) (\partial_{x} Z_{\eps})  U_{1}, G_{\eps, k } U_{2}
\big)  \big|
,  \\
& & II=  \big|{\rm Re }\, \big( v_{\eps} -1) \partial_{x} U_{2}, G_{\eps, k} U_{2} \big) \big|
,  \\
& & III=  \big| {\rm Re }\, \big( - \partial_{x} ( (v_{\eps} - 1) U_{1}) , (P_{\eps, k} + \alpha) U_{1}
\big)\big|.
\end{eqnarray*}

The term $I$ is easy to bound, it suffices to use \eqref{DNC}
to get
$$ I \leq C  |U_{1}|_{H^1} \, \Big(  \Big| {| D_{x}|\over  1 + |D_{x}|^{1 \over 2 }   } U_{2}
\Big|_{L^2} +  |k | \, |U_{2}  |_{L^2} \Big).
$$
The estimate of  $II$ follows from the commutator estimate of Proposition \ref{com}    which yields
$$ 
II \leq C    \Big(  \Big| {| D_{x}|\over  1 + |D_{x}|^{1 \over 2 }   } U_{2}
\Big|_{L^2}^2 +  |k |^2 \, |U_{2}  |_{L^2}^2\Big).
$$
Finally, by using integration by  parts, we also easily get that
$$ III \leq C |U_{1} |_{H^1}^2.$$
We have thus proven that
\beq\label{L0L1}
\big| \big(J L_{1} U, L_{0}(k)U \big) \big| 
\leq C\Big( |U_{1}|_{H^1}^2 +  \Big| { |D_{x}|\over  1 + |D_{x}|^{1 \over 2 }   } U_{2}
\Big|_{L^2}^2 +  |k |^2 \, |U_{2}  |_{L^2}^2\Big).  
\eeq
Consequently,  we obtain from \eqref{large1_bis}, \eqref{L0m}
that
$$  c \, \mbox{Re}(\sigma) \Big( |U_{1}|_{H^1}^2 +  \Big| { |D_{x}|\over  1 + |D_{x}|^{1 \over 2 }   } U_{2}
\Big|_{L^2}^2 +  |k |^2 \, |U_{2}  |_{L^2}^2 \Big)
\leq C \Big( |U_{1}|_{H^1}^2 +  \Big| { |D_{x}|\over  1 + |D_{x}|^{1 \over 2 }   } U_{2}
\Big|_{L^2}^2 +  |k |^2 \, |U_{2}  |_{L^2}^2 \Big)$$
for some constant $C>0$, depending on $K$ but independent of $\sigma$. 
This yields that $U=0$  if $\mbox{Re }(\sigma)$ is sufficiently large. This proves ii).
This completes the proof of Proposition~\ref{brute0cor}.
\end{proof}
\subsection{Existence of an unstable eigenmode}
\begin{theoreme}[Linear instability]\label{mode}
There exists $\sigma>0$ and $k\neq 0$  and a nontrivial $U\in H^2 \times H^1$ such that
$$ JL(k) U= \sigma U.$$
\end{theoreme}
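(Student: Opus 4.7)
The strategy is to combine the monotonicity of $L(k)$ with respect to $|k|$ with a Lyapounov--Schmidt reduction performed at a critical transverse frequency $k=k^*$ where a new zero eigenvalue of $L(k)$ appears. The bifurcating eigenvalue of $JL(k)$ will turn out to be real on one side of $k^*$, which is exactly the desired unstable mode.

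\emph{Step 1. Existence of a critical $k^*$.} The quadratic form $(L(k)U,U)$ is strictly monotone increasing in $k^2$: the first diagonal block $-P_{\varepsilon,k}+\alpha$ contributes the strictly positive term $\beta k^2(1+(\partial_x\eta_\varepsilon)^2)^{-1/2}$, while Proposition~\ref{DirNeum}~ii) gives strict monotonicity of $G_{\varepsilon,k}$ on the second diagonal block. By Proposition~\ref{Lk}, $L(0)$ has exactly one simple negative eigenvalue and $L(k)$ has at most one for each $k$; since for $|k|$ sufficiently large one checks from the $k$-dependence of $-P_{\varepsilon,k}$ and $G_{\varepsilon,k}$ (combined with \eqref{Lkm}) that $L(k)$ is positive definite, the negative eigenvalue, which depends continuously on $k$, must reach zero at some smallest $k^*>0$. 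At $k^*$, the self-adjoint operator $L(k^*)$ is non-negative with a simple zero eigenvalue spanned by some $\phi^*\in H^2\times H^1$, and by Proposition~\ref{essL} together with the spectral description just given, $L(k^*)$ is coercive on $(\mathrm{span}\,\phi^*)^{\perp}$.

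\emph{Step 2. Jordan structure at $(k^*,0)$.} Since $J$ is invertible, $JL(k^*)\phi^*=0$. The equation $L(k^*)\psi^*=-J\phi^*$ is Fredholm-solvable: its compatibility condition $(-J\phi^*,\phi^*)=0$ follows from skew-symmetry of $J$. Any such $\psi^*$ satisfies $JL(k^*)\psi^*=\phi^*$. Testing the relation against $\psi^*$ yields
\begin{equation*}
(\psi^*,J\phi^*) \;=\; -(L(k^*)\psi^*,\psi^*);
\end{equation*}
because $\psi^*\notin\ker L(k^*)$ (otherwise $J\phi^*=L(k^*)\psi^*=0$, contradicting invertibility of $J$), the coercivity from Step~1 gives $(L(k^*)\psi^*,\psi^*)>0$. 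In particular the Jordan chain has length exactly $2$, and $\{\phi^*,\psi^*\}$ spans the full generalized kernel of $JL(k^*)$ at zero.

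\emph{Step 3. Lyapounov--Schmidt and a real eigenvalue for $k<k^*$.} By Proposition~\ref{DirNeum2}, $L(k)$ depends analytically on $k^2$ near $k^*$, so we may write $L(k)=L(k^*)+\mu M^*+O(\mu^2)$ with $\mu:=k^2-(k^*)^2$ and $M^*=\partial_{k^2}L|_{k=k^*}$. The same monotonicity as in Step~1 shows that $M^*$ is block-diagonal with strictly positive blocks, so in particular $(M^*\phi^*,\phi^*)>0$. A standard Lyapounov--Schmidt reduction of $(JL(k)-\sigma I)U=0$ with respect to the two-dimensional generalized kernel $\mathrm{span}(\phi^*,\psi^*)$ produces an analytic bifurcation function; substituting the Puiseux ansatz
\begin{equation*}
\sigma = s_1\sqrt{\mu}+O(\mu),\qquad U=\phi^*+s_1\sqrt{\mu}\,\psi^*+\mu\, U_2+O(\mu^{3/2})
\end{equation*}
and collecting the $O(\mu)$ terms yields $JL(k^*)U_2 = s_1^2\psi^*+s_2\phi^*-JM^*\phi^*$. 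The Fredholm solvability condition is orthogonality of the right-hand side to $J\phi^*$ (which spans $\ker (L(k^*)J) = J^{-1}\ker L(k^*)$); using $J^{T}J=I$ and skew-symmetry of $J$ this reduces to
\begin{equation*}
s_1^{2}\,(\psi^*,J\phi^*)\;=\;(M^*\phi^*,\phi^*).
\end{equation*}
By Step~2, $(\psi^*,J\phi^*)<0$, while $(M^*\phi^*,\phi^*)>0$, so $s_1^{2}<0$ and $s_1=i\tau$ for some $\tau>0$. For $k$ slightly below $k^*$ we have $\mu<0$ and $\sqrt{\mu}=i\sqrt{|\mu|}$, whence $\sigma=\pm\tau\sqrt{|\mu|}$ is real; the positive root is the desired unstable eigenvalue of $JL(k)$. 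The eigenfunction lies in $H^2\times H^1$ (and in fact in $H^\infty\times H^\infty$) by the elliptic bootstrap recorded at the start of Section~\ref{sectionJLk}.

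The delicate ingredient, where monotonicity is essential, is the sign analysis: strict monotonicity of $L(k)$ in $k^2$ simultaneously forces $(M^*\phi^*,\phi^*)>0$ and, via the existence of $k^*$ plus the fact that at $k^*$ the operator $L(k^*)$ is semi-definite with a simple kernel, also $(L(k^*)\psi^*,\psi^*)>0$. These opposite signs make $s_1^{2}$ negative, and the square-root ambiguity then produces real eigenvalues precisely on the side $k<k^*$ where $L(k)$ still has a negative direction, consistent with the fact that for $k>k^*$ the operator $L(k)$ becomes positive and $JL(k)$ has purely imaginary spectrum.
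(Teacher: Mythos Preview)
Your plan is correct and shares the paper's core strategy: locate a critical $k^*>0$ where $L(\cdot)$ first acquires a one-dimensional kernel, then run a Lyapounov--Schmidt bifurcation there, with the monotonicity of $L(k)$ in $|k|$ supplying the crucial non-degeneracy. The executions differ, however. The paper repackages the problem via $M(k)=JL(k)J$ and Lemma~\ref{ker}, and then---instead of your Puiseux expansion in $k$---applies the implicit function theorem with $\sigma$ as the free parameter, solving $M(k)v=\sigma Jv$ for $(v,k)$ as analytic functions of $\sigma$ near $0$. The single transversality condition needed there, $\bigl(\partial_kM(k_1)u,u\bigr)<0$, is equivalent to your $(M^*\phi^*,\phi^*)>0$; once it holds, one simply takes any small $\sigma>0$ and reads off the corresponding $k(\sigma)\neq 0$, with no further sign analysis. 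Your route does more work---the Jordan-chain computation of Step~2 producing $(\psi^*,J\phi^*)<0$ has no counterpart in the paper's argument---but in exchange you extract more structure: the leading behaviour $\sigma\sim\tau\sqrt{(k^*)^2-k^2}$ and the fact that the instability sits on the side $k<k^*$, consistent with $L(k)$ still having a negative direction there.

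One point in Step~1 to flesh out: simplicity of $\ker L(k^*)$ does not follow by direct citation of Proposition~\ref{Lk}. You need the strict-monotonicity argument (a two-dimensional kernel at $k^*$ would, for $k$ slightly below $k^*$, produce a two-dimensional subspace on which $(L(k)\cdot,\cdot)<0$, contradicting Proposition~\ref{Lk}); the paper isolates exactly this step as Lemma~\ref{ker}.
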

To prove the existence of an unstable eigenmode, we shall follow the
general method presented in \cite{RT2}. Note  that this result was proven in \cite{GHS} 
by using a different formulation of the water waves equations.
\begin{proof}[Proof of Theorem~\ref{mode}]
Let us  set
$
M(k)\equiv J L(k)J.
$ 
Note that
\beq
\label{simetria}
(M(k) u, u ) = - (L(k) Ju, Ju), \quad \forall\, u\in H^1(\mathbb{R}) \times H^{1\over 2 }(\mathbb{R}).
\eeq
Moreover,  since  $J$  is  invertible matrix,  we get from Proposition~\ref{Lk}
that there exists $\lambda >0$  and $v, w \,  \in H^1 \times H^{ 1 \over 2}$ such that
$ M(0) v= \lambda v$,  $M(0) w = 0$ and that for some $c>0$
\beq
\label{estM(0)}
(M(0)z,z)\leq -c\Big(|z_1|_{H^1}^2+\Big| {  |D_x| \over 1 +   |D_{x}|^{1 \over 2 } } z_2\Big|_{L^2}^2\Big)\,,
\quad \forall\, z=(z_1,z_2), \quad (z, v)=0, \, (z,w)=0.
\eeq
Next we set
$$
f(k)\equiv \sup_{
z\in H^1\times H^{1/2}, |z|_{L^2\times L^2}= 1}(M(k)z,z)\,.
$$
Since $(M(0)v, v)>0$, we already know that $f(0)>0$.
Moreover, from Proposition~\ref{DirNeum} ii) and the obvious monotonicity  of $ P_{\eps, k}$,
we get that $M(k)$  is strictly decreasing in $k$ on $\mathbb{R}_{+}$ as a symmetric operator.
In particular, this yields  that $f(k)$ is decreasing on $\mathbb{R}_{+}.$
By using (\ref{simetria}) and \eqref{klarge}, we obtain that for $k\gg 1$ one
has $f(k)<0$. Therefore, since $f$ is continuous,  there exists a smallest $k_{0}>0$ such that $f(k_{0})=0$. The
following lemma is the key element in the proof of the existence of an
unstable mode.
\begin{lem}\label{ker}
There exists $k_1\in (0,k_0]$ such that $\dim({\rm Ker}(M(k_1))=1$. Moreover, for every $F\in
L^2\times L^2$ orthogonal to ${\rm Ker}(M(k_1))$ there exists a unique   $v\in
H^2\times H^1$ orthogonal to ${\rm Ker}(M(k_1))$ such that $M(k_1)v=F$.
\end{lem}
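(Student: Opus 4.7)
The plan is to take $k_1 = k_0$ and to establish both statements by spectral theory for the self-adjoint operator $M(k) = JL(k)J$ together with the strict monotonicity of the quadratic form $(M(k)z,z)$ in $|k|$ inherited from Proposition~\ref{DirNeum}~ii). Since $J$ is a constant invertible matrix, $M(k)$ is self-adjoint on $L^2 \times L^2$ and $J^{-1}M(k)J = -L(k)$, so by conjugation invariance $\sigma_{\mathrm{ess}}(M(k)) = -\sigma_{\mathrm{ess}}(L(k))$ is contained in $(-\infty, -c_k]$ with $c_k > 0$ for $k \neq 0$ by Proposition~\ref{essL}. Consequently, every point of $\sigma(M(k_0))$ lying in $(-c_{k_0}, +\infty)$ is a discrete eigenvalue of finite multiplicity. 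Since $f(k_0) = \sup \sigma(M(k_0)) = 0 > -c_{k_0}$ and spectra are closed, $0 \in \sigma(M(k_0))$ is such a discrete eigenvalue, so $\ker M(k_0) \neq \{0\}$.

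The main step is to show that $\dim \ker M(k_0) = 1$, which I would achieve by controlling the second generalized eigenvalue $\mu_2(k) := \sup_{\dim V = 2} \inf_{z \in V,\, \|z\| = 1} (M(k)z, z)$ given by the Courant--Fischer principle. First I would verify that $\mu_2(0) = 0$: the lower bound follows by testing on $V = \mathrm{span}(v, w)$, while for the upper bound, given any two-dimensional $V$, intersecting with $\{v\}^\perp$ produces a nonzero $z$ which I decompose as $z = \beta w + P_- z$, with $P_-$ the orthogonal projection onto $\{v, w\}^\perp$; then $M(0)w = 0$ together with the coercive bound~\eqref{estM(0)} on $\{v, w\}^\perp$ yields $(M(0)z, z) = (M(0)P_- z, P_- z) \leq 0$. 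Now for $k > 0$ I would argue by contradiction: if $\mu_2(k) \geq 0$, then some two-dimensional subspace $V$ satisfies $(M(k)z, z) \geq 0$ on $V \setminus \{0\}$, and by strict monotonicity $(M(0)z, z) > (M(k)z, z) \geq 0$ pointwise on $V \setminus \{0\}$; compactness of the unit sphere in the finite-dimensional $V$ then upgrades this to $\min_{z \in V,\, \|z\| = 1}(M(0)z, z) > 0$, contradicting $\mu_2(0) = 0$. Therefore $\mu_2(k_0) < 0$, so the only non-negative eigenvalue of $M(k_0)$ is $0$ and it is simple.

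For the Fredholm alternative, since $M(k_1)$ is self-adjoint and $0$ is isolated in $\sigma(M(k_1))$ at distance at least $\min(c_{k_1}, |\mu_2(k_1)|) > 0$ from the rest of the spectrum, the spectral theorem provides a reducing orthogonal decomposition $L^2 \times L^2 = \ker M(k_1) \oplus (\ker M(k_1))^\perp$, and the restriction of $M(k_1)$ to the orthogonal complement of its kernel is invertible with bounded inverse mapping $L^2 \times L^2$ into $H^2 \times H^1$; applying this inverse to any $F \perp \ker M(k_1)$ produces the unique required $v$. The main obstacle is the simplicity step: the naive inequality $\mu_2(k) \leq \mu_2(0) = 0$ is not strict because $0$ is an embedded eigenvalue of $M(0)$ sitting at the top of its essential spectrum, so one must genuinely exploit compactness in a finite-dimensional test subspace to promote the pointwise strict monotonicity of the quadratic form into a strict separation for $\mu_2$.
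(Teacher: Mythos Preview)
Your proof is correct and takes a somewhat different, more direct route than the paper's. The paper proceeds by a case distinction: either $\ker M(k)$ is already nontrivial for some $k\in(0,k_0)$, in which case that $k$ is taken as $k_1$ (the one-dimensionality of the kernel being left implicit there), or the kernel is trivial throughout $(0,k_0)$ and one sets $k_1=k_0$; in the latter case simplicity is obtained by noting that for each fixed $k<k_0$ the operator $M(k)$ is strictly negative on a codimension-one subspace (the orthogonal complement of its sole positive eigenvector), and monotonicity then forces $M(k_0)$ to be strictly negative on that same subspace, precluding a two-dimensional kernel. You bypass the case split by going straight to $k_1=k_0$ and controlling the second Courant--Fischer number $\mu_2$, using compactness of the unit sphere in a finite-dimensional test space to upgrade the pointwise strict monotonicity of the form into the strict inequality $\mu_2(k_0)<\mu_2(0)=0$. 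Both arguments rest on the same ingredients (Proposition~\ref{essL}, Proposition~\ref{Lk} via \eqref{estM(0)}, and the strict monotonicity of the form in $|k|$), and the Fredholm part is identical in both, following from self-adjointness and the isolation of $0$ in $\sigma(M(k_0))$ for $k_0\neq 0$. Your packaging is arguably cleaner since it avoids the dichotomy; what the paper's route buys is that it never needs to argue that the supremum in the min-max is attained, working instead with a fixed $k<k_0$ where the positive eigenvector is already in hand.
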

\begin{proof}
Thanks to  Proposition~\ref{essL}, we already know
that the essential spectrum of $M(k)$ is in $\mathbb{R}_{-}$ and for $k\neq 0$ it
is even included in $(-\infty, - m)$ for some $m>0$. Indeed, we have
$$ M(k) = -J L(k) J^{-1}.$$

Thanks to the classical variational characterization of the largest eigenvalue,
and  since $f$ is strictly decreasing, we  get that for $k \in (0, k_{0})$, the largest eigenvalue
of $M(k)$ is positive.  Moreover,  thanks to Proposition~\ref{Lk}, 
we further 
obtain that there is exactly one positive eigenvalue of $M(k)$ for $k \in (0, k_{0})$.
Indeed, if there were at least  two positive eigenvalues of $M(k)$ for some
positive $k$, we would get  by the monotonicity of $M(k)$ that $M(0)$ is
positive on a subspace  of dimension at least two and this contradicts the fact that
$L(0)$ has a unique simple negative eigenvalue. 

If for some $k\in (0, k_{0})$ the kernel of $M(k)$ is non-trivial then,  the value $k_{1}$ that 
we are looking for is this $k$. If  for every $k\in (0, k_{0})$, the kernel of $M(k)$
is trivial  then we have $k_{1}= k_{0}$. Indeed, by definition of $k_{0}$, the kernel of 
$M(k_{0})$ is non-trivial and by the classical variational characterization of eigenvalues, 
$0$ is the largest eigenvalue of $M(k_{0})$. Moreover, this eigenvalue is simple. Indeed,  for
$0<k<k_{0}$, we have that $M(k)$ is strictly negative
on a subspace of codimension $1$ (this is a consequence of
the fact that the kernel of  $M(k) $  is assumed to be  trivial  for $k<k_{0}$  and of the fact
that $M(k)$ has a unique positive eigenvalue for $k, $ $0<k<k_{0}$). Consequently,  by monotonicity, we
get that $M(k_{0})$ is also strictly negative on a subspace 
of codimension $1$. Consequently, it cannot vanish on  a subspace of
dimension at least two.    

The second part  in the statement of the lemma is a consequence of the fact that $M(k_{1})$ is
symmetric and  Fredholm index 
zero. Indeed,  $0$ is not in the essential spectrum of $M(k_{1})$
thanks to Theorem \ref{essL} since $k_{1}\neq 0$.
This completes the proof of Lemma~\ref{ker}.
\end{proof}
We next finish the proof of Theorem \ref{mode}, i.e. we prove  the existence of an unstable mode.
Since $J$ is invertible, it is equivalent to prove that
there exist $k\neq 0$, $\sigma>0$ and $u\in H^2\times H^1$ different from zero such that
$$
M(k)u=\sigma Ju\,.
$$
Let $k_1$ be the number defined in Lemma~\ref{ker} with corresponding kernel
spanned by $u$.
We need to solve $F(v,k,\sigma)=0$, with $\sigma>0$, where $F(v,k,\sigma)\equiv M(k)v-\sigma Jv$. We have that 
$F(u,k_1,0)=0$. We look for $v$ as $v=u+w$, where 
$$
w\in {u}^{\perp}\equiv \{v\in H^2\times H^1\,:\, (v,u)=0\}.
$$
Therefore we need to solve $G(w,k,\sigma)=0$ with $\sigma>0$, where
$$
G(w,k,\sigma)=M(k)u+M(k)w-\sigma Ju-\sigma Jw,\quad w\in {u}^{\perp}\,.
$$
We have that
$$
D_{v,k}G(0,k_1,0)[w,\mu]=\mu\Big[\frac{d}{dk}M(k)\Big]_{k=k_1}u+M(k_1)w\,.
$$
By, using Lemma~\ref{ker}, we shall  obtain that $D_{v,k}G(0,k_1,0)$ is a
bijection form $ {u}^{\perp} \times \mathbb{R}$ to $L^2\times L^2$ if we establish that
$$
\Big(\Big[\frac{d}{dk}M(k)\Big]_{k=k_1}u,u\Big)< 0\,.
$$
By explicit computation, we have
$$ 
\Big[\frac{d}{dk}M(k)\Big]_{k=k_1}=
J \left( \begin{array}{cc}  2 k_{1} \big(1+  ( \partial_{x} \eta_{\eps})^2 \big)^{  - {1 \over 2 }  }  & 0 \\
0 & \Big[\frac{d}{dk} G_{\eps, k }\Big]_{k=k_1} \end{array} \right)  J.
$$
From  Proposition \ref{DirNeum} ii), we have that
$ \Big[\frac{d}{dk} G_{\eps, k }\Big]_{k=k_1}$ is a nonnegative operator. Therefore, we obtain
$$  \Big(\Big[\frac{d}{dk}M(k)\Big]_{k=k_1}u,u\Big) \leq - 2 k_{1} \int_{\mathbb{R}} { |u_{2}|^2
\over  \big(1+  ( \partial_{x} \eta_{\eps})^2 \big)^{   {1 \over 2 }  }  } \, dx <0$$
Indeed, we have from the structure of $L(k)$ that    $u_{2}$ does not vanish identically: assume
that $u_{2}$ vanishes identically, then  $M(k_{1}) u = 0$
gives that $G_{\eps,k_{1}}u_{1} = 0$  and hence from Proposition \ref{DirNeum} iii), we get
$u_{1}=0$ which is impossible.

Consequently, we have shown 
that $D_{v,k}G(0,k_1,0)$ is a
bijection form $ {u}^{\perp} \times \mathbb{R}$ to $L^2\times L^2$ and 
we can apply the implicit function theorem,
in order to  complete  the proof of Theorem~\ref{mode}.
\end{proof}
\subsection{Essential spectrum of $JL(k)$}
\begin{prop}\label{essJL}
For $\eps \in (0, \eps_{0})$, 
the essential spectrum  of $JL(k)$ is included in  $i \R$,  for every $k$.
\end{prop}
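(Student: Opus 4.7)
The strategy is to mimic the factorization scheme of Proposition~\ref{essL} in order to show that for each fixed $k$, the operator $\lambda I - JL(k)$ is a Fredholm operator of index zero whenever $\mbox{Re}(\lambda) \neq 0$; this gives $\sigma_{ess}(JL(k)) \subset i\R$. The case $k = 0$ is already contained in Lemma~\ref{GSS}, so I restrict to $k \neq 0$. Since $J$ is an invertible constant matrix, $\lambda I - JL(k) = -J(L(k) + \lambda J)$, and it suffices to study the Fredholmness of
$M_\lambda(k) \equiv L(k) + \lambda J : H^2 \times H^1 \to L^2 \times L^2$.
Note that $\lambda J$ is a bounded, zero-order operator contributing only a constant off-diagonal coupling, so it can be carried through the entire chain of reductions of Proposition~\ref{essL}.

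Running that chain with the additional $\pm \lambda$ appearing on the off-diagonals, and using: (i) the exponential decay of $\eta_\eps$, $v_\eps$, $Z_\eps$, $\partial_x Z_\eps$ and $\zeta - 1$ to produce compact remainders in $\mathcal{B}(H^2 \times H^1, L^2 \times L^2)$; (ii) the invertibility of $G_{\eps, k}$ on $L^2$ for $k \neq 0$, which follows from \eqref{DNm} via the Lax--Milgram argument used in Proposition~\ref{essL}; (iii) Lemma~\ref{pert} to substitute $G_{\eps,k}^{-1}$ by $G_{0,k}^{-1}$ in the Schur complement modulo a compact operator, one arrives at a decomposition
\begin{equation*}
M_\lambda(k) = \mathcal{A}\,\tilde L_4(\lambda, k)\,\mathcal{B} + \mathcal{K},
\end{equation*}
where $\mathcal{A}, \mathcal{B}$ are bounded invertible, $\mathcal{K}$ is compact from $H^2 \times H^1$ to $L^2 \times L^2$, and the asymptotic block-diagonal operator is
\begin{equation*}
\tilde L_4(\lambda, k) = \begin{pmatrix} \tilde S_\lambda & 0 \\ 0 & G_{\eps,k} \end{pmatrix},
\qquad
\tilde S_\lambda = -\beta \partial_x^2 + \beta k^2 + \alpha + (\partial_x - \lambda)\,G_{0,k}^{-1}\,(\partial_x - \lambda).
\end{equation*}

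It then suffices to verify that $\tilde L_4(\lambda, k)$ is invertible from $H^2 \times H^1$ into $L^2 \times L^2$ when $\mbox{Re}(\lambda) \neq 0$. Since $G_{\eps, k}$ is invertible for $k \neq 0$, the question reduces to the invertibility of the Fourier multiplier $\tilde S_\lambda$; after multiplying its symbol by the positive quantity $G_0(\xi, k) = \sqrt{\xi^2 + k^2}\,\tanh\sqrt{\xi^2 + k^2}$ (positive because $k \neq 0$), one obtains
\begin{equation*}
(\beta \xi^2 + \beta k^2 + \alpha)\,G_0(\xi, k) + (\lambda - i\xi)^2.
\end{equation*}
Vanishing of this quantity would force $(\lambda - i\xi)^2$ to be real and $\leq 0$, i.e. $\lambda - i\xi \in i\R$, contradicting $\mbox{Re}(\lambda) \neq 0$. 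The symbol of $\tilde S_\lambda$ moreover grows like $\beta \xi^2$ as $|\xi| \to \infty$, so the Fourier multiplier inverse is bounded from $L^2$ into $H^2$; thus $\tilde L_4(\lambda, k)$ is invertible, $M_\lambda(k)$ is Fredholm of index zero, and $\lambda \notin \sigma_{ess}(JL(k))$.

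The main obstacle will be the careful bookkeeping in the adaptation of the multi-step factorization of Proposition~\ref{essL} to the non-self-adjoint operator $M_\lambda(k)$: one must check that each successive reduction $L_0 = A_1 L_1 + C_1$, $L_1 = L_2 B_1 + C_2$, $L_2 = A_2 L_3 B_2$, $L_3 = L_4 + C_3$ remains algebraically valid and that the accumulated error stays compact in $\mathcal{B}(H^2 \times H^1, L^2 \times L^2)$ once the off-diagonal entries carry the extra constants $\pm \lambda$. Because $\lambda J$ is bounded and of order zero, these verifications ultimately rest on the same compactness inputs already exploited in Proposition~\ref{essL} (exponential decay of the solitary-wave profile and Lemma~\ref{pert}), but they must be redone outside the self-adjoint framework, where the positivity of $L(k) + \gamma$ is no longer available to ensure invertibility of $\gamma + G_{\eps,k}$ and must be replaced by the symbol-level nonvanishing computed above.
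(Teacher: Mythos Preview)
Your approach is correct but differs from the paper's.  The paper does \emph{not} carry the skew term $\lambda J$ through the factorization chain of Proposition~\ref{essL}.  Instead it first recalls the decomposition $L(k)=A_1A_2L_4B_2B_1+\mathcal K$ from \eqref{decL} (with $\gamma=0$, $k\neq 0$) and then manipulates $\lambda-JL(k)$ so as to reduce, modulo relatively compact perturbations, to the study of $\lambda-JL_5$ where $L_5=A_2L_4B_2$.  The key observation is that $B_2=A_2^{*}$ and $L_4>0$, hence $L_5$ is symmetric and satisfies $(L_5U,U)\geq c\|U\|_{H^1\times H^{1/2}}^2$.  Taking the scalar product of $(\lambda-JL_5)U=0$ with $L_5U$ gives $\mbox{Re}(\lambda)(L_5U,U)=0$, forcing $U=0$; existence of solutions to $(\lambda-JL_5)U=F$ is then obtained by an abstract duality argument (a Hahn--Banach type construction based on the a priori bound $|A^*V|_{H^1\times H^{1/2}}\geq c|V|_{H^1\times H^{1/2}}$).

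What you do instead is absorb $\lambda J$ into the operator, rerun the entire $L_0\to L_1\to L_2\to L_3\to L_4$ reduction with off-diagonal entries $\partial_x-\lambda$, and end up with an explicit scalar Fourier multiplier $\tilde S_\lambda$ whose nonvanishing on $\{\mbox{Re}\,\lambda\neq 0\}$ you verify by the elementary observation that $(\lambda-i\xi)^2$ cannot be real nonpositive.  Your route is more computational and requires redoing the compactness bookkeeping in a non-self-adjoint setting (which, as you note, still works because the new terms are of order zero), but it is also more elementary: it avoids the abstract functional-analytic existence argument entirely and gives explicit symbol-level information.  The paper's route, by contrast, is shorter once \eqref{decL} is in hand and exploits the Hamiltonian structure (positivity of $L_5$) directly; this makes it more robust conceptually, in particular for generalizations where the constant-coefficient limit is less explicit.
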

\begin{proof}
We can use the most restrictive definition for the essential spectrum i.e.
following \cite{Henry}, we say that $\lambda$ is not in the essential spectrum if $\lambda$
is an isolated eigenvalue of finite multiplicity. 
Note that since  we already know by Lemma~\ref{GSS} that $JL(k)$ has at most one unstable eigenvalue, 
it still  suffices following \cite{Henry}  to prove that $\lambda - JL(k)$ is Fredholm with zero index
for $\mbox{Re}(\lambda) \neq 0$. Moreover, the case $k=0$ is  already given by Lemma~\ref{GSS},
consequently, it suffices to consider the case $k \neq 0$ only.
We shall proceed in a similar way  as in the proof of Proposition~\ref{essL}.
Since we are in the case $k \neq 0, $  we can use the decomposition \eqref{decL}
for $\gamma = 0$. This yields
$$ L(k) = A_{1} A_{2}( k ) L_{4}(k) B_{2}(k) B_{1} + \mathcal{K}$$
where, using the notation 
$$ A_{2}( k ) = A_{2}(0, k), \quad B_{2}(k) = B_{2}(0, k ) , \quad L_{4}(k) = L_{4}(0, k)$$
we have that  $A_{1}$, $A_{2}$, $B_{1}$, $B_{2}$ are bounded invertible operators
and that $\mathcal{K}$ is a relatively compact perturbation. We thus have
$$ \lambda - JL(k) = \lambda - J  A_{1} A_{2}( k ) L_{4}(k) B_{2}(k) B_{1}  + \tilde{K}$$
where $\tilde{K}$ is a relatively compact perturbation. Next, we can write that
\begin{eqnarray*}
\lambda - JL(k)
=  & &J A_{1} J^{-1}\big( \lambda - J  A_{2}  L_{4} B_{2} \big)  B_{1} \\
& & + \lambda  J A_{1}\big(    A_{1}^{-1} - {\rm Id} \big)J^{-1} 
+ \lambda  J A_{1} J^{-1}\big(  {\rm Id}  -   B_{1}  \big) + \tilde{K}.
\end{eqnarray*}
Since the matrices  $ A_{1}^{-1} - {\rm Id}$ and ${\rm Id} - B_{1}$  have exponentially decreasing
coefficients, we  find again that 
$$ \lambda  J A_{1}\big(    A_{1}^{-1} - {\rm Id} \big)J^{-1}  + 
\lambda  J A_{1} J^{-1}\big(  {\rm Id}  -   B_{1}  \big)$$
is a relatively compact perturbation. Consequently, to prove Proposition~\ref{essJL}, 
it suffices to prove that   $\lambda - J  A_{2}  L_{4} B_{2}$ is invertible
for $\mbox{Re } \lambda \neq  0$.

The operator $L_{4}$ has been studied in the proof of Proposition~\ref{essL}.
We have proven that its spectrum is included in $( 0, + \infty )$. Since  it  is moreover a symmetric operator, we 
 obtain that
\beq
\label{L31}
(L_{4} U, U ) \geq c  |U|_{L^2}^2, \quad \forall\, U \in H^2 \times H^1
\eeq
for some $c>0$.
 Since by an integration by parts and  \eqref{DNm}, we have
\beq
\label{L32}
(L_{4} U, U ) \geq c|\partial_{x} U_{1}|_{L^2}^2  + c |U_{2}|_{H^{1 \over2}}^2   - C |U_{1}|_{L^2}^2,
\eeq
we can combine the two estimates \eqref{L31}, \eqref{L32} to get
\beq
\label{L33}
(L_{4} U, U ) \geq c_{0} \Big(|U_{1}|_{H^1}^2 + |U_{2}|_{H^{1 \over 2 }}^2 \Big), 
\quad \forall\, U \in H^1 \times H^{1\over 2}
\eeq
 for some $c_{0}>0$.
Finally, let us notice that  $A_{2}$ and $B_{2}$ are bounded  invertible operators 
on $H^s$ for every $s$ and that  $B_{2}= A_{2}^*$.  This implies that
the operator $ L_{5}(k) = A_{2} L_{4} B_{2}$ is a symmetric operator which satisfies
 thanks to \eqref{L33}
\beq
\label{L4}
(L_{5} U, U ) \geq c \Big(|U_{1}|_{H^1}^2 + |U_{2}|_{H^{1 \over 2 }}^2 \Big), \quad 
\forall\, U \in H^1 \times H^{1\over 2}
\eeq
for some $c>0$. 
We shall use this property of $L_{5}$ to prove that the operator
 $\lambda - J L_{5}$ is invertible  if $\mbox{Re }\lambda \neq 0$.
For $F \in L^2 \times L^2$, we want to prove that  the equation
\beq
\label{reseq}
\big(  \lambda -JL_{5}(k) \big)U = F \eeq
has a unique solution $U \in H^2 \times H^1.$
The estimate \eqref{L4} immediately gives that there is at most one solution.
Indeed, if 
$$\big( \lambda - JL_{5}\big) U=0, $$
by taking the scalar product  and the real part with $L_{5}U$, we  find
 that
 $$ \mbox{Re}\,(\lambda) \, (L_{5}U, U) = 0$$
  and hence since $ \mbox{Re}\,\lambda \neq 0$, we get from \eqref{L4}
   that $U=0$.

To prove that there exists a solution to \eqref{reseq}, we shall use  a classical
approach  based on a duality argument combined with an a priori bound
which is  typically  used  in the context of evolution equations.
To solve \eqref{reseq} for $F\in L^2 \times L^2$, we look $U$ under the form
$U=JV$ and thus we need to solve
$$
AV=J^{-1}F,\quad A=\lambda{\rm Id}-L_{5}J\,.
$$
Since $A^*=\bar{\lambda}{\rm Id}+JL_{5}$, we get
\begin{equation}\label{bound-hb}
|A^{\star}V|_{H^1\times H^{1/2}}\geq c|V|_{H^1\times H^{1/2}}\,.
\end{equation}
Indeed, it suffices to consider $(A^\star V,L_5V)$ and apply \eqref{L4}.
Next, we define $\mathcal{F}$ as
$$
\mathcal{F}=\{U\in H^1\times H^{1/2}\,:\, \exists\, V\in  H^1\times H^{1/2}  ,\, A^\star(V)=U  \}\,.
$$
Thanks to (\ref{bound-hb}), $\mathcal{F}$ is a closed
set of $H^1\times H^{1/2}$. Indeed, let $U_n\in \mathcal{F}$ that  converges to some
limit $U$ in $H^1\times H^{1/2}$. Then there exists $V_n\in H^1\times H^{1/2} $
such that $U_n=A^\star(V_n)$ and thanks to (\ref{bound-hb}) $V_n$ converges to
some limit $V$ in  $H^1\times H^{1/2}$. In particular $A^{\star}(V_n)$
converges in $H^{-2}\times H^{-2}$ to $A^\star V$ which allows to identify $U$ and
$A^\star V$, i.e. $U=A^\star V$ and thus get that  $U\in \mathcal{F}$.  We have thus  proven
that $\mathcal{F}$ is a closed
set of $H^1\times H^{1/2}$. Now, we define the linear form $l: H^1\times H^{1/2}\rightarrow {\mathbb C}$ as
$$
l(U)=
\left\{
\begin{array}{l}
(J^{-1}F,V),\quad{\rm if}\quad U\in  \mathcal{F}\quad {\rm with}\quad U=A^{\star}V,
\\
0,\quad{\rm if}\quad U\in \mathcal{F}^{\perp}
\end{array}
\right.
$$
Using again (\ref{bound-hb}) and the fact that $\mathcal{F}$ is closed in $H^1\times H^{1/2}$, we
obtain that $l$ is continuous on $H^1 \times H^{1 \over 2 } $  and therefore there exists $V\in  H^{-1}\times
H^{-1/2}$ such that
$$
l(U)=(V,U),\quad \forall\, U\in H^1\times H^{1/2}\,.
$$
If $U=A^{\star}W$ with $ W\in H^{3}\times H^{2}$ then $U\in \mathcal{F}$.
Therefore
$$
(AV,W)=
(V,A^{\star}W)=(J^{-1}F,W),\quad \forall\, W\in H^{3}\times H^{2}   \,.
$$
Hence $AV=J^{-1}F$ and thus $U=JV$ is a solution of \eqref{reseq}. Moreover thanks to the elliptic
regularity $U \in H^2 \times H^1$.
This ends the proof of Proposition~\ref{essJL}.
\end{proof}
As a  consequence of the Lyapounov-Schmidt method, Proposition~\ref{essJL} and Lemma~\ref{GSS}, 
we have the following statement  important  for  future use: 
\begin{cor}
\label{evans}
For every $(\sigma_{0}, k_{0})$, $k_{0} \neq 0$, ${\rm Re}\,\sigma_{0}>0$, $ \sigma_{0}\in \sigma (JL(k_{0})),$
the set 
$$\{ (\sigma, k), \, \sigma \in \sigma(J(L(k)) \}$$
in  a vicinity of  $(\sigma_{0}, k_{0})$
is the graph of  an analytic curve $k \mapsto \sigma(k)$ and
$\sigma(k)$ is   
an eigenvalue of $JL(k)$.
\end{cor}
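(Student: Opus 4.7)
The plan is to combine the spectral information already gathered on $JL(k)$ with analytic perturbation theory near the isolated simple eigenvalue $\sigma_{0}$. By Proposition \ref{essJL}, the essential spectrum of $JL(k_0)$ lies in $i\mathbb{R}$, so any $\sigma_0$ with $\mathrm{Re}\,\sigma_0>0$ is an isolated eigenvalue of finite algebraic multiplicity. Moreover, by Lemma \ref{GSS}, for $k_0\neq 0$ the operator $JL(k_0)$ has at most one unstable eigenvalue, necessarily simple; hence $\sigma_0$ is a simple eigenvalue of $JL(k_0)$, and there exists a small circle $\Gamma\subset\{\mathrm{Re}\,\lambda>0\}$ enclosing $\sigma_0$ and no other element of $\sigma(JL(k_0))$.

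Next I would establish that $k\mapsto JL(k)$ is analytic in $k$ as a family of closed operators on $L^2\times L^2$ with common domain $H^2\times H^1$, at least in a neighborhood of $k_0\neq 0$. The only delicate ingredient is the analytic dependence of $G_{\eps,k}$ on $k$ for $k\neq 0$ in the operator norm of $\mathcal B(H^1,L^2)$, which is precisely part ii) of Proposition \ref{DirNeum2}; the coefficient $P_{\eps,k}$ depends polynomially on $k$ and the off-diagonal entries of $L(k)$ are independent of $k$. It follows that the resolvent $(\lambda-JL(k))^{-1}$ depends analytically on $(\lambda,k)$ on $\Gamma\times \{|k-k_0|<r\}$ for some $r>0$ small enough, and hence so does the Riesz projector
\[
\Pi(k)=\frac{1}{2\pi i}\int_{\Gamma}(\lambda-JL(k))^{-1}\,d\lambda.
\]
Since $\Pi(k_0)$ is of rank one and $\Pi(k)$ depends continuously on $k$, the rank of $\Pi(k)$ remains equal to one for $|k-k_0|<r$ (perhaps after shrinking $r$).

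From here I would conclude by a Lyapounov--Schmidt reduction on the finite-dimensional range $\mathrm{Ran}(\Pi(k))$. Let $U_0$ span $\mathrm{Ran}(\Pi(k_0))$. The eigenvalue problem restricted to $\mathrm{Ran}(\Pi(k))$ becomes a $1\times 1$ matrix equation, and writing this reduced operator as multiplication by an analytic scalar $\sigma(k)$ with $\sigma(k_0)=\sigma_0$ yields the desired analytic curve. Equivalently, one may write the bifurcation equation
\[
F(V,\sigma,k)=(JL(k)-\sigma)(U_0+V)=0,\qquad V\in(\mathrm{Ker}(JL(k_0)-\sigma_0)^*)^{\perp},
\]
observe that $\partial_{(V,\sigma)}F(0,\sigma_0,k_0)$ is an isomorphism by simplicity of $\sigma_0$ together with the Fredholm property furnished by Proposition \ref{essJL}, and apply the analytic implicit function theorem to produce an analytic curve $k\mapsto(V(k),\sigma(k))$. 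Finally, the locally-unique character of the curve combined with Lemma \ref{GSS} (at most one unstable eigenvalue per $k\neq 0$) and the continuity of $\Pi(k)$ guarantees that the full local spectrum of $JL(k)$ near $(\sigma_0,k_0)$ reduces to this graph.

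The main obstacle I expect is the verification that $k\mapsto JL(k)$ is analytic as an unbounded operator family in the sense required by Kato's theory; once Proposition \ref{DirNeum2}~ii) is invoked to handle $G_{\eps,k}$, the remaining steps are standard finite-dimensional perturbation arguments, and the simplicity of $\sigma_0$ plus the isolation granted by Proposition \ref{essJL} remove any algebraic ambiguity.
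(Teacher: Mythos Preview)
Your proposal is correct and follows precisely the route the paper indicates: the corollary is stated as an immediate consequence of the Lyapounov--Schmidt method together with Proposition~\ref{essJL} and Lemma~\ref{GSS}, and you have supplied exactly those ingredients (isolation from the essential spectrum, simplicity of the unstable eigenvalue, analytic dependence of $JL(k)$ in $k$ via Proposition~\ref{DirNeum2}~ii), then the Riesz projector or implicit function theorem). The paper gives no further details, so your write-up is in fact more complete than the paper's own treatment.
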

\section{Construction of an approximate  unstable solution}
\label{sectionUapp}
Let us write the system \eqref{eq11}, \eqref{eq22} under the abstract form
\beq
\label{ww}
\partial_{t} U = \mathcal{F}(U)
\eeq
where
$$ 
U = \left( \begin{array}{ll} \eta \\ \varphi  \end{array}\right) , \quad
\mathcal{F}(U)= \left(\begin{array}{ll} \eta_{x}+ G[\eta]\varphi
\\
\varphi_{x}  - {  1 \over 2 } |\nabla \varphi|^2 + {1\over 2}
{ (G[\eta] \varphi + \nabla \varphi \cdot \nabla \eta )^2 \over 1 + |\nabla \eta |^2}
- \alpha \eta + \beta \nabla \cdot \big(  { \nabla \eta \over  \sqrt{ 1 +  |\nabla \eta |^2} } \big)
\end{array} \right).
$$
We shall also use the notation  $Q= (\eta_{\eps}, \varphi_{\eps})$ for the
solitary wave. Following the method  of Grenier \cite{Grenier} used in our previous 
works \cite{RT1}, \cite{RT2}, the main ingredient in the proof
of  Theorem~\ref{main}  is the construction of  an approximate unstable solution of \eqref{ww} under the form
\beq\label{Vapp}
U= Q+ \delta  U^{a}, \quad U^{a}= \sum_{j=0}^M \delta^j U^j.
\eeq
To measure the regularity of the approximate solution, we introduce for $U=(\eta, \varphi)$ the  ``norm''
$$ 
\|U(t) \|_{E^s}^2 = \sum_{  0 \leq \alpha + \beta+\gamma  \leq s}
\|  \partial_{t }^\alpha \partial^\beta_{x}\partial^{\gamma}_{y} U(t, \cdot) \|_{ L^2(\mathbb{R}^2)}^2.$$
Note that since we shall work in this section with linear problems and   very smooth ($H^\infty$) solutions we do not need for the
moment to emphasize some differences in the regularity of each components of $U$.
\subsection{Construction of $U^{0}$}
In the next proposition we   first construct the leading term  $U^0$ of
the approximate solution \eqref{Vapp} with a maximal growth rate.
\begin{prop}\label{U0}
There exists $U^0 (t,x,y)\in  \cap_{s \geq 0 } E^s$   such that
\beq\label{eqL}
\partial_{t} U^0 = J\Lambda U^0
\eeq
and such that there exist an integer $ m \geq 1$ and $\sigma_0>0$ such that for every $s\geq 0$
\beq
\label{U0est} 
{ 1 \over c_{s}} { e^{\sigma_{0} t }  \over { (1 + t)^{ 1 \over 2 m }}}
\leq \| U^0 (t)\|_{ E^s} \leq c_{s}    { e^{\sigma_{0} t }  \over { (1 + t)^{ 1 \over 2 m }}}, 
\quad \forall\, t \geq 0.
\eeq
Moreover $\sigma_{0}$  is such that  the real part of the amplification
parameter of every unstable eigenmode of \eqref{eqL} is non bigger  than $\sigma_{0}$.
\end{prop}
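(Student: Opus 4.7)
The plan is to build $U^0$ as an analytic wave packet supported in a small $k$-interval around a most-unstable frequency $k_0$, and then transport the construction from $L$ to $\Lambda$ via the triangular matrix $P$ of \eqref{vrazka}.

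First I need the maximal growth rate to exist and be attained. Proposition \ref{brute0cor} confines all unstable eigenvalues of $JL(k)$ to the bounded rectangle $\{|k|\leq K,\ 0<\mathrm{Re}\,\sigma\leq M\}$; Proposition \ref{essJL} guarantees that the unstable spectrum is isolated from the essential spectrum (which sits in $i\mathbb{R}$); Theorem \ref{mode} produces at least one such eigenvalue; and Corollary \ref{evans} says every unstable eigenvalue extends analytically in $k$. Standard compactness combined with upper semicontinuity of isolated eigenvalues of finite multiplicity then forces
\begin{equation*}
\sigma_{0}\equiv\sup\{\mathrm{Re}\,\sigma:\sigma\in\sigma(JL(k)),\ k\in\mathbb{R}\}
\end{equation*}
to be strictly positive and attained at some pair $(k_{0},\sigma_{0})$ with $k_{0}\neq 0$; Lemma \ref{GSS} implies $\sigma_{0}\in\mathbb{R}$ and that the eigenvalue is simple.

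Second, using Corollary \ref{evans}, I parametrize the unstable branch by an analytic curve $k\mapsto\sigma(k)$ of simple eigenvalues near $k_{0}$ with a matching analytic choice of eigenvector $k\mapsto V(k,\cdot)$; the elliptic bootstrap sketched right after \eqref{resolvent} gives $V(k,\cdot)\in H^{\infty}\times H^{\infty}$. Since $k_{0}$ is a maximum of $\mathrm{Re}\,\sigma$, a Taylor expansion produces an integer $m\geq 1$ and a constant $c>0$ such that
\begin{equation*}
\mathrm{Re}\,\sigma(k)=\sigma_{0}-c(k-k_{0})^{2m}+O\bigl((k-k_{0})^{2m+1}\bigr)
\end{equation*}
on a small neighborhood of $k_{0}$ (the case where $\mathrm{Re}\,\sigma$ is locally constant is excluded by analyticity and the maximality property, unless the branch is globally flat, in which case the estimate is even easier). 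Fix a real even cutoff $\chi$ supported in a small enough interval around $k_{0}$, and set
\begin{equation*}
V^{0}(t,x,y)=\int_{\mathbb{R}}e^{\sigma(k)t}e^{iky}\chi(k)V(k,x)\,dk+\text{c.c.},
\qquad U^{0}=PV^{0}.
\end{equation*}
By construction $V^{0}$ is real and solves $\partial_{t}V^{0}=JLV^{0}$ (read off on each $y$-Fourier mode), so by \eqref{vrazka} $U^{0}$ solves $\partial_{t}U^{0}=J\Lambda U^{0}$; since $P$ is a smooth invertible $x$-multiplication, $\|U^{0}\|_{E^{s}}\simeq\|V^{0}\|_{E^{s}}$.

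The real content is the two-sided $E^{s}$ estimate \eqref{U0est}. Time derivatives act through $\sigma(k)$ and $y$ and $x$ derivatives through $ik$ and $\partial_{x}V(k,\cdot)$, all uniformly bounded on $\mathrm{supp}\,\chi$; Plancherel in $y$ then gives
\begin{equation*}
\|V^{0}(t)\|_{E^{s}}^{2}\simeq\int_{\mathbb{R}}e^{2\mathrm{Re}\,\sigma(k)t}\chi(k)^{2}\,
\mathcal{A}_{s}(k)\,dk,
\end{equation*}
where $\mathcal{A}_{s}(k)$ is a smooth strictly positive weight depending on $V(k,\cdot)$ and the polynomial in $k,\sigma(k)$ coming from the derivatives; the $\pm$-branches decouple on the Fourier side in $y$, so the complex-conjugate partner does not interfere. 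Laplace's method applied to this one-dimensional integral, using $\mathrm{Re}\,\sigma(k)=\sigma_{0}-c(k-k_{0})^{2m}$ and the fact that $\mathcal{A}_{s}(k_{0})>0$, yields both bounds $\sim e^{\sigma_{0}t}(1+t)^{-1/(2m)}$. The main obstacle, and the reason Plancherel in $y$ is essential, is to prevent destructive cancellation between the $k$-branch and its complex conjugate $(-k)$-branch when establishing the lower bound — this is precisely what separating the two half-lines of $y$-frequencies achieves.
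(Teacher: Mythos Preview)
Your proof is correct and follows essentially the same route as the paper: reduce to the $JL$ problem via the triangular matrix $P$, show the supremum $\sigma_0$ is attained at some $k_0\neq 0$ using the confinement of Proposition~\ref{brute0cor}, the essential-spectrum result of Proposition~\ref{essJL}, and the analytic dependence of Corollary~\ref{evans}, then build a wave packet on a small $k$-interval around $k_0$ and read off the two-sided bound from Plancherel in $y$ plus Laplace's method. The paper uses the symmetric interval $I_0\cup(-I_0)$ where you use a cutoff plus complex conjugate; these are equivalent since $\sigma(-k)=\sigma(k)$ and $\overline{U(k)}=U(-k)$.

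One arithmetic slip: with your convention $\mathrm{Re}\,\sigma(k)=\sigma_0-c(k-k_0)^{2m}$, Laplace gives $\|V^0\|_{E^s}^2\sim e^{2\sigma_0 t}(1+t)^{-1/(2m)}$, hence $\|V^0\|_{E^s}\sim e^{\sigma_0 t}(1+t)^{-1/(4m)}$, not $(1+t)^{-1/(2m)}$. Since the integer in the statement is only asserted to exist, this is harmless after relabeling (the paper's $m$ is your $2m$), but the sentence as written does not match your own definition of $m$.
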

\begin{rem}
As we shall see in the proof, we can choose $U^0$ under the form
\begin{equation}\label{U0form} 
U^0(t,x,y) = \int_{I} e^{\sigma(k) t } e^{iky } U(k)(x)\, dk, \quad I = I_{0} \cup -I_{0}\,,
\end{equation}
where  $I_{0}\subset (0,\infty)$ is a small interval with left extremity
$k_0\neq 0$ such that $\sigma_{0}= \sigma (k_{0})$ and $U(k)$ is an unstable eigenmode
with transverse frequency $k$.
\end{rem}
\begin{proof}[Proof of Proposition~\ref{U0}]
We first  recall that $U^0$ solves \eqref{eqL} if and only if 
$$ V^0= P^{-1}U^0, \quad P= \left( \begin{array}{cc} 1 & 0  \\ - Z_{\eps} & 1 \end{array} \right) $$
solves
\beq
\label{eqLbis}
\partial_{t} V^0 = JL V^0.
\eeq
Since the matrix $P$ is invertible and does not depend on $t$, it suffices to construct  
a solution $V^0$ of \eqref{eqLbis} which satisfies the estimate
\eqref{U0est}. The first step is to  find the most unstable eigenmode which solves
\beq\label{eqvp2}\sigma U = J L(k)U\eeq
i.e., we are looking for the largest  $\sigma$  such that 
$\sigma$ is an eigenvalue of $JL(k)$. Thanks to Theorem~\ref{mode}, we already
know that there exists $k_{0} \neq 0$ such that $JL(k_{0})$ has a nontrivial  unstable eigenvalue.
Thanks to  Proposition~\ref{brute0cor},  we also  know that unstable
eigenmodes must be sought only for transverse frequencies $k$ such that 
$k \in [0,K]$.  Moreover,  for $k \in [0,K]$, we have that 
the amplification parameter $\sigma$ of the possible unstable eigenmodes
should  be real and satisfy $\sigma \leq  M$.

Let us  assume that  the  unstable eigenmode given by Theorem \ref{mode}
is such that $\sigma = \delta$. Thanks to the previous remarks, 
the most unstable eigenmode (i.e. with the largest $\sigma$) has to be sought
in the compact set $\mathcal{R}$ of $\mathbb{R}\times\R$ defined by
$$
\mathcal{R}\equiv\big\{(\sigma, k)\,:\,
\delta/2 \leq\sigma \leq M, \,\, |k |\leq K\big\}.
$$
Moreover,   thanks to Corollary \ref{evans},   the set 
$\{(\sigma, k), \, \sigma >0, \, k \neq 0, \quad \sigma \in \sigma(JL(k)\}$
is locally  the graph of an analytic curve. 
If we define $\Omega = \big\{k, \, \exists\,\sigma, \, \sigma>\delta/2,\, 
\sigma \in \sigma(JL(k))  \big\}$,  we thus  get  that $\Omega$ is a bounded
(and non empty)  open  set  of 
$\mathbb{R}$. One can decompose $\Omega$ as $\Omega= \cup_{m} I_{m}$ where
$I_{m}$ are disjoint, open and bounded  
intervals which are the connected components of $\Omega$. On each $I_{m}$ the above considerations
prove that there exists an analytic function $k \mapsto \sigma(k)$ such
that $\sigma(k)$ is the only  eigenvalue of $JL(k)$  in $\sigma >0.$ 
We shall prove next that $k \mapsto  \sigma(k)$ has a continuous
extension to $\overline{I_{m}}$. Indeed, if  $k_{n}$ is a sequence converging to an extremity $\kappa$ of  $I_{m}$, 
since $\sigma(k_n)$ is bounded ($\sigma (k_{n}) \in \mathcal{R}$), then we can
extract a subsequence not relabelled such that $\sigma(k_{n})$ tends to some $\sigma$. Moreover, we also 
have $\sigma\geq \delta /2$,  and  $\sigma \in \sigma(JL(\kappa))$ since
$JL(k)$ depends continuously on $k$. Thanks to Proposition~\ref{essJL}, $\sigma$ is actually an eigenvalue
of $JL(\kappa)$ and hence is the only unstable eigenvalue of  $JL(\kappa)$,
thanks to Lemma~\ref{GSS}.  By uniqueness of the limit, we get that  
$\lim_{k \rightarrow \kappa, k \in I_{m}} \sigma(k)= \sigma$ and hence, we can define  a continuous function on $\overline{I_{m}}$.
Finally, we also notice that if $\partial I_{m} \cap \partial I_{m'}
\neq\emptyset$, then the continuations must coincide again thanks to the fact
that there is at most one unstable eigenmode. Consequently, we have actually a
well-defined continuous function $k \rightarrow \sigma(k)$ on  $\overline{\Omega}$
which is a compact set. This allows to define $k_0$ and $\sigma_0$ by 
$$ 
\sigma_{0} \equiv \sigma(k_{0})= \sup\{\sigma(k), \quad k \in \overline{\Omega}\}>0
$$ 
($k_0$ is not necessarily unique). Note that  $k_{0}\neq 0$ thanks to Lemma \ref{GSS}.
Moreover, $ \sigma(k)$ is an analytic function in the vicinity of $
k_{0}$  and hence,  there exists $m\geq 2$ so that
\begin{equation}\label{nondeg}
\sigma'(k_0)=\cdots=\sigma^{(m-1)}(k_0)=0,\quad \sigma^{(m)}(k_0)\neq 0.
\end{equation}
Let $I_0\subset\Omega$ be an interval containing $k_0$ which does meet zero.
For $k\in I_0$, let us denote by $U(k)$ the unstable mode corresponding to transverse
frequency $k$ and amplification parameter $\sigma(k)$.

Taking $I_0$  sufficiently small, one can take a  smooth curve $k \mapsto U(k)
\in H^\infty$ which is continuous from $I_0$ to $H^s$ for every $s$. Indeed,
by continuity of $k \mapsto \sigma(k)$, we can choose a disk $B(\sigma(k_{0}), r) \subset \{ \mbox{Re } \sigma>0 \}$
such that for every $k \in I_{0}$,  $\sigma(k)$ belongs to the interior of the
disk. In particular  on $\partial B(\sigma(k_{0}), r)$, there is no eigenvalue of 
$JL(k)$ for $k \in I_{0}$ and hence thanks to Proposition~\ref{essJL}, we get
that the resolvent $(JL(k)-\sigma)^{-1}$  of $JL(k)$ is well defined for $(\sigma, k) \in  \partial B(\sigma(k_{0}), r)
\times \overline{I_{0}}$. Consequently, the eigenprojection on the only
unstable eigenmode with transverse frequency $k$ can be written under the form 
$$
P(k) = \frac{1}{2\pi i}
\int_{\partial B(\sigma(k_{0}), r)} (\sigma - JL(k))^{-1} \, d\sigma.
$$
This allows to choose $U(k)$ under the form
\beq\label{defU} 
U(k) =\frac{1}{2\pi i}  
\int_{\partial B(\sigma(k_{0}), r)} (\sigma - JL(k))^{-1}U(k_{0}) \, d\sigma.
\eeq
With this definition, $U(k)$ is non trivial for $k$ in a vicinity of $k_{0}$
and depends smoothly on $k$ since $JL(k)$ depends analytically on $k$ for $k \neq 0$.  
We have that $\sigma(k)=\sigma(-k)$. By the definition \eqref{defU}, we also have $\overline{U(k)}= U(-k)$.
Then we set $I= I_{0} \cup -I_0$ and 
$$
V^0(t,x,y)\equiv \int_{I}\, e^{\sigma(k) t } e^{ i k y }\,U(k) \, dk,
$$
where the dependence in  $x$ of $V^0$ is in $U(k)$. Note that $V^0$  is real-valued by the choice of $I$.
By the Bessel-Parseval identity, we get for every $s, \, \alpha  \in \mathbb{N}$ that 
$$ \|\partial_{t}^\alpha V^0(t, \cdot)\|_{H^s(\R^2)}^2
= C\int_{I} e^{ 2 \sigma(k)t} \sum_{s_{1}+ s_{2}\leq  s}
|\sigma(k)|^{2\alpha}  k^{2s_{2}} | \partial_{x}^{s_{1}}U(k)|_{L^2(\mathbb{R})}^2 \, dk,
$$
where $C$ is an harmless number. Recall that  $\sigma_0\equiv \sigma(k_0)$.
Thanks to  (\ref{nondeg}), we can apply the Laplace method (see e.g. \cite{JD,F}) 
and obtain that  for every $s, \, \alpha \geq 0$ there exists
$c_{s,\alpha}\geq 1 $ such that for every $t\geq 0$
\begin{equation*}
\frac{1}{c_{s, \alpha }}\frac{1}{(1+t)^{\frac{1}{2m}}}e^{\sigma_0 t}
\leq\|\partial_{t}^\alpha V^0(t,\cdot)\|_{H^{s}(\R^2)}\leq \frac{c_{s, \alpha }}{(1+t)^{\frac{1}{2m}}}e^{\sigma_0 t}\,.
\end{equation*}
This completes the proof of Proposition~\ref{U0}.
\end{proof}
\subsection{Construction of $U^{a} $ }
The aim of this section is to  prove the following statement.
\begin{prop}\label{Uap}
For every $M \geq 0$, there exists an expansion
\beq\label{expUap} 
U^{a} =  U^0 + \sum_{j=1}^{M+1} \delta^j U^j, \quad U^j \in
\mathcal{C}^\infty(\mathbb{R}_{+},  H^\infty(\R^2)),\quad \delta\in \R
\eeq
such that for every $j$, $U^{j}(0)= 0$   and  for some $C_{s, j}$ we have  the estimates
\beq\label{Ujest}
\|U^j(t)  \|_{ E^s} \leq { C_{s, j } \over (1+t )^{j+ 1 \over 2 m } } e^{(j+ 1) \sigma_{0}t}, \quad\forall\, t \geq 0.
\eeq
Moreover, $ Q+ \delta U^{a}$ is an approximate solution of \eqref{ww} in the sense
that
\beq
\label{Uapeq}
\partial_{t}\big( Q+ \delta U^{a}\big) - \mathcal{F}(Q + \delta U^{a}) = R^{ap}
\eeq
and there exists $\delta_0>0$ such that for every $\delta \in (0, \delta_0]$, the estimate,
\beq \label{Rap}
\| R^{ap} (t) \|_{ E^s} \leq { C_{M, s} \delta^{M+ 3 } \over ( 1 + t  )^{M+3 \over 2 m } }
e^{(M+3) \sigma_{0} t }, 
\eeq
holds for $t\in [0,T^\delta]$, where $T^\delta$ is such that
$$
\frac{e^{\sigma_0 T^\delta}}{(1+T^\delta)^{\frac{1}{2m}}}=\frac{1}{\delta}\,.
$$
\end{prop}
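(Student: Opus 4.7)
The plan is to determine the $U^j$ by formally expanding $\mathcal{F}(Q+\delta U^a)$ in powers of $\delta$ and requiring that coefficients cancel up to order $\delta^{M+2}$. Since $\mathcal{F}(Q)=0$ and $D\mathcal{F}(Q)=J\Lambda$, the Taylor formula gives
\begin{equation*}
\mathcal{F}(Q+\delta U^a) = \delta J\Lambda U^a + \sum_{k\geq 2}\frac{\delta^k}{k!}D^k\mathcal{F}(Q)(U^a,\dots,U^a).
\end{equation*}
Substituting the ansatz $U^a=\sum_{j=0}^{M+1}\delta^j U^j$ and identifying the coefficient of $\delta^{j+1}$ yields the cascade
\begin{equation*}
\partial_t U^0 = J\Lambda U^0, \qquad \partial_t U^j - J\Lambda U^j = F^j, \quad 1\leq j\leq M+1,
\end{equation*}
where $F^j$ is an explicit polynomial in $U^0,\dots,U^{j-1}$ and finitely many of their spatial derivatives: it gathers all contributions of the form $\frac{1}{k!}D^k\mathcal{F}(Q)(U^{i_1},\dots,U^{i_k})$ with $k\geq 2$ and $i_1+\dots+i_k = j+1-k$, which forces $i_\ell<j$. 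The $U^0$ equation is solved by Proposition \ref{U0}; for $j\geq 1$ I would take the unique solution of the inhomogeneous linear equation with zero data at $t=0$ via Duhamel,
\begin{equation*}
U^j(t) = \int_0^t e^{(t-s)J\Lambda} F^j(s)\,ds.
\end{equation*}

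The estimate \eqref{Ujest} then follows by induction on $j$. Assuming the bound at all orders $\ell<j$, a typical term $D^k\mathcal{F}(Q)(U^{i_1},\dots,U^{i_k})$ appearing in $F^j$ is multilinear in its arguments with smooth, bounded coefficients built from $Q$, so standard Gagliardo--Nirenberg--Moser type product estimates in $E^s$ (together with the Dirichlet--Neumann derivative bounds of Proposition \ref{lemderiveej}, applied after Fourier transform in $y$) give
\begin{equation*}
\|F^j(t)\|_{E^s} \leq C_{s,j}\prod_{\ell=1}^{k}\frac{e^{(i_\ell+1)\sigma_0 t}}{(1+t)^{(i_\ell+1)/2m}} \leq C_{s,j}\frac{e^{(j+1)\sigma_0 t}}{(1+t)^{(j+1)/2m}},
\end{equation*}
using $k+\sum i_\ell = j+1$. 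One then inserts this into the Duhamel formula. Because $\sigma_0$ is the maximal growth rate of $JL(k)$ (Proposition \ref{U0} and Corollary \ref{evans}), one expects the semigroup bound
\begin{equation*}
\|e^{tJ\Lambda}\|_{E^s\to E^s} \leq C_s(1+t)^{\kappa}e^{\sigma_0 t}
\end{equation*}
for some $\kappa\geq 0$. Combined with $j+1\geq 2$, the integral is dominated by its endpoint $s\sim t$, producing the desired bound on $\|U^j(t)\|_{E^s}$.

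The main obstacle is precisely this $H^s$ semigroup estimate for $e^{tJ\Lambda}$: as emphasized in the outline of the paper, $J\Lambda$ is not sectorial, so one cannot invoke standard analytic semigroup theory. I would obtain it by Fourier transforming in $y$, reducing to a family of semigroups $e^{tJL(k)}$, and then applying the Laplace transform in $t$: for high time frequencies one uses the Hamiltonian energy estimates furnished by the coercivity estimate \eqref{Lkm} on $L(k)$, and for bounded time frequencies one exploits the precise spectral information on $JL(k)$ obtained in Proposition \ref{brute0cor}, Corollary \ref{evans} and Proposition \ref{essJL} (which together guarantee that, away from the at most one unstable eigenvalue, the resolvent behaves nicely up to $\mathrm{Re}\,\lambda=\sigma_0$). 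Time derivatives in the $E^s$ norm are absorbed by commuting $\partial_t$ through the equation, writing $\partial_t U^j = J\Lambda U^j+F^j$ and iterating.

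It remains to check \eqref{Rap}. By construction, $R^{ap}$ is exactly the sum of all terms of order $\delta^{M+3}$ and higher that appear in the Taylor expansion of $\mathcal{F}(Q+\delta U^a)-\partial_t(\delta U^a)$, each of which can be written as $\delta^{M+3}$ times a polynomial expression in the $\delta^\ell U^\ell$. The time $T^\delta$ is defined so that $\delta\|U^0(T^\delta)\|_{E^s}\sim 1$, and by \eqref{Ujest} the same holds for every $\delta^j U^j$; thus on $[0,T^\delta]$ the Taylor remainder is controlled and its leading contribution is of size $\delta^{M+3}e^{(M+3)\sigma_0 t}/(1+t)^{(M+3)/2m}$, which is exactly \eqref{Rap}. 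This requires one final technical point, namely $H^s$ estimates for expressions involving differences of Dirichlet--Neumann operators at $\eta_\eps$ and $\eta_\eps+\eta$, but these are prepared by Proposition \ref{lemderiveej} and the decay of $\eta_\eps$.
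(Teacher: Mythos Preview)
Your approach is the same as the paper's: Taylor-expand $\mathcal{F}$, derive a cascade of inhomogeneous linear equations $\partial_t U^j = J\Lambda U^j + F^j$, and control them inductively via a linear propagation estimate obtained through the Laplace transform, with energy estimates for high time-frequencies and resolvent bounds for bounded time-frequencies.

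There is, however, one point where your formulation needs care. You ask for a semigroup bound $\|e^{tJ\Lambda}\|_{E^s\to E^s}\leq C_s(1+t)^\kappa e^{\sigma_0 t}$ at the \emph{exact} maximal rate $\sigma_0$. Since $\sigma_0$ is itself an eigenvalue of $JL(k_0)$, the resolvent is singular on the line $\mathrm{Re}\,\lambda=\sigma_0$, and a Gearhart--Pr\"uss type conclusion at this sharp rate does not follow from the spectral information collected in the paper. The paper sidesteps this: Proposition~\ref{semi-group} is stated not as a semigroup bound but as a source-to-solution estimate, assuming the source grows like $e^{\gamma t}/(1+t)^\rho$ with $\gamma>\sigma_0$; the Laplace contour is then placed at $\gamma_0\in(\sigma_0,\gamma)$, where the resolvent is uniformly bounded. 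Since $(j+1)\sigma_0>\sigma_0$ for $j\geq 1$, this suffices, and your Duhamel argument would work equally well with a semigroup bound at rate $\sigma_0+\eps$ for arbitrary $\eps>0$, which is what the paper's method actually delivers. A second minor omission: the paper uses that $\widehat{U^0}(\cdot,k)$ is compactly supported in $k$ (the remark after Proposition~\ref{U0}) and propagates this inductively, so that each $\widehat{U^j}$ is compactly supported in $k$. This is what permits the reduction to the family $JL(k)$ for $|k|$ in a fixed compact set, where the uniform resolvent and energy estimates are available, and it also makes the convolution estimates for $F^j$ straightforward via Cauchy--Schwarz in $k$.
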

\begin{proof}[Proof of Proposition~\ref{Uap}]
By using the Taylor expansion of $\mathcal{F}$
$$ \mathcal{F}(Q+ \delta U) = \mathcal{F}( Q)
+ \sum_{ k=1}^{M+2}  { \delta^k \over k ! }  D^k \mathcal{F}[Q]\big(U, \dots, U\big)
+ \delta^{M+3} R_{M, \delta}(U),$$
we can plug  the expansion \eqref{expUap} into  the equation \eqref{ww} and
identify the terms in front of each power of $\delta$ to get for every $j \geq 1$
\beq\label{eqUk}
\partial_{t} U^j - J \Lambda U^j = \sum_{p=2}^{j+1} \sum_{
\stackrel{ 0 \leq l_{1}, \dots, l_{p}\leq M}{l_{1}+ \cdots  + l_{p} = j+1 - p} } 
{ 1 \over p!} D^p \mathcal{F}[Q]\big( U^{l_{1}}, \dots, U^{l_{p}} \big).
\eeq
Note that the right hand-side of \eqref{eqUk} involves only the $U^l$ for
$l\leq j-1$. This will allow to solve these equations by induction.
Moreover, thanks to  \eqref{U0form}, the Fourier transform in $y$  of $U^0$
is compactly supported. Consequently, it will be possible to solve the equations
\eqref{eqUk}  with the Fourier transform of  $U^j$ compactly supported in $y$
( in $B(0, R(|j|+1) )$ for example). 
This remark yields the introduction of the following ``norms'' for functions of
$x$ only :
$$ 
|U(t) |_{X^s_{k}}^2 = \sum_{0\leq  \alpha + \beta  \leq s} 
\Big(\big|\partial_{t }^\alpha \partial_{x}^\beta U_{1} (t, \cdot)\big|_{ H^1(\mathbb{R} ) }^2 + 
\big| \partial_{t }^\alpha \partial_{x}^\beta U_{2}\big|_{ \dot{H}^{1\over 2}_{k}(\R)}^2\Big),
$$
with the definition 
$$   
|\varphi|_{ \dot{H}^{1\over 2}_{k}(\R)}^2 \equiv
\big| { |D_{x}| \over 1 +  |D_{x}|^{1 \over 2} } \varphi\big|_{L^2(\mathbb{R})}^2 +
|k|^2   |   \varphi |_{  L^2(\mathbb{R}) }^2    .
$$
Note that the  ``semi-norm''  $ H^1 \times  \dot{H}^{1\over 2 }_{k}$ is the  "energy norm" which is   
naturally  associated to the operator $L(k)$. Note also that it does not give any control on
the $L^2$ norm of $U_{2}$ for $k=0$. In the sequel $k$ will range in a compact
set containing the origin. Thus we will only pay attention to the uniformness of the bounds near
$k=0$ and for that purpose $\dot{H}^{1\over 2}_{k}$ turns out to be quite natural.
The main ingredient towards the proof of Proposition~\ref{Uap} will be the
following result.
\begin{prop}[Semi-group bound for $J \Lambda(k)$]\label{semi-group}
Let us fix $\gamma>\sigma_0$ (where $\sigma_0$ is defined in the proof of
Proposition~\ref{U0}), $\rho>0$  and $s\in \N$. 
For every
every $F(t,x, k)$, $F(\cdot, \cdot, k) \in
C^{\infty}(\R;H^{\infty}(\mathbb{R})) $ satisfying uniformly 
for $|k  | \leq K$ the estimates
\beq\label{Fhyp}
\sum_{ \alpha + \beta \leq s}\|\partial_{t}^\alpha \partial_{x}^\beta
F(t,\cdot, k )\|_{L^2}
\leq \Lambda_{s} \frac{e^{\gamma t}}{(1+ t )^\rho } ,\quad\forall\, t\geq 0,
\eeq
if  $U$ solves
\beq
\label{semieq}
\partial_{t}U=J\Lambda(k)U+F,\quad U(0)=0, 
\eeq
then there exists $C_{s}$ depending only on  $\Lambda_{s+s_{0}}$ for some 
$s_{0}\geq 0$ such that for every $k,$  $|k|  \leq K$,
\beq\label{semiest}
|U(t,\cdot) |_{L^2 } + |U(t,\cdot)|_{ X^s_{k}  } \leq 
C_{s} \frac{e^{\gamma t}}{(1+ t )^\rho },\quad \forall\, t\geq 0\,.
\eeq
\end{prop}
\begin{rem}
{\rm Notice that in particular (\ref{semiest}) provides bounds of $U$ and its time
derivatives in usual Sobolev spaces. These bounds will be used in the
application of Proposition~\ref{semi-group} to the proof of  Proposition~\ref{Uap}.}
\end{rem}
\begin{proof}[Proof of Proposition~\ref{semi-group}]
We shall focus on the proof of the estimate \eqref{semiest} assuming that $U$ is a smooth
solution of \eqref{semieq}. We shall not detail the proof of the existence of  the solution
which can be obtained in a classical way (for example by using the vanishing
viscosity method as in \cite{L1})  once  the a priori estimates \eqref{semiest} are established.
By using again the change of unknown $V=P^{-1}U$, it is equivalent to study the equation
\beq
\label{semieq2}
\partial_{t} V  = JL(k) V +  F, \quad V(0) = 0
\eeq
with  $F$ satisfying the estimates \eqref{Fhyp} and to prove that $V$ verifies the estimate \eqref{semiest}.
We shall first ignore the estimate of the $L^2$ norm of $V_{2}$ and  prove the estimate
\beq\label{semiest1}
|V(t,\cdot)|_{ X^s_{k}  } \leq C_{s} {e^{\gamma t} \over ( 1 + t
)^\rho},\quad \forall\, t\geq 0, \quad \forall\, k,  \, |k| \leq K
\eeq
by induction on $s$. We start with the proof of   the estimate
for $s=0$.
\subsubsection{Proof of \eqref{semiest1} for $s=0$}
By using the Laplace transform, we shall first reduce the proof of the estimate to a resolvent estimate.
Let us fix $T>0$ and introduce $G(t,x,k) $ such that
$$ 
G=0, \, t<0, \quad G=0, \, t>T, \quad G(t,x,k) =F(t,x,k),  \, t \in [0, T].
$$ 
We notice that the solution $\tilde{V}$ of
\beq
\label{tildeV}
\partial_{t} \tilde{V} = JL(k)\tilde{V} + G, \quad  \tilde{V}(0)=0, \quad \forall\, t \geq 0\eeq
verifies
\beq
\label{V=V}
V(\tau,x,k) = \tilde{V}(\tau,x,k), \quad \forall\, \tau  \in 
[0, T].
\eeq
Indeed, $W= V - \tilde{V}$ is a solution of
\begin{equation}\label{eqW_0}
\partial_{t} W = JL(k) W, \quad W(0) = 0,  \quad t \in [0, T].
\end{equation}
By using again  the decomposition \eqref{Ldec1} of  $L(k)$, we get
the energy estimate
\begin{equation}\label{eqW}
{ 1 \over 2 } { d \over dt } (L_{0}(k) W, W )  = {\rm Re }\,  \big( JL_{1} W, L_{0}(k)
W\big), \quad t \in [0, T].
\end{equation}
The right hand side  was already estimated in the proof of  Proposition~\ref{brute0cor} (see \eqref{L0L1}). 
We have proven that
\beq\label{L0L12}
| \big( JL_{1} W, L_{0}(k) W\big) | \leq C  |W|_{X^0_{k}}^2,
\eeq
where $C$ is a constant independent of $t$, $k, \,  |k| \leq K$ and $W\in X^0_{k}$ .
By using the estimate \eqref{DNm} we have also seen in \eqref{L0m}
that for some $c>0$
\beq\label{L0m2}
(L_{0}W, W ) \geq c | W  |_{X_{k}^0}^2, \quad \forall\, k, \,  |k | \leq K.
\eeq
Next, we can integrate \eqref{eqW} in time and use \eqref{L0L12}, \eqref{L0m2} to get
$$ 
| W(t)  |_{X_{k}^0}^2 \leq C  \int_{0}^t  | W(s)  |_{X_{k}^0}^2\, ds, \quad\forall\, t \in [0, T].
$$
By the Gronwall inequality, we get that $ | W(t)  |_{X_{k}^0}^2$
vanishes on $[0, T]$.  This implies that $W_{1}$=0 on $[0, T]$  and
then that $W_{2}=0$ on $[0, T]$ by using the second equation of \eqref{eqW_0}.
Consequently, we  shall  study \eqref{tildeV}.  For some $\gamma_{0}$ such that
\beq\label{gamma0}
\sigma_{0} <\gamma_{0} < \gamma, 
\eeq  
let us set
$$ 
W(\tau,x)= \mathcal{L} \tilde{V}(\gamma_{0}+ i \tau), \quad 
H(\tau, x) = \mathcal{L}G ( \gamma_{0} + i \tau ), \quad (\tau, x) \in \mathbb{R}^2\,,
$$
where $\mathcal{L}$ stands for the Laplace transform in time :
$$ 
\mathcal{L}f(\gamma_{0}+ i  \tau) = \int_{0}^{\infty} e^{- \gamma_{0} t - i \tau\, t} f(t)\, dt.
$$
Since $\tilde{V}(0)=0$, $W$ solves the resolvent equation
\beq
\label{Wres}
(\gamma_{0}+ i \tau) W- JL(k) W  = H(\tau, \cdot).
\eeq
By the choice of $\gamma_{0}$ in \eqref{gamma0}, $\gamma_{0} + i \tau$ is not
in the spectrum of $JL(k)$ for every $k$. Consequently,  $W$ is given by 
$$ 
W= \big((\gamma_{0} + i \tau){\rm Id} - JL(k)  \big)^{-1} H. 
$$
The next step is to obtain an estimate of $W$ uniform in $\tau$. 
We first provide the bound for large values of $\tau$ (note that here we do
not use that $\gamma_0>\sigma_0$). Here is the precise statement.
\begin{lem}\label{res1}
Fix $\gamma_0>0$ and $K>0$.
There exist $M>0$ and $C>0$ such
that for every $|\tau|\geq M$, every $\gamma\geq \gamma_0$ every $f=(f_1,f_2)\in H^{1}\times H^{\frac{1}{2}}$, 
every $|k|\leq K$ if $U=(U_1,U_2)$ solves
\begin{equation}\label{rugbi}
(\gamma+i\tau)U=JL(k)U+f
\end{equation}
then
\begin{equation}\label{sad0}
|U_{1}|_{H^1} +  \Big| { |D_{x}| \over  1 + |D_{x}|^{ 1 \over 2 } } U_{2}\Big|_{L^2} + |k | |U_{2}|_{L^2}
\leq C|f|_{H^1\times H^{1 \over 2 }}\,.
\end{equation}
\end{lem}
\begin{proof}[Proof of Lemma~\ref{res1}]
We shall use Proposition~\ref{Lk}. Let us set $\Phi_{-}= ( \eta_{\eps}^{-}, 0)^t$, $\Phi_{0}= (\eta_{\eps}^0
, 0)^t$, we can moreover assume that 
$\Phi_{-}$ and $ \Phi_0$  are normalized in $L^2\times L^2$.  Then,  
every $U=(U_1,U_2)^{t}\in H^2\times H^1$ can be written as
\begin{equation}\label{her1}
U=\alpha \Phi_{-}+\beta \Phi_0+U^{\perp}, \quad (U^\perp, \Phi_{-})=0, \, (U^\perp, \Phi_{0})=0
\end{equation}
and thanks to \eqref{Lkmweak}, we have for some $c>0$
\begin{equation}\label{her2}
(L(k)U^{\perp},U^{\perp})\geq c\Big(
|U^{\perp}_{1}|_{H^1}^2 +  \Big| { |D_{x}| \over  1 + |D_{x}|^{ 1 \over 2 } } U^{\perp}_{2}\Big|_{L^2}^2
+  |k|^2 |U_{2}^{\perp}|_{L^2}^2 \Big),\quad \forall\, k,  \,\,  |k |\leq K.
\end{equation}
Next, if $U=(U_1,U_2)^{t}\in H^2\times H^1$ is a solution of (\ref{rugbi}) then
\begin{equation}\label{her3}
\gamma (L(k)U,U)={\rm Re} \, (f,L(k)U).
\end{equation}
To estimate the right-hand side, we use that
$$
|(f,L(k)U) | \leq C \Big( \big(|f_{1}|_{H^1} +  |f_{2}|_{L^2}\big) \, |U_{1}|_{H^1} +  
|(f_{2},  G_{\eps, k } U_{2}) | + |\big(f_{1},  (v_{\eps}- 1 ) \partial_{x}U_{2}\big)  |\Big).
$$
Next, by using  \eqref{DNC},  and  that
$$\big| (v_{\eps}- 1)f_{1},  \partial_{x} U_{2}\big)\big| \leq | (v_{\eps }-1 ) f_{1}|_{H^{1\over 2 } } | \partial_{x}U_{2}
 |_{H^{ - {1 \over 2 } } } $$
we get that
$$ |(f_{2},  G_{\eps, k } U_{2}) | + |\big(f_{1}, 
(v_{\eps}- 1 ) \partial_{x}U_{2}\big)  |\leq C \big( |f_{1}|_{H^{1 }} +  |f_{2}|_{H^{1\over2}}
\big)\Big(  \Big| { |D_{x}| \over  1 + |D_{x}|^{ 1 \over 2 } } U_{2}\Big|_{L^2}
+  |k|  |U_{2}|_{L^2} \Big).$$
Consequently, we have shown that
\beq
\label{ressource}
|(f,L(k)U) | \leq C \big(|f_{1}|_{H^1} +  |f_{2}|_{H^{1 \over 2 } }\big)\big( |U_{1}|_{H^1}
+ \Big| { |D_{x}| \over  1 + |D_{x}|^{ 1 \over 2 } } U_{2}\Big|_{L^2}
+  |k|  |U_{2}|_{L^2} \big).
\eeq
Furthermore, using integrations by part and  some crude estimates, 
we can estimate the left hand-side of  \eqref{her3} as follows
\begin{equation}\label{her3bis}
(L(k)U,U)\geq  (L(k) U^\perp, U^\perp)  - C \Big(|\alpha|^2 + |\beta|^2 + (|\alpha|+|\beta|)
\big(|U^\perp_1|_{L^2}+\Big| { |D_{x}| \over  1 + |D_{x}|^{ 1 \over 2 } } U_{2}^\perp \Big|_{L^2}\big)\Big)
\end{equation}
for some $C>0$. Consequently, we can combine  \eqref{her2}, \eqref{her1} with  \eqref{her3}, 
\eqref{her3bis}, \eqref{ressource} to get that
\begin{eqnarray*}
& &|U^{\perp}_{1}|_{H^1} +  \Big| { |D_{x}| \over  1 + |D_{x}|^{ 1 \over 2 } } U^{\perp}_{2}\Big|_{L^2}^2
+ k^2 |U_{2}^\perp|_{L^2}^2  \\
& & \leq  C \Big(|f_{1}|_{H^1} +  |f_{2}|_{H^{1 \over 2 } } +  |\alpha | + |\beta | \Big)
\Big( |U_{1}^\perp|_{H^1}
+ \Big| { |D_{x}| \over  1 + |D_{x}|^{ 1 \over 2 } } U_{2}^\perp \Big|_{L^2}
+  |k|  |U_{2}^\perp|_{L^2} \Big)  + C (|\alpha |^2 + |\beta |^2).
\end{eqnarray*} 
A use of the inequality \eqref{Young} yields  
\beq\label{sad1}
|U^{\perp}_{1}|_{H^1}^2 +  \Big| { |D_{x}| \over  1 + |D_{x}|^{ 1 \over 2 } } U^{\perp}_{2}\Big|_{L^2}^2
+ k^2 |U_{2}^\perp|^2 
\leq C\big(|f|_{H^1\times H^{1 \over 2 }}^2 +|\alpha|^2+|\beta|^2\big).
\eeq
We now take the $ L^2\times L^2$ scalar product of
$$
(\gamma+i\tau)U=JL(k)U+f
$$
with $\Phi_{-}$ and $\Phi_0$ to arrive at
$$
(\gamma+i\tau)\alpha=
-(U,L(k)J\Phi_{-})+(f,\Phi_{-})
$$
and
$$
(\gamma+i\tau)\beta=
-(U,L(k)J\Phi_{0})+(f,\Phi_{0})\,.
$$
By using again   \eqref{DNC} and the fact that $\Phi_{0}$, $\Phi_{-}$
are smooth and fixed,  we have  for $i=0, \,-$, 
$$ |(U,L(k)J\Phi_{i})| \leq C \Big(  |U_{1}|_{L^2} + 
\Big| { |D_{x}| \over  1 + |D_{x}|^{ 1 \over 2 } } U_{2}\Big|_{L^2}
+ |k|  |U_{2}|_{L^2} \Big).$$ 
Therefore, we obtain that
\begin{eqnarray}
\label{sad2}
(\gamma+|\tau|)|\alpha| & \leq & C\big(|U_1|_{L^2}+\Big| { |D_{x}| \over  1 + |D_{x}|^{ 1 \over 2 } } U_{2}\Big|_{L^2}
+|k| \, |U_{2}|_{L^2}+|f|_{L^2\times L^{2}}\big),
\\
\label{sad3}
(\gamma+|\tau|)|\beta| & \leq & C\big(|U_1|_{L^2}+ \Big| { |D_{x}| \over  1 + |D_{x}|^{ 1 \over 2 } } U_{2}\Big|_{L^2}
+ |k |\, |U_{2}|_{L^2}+|f|_{L^2\times L^{2}}\big).
\end{eqnarray}
Combining (\ref{sad1}), (\ref{sad2}) and (\ref{sad3}), we obtain that for $|\tau|$ sufficiently large,
depending on  $K$ and $\gamma_0$, we arrive at the (\ref{sad0}). 
This completes the proof of Lemma~\ref{res1}.
\end{proof}
Using Lemma~\ref{res1}, we get
\beq\label{large}
|W(\tau, \cdot)|_{X^0_{k}} \leq C  |H(\tau, \cdot) |_{ H^1 \times H^{1 \over 2} }, \quad 
\forall\, \tau, \, k,  \, |\tau | \geq M, \, |k | \leq K.
\eeq
Next, we give the argument for $|\tau|\leq M$.
Since, on the compact set $\{\lambda =\gamma_{0} + i \tau, \, |\tau  |\leq M\}$,  there is no spectrum
of $JL(k)$ by  the choice of $\gamma_{0}$, we get  by the continuous dependence of 
$L(k)$ in $k$ that the resolvent
$$ 
\mathcal{R}(\tau, k ) = \big( (\gamma_{0} + i \tau){\rm Id} - JL(k) \big)^{-1}
$$
is uniformly bounded on  $[-M, M]\times [-K, K]$ i.e.
$$ 
|W(\tau)|_{H^2 \times H^1 } \leq C |H(\tau) |_{L^2\times L^2}, \quad \forall\, \tau, \, k ,  \, |\tau| \leq M, \, |k| \leq K .
$$
Consequently, we have in particular proven  the uniform estimate
\beq\label{resolvent_bis}
|W(\tau, \cdot)|_{X^0_{k}} \leq C  |H(\tau, \cdot) |_{ H^1 \times H^{1 \over 2 } }
, \quad \forall\, \tau, \, k,\quad \, |k | \leq K.
\eeq 
By the Bessel-Parseval identity, \eqref{V=V} and \eqref{resolvent_bis}, we get
\begin{eqnarray*}
& & \int_{0}^T e^{-2 \gamma_{0} t } |V(t)|_{X_{k}^0}^2\, dt
\leq \int_{0}^{+ \infty} e^{-2 \gamma_{0} t } |\tilde{V}(t)|_{X_{k}^{0}}^2\, dt
= C \int_{\mathbb{R}} |W(\tau)|_{X_{k}^0}^2 \, d\tau \\
& & \leq C \int_{\mathbb{R}} |H(\tau)|_{H^1 \times H^{\frac{1}{2}}}^2 \, d\tau
= \int_{0}^{T} e^{-2 \gamma_{0} t } |F(t, k )|_{H^1 \times L^2 }^2\, dt
\end{eqnarray*}
and finally thanks to \eqref{Fhyp}, we get that there exists $C>0$ such that for every $T >0$, 
\beq\label{estsource}
\int_{0}^T e^{-2 \gamma_{0} t } |V(t)|_{X_{k}^0}^2\, dt\leq C \int_{0}^T{ e^{2( \gamma - \gamma_{0})t }
\over  ( 1 + t )^{2 \rho}}\, dt
\leq C { e^{ 2( \gamma - \gamma_{0})T } \over  ( 1 + T )^{2 \rho } }
\eeq
since $\gamma_{0}$ was fixed  such that $\gamma> \gamma_{0}$.

To finish the proof, we can use an energy estimate for \eqref{semieq2}.
By using again the decomposition \eqref{Ldec1}, we  get the energy estimate
$$ { 1 \over 2 } { d \over dt } e^{- 2 \gamma_{0} t }\big( L_{0}(k) V, V\big)
=  e^{-2 \gamma_{0} t }  {\rm Re }\,\big( JL_{1} V, L_{0}(k)V  \big) - 2 \gamma_{0} e^{-2 \gamma_{0}t}
\big(L_{0}(k)U, U \big) +  {\rm Re }\, \big( F, L_{0}(k) V\big).$$
Since, by using an integration by parts  and  \eqref{DNC}
we have
\beq
\label{semi1}
\big|  ( F, L_{0}(k) V \big ) \big| \leq C |F|_{X^0_{k}} \, |V|_{X^0_{k}}, \quad
\big(L_{0}(k)U, U \big) \leq C |U|_{X^k_{0}}^2, 
\eeq
a new use of \eqref{L0L12},  \eqref{L0m2}  and \eqref{Fhyp} gives
$$ e^{- 2 \gamma_{0} t }  |V(t) |_{X^0_{k}}^2 \leq C \int_{0}^t e^{- 2 \gamma_{0} t } | V(s) |_{X^k_{0}}^2 \, ds
+  C \int_{0}^t  { e^{ 2  ( \gamma - \gamma_{0}  ) s } \over ( 1 + s )^{2 \rho}  }\, ds. 
$$
Consequently,  we can use \eqref{estsource} to get
$$
e^{- 2 \gamma_{0} t }  |V(t) |_{X^0_{k}}^2 \leq C { e^{ 2( \gamma - \gamma_{0})t } \over  ( 1 + t )^{2 \rho } }.
$$
This ends the proof of \eqref{semiest1} for $s=0$.
\begin{rem}
{\rm The argument for $|\tau|\leq M$ given above is different compared to a similar
analysis in our previous works \cite{RT1,RT2}. In \cite{RT1,RT2}, we use an
ODE argument since the linearized about a solitary wave equation may be easily
reduced to an ODE. For the water waves problem such a reduction is not
clear. On the other  hand, it is not clear to us how to adapt the approach presented
here to the case of the KP-I type equations, the problem being that the
analogue of $J$ for the KP-I type equations is $\partial_x$ which makes the
counterpart of Proposition~\ref{essJL} more difficult to establish.}
\end{rem} 
\subsubsection{Proof of \eqref{semiest1} for $s \geq 1$}
We shall use the following estimate
\beq\label{young2}
|\partial_{x} f |_{L^2 } \leq   C\Big(  \Big| { |D_{x} | \over 1 +  |D_{x}|^{1 \over 2 } } \partial_{x }f \Big|_{L^2}
+ \Big| { |D_{x} | \over 1 +  |D_{x}|^{1 \over 2 } } f \Big|_{L^2}
\Big)
\,.
\eeq
One may obtain (\ref{young2}) by analysing separately the low and the high frequencies.
For the proof of  \eqref{semiest1} for $s \geq 1$,
we proceed by induction. Let us assume that \eqref{semiest1} is proven for $s' \leq s-1$ i.e.
\beq\label{ind1}
|V(t)|_{X_{k}^{s'} } \leq C {e^{\gamma t } \over (1 + t )^\rho }, \quad \forall\, t \geq 0, \, \forall\, k, \,
|k| \leq K, \, \forall\, s', \, s' \leq s-1.
\eeq
We have to estimate $|\partial_{t}^{s-i} \partial_{x}^i V |_{X_{k}^0}$
for $i \leq s$.  We shall   now use  an induction on $i$.
For $i=0$, since the coefficients of  $L(k)$ do not depend on time, we get
that $\partial_{t}^s V $ solves
$$ 
(\partial_{t} - JL(k) \big)( \partial_{t}^s V ) = \partial_{t}^s F.
$$
Moreover, by using the equation \eqref{semieq2} and \eqref{Fhyp}, we get that
at $t=0$
\beq\label{t=0} 
| \partial_{t}^s V(0)  |_{H^l} \leq C_{s,l},
\eeq
where $C_{s,l}$ depends only on norms of $F$ at $t=0$. Thus, we get in particular that
$$ W= \partial_{t}^s V(t) - \partial_{t}^sV(0)$$
solves the equation 
$$ \partial_{t} W - JL(k) W = \tilde{F}, \quad W(0)=0$$
with a source term $\tilde{F}$ satisfying \eqref{Fhyp}.
By using the result of the previous subsection, we  get 
$$ 
|W(t) |_{X_{k}^0} \leq C { e^{\gamma t } \over ( 1 + t )^\rho}, \quad
\forall\, t \geq 0
$$
and hence 
\beq\label{i=1}
|\partial_{t}^sV(t) |_{X_{k}^0} \leq C { e^{\gamma t } \over ( 1 + t )^\rho}, \quad \forall\, t \geq 0.
\eeq  
Now, for $j\geq 1$, let us assume that  
\beq\label{indhyp}
|\partial_{t}^{s-i} \partial_{x}^i V |_{X_{k}^0} \leq C{ e^{\gamma t} \over (1+ t)^\rho }, \quad \forall\, i \leq j-1,
\quad \forall\, t \geq 0.
\eeq
By applying $\partial_{t}^{s-j} \partial_{x}^j$ to equation \eqref{semieq2}, we get the equation
\begin{equation}\label{semieq3}
\partial_{t} ( \partial_{t}^{s- j } \partial_{x}^j V) = J \Big(L(k) \partial_{t}^{s-j} \partial_{x}^j  V
+ [\partial_{x}^j, L(k)] \partial_{t}^{s-j} V \Big) + \partial_{t}^{s-j} \partial_{x}^j F.
\end{equation}
Thanks to Proposition~\ref{com} and (\ref{young2}),  we easily get the estimate
\beq
\nonumber
|[\partial_x^j,L(k)] \partial_{t}^{s-j} V|_{L^2}\leq C \sum_{i \leq j}|\partial_{t}^{s-j} \partial_{x}^iV |_{ X_{k}^0 }.
\end{equation}
Consequently,  thanks to the induction  assumption \eqref{ind1}, we get that
\beq
\label{komutat}
|[\partial_x^j,L(k)] \partial_{t}^{s-j} V|_{L^2}\leq C\Big( |\partial_{t}^{s-j} \partial_{x}^jV |_{ X_{k}^0 }
+ { e^{\gamma t} \over  ( 1 + t )^\rho}\Big).
\end{equation}
By taking the scalar product  and the real part of \eqref{semieq3},  against
$ L(k) \partial_{t}^{s-j} \partial_{x}^j V+   [\partial_{x}^j, L(k) ] \partial_{t}^{s-j} V$, 
we get thanks to \eqref{Fhyp} and \eqref{komutat} that 
\begin{multline}\label{semind1}
{1\over 2}{d \over dt} \big( \partial_{t}^{s-j} \partial_{x}^j V,  L(k) \partial_{t}^{s-j} \partial_{x}^j V \big)
+  {\rm Re }\, \big( \partial_{t }  \partial_{t}^{s-j} \partial_{x}^j V,  [\partial_{x}^j,
L(k) ] \partial_{t}^{s-j} V \big) 
\\
\leq C\Big( 
{e^{ 2 \gamma t } \over (1+ t )^{2\rho} } +
| \partial_{t}^{s-j}\partial_{x}^j F |_{L^2}
|\partial_{t}^{s-j} \partial_{x}^jV |_{ X_{k}^0 } +
{\rm Re }\, \big( \partial_{t}^{s-j} \partial_{x}^j F,
L(k) \partial_{t}^{s-j} \partial_{x}^j V \big)
\Big).
\end{multline}
Thanks to  \eqref{ressource} and \eqref{Fhyp}, we have
\beq\label{semind2} 
|\big( \partial_{t}^{s-j} \partial_{x}^j F, L(k) \partial_{t}^{s-j} \partial_{x}^j V \big) |
\leq C {e^{\gamma t } \over (1+ t )^{\rho}} | \partial_{t}^{s-j} \partial_{x}^j V |_{X_{k}^0}\,.
\eeq
Moreover,   by using the expression of $L(k)$, we can write
\beq \label{semi2}
 {\rm Re }\,\big( \partial_{t }  \partial_{t}^{s-j} \partial_{x}^j V,  [\partial_{x}^j, L(k) ] \partial_{t}^{s-j} V \big) \\
=   {\rm Re }\, \big( \partial_{t}  \partial_{t}^{s-j} \partial_{x}^j V_{2}, [\partial_{x}^j , G_{\eps, k}]\partial_{t}^{s-j}V_{2}
\big) + R,
\eeq
where $R$ can  be estimated   by 
\begin{multline}\label{Rsemi}
|R |   \leq C 
\Big(|\partial_{t}^{s-j + 1 } \partial_{x}^{j- 1 }  V_{1}|_{H^1  } \, 
\big(   
| \partial_{t }^{s-j}  V_{1 }|_{H^{j+ 1 }}  
+|\partial_{t}^{s-j} \partial_{x}V_{2} |_{ H^{j-1} } \big) 
\\ 
+ \Big|\partial_{t}^{s-j + 1 } { |D_{x}| \over 1 + |D_{x}|^{1 \over 2 }} \partial_{x}^{j- 1 }  V_{2}\Big|_{ L^2  }\,  
|V_{1}|_{H^{j+1}}
\Big).
\end{multline} 
Note that to get the last term above, we have used    that
$$
\Big| \Big(  \partial_{t}^{s- j + 1} \partial_{x}^j V_{2}, [\partial_{x}^j,  \partial_{x}\big(( v_{\eps} - 1 ) \cdot \big) ]V_{1}
\Big) \Big| 
\leq C \Big|   { |D_{x}| \over 1 +  |D_{x}|^{1 \over 2 } } \partial_{t}^{s- j + 1} \partial_{x}^{j- 1 }  V_{2} \Big|
_{L^2}\,  \Big | [\partial_{x}^j,  \partial_{x}\big( (v_{\eps} - 1 ) \cdot \big) ]V_{1} \Big|_{H^1},
$$
while for the first term we used a direct commutator of differential operators estimate.
By using the   induction assumptions \eqref{ind1}, \eqref{indhyp} and the inequality (\ref{young2}), we get
$$  |\partial_{t}^{s-j} \partial_{x}V_{2} |_{ H^{j-1} }
\leq C\Big(   \Big| { |D_{x} | \over 1 +  |D_{x}|^{1 \over 2 } } \partial_{t}^{s-j}\partial_{x }^j V_{2}\Big|_{L^2}
+  { e^{\gamma t} \over (1+ t)^{\rho} }\Big),
$$
and thus coming back to \eqref{Rsemi}, we obtain
\beq\label{Rsemi2}
|R| \leq C\Big( { e^{\gamma t } \over (1+ t )^{\rho} } | \partial_{t}^{s-j}\partial_{x}^j V|_{X^0_{k}}
+ {e^{ 2 \gamma t } \over (1+ t)^{ 2 \rho} }\Big).
\eeq
To estimate the first term in the right-hand side of \eqref{semi2}, we   write
$$
\big|  \big( \partial_{t }  \partial_{t}^{s-j} \partial_{x}^j V_{2},  [\partial_{x}^j,  G_{\eps, k} ] \partial_{t}^{s-j} V_{2} \big)
\big| \leq C
\Big| \partial_{t}^{s-j+1} { |D_x| \over {1 +  |D_{x}|^{ 1\over 2 } } } \partial_{x}^{j-1} V_{2}\Big|_{L^2}
\, \big|  [\partial_{x}^j,  G_{\eps, k} | \partial_{t}^{s-j} V_{2} \big|_{H^{1\over 2} }
$$
and hence the induction assumption \eqref{ind1}, \eqref{indhyp} and
the commutator estimate \eqref{com1}  yield
\beq\label{semi3}
\big|  \big( \partial_{t }  \partial_{t}^{s-j} \partial_{x}^j V_{2},  [\partial_{x}^j,  G_{\eps, k} ] \partial_{t}^{s-j} V_{2} \big)
\big|\leq C\Big(   { e^{\gamma t } \over (1+ t )^{\rho} } | \partial_{t}^{s-j}\partial_{x}^j V|_{X^0_{k}}
+ {e^{ 2 \gamma t } \over (1+ t)^{ 2 \rho} }\Big).
\eeq
Consequently, we can integrate \eqref{semind1} in time and use \eqref{t=0}, 
\eqref{semind2}, \eqref{semi2}, \eqref{Rsemi2}, 
\eqref{semi3} to obtain
\beq\label{semi4}
\big( \partial_{t}^{s-j}\partial_{x}^j V, L(k) \partial_{t}^{s-j}\partial_{x}^j V\big)(t)
\leq C\Big(
{e^{ 2 \gamma t } \over (1+ t)^{ 2 \rho} } + \int_{0}^t  { e^{\gamma \tau} \over (1+ \tau )^{\rho} } | \partial_{t}^{s-j}\partial_{x}^j V(\tau)|_{X^0_{k}}\, d\tau\Big).
\eeq
A crude bound from below on $L(k)$ gives for some $c>0$, $C>0$, 
\begin{eqnarray*}
& &  \big( \partial_{t}^{s-j}\partial_{x}^j V, L(k) \partial_{t}^{s-j}\partial_{x}^j V\big)(t)
\\
& &\geq  c \big| \partial_{t}^{s-j} \partial_{x}^j V |_{X_{k}^0}^2 -  C\Big(   \big|\partial_{t}^{s-j}\partial_{x}^j V_{1}|_{L^2}^2
+   \big|\partial_{t}^{s-j}\partial_{x}^j V_{1}|_{H^{1\over 2 } } 
\Big| { |D_{x} | \over 1 +  |D_{x}|^{1 \over 2 } } \partial_{t}^{s-j}\partial_{x }^j V_{2}\Big|_{L^2} \Big),
\quad \forall\, k, \, |k| \leq K 
\end{eqnarray*}
and hence by the interpolation inequality
$$   
\forall\,\delta>0,\,\,\exists\,C(\delta)>0\,:\,\quad
\big|\partial_{t}^{s-j}\partial_{x}^j V_{1}|_{H^{1\over 2 } } \leq  \delta  \big|\partial_{t}^{s-j}\partial_{x}^j V_{1}|_{H^{1} } + C(\delta) \big|\partial_{t}^{s-j}\partial_{x}^j V_{1}|_{L^2 }$$
we get by choosing $\delta$ sufficiently small and the induction assumption \eqref{ind1} that 
$$  
\big( \partial_{t}^{s-j}\partial_{x}^j V, L(k) \partial_{t}^{s-j}\partial_{x}^j V\big)(t)
\geq {c \over 2 } \big| \partial_{t}^{s-j} \partial_{x}^j V(t) |_{X_{k}^0}^2 - C { e^{2 \gamma t } \over (1+ t)^{2\rho} }.$$
Consequently, we can plug this last estimate into \eqref{semi4}
to get
\begin{eqnarray*}
\big| \partial_{t}^{s-j} \partial_{x}^j V(t) |_{X_{k}^0}^2&  \leq &
C\Big(  {e^{ 2 \gamma t } \over (1+ t)^{ 2 \rho} } +\int_{0}^t { e^{\gamma \tau} \over (1+ \tau )^{\rho} } | \partial_{t}^{s-j}\partial_{x}^j V(\tau)|_{X^0_{k}}\, d\tau\Big) \\
& \leq &   C(\delta){e^{ 2 \gamma t } \over (1+ t)^{ 2 \rho} }  + \delta    \int_{0}^t
| \partial_{t}^{s-j}\partial_{x}^j V(\tau)|_{X^0_{k}}^2 \, d\tau
\end{eqnarray*}
for every $\delta>0$. Note that we have used the inequality \eqref{Young}
to get the last estimate.
By  the choice $\delta <2 \gamma$, we get from the Gronwall inequality
that
$$ \big| \partial_{t}^{s-j} \partial_{x}^j V(t) |_{X_{k}^0}^2  \leq C
{e^{ 2 \gamma t } \over (1+ t)^{ 2 \rho} } .
$$
This ends the proof of \eqref{semiest1}.
\subsubsection{ $L^2$ estimate }
To finish the proof of  \eqref{semiest}, it remains to estimate the $L^2$ norm
of $V_{2}$ which is not given by the estimate \eqref{semiest1} for small $k$.
It suffices to use the equation for $V_{2}$ in \eqref{semieq2} which gives that
$$ 
|V_{2}(t) |_{L^2} \leq C \int_{0}^t \big(  | \partial_x V_{2}(\tau)|_{L^2 } +
|V_{1}(\tau)|_{H^2}
+|F_2(\tau)|_{L^2}\big)d\tau
$$
and then to use  \eqref{semiest1} (for $s=1$) together with (\ref{young2})
and (\ref{Fhyp}), to get
$$   |V_{2}(t) |_{L^2} \leq C  {e^{  \gamma t } \over (1+ t)^{  \rho} } .$$
This ends the proof of Proposition~\ref{semi-group}.
\end{proof}
\subsubsection{End of the proof of  Proposition~\ref{Uap}}
We proceed by induction. We have already built $U^0$ in
Proposition~\ref{U0}. Fix $j\geq 1$ and assume that  the $U^{l}$ are built for
$ l \leq j-1 $. 
We shall estimate the solution of \eqref{eqUk} by using Proposition~\ref{semi-group}.
Towards this, it suffices to check assumption \eqref{Fhyp}, where the source term is
defined by the right hand-side of (\ref{eqUk}).
Let us denote by $\mathcal{S}^j(t, x, y)$
the right hand side of \eqref{eqUk} and by $\hat{ \mathcal{S} }^j(t,x,k)$ its
Fourier transform with respect to $y$. 
From Proposition~\ref{lemderiveej} and the standard product estimates in Sobolev spaces, we get
$$
| \hat {\mathcal{S} }^j (t,\cdot,k ) |_{F^s}\leq C\, \sum_{p=2}^{j+ 1 }
\sum_{ \stackrel{ 0 \leq l_{1}, \dots, l_{p}\leq M}{l_{1}+ \cdots  + l_{p} = j+1 - p}}
\Big(|\hat{U}^{l_{1} } |_{F^{s+ s_{0} } }* \cdots * |\hat{U}^{l_{p}} |_{F^{s+ s_{0} } }\Big)(t,k)\,,
$$
where $*$ stands for the convolution with respect to the $k$ variable and $|\cdot|_{F^s}$ is naturally defined as
$$ 
|V(t, \cdot) |_{F^s} = \sum_{\alpha + \beta \leq s}
|\partial_{t}^\alpha \partial_{x}^\beta v(t, \cdot)|_{L^{2}(\mathbb{R}) }.
$$
Consequently, by using  repeatedly the Cauchy-Schwarz inequality in the
integrations defining the convolution,  the fact
that the $\hat{U}^i$ are compactly supported  in $k$, and the Bessel-Plancherel identity, we get
\begin{equation}\label{k_k}
| \hat {\mathcal{S} }^j (t,\cdot,k ) |_{F^s} \leq C(R,s,  j) \sum_{p=2}^{j+ 1 }
\sum_{ \stackrel{ 0 \leq l_{1}, \dots, l_{p}\leq M}
{l_{1}+ \cdots  + l_{p} = j+1 - p}}\|U^{l_{1}}\|_{E^s}\cdots \|U^{l_{p}}\|_{E^s}.
\end{equation}
By the induction assumption, again the Bessel-Plancherel identity and the fact that $\mathcal{S}^j(t,x,k)$ is 
compactly supported in $k$, after suitable integrations in $k$ starting from (\ref{k_k}), we finally get
$$   
\| \mathcal{S}^j (t) \|_{E^s}\leq C(R,s, j)  {e^{( j+ 1) \sigma_{0} t } 
\over  ( 1 + t )^{ {j+1 \over 2 m } } }.
$$
Consequently,  since $( j+ 1) \sigma_{0} >\sigma_{0}$, the
estimate of $\|U^j(t)\|_{E^s}$    follows thanks to
Proposition~\ref{semi-group}.
Finally, \eqref{Rap} follows from \eqref{Ujest} and crude estimates in
Sobolev spaces applied to $\|R_{M,\delta}(U^{a})\|_{E^s}$ and the other terms
involving $\delta^p$ with $p$ at least $M+3$. This ends the proof of Proposition~\ref{Uap}.
\end{proof}

\section{Proof of Theorem~\ref{main}  (the nonlinear analysis)}
\label{sectionnonlin}
Let us set  $V^{a}= Q+\delta U^{a}$ where $U^{a}$ is the  approximate solution
given by Proposition~\ref{Uap}.  To prove  our instability result,  we shall prove that we
can construct a true  solution $U^\delta$  of  \eqref{eq11}, \eqref{eq22},  that we can still 
consider in its abstract form \eqref{ww}, up to time $T^{\delta}\sim \log(1/\delta)$,
under the form
\beq\label{Uinstabl}
U^\delta = V^{a}+ U,\quad U^\delta(0)=V^{a}(0)=Q+\delta U^0(0).
\eeq
We therefore need to solve the equation
\beq\label{Ueqinstab}
\partial_{t} U = \mathcal{F}(V^{a} + U) -  \mathcal{F}(V^{a})   -  R^{ap}, \quad t>0, \quad U(0)= 0
\eeq
and obtain  estimates for $U$.
More precisely, we need to prove that the solution of \eqref{Ueqinstab}
is defined on a sufficiently large interval (of size $\log(1/\delta)$) of time in order to see 
the linear instability and also to prove that $U$ remains negligible in front of $V^{a}$.

The aim of the following is to  prove  a priori estimates for $U$ suitable for that purpose. 
These estimates rely on the transformation of the system into a quasilinear form.
Once these estimates are established, the result will follow by a continuation
argument as in \cite{Grenier} provided the number  $M$ of terms in the approximate
 solution is chosen sufficiently large.
  The  proof is organized as follows:

  - In the next subsection, we introduce
   useful notations and functional spaces. Then we state the key energy estimate.

   - Then in  Subsection \ref{DirNeumf},
    we study the Dirichlet-Neumann operator $G[\eta^{a}+ \eta] \varphi$.
    We need to track carefully the dependence of the estimates  with respect to the regularity
      of the surface. Here we need   the case that $\varphi$ is in  the  Sobolev scale, 
        for this part the analysis will be very close to the one of \cite{L1},
       but also the case that $\varphi= \varphi_{\eps}$ is the line solitary wave  and thus $\varphi$ is very smooth
        but not in the Sobolev scale.  The technically most subtle estimate is the estimate
         of 
         $ D_{\eta} G[\eta^{a}+ \eta] \varphi_{\eps} \cdot h$ in $H^1$ when $h$ is in $H^2$ 
         which is given in Proposition~\ref{pak_bis}.

  - Next, in  Subsection~\ref{quasin},
    we derive  a  quasilinear form of the system  by applying three space-time
     derivatives to the equation. We isolate a principal part of the equation and
      a lower order part that  mostly arises from commutators and can be considered as  made of semilinear terms.
     Subsection~\ref{semin} is devoted to the estimates of these semi-linear terms.

   - The energy estimates (which rely on the Hamiltonian structure of the linearized water-waves system)
   are given in Subsection \ref{energien}.

   - Subsection \ref{finn} is  devoted to the conclusion that is the proof of the nonlinear instability.
   
   - Finally, in Section \ref{sketch}, we  briefly explain how we can use   our a priori estimates
    in order to rigorously get the  local existence of a smooth solution for \eqref{Ueqinstab}
     by using the vanishing viscosity method.

  \subsection{Notations and functional spaces}
  Let us first introduce several notations. 
For $\sigma\in\R$, we denote by
$\Lambda^\sigma$ the Fourier multiplier on ${\mathcal S}'(\R^2)$ with symbol
$(1+|\xi|^2)^{\sigma/2}$, $\xi\in\R^2$. We shall also denote by $| \nabla |$ the Fourier 
multiplier  by  $|\xi|$.
For $\alpha = (\alpha_{0}, \alpha_{1}, \alpha_{2})\in\N^3$, we shall use the notation
$$ \partial^\alpha= \partial_{t}^{\alpha_{0}}\partial_{x}^{\alpha_{1}} \partial_{y}^{\alpha_{2}}.$$
Next, for $k\in\N$, we set
$$ 
\langle\partial\rangle^{k}u= (\partial^{\alpha } u  )_{|\alpha| \leq   k },\quad \langle\nabla\rangle^{k}
u= (\partial^{\alpha_1}_x\partial^{\alpha_2}_y u)_{\alpha_1+\alpha_2 \leq  k}\,.
$$
If $\|\cdot\|$ is a norm, by $\|\langle\partial\rangle^ku\|$ we denote the sum
of $\|\cdot\|$ norms of all the components of $\langle\partial\rangle^ku$ (if $u$ is
a tensor an additional summation over the components of $u$ should be
added). A similar convention shall be used for  $\langle\nabla\rangle^{k}u$.
For $k\in  \mathbb{N}$ and $U(t)=(U_{1}(t), U_{2}(t))$ we define $X^k$ by 
$$ 
\| U(t) \|^2_{X^k}= \sum_{| \alpha | \leq k} \Big(\|\partial^{\alpha } U_{1}(t) \|^2_{H^1}
+ \|\partial^{\alpha } U_{2}(t) \|^2_{H^{1\over 2 } }\Big)
\approx\|\langle\partial\rangle^k U_1(t)\|_{H^1}^2+\|\langle\partial\rangle^k U_2(t)\|_{H^\frac{1}{2}}^2.
$$
For $t>0$, we define the space $X^k_{t}$ of functions defined on
$[0,t]\times\R^2$ equipped with the norm
$$
\|U\|_{X^k_t}=\sup_{0\leq \tau\leq t}\|U(\tau)\|_{X^k}\,.
$$
Next, we define ${\mathcal W}^{k}$ by
$$
\|u(t)\|_{{\mathcal W}^{k}}=\sum_{|\alpha|\leq k}\|\partial^\alpha u(t)\|_{L^\infty(\R^2)}
=\|\langle\partial\rangle^ku\|_{L^\infty(\R^2)}
$$
and for $t>0$, we use the notation  ${\mathcal W}^{k}_t$  for the space of  functions defined on
$[0,t]\times\R^2$ equipped with the norm
$$
\|U\|_{{\mathcal W}^{k}_t}=\sup_{0\leq \tau\leq t}\|U(\tau)\|_{{\mathcal W}^{k}}\,.
$$
We shall denote by $\omega(x)$ a generic continuous, positive non decreasing
function on $\R^+$  with $\omega \geq 1$. 
This function may change from line to line and in
fact may be chosen under the form  $C(1+|x|)^N$, where $N$ may change in each appearance of $\omega$.

Since we want to construct a solution $U^\delta$ of \eqref{ww} under the form
$U^\delta = U+ V^{a}$ with $V^{a}= (\eta^{a}, \varphi^{a})$, we shall use the following
convention throughout the section: for  a function  or an operator $g(U)$, we set:
\beq
\label{convention} 
g^\delta = g(U+ V^a), \quad g^a= g(V^a)
\eeq
and thus
$$
g^\delta - g^{a} = g(U + V^{a}) - g(V^{a}).$$
For example, 
we shall use the notation 
$$G^\delta\varphi - G^{a} \varphi= G[\eta^{a}+ \eta]\varphi - G[\eta^{a}]\varphi, \quad
 Z^\delta - Z^{a}= Z[V^{a} + U]- Z[V^{a}]$$
 where $Z$ is defined in Lemma \ref{DN'}
and  the abstract equation \eqref{Ueqinstab} becomes
\beq
\label{Ueqinstab2}
\partial_{t} U =  \mathcal{F}^\delta - \mathcal{F}^{a} + R^{ap}.
\eeq
\subsection{Statement of the energy estimate}
The aim of this section is to establish an a priori  energy estimate for a smooth  enough solution  $U$ of
 \eqref{Ueqinstab2} defined on $[0, T]$ and satisfying the constraint
 \beq
 \label{hautbas}
  1 -  \|\eta^a(t) \|_{L^\infty} - \|\eta(t)\|_{L^\infty} >0, \quad \forall t\in [0, T]
 \eeq
 where $\eta$ stands for the first component of $U$.
\begin{theoreme}\label{theoenergie}
Let $U(t)$ a smooth solution of \eqref{Ueqinstab2} on $[0, T]$ satisfying \eqref{hautbas}.  Then 
for  $m \geq  2 $, $S\geq  5 $ and $t \in [0, T]$
we have the estimate:
\begin{multline*}
\| U(t)\|^2_{X^{m+3}} \leq 
\omega\Big(\|R^{ap}\|_{{X}^{m+3}_t}+\|V^{a}\|_{{\mathcal W}^{m+S}_t}+\|U\|_{X^{m+3}_t}\Big)
\\ 
\times\Big(\| R^{ap}\|^2_{{X}^{m+3}_t}+
\int_{0}^t\big(\|U(\tau)\|^2_{X^{m+3}}+\|R^{ap}(\tau)\|_{X^{m+3}}^2
\big)d\tau\Big).
\end{multline*}
\end{theoreme}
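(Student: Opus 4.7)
The plan is to apply $\partial^\alpha$ for $|\alpha|\leq m+3$ to equation \eqref{Ueqinstab2} and derive a quasilinear energy estimate based on the Hamiltonian structure. Writing $\mathcal{F}^\delta-\mathcal{F}^a$ as a linearization about the current state $V^a+U$ plus a bilinear remainder, the principal part of the linearized equation, after the Alinhac good-unknown change of variables $(V_1,V_2)=(\eta,\varphi-Z^\delta\eta)$ analogous to \eqref{changev}, takes the form $\partial_t V = JL^\delta V$, where $L^\delta$ is the symmetric operator analogous to $L$ of Section~\ref{sectionprelim} but evaluated at $V^a+U$. For $|\alpha|\leq m+3$ I would form the quasilinear energy
\begin{equation*}
E^\alpha(t)=\bigl(L^\delta\partial^\alpha V,\partial^\alpha V\bigr) + \mathcal{C}^\alpha(t),
\end{equation*}
where $\mathcal{C}^\alpha$ is a lower-order corrector designed to absorb the commutator between $\partial^\alpha$ and the surface-tension block $P^\delta$ of $L^\delta$; this correction is the reason for favouring time derivatives (as outlined in the introduction), since $\partial_t$ has simpler commutators with spatial differential operators, allowing $\mathcal{C}^\alpha$ to depend only on $\partial_t(V^a+U)$ and $\langle\nabla\rangle V$, and hence to be controllable by $\|U\|_{X^{m+2}}$ together with $\|V^a\|_{\mathcal{W}^{m+S}}$.

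Differentiating $E^\alpha$ in time, the highest-order part vanishes by the Hamiltonian identity $\mathrm{Re}\,(JW,L^\delta W)=0$ applied to $W=L^\delta\partial^\alpha V$, and one is left with (i) the time derivative of the coefficients of $L^\delta$, which costs $\partial_t(V^a+U)$ and is absorbed by $\omega(\|V^a\|_{\mathcal{W}^{m+S}}+\|U\|_{X^{m+3}})\,\|U\|_{X^{m+3}}^2$; (ii) commutators $[\partial^\alpha,JL^\delta]V$, where the commutators involving $P^\delta$ are cancelled by the design of $\mathcal{C}^\alpha$, and those involving $G^\delta=G[\eta^a+\eta]$ are controlled by the commutator estimates of Proposition~\ref{com} together with the tame DN estimate of Proposition~\ref{lemderiveej}, both applied with $\eta$ replaced by $\eta^a+\eta$; (iii) source terms from $\partial^\alpha R^{ap}$ and from the quadratic remainder in the Taylor expansion of $\mathcal{F}^\delta-\mathcal{F}^a$, which are handled by Moser-type Sobolev product estimates, again producing a factor $\omega(\cdots)$; and (iv) the contribution of the terms $(G[\eta^a+\eta]-G[\eta^a])\varphi^a$, whose analysis requires the Dirichlet--Neumann operator outside the standard Sobolev framework because $\varphi_\eps$ is smooth but not in $H^s(\mathbb{R}^2)$. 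Coercivity of $E^\alpha$, namely $E^\alpha(t)\gtrsim\|\partial^\alpha V\|_{H^1\times H^{1/2}}^2$ modulo lower-order terms treated inductively in $|\alpha|$, is guaranteed by Proposition~\ref{DirNeum} (lower bound on $G^\delta$) together with the positivity hypothesis \eqref{hautbas}, which keeps the surface graph strictly above the bottom and the surface-tension coefficient non-degenerate.

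Integrating in time and summing over $|\alpha|\leq m+3$, one obtains
\begin{equation*}
\|U(t)\|_{X^{m+3}}^2 \leq \omega\bigl(\|R^{ap}\|_{X^{m+3}_t}+\|V^a\|_{\mathcal{W}^{m+S}_t}+\|U\|_{X^{m+3}_t}\bigr)\Big(\|R^{ap}\|_{X^{m+3}_t}^2+\int_0^t\bigl(\|U(\tau)\|_{X^{m+3}}^2+\|R^{ap}(\tau)\|_{X^{m+3}}^2\bigr)d\tau\Big),
\end{equation*}
which is the claim. The main obstacle is the estimate of $D_\eta G[\eta^a+\eta]\varphi_\eps\cdot h$ in $H^1(\mathbb{R}^2)$ for $h\in H^2$: since $\varphi_\eps\notin H^s(\mathbb{R}^2)$, one cannot directly invoke Proposition~\ref{lemderiveej}. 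The required estimate has to be established by re-running the flattening/elliptic-regularity argument of Section~\ref{sectionGepsk} while carefully separating the contributions of $\varphi_\eps$ and of $\varphi^{a}-\varphi_\eps$ (the latter being in Sobolev spaces thanks to Proposition~\ref{Uap}); this is the technical heart of the estimate and the reason the thresholds $m\geq 2$, $S\geq 5$ appear.
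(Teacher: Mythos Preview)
Your overall strategy---good-unknown change, Hamiltonian energy, time derivatives to tame surface-tension commutators, special care for $D_\eta G[\eta^a+\eta]\varphi_\eps\cdot h$---matches the paper's. But two points in your execution do not close.

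\textbf{Wrong toolbox for the Dirichlet--Neumann estimates.} Propositions~\ref{com}, \ref{lemderiveej} and \ref{DirNeum} all concern the one-dimensional operator $G_{\eps,k}$ obtained after Fourier transform in $y$; they are used only in Sections~\ref{sectionLk}--\ref{sectionUapp}. For the full two-dimensional nonlinear operator $G[\eta^a+\eta]$ you need the machinery of Section~\ref{DirNeumf}: the harmonic-extension flattening of Lemma~\ref{harmonic}, the Sobolev-scale estimates of Proposition~\ref{pak} (in particular the commutator estimate \eqref{rennes2}), and the H\"older/Schauder estimates of Proposition~\ref{pak_bis} (in particular \eqref{maximum_tris}, which is precisely the $H^1$ bound on $D_\eta G[\eta^a+\eta]\varphi_\eps\cdot h$ you flag as the technical heart). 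Likewise coercivity uses Lemma~\ref{G}, not Proposition~\ref{DirNeum}.

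\textbf{The corrector must absorb more than the surface-tension commutator.} In the paper, after applying three derivatives $\partial_{ijk}$ one finds (see \eqref{eqeta}--\eqref{eqphi}) a subprincipal piece $\mathcal{Q}^{ijk}=(-\mathcal{Q}_2^{ijk},\mathcal{Q}_1^{ijk})$ where $\mathcal{Q}_2^{ijk}$ comes from the surface-tension block \emph{and} $\mathcal{Q}_1^{ijk}=\sum_\sigma D_\eta G[\eta]\partial_{\sigma(i)\sigma(j)}\varphi\cdot\partial_{\sigma(k)}\eta$ comes from the shape derivative of $G$. These lie only in $H^{-1/2}$ and $H^{-1}$ respectively (Proposition~\ref{Q}); paired against $L^\delta\partial^\alpha W$ (which carries an extra $\mathcal{P}^\delta$ or $G^\delta$) they lose half a derivative and cannot be handled by Moser-type product estimates as you propose in (iii). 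The paper's fix is not a corrector added to the energy for one block, but to take the multiplier
\[
\mathcal{M}=L^\delta\partial^\alpha W_{ijk}+[\partial^\alpha,L^\delta]W_{ijk}-\partial^\alpha JPJ\bigl((\mathcal{Q}^{ijk})^\delta-(\mathcal{Q}^{ijk})^a\bigr)
\]
so that the entire right-hand side becomes $J\mathcal{M}+\partial^\alpha(PF_{ijk})$ and the skew-symmetry $(J\mathcal{M},\mathcal{M})=0$ kills all cross terms at once. The energy corrector $\mathcal{I}^\alpha_{ijk}$ of \eqref{Idef} then pairs $\partial^\alpha W_{ijk}$ (not $L^\delta\partial^\alpha W_{ijk}$) against the commutator and subprincipal pieces, and \emph{these} pairings are controllable via Proposition~\ref{Q} and \eqref{major2} (Lemma~\ref{IJ}). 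Your design of $\mathcal{C}^\alpha$ only for the $P^\delta$ commutator leaves the DN subprincipal $\mathcal{Q}_1^{ijk}$ paired against $L^\delta\partial^\alpha V$, and that term does not close. The paper also organizes this via a two-step differentiation (three derivatives to isolate principal/subprincipal/semilinear, then $|\alpha|\le m$ further derivatives on the resulting quasilinear system); your one-step application of $|\alpha|\le m+3$ derivatives obscures exactly which quadratic terms are subprincipal and must go into the multiplier rather than into the source.
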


Of course we can replace $m+3$ by $m$ for $m$ larger than $5$  but we decided to keep this form
of the energy estimate  in order to emphasize the fact that we have differentiated three times
 the system to quasilinearize it  before performing the energy estimate.

 This estimate is far from  being  the  best one to use in terms of regularity  to get  well-posedness of
  the Cauchy problem, we  are not interested here in this issue since it is not relevant for
   the proof of Theorem \ref{main}. Indeed, the smoother the involved norms are, the  better
    the instability result is.
    
In the following,  we shall  always assume that $U$ verifies the constraint \eqref{hautbas}
 without making  explicit reference to it.
 In a similar way, as soon as a  Dirichlet Neumann operator $G[\zeta]$ is  involved, 
 we always assume that $\zeta$ satisfies $1- \| \zeta \|_{L^\infty}>0$ without
 recalling it.
 
\subsection{Preliminary estimates on the Dirichlet-Neumann operator}\label{DirNeumf}
In this section, we recall some useful properties of the Dirichlet-Neumann operator
$G[\eta]\varphi$. The new points with respect to  similar estimates  in
\cite{L1}, \cite{Alvarez-Lannes} is the introduction of time derivatives in
our estimates and the use of Schauder elliptic regularity estimates. We need
to use this elliptic theory and in some cases combine it  with the Sobolev
regularity theory since the solitary waves do not belong to the usual Sobolev spaces on $\R^2$.
In this section, we absolutely do not aim at giving  optimal regularity estimates,
we just give the one which are sufficient  for the proof of Theorem \ref{theoenergie}.

As in  Section \ref{sectionGepsk}, the problem will be reduced  to elliptic estimates
in a flat strip, consequently, for this section, it is useful to introduce the following notations.

For a   function $u(t,X, z)$ defined on  $[0, T]\times \mathcal{S}$ where $\mathcal{S}$
 is the strip $ \mathbb{R}^2 \times (0, 1)$, we set $$D^\alpha u = \partial_{t}^{\alpha_{0}} \partial_{x}^{\alpha_{1}} \partial_{y}^{\alpha_{2}}
\partial_{z}^{\alpha_{3}}u, \quad \,  \alpha \in \mathbb{N}^4, \quad 
 \nabla_{X,z}^\alpha u =   \partial_{x}^{\alpha_{1}} \partial_{y}^{\alpha_{2}}
\partial_{z}^{\alpha_{3}} u , \quad \alpha \in \mathbb{N}^3.$$
Moreover, as previously $\| \langle \nabla_{X,z}  \rangle^m u  \|_{L^2(\mathcal{S})}$ will stand for the sum
 of the $L^2(\mathcal{S})$ norm  of $\nabla_{X,z}^\alpha u $ for $|\alpha | \leq m$ (and thus this  norm
  is equivalent to the standard Sobolev norm $H^m(\mathcal{\mathcal{S}})$ of the strip)
   while 
   $ \| \langle D  \rangle^m u (t)  \|_{L^2(\mathcal{S})}$ will stand for the sum
 of the $L^2(\mathcal{S})$  norms of $ D^\alpha u(t) $ for $|\alpha | \leq m $. 
  With these definitions, we have in particular that
\beq
\label{relXH}
 \| \langle D \rangle^m u (t) \|_{L^2(\mathcal{S}) }
= \sum_{l=0}^m  \|  \langle \nabla_{X,z} \rangle^{m-l}  \partial_{t}^l u (t) \|_{L^2(\mathcal{S})}
\approx  \sum_{l=0}^m \| \partial_{t}^l u(t) \|_{H^{m- l} (\mathcal{S})}.
\eeq 
Finally, we also set $ \|u(t) \|_{\mathcal{W}^{m}(\mathcal{S})}= \|\langle D \rangle^m  u(t) \|_{L^\infty(\mathcal{S})}.$

In this whole  subsection the time variable $t$ is only a parameter, we shall therefore
omit  to write down  explicitly the  dependence  on this parameter.

Let us  establish some product estimates which will be of constant use throughout this
section.

\begin{lem} 
\label{prodS}
For $m \geq 2$,   $|\alpha |+ |\beta | \leq  m$,  and $k=0, \, 1, \, 2$, we have
\begin{eqnarray*}
 \| D^\alpha u \,  D^\beta v  \|_{H^k(\mathcal{S})}
 &  \leq &  C  \|  \langle D \rangle^m  u \|_{H^k(\mathcal{S})} \,
    \|  \langle D \rangle^m    v \|_{H^k(\mathcal{S})}, \\ 
\| D^\alpha u  \, D^\beta v   \|_{H^k(\mathcal{S})} & \leq &   
  C  \|u \|_{\mathcal{W}^{m+ k}(\mathcal{S})} \, \|  \langle D \rangle^m v \|_{H^k(\mathcal{S})}. 
\end{eqnarray*}
\end{lem}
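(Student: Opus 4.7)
The plan is to reduce both estimates to a pointwise-in-$t$ product inequality on the three-dimensional strip $\mathcal{S}$, and then invoke the standard Moser-type product estimates. The key analytic ingredients are the Leibniz formula, Hölder's inequality, and the three-dimensional Sobolev embeddings $H^{2}(\mathcal{S})\hookrightarrow L^{\infty}(\mathcal{S})$ and $H^{1}(\mathcal{S})\hookrightarrow L^{4}(\mathcal{S})$, both of which hold since $\dim \mathcal{S}=3$. Since the time variable is only a parameter, each $D^\alpha u$ evaluated at a fixed time is simply an element of some Sobolev class on $\mathcal{S}$, and the claimed inequalities reduce to purely spatial product estimates in $H^{m+k}(\mathcal{S})$.

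First I reduce to $L^{2}$-bounds on products. Applying $\nabla_{X,z}^{\gamma}$ for an arbitrary spatial multi-index $|\gamma|\leq k$ to the product $D^{\alpha}u \cdot D^{\beta}v$ and expanding by Leibniz, one obtains a finite sum of terms of the form $D^{\alpha'}u\cdot D^{\beta'}v$, where $\alpha'$ (resp.\ $\beta'$) is obtained from $\alpha$ (resp.\ $\beta$) by adding a spatial multi-index $\gamma'$ (resp.\ $\gamma''$) with $|\gamma'|+|\gamma''|\leq k$. In particular $|\alpha'|+|\beta'|\leq m+k$ and each of $|\alpha'|,|\beta'|\leq m+k$.

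For the second estimate, without loss of generality I may assume that the one placed in $L^{\infty}$ is $D^{\alpha'}u$. Then by Hölder,
\[
\|D^{\alpha'}u\cdot D^{\beta'}v\|_{L^{2}(\mathcal{S})}\leq \|D^{\alpha'}u\|_{L^{\infty}(\mathcal{S})}\,\|D^{\beta'}v\|_{L^{2}(\mathcal{S})}.
\]
Since $|\alpha'|\leq m+k$, the first factor is bounded by $\|u\|_{\mathcal{W}^{m+k}(\mathcal{S})}$. Writing $\beta'=\beta+\gamma''$ with $|\beta|\leq m$ and $|\gamma''|\leq k$, the second factor is bounded by $\|D^{\beta}v\|_{H^{k}(\mathcal{S})}\leq\|\langle D\rangle^{m}v\|_{H^{k}(\mathcal{S})}$. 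Summing over the (finitely many) Leibniz terms yields the second inequality.

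For the first estimate I use a similar splitting, but now allocate both factors via Hölder/Sobolev. Assume (without loss of generality) $|\alpha'|\leq|\beta'|$, so $|\alpha'|\leq (m+k)/2$. When $m+k\geq 4$, the bound $|\alpha'|\leq m+k-2$ allows me to place $D^{\alpha'}u$ in $L^{\infty}$ via $H^{2}(\mathcal{S})\hookrightarrow L^{\infty}(\mathcal{S})$, estimating it by $\|\langle D\rangle^{m}u\|_{H^{k}}$, and to keep $D^{\beta'}v$ in $L^{2}$, estimated by $\|\langle D\rangle^{m}v\|_{H^{k}}$. The borderline case is $m=2$, $k=0$ (so $m+k=2$): the only new configuration is $|\alpha'|=|\beta'|=1$, which I handle by Hölder in $L^{4}\times L^{4}$ together with the three-dimensional embedding $H^{1}(\mathcal{S})\hookrightarrow L^{4}(\mathcal{S})$, giving $\|D^{\alpha'}u\,D^{\beta'}v\|_{L^{2}}\leq C\|u\|_{H^{2}(\mathcal{S})}\|v\|_{H^{2}(\mathcal{S})}$. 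In every case the bound is dominated by $\|\langle D\rangle^{m}u\|_{H^{k}}\|\langle D\rangle^{m}v\|_{H^{k}}$, and summing over the Leibniz terms concludes the argument.

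No step presents a genuine obstacle: the lemma is a routine Moser-type product estimate in the three-dimensional strip, and the only mild subtlety is tracking that the hybrid norm $\|\langle D\rangle^{m}\cdot\|_{H^{k}}$ controls, in particular, mixed spatial-temporal derivatives up to total order $m+k$, so that all Sobolev embeddings on $\mathcal{S}$ can be applied uniformly in the time parameter.
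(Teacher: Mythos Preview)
Your argument is correct and matches the paper's approach: both rely on the Leibniz rule together with the three-dimensional embeddings $H^{1}(\mathcal{S})\hookrightarrow L^{4}(\mathcal{S})$ and $H^{2}(\mathcal{S})\hookrightarrow L^{\infty}(\mathcal{S})$; the paper simply organizes the cases differently, first treating $k=0$ (using $L^{4}\times L^{4}$ when $\alpha\neq 0$ and $L^{\infty}\times L^{2}$ when $\alpha=0$), then reducing $k=1$ to $k=0$ by distributing one derivative, and handling $k=2$ via the algebra property of $H^{2}(\mathcal{S})$. One harmless slip in your case split: the threshold should be $m+k\geq 3$ rather than $m+k\geq 4$, since for $m+k=3$ one still has $|\alpha'|\leq 1=m+k-2$ and the $L^{\infty}$-via-$H^{2}$ step applies verbatim.
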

\begin{proof}[Proof of Lemma \ref{prodS}]
The second estimate is   obvious. Let us prove the first one. We start with the case $k=0$.
From the symmetry of the expression, it suffices to estimate
 $ \| D^\alpha u  D^\beta  v\|_{L^2(\mathcal{S})}$ for $| \alpha | \leq |\beta|$.
 When $\alpha \neq 0$, we can use the Sobolev embedding $H^1(\mathcal{S})\subset  L^4
 (\mathcal{S})$ to
  get
  $$  \| D^\alpha u  D ^\beta v\|_{L^2(\mathcal{S})}
   \leq \| D^\alpha u \|_{H^1(\mathcal{S})} \| D^\beta v \|_{H^1(\mathcal{S})}
    \leq    \|  \langle D \rangle^m   u \|_{L^2(\mathcal{S})} \,
    \|   \langle D \rangle^m  v \|_{L^2(\mathcal{S})},$$
     since $ |\alpha | \leq | \beta | \leq m-1$.
     When $\alpha=0$, we just write  
  $$  \| u D ^\beta v\|_{L^2(\mathcal{S})} \leq \|u \|_{L^\infty(\mathcal{S})} \|   \langle D \rangle^m v\|_{L^2(\mathcal{S})}$$
   and the result follows from the Sobolev embedding 
  $  \|u \|_{L^\infty(\mathcal{S})} \leq C \| \langle D \rangle^m u\|_{L^2(\mathcal{S})}  $ when $m \geq 2$.
   For $k=1$, it suffices to use the previous estimate with $u$ and $v$ replaced  by $\nabla u $ and
    $v$ or $u$ and $\nabla v$. The case $k=2$ is very simple since $H^2(\mathcal{S})$
     is an algebra. This ends the proof of Lemma \ref{prodS}.

\end{proof}
We are now  able to state our first set of estimates on the Dirichlet-Neumann operator 
which will be intensively used in the proof of Theorem~\ref{main}.  We start with
the estimates in the Sobolev framework.
\begin{prop}\label{pak}
Let us set 
$$ \underline{\omega}=
\omega \big(  \|\langle \pa \rangle^m \eta \|_{H^{ 5 \over 2 }(\mathbb{R}^2)} + \|\eta_{0} \|_{\mathcal{W}^{m+3}
(\mathbb{R}^2)}\big)$$
and let  $m \geq 2$.

Then we have the following estimates:
\begin{itemize}
\item 
for 
 $\sigma=-1/2,1/2,1$,
 \begin{equation}\label{rennes}
\|\langle \partial \rangle^m G[\eta+\eta_0]u\|_{H^\sigma}\leq
\underline{\omega}\, \|\langle\partial\rangle^m u\|_{H^{\sigma+1}}.
\end{equation}
\item  For $n\geq l\geq 1$, $\sigma=-1/2, 1/2,1$, 
\begin{multline}\label{rennes1,5}
\|\langle \partial \rangle^m \big(D_{\eta}^n G[\eta+\eta_0]u\cdot(h_1,\cdots,h_n)\big)\|_{H^\sigma}
\\
\leq \underline{\omega}\|\langle\partial\rangle^m u\|_{H^{\sigma+1}}
\big(\prod_{j=1}^l\| h_j\|_{{\mathcal W}^{m+3}}\big)\big(\prod_{j=l+1}^n\|\langle \partial\rangle^{m+1} h_j\|_{H^{1}}\big)
\end{multline}
(the first product is defined as $1$ if $l=0$ and the second product is defined as $1$ if $l=n$).
\\
\item 
Finally, we have the following commutator estimate
\begin{equation}\label{rennes2}
\|[\partial^{\alpha},G[\eta+ \eta_{0}]](u)\|_{H^{-\frac{1}{2}}}\leq\underline{\omega}\,
\|\langle\partial\rangle^{m-1} u\|_{H^{\frac{1}{2}}}\,,\quad \forall\,|\alpha|\leq m.
\end{equation}
\end{itemize}
\end{prop}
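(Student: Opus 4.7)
The plan is to reduce the three statements to elliptic estimates on the flat strip $\mathcal{S}=\mathbb{R}^2\times(-1,0)$, along the same lines as in Section~\ref{sectionGepsk}, but now with two horizontal variables and in the presence of time. Given $u(t,X)$, set $\varphi(t,X,z)$ to be the harmonic extension of $u$ in the fluid domain with surface $z=\eta+\eta_{0}$ and flat bottom, and define $\psi(t,X,z)=\varphi\big(t,X,(z-\eta-\eta_{0})/(1+\eta+\eta_{0})\big)$, so that $\psi$ solves $-\Delta_{g}\psi=0$ on $\mathcal{S}$ with $\psi_{|z=0}=u$ and $\partial_{z}\psi_{|z=-1}=0$, the metric $g=g[\eta+\eta_{0}]$ being as in \eqref{beltrami}. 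The Dirichlet--Neumann operator is then recovered by the obvious analogue of \eqref{pechka}, so all three estimates reduce to bounds for $\nabla_{X,z}\psi$ on $\mathcal{S}$ in suitable norms combined with the trace estimate $\|\cdot\|_{H^{\sigma}(\mathbb{R}^{2})}\lesssim \|\langle\nabla_{X,z}\rangle\,\cdot\|_{H^{\sigma+1/2}(\mathcal{S})}$.

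The first step is the basic energy/elliptic estimate on the strip, which yields
\[
\|\langle\nabla_{X,z}\rangle^{k}\psi\|_{L^{2}(\mathcal{S})}\leq \underline{\omega}\,\|u\|_{H^{k-1/2}(\mathbb{R}^{2})},\qquad k=1,2,3,
\]
where the Schauder/$H^{s}$ regularity for mixed Dirichlet--Neumann problems on $\mathcal{S}$ is used iteratively, the constant $\underline{\omega}$ tracking the $\mathcal{W}^{m+3}$-norm of the background $\eta_{0}$ and the $H^{5/2}$-norm of $\eta$ needed to bound the coefficients of $g$ and $(\det g)^{1/2}$ in $W^{2,\infty}$. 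To insert time derivatives, differentiate the equation $-\Delta_{g}\psi=0$: the function $\partial_{t}^{\alpha_{0}}\psi$ solves $-\Delta_{g}\partial_{t}^{\alpha_{0}}\psi=[\partial_{t}^{\alpha_{0}},\Delta_{g}]\psi$ with boundary data $\partial_{t}^{\alpha_{0}}u$ on the top and homogeneous Neumann at the bottom. The commutator distributes time derivatives on the coefficients of $g$, which are smooth functions of $\eta+\eta_{0}$ and its spatial derivatives, and the product estimate of Lemma~\ref{prodS}, applied in $\mathcal{S}$ to factors extended by the analogous elliptic lifts, controls each term by $\underline{\omega}\,\|\langle\partial\rangle^{m}u\|_{H^{\sigma+1}}$. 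Iterating this scheme in $|\alpha|$ and combining with the space elliptic regularity gives \eqref{rennes} for $\sigma=-1/2$ by duality and for $\sigma=1/2,1$ directly, the case $\sigma=-1/2$ being obtained from $(G[\eta+\eta_{0}]u,v)=\int_{\mathcal{S}}g^{-1}\nabla\psi\!\cdot\!\nabla\mathbf{v}\,(\det g)^{1/2}$ with $\mathbf{v}$ a convenient lift of $v$, as in \eqref{DNnew}.

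For the Fr\'echet derivative estimates \eqref{rennes1,5}, the strategy is to differentiate the flattened elliptic problem in $\eta$: the function $D_{\eta}^{n}\psi\cdot(h_{1},\dots,h_{n})$ solves a linear elliptic equation on $\mathcal{S}$ with homogeneous boundary data and source terms which are multilinear in $(h_{1},\dots,h_{n})$ and in lower-order derivatives of $\psi$. An induction on $n$, together with Lemma~\ref{prodS} applied in $\mathcal{S}$ (and the distinction between $\mathcal{W}^{m+3}$ and $H^{m+1}$ factors inherited from the first versus the second product in \eqref{rennes1,5}), reduces everything to the same elliptic estimates as before; alternatively, one may argue directly from the closed formula of Lemma~\ref{DN'} and iterate it $n$ times, which shifts the work to controlling $Z[\eta+\eta_{0}]\varphi$ in the appropriate norms, itself a consequence of \eqref{rennes}. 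The commutator estimate \eqref{rennes2} follows by writing $[\partial^{\alpha},G[\eta+\eta_{0}]](u)=\partial^{\alpha}G[\eta+\eta_{0}]u-G[\eta+\eta_{0}]\partial^{\alpha}u$ and expanding the first term through the flattened representation: the leading piece $\partial^{\alpha}G[\eta+\eta_{0}]\partial^{\alpha}u$ cancels the second, leaving exactly the terms where at least one derivative falls on the metric coefficients, whose $H^{-1/2}$ norm is bounded by $\underline{\omega}\,\|\langle\partial\rangle^{m-1}u\|_{H^{1/2}}$ by the above product estimates.

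The main obstacle I expect is the bookkeeping of time derivatives in the strip: because $g$ depends on $\eta(t,\cdot)$, iterated time differentiation generates a Leibniz expansion with many multilinear terms in $\partial_{t}^{\ell}\eta$, and the borderline case $\sigma=-1/2$ forces one to work with the weak form \eqref{DNnew} rather than with pointwise identities on the boundary. Moreover, the solitary wave part $\eta_{0}$ is not in $H^{s}(\mathbb{R}^{2})$, so the product estimates must split each factor into a Sobolev part (containing $\eta$) and an $L^{\infty}$ part (containing $\eta_{0}$ and its derivatives); this is precisely why the norm $\underline{\omega}$ involves both $\|\langle\partial\rangle^{m}\eta\|_{H^{5/2}}$ and $\|\eta_{0}\|_{\mathcal{W}^{m+3}}$, and why one has to prove the strip-version of Lemma~\ref{prodS} with this decomposition already built in.
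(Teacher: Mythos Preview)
Your overall architecture---flatten the domain, prove elliptic regularity on the strip with time as a parameter, split the coefficients into a Sobolev part from $\eta$ and an $L^\infty$ part from $\eta_0$, use the weak form \eqref{DNnew} for the low-$\sigma$ case, and handle the Fr\'echet derivatives and commutator by differentiating the flattened problem---matches the paper exactly. But there is one genuine gap, and it is precisely the point the paper singles out as ``important''.

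You flatten with the naive affine map $\theta(X,z)=H(X)+(1+H(X))z$, $H=\eta+\eta_0$, as in Section~\ref{sectionGepsk}. With that choice the metric $g$ involves $\nabla_X\eta$ pointwise, so $\|\langle D\rangle^m g_1\|_{H^{k}(\mathcal S)}$ is controlled by $\|\langle\partial\rangle^m\eta\|_{H^{k+1}(\mathbb R^2)}$, not $H^{k+1/2}$; for the $k=2$ level needed to reach $\sigma=1$ this would force $\underline\omega$ to depend on $\|\langle\partial\rangle^m\eta\|_{H^{3}}$ rather than $H^{5/2}$. Your sentence ``the $H^{5/2}$-norm of $\eta$ needed to bound the coefficients of $g$ and $(\det g)^{1/2}$ in $W^{2,\infty}$'' is the place this fails: with the affine map and $\eta\in H^{5/2}(\mathbb R^2)$ one only has $\nabla\eta\in H^{3/2}(\mathbb R^2)\hookrightarrow C^{1/2}$, nowhere near $W^{2,\infty}$, and the elliptic estimates you invoke do not close at the stated regularity.

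The paper's fix (Lemma~\ref{harmonic}, following Lannes) is to replace the affine lift of $\eta$ by a \emph{regularizing} diffeomorphism: set $\theta=\theta_1(\eta)+\theta_2(\eta_0)$ where $\theta_1$ is built from the bounded harmonic extension of $\eta$ into the strip (scaled in $z$ to keep $\partial_z\theta\ge\kappa>0$), while $\theta_2$ is the affine map in $\eta_0$. The gain is exactly half a derivative, $\|\langle D\rangle^m\theta_1\|_{H^k(\mathcal S)}\le C\|\langle\partial\rangle^m\eta\|_{H^{k-1/2}(\mathbb R^2)}$, which then propagates to the metric decomposition (Lemma~\ref{metricest}) and to the elliptic regularity (Lemma~\ref{ell-reg}, Corollary~\ref{usful}, Lemma~\ref{lema_psi}). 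With this in place your remaining steps---duality against a harmonic lift of the test function for $\sigma=-1/2,1/2,3/2$, then interpolation for $\sigma=1$; the equation $P(\partial^\alpha\psi^u-\psi^{\partial^\alpha u})=[P,\partial^\alpha]\psi^u$ for the commutator; induction on $n$ for the Fr\'echet derivatives---go through exactly as you sketch. So the missing ingredient is not conceptual but is the one technical device without which the stated $\underline\omega$ cannot be obtained.
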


\begin{rem}
\label{Sanremark}
The estimates that we have stated are the  useful ones for the proof of Theorem \ref{theoenergie}.
In particular, the important thing is that the dependence in $\eta$   in $\underline{\omega}$ 
 is controlled by $ \| \langle \pa \rangle^{m+3} \eta \|_{H^1}$.
We shall actually prove some more precise estimates. For example, we shall get that for
 $m \geq 2$, $| \alpha | \leq m $,  we have
\beq
\label{rennesrefined}
\| \partial^\alpha \big( G[\eta + \eta_{0}] u \big) \|_{H^{\sigma }(\mathbb{R}^2)}
   \leq  \omega\big(   \| \langle \pa \rangle^m  \eta \|_{{H}^{ \sigma + 1 }(\R^2)} +
\| \eta_{0} \|_{\mathcal{W}^{m+  \sigma + {3 \over 2 } }(\R^2)} \big) \| \langle \pa \rangle^m u \|_{{H}^{
\sigma + 1  }(\R^2)}
\eeq
 for $\sigma = -1/2, \, 1/2, \,3/2$ and also 
 \beq
 \label{San}
\| \partial^\alpha D_{\eta}G[\eta+\eta_0](u)\cdot h \|_{H^{\sigma}(\R^2)}
\leq 
\underline{\omega}\|  \langle \pa \rangle^m 
u\|_{{H}^{\sigma + 1 }(\R^2)} \,\|  \langle \pa \rangle^m  h\|_{{H}^{\sigma + 1 }(\mathbb{R}^2)}
\eeq
for  $\sigma = -1/2, \, 1/2.$
\end{rem}

\begin{proof}[Proof  of Proposition \ref{pak} ]
We shall split the proof in various Lemmas.

As in the work by Lannes \cite{L1} an important point  is to
choose in the optimal way (by using a harmonic extension) with respect to the Sobolev regularity the map which
flatters the domain.
We shall denote by $\mathcal{S}$ the flat strip $\mathcal{S}= \mathbb{R}^2 \times (-1, 0 )$. 
\begin{lem}\label{harmonic}
Consider  $H$
which can be written as
$H =\eta+\eta_0$  and satisfies $ 1 - \|\eta \|_{L^\infty} - \|\eta_{0}\|_{L^\infty} >\tilde{\kappa}$ 
with $\tilde{\kappa}>0$   and 
$  \pa^\alpha \eta \in {H}^s(\R^2)$ for  $ | \alpha | \leq m$, $m \geq 2$,  $s \geq 1/2$,  $\eta_0 \in \mathcal{W}^{k}(\R^2)$.

Then,  there exists a map  
$
\theta \,:\mathcal{S}\rightarrow \R
$
such that $\theta(X,-1)=-1$, $\theta(X,0)=H(X)$ which can be decomposed as
\beq
\label{dectheta}
\theta=\theta_1(\eta)+\theta_2(\eta_0)
\eeq  
with  the estimates
$$
\| \langle D \rangle^m  \theta_{1}\|_{{H}^{k}(\mathcal{S})}\leq
C_{m,k}\| \langle \pa \rangle^m \eta \|_{{H}^{k - { 1 \over 2 }}(\R^2)},\,
\quad 
\| \theta_{2}\|_{\mathcal{W}^{m}(\mathcal{S})}
\leq C_{m}(1+\| \eta_0\|_{\mathcal{W}^{m}(\R^2)}),$$
moreover there exists $\kappa >0$ such that
\begin{equation}\label{nondegen}
\pa_{z}\theta \geq \kappa,\quad \forall\, X\in\R^2,\quad \forall
z\in [0,1]\,. 
\end{equation}
In particular the map
$
(X,z)\mapsto (X,\theta(X,z))
$
is a diffeomorphism from the strip
$\mathcal{S}= \mathbb{R}^2 \times (-1, 0 )$
to
$
\{(X,z)\in\R^2\times\R\,:\, -1<z< H(X)\}.
$
\end{lem}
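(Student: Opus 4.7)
The plan is to construct $\theta$ as an additive split $\theta = \theta_1(\eta) + \theta_2(\eta_0)$ that separates the rough surface $\eta$ (which needs a regularizing extension to gain half a derivative of Sobolev regularity) from the bounded smooth part $\eta_0$ (handled by trivial affine interpolation in $z$). The main technical obstacle will be achieving the pointwise non-degeneracy $\partial_z\theta \geq \kappa$ simultaneously with the half-derivative Sobolev gain.

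For $\theta_2$ I would take the elementary affine extension $\theta_2(X,z) = z + (z+1)\eta_0(X)$. The boundary conditions $\theta_2(X,-1)=-1$, $\theta_2(X,0)=\eta_0(X)$ and the pointwise estimate $\|\theta_2\|_{\mathcal{W}^m(\mathcal{S})}\leq C_m(1+\|\eta_0\|_{\mathcal{W}^m})$ are immediate, and one notes that $\partial_z\theta_2 = 1+\eta_0(X)$, which will supply the baseline positivity.

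For $\theta_1$ I follow the Poisson-type harmonic extension in the spatial Fourier variables used in \cite{L1}: set $\widehat{\theta_1}(\xi,z) = K(z,\langle\xi\rangle)\widehat{\eta}(\xi)$ with kernel $K(z,\omega) = \sinh((z+1)\omega)/\sinh\omega$, which trivially satisfies $\theta_1(X,-1)=0$, $\theta_1(X,0)=\eta(X)$ and commutes with $\partial_t,\partial_x,\partial_y$ (so time and horizontal derivatives of $\theta_1$ are the extensions of the corresponding derivatives of $\eta$). The Sobolev gain of half a derivative then follows via Parseval and the relation \eqref{relXH} from the elementary fiberwise kernel bound $\int_{-1}^0|\partial_z^j K(z,\omega)|^2\,dz \leq C_j\,\omega^{2j}/(1+\omega)$ for $\omega\geq 1$ (proved by the explicit change of variables $z'=(z+1)\omega$, exactly as in \eqref{uH1bis}), applied frequency by frequency.

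The hard part is the non-degeneracy $\partial_z\theta \geq \kappa > 0$. Writing $\partial_z\theta = (1+\eta_0) + \partial_z\theta_1$, the first term is $\geq 1 - \|\eta_0\|_{L^\infty} > \tilde\kappa + \|\eta\|_{L^\infty}$ by hypothesis, so it suffices to achieve $\|\partial_z\theta_1\|_{L^\infty(\mathcal{S})} \leq \|\eta\|_{L^\infty} + \tilde\kappa/2$. The pure harmonic extension fails this bound since $\partial_z\theta_1|_{z=0} = \langle D\rangle\coth\langle D\rangle\,\eta$ is a Fourier multiplier of order one applied to $\eta$. I would remedy this by refining the construction via a Littlewood--Paley splitting $\eta = P_{\leq N}\eta + P_{>N}\eta$: use the simple affine extension $(z+1)P_{\leq N}\eta$ for the low-frequency piece (so $\partial_z$ produces exactly $P_{\leq N}\eta$, controlled in $L^\infty$ by $\|\eta\|_{L^\infty}$) and the Poisson extension above applied to $P_{>N}\eta$ for the high-frequency piece (whose $\partial_z$ is bounded in $L^\infty(\mathcal{S})$ by $CN^{-\varepsilon}\|\eta\|_{H^{s_0}}$ for suitable $s_0$, by combining Bernstein's inequality with the uniform multiplier bound $|\partial_zK(z,\omega)| \leq C\omega$ on $z\in[-1,0]$). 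Choosing $N$ large enough (depending only on $\|\langle\partial\rangle^m\eta\|_{H^{1/2}}$ and $\tilde\kappa$) makes the high-frequency contribution to $\|\partial_z\theta_1\|_{L^\infty}$ no larger than $\tilde\kappa/2$, while each piece separately still satisfies the claimed Sobolev estimate (the low-frequency piece through Bernstein, the high-frequency piece with absolute constant). Once $\partial_z\theta \geq \tilde\kappa/2 =: \kappa$ is established pointwise, the map $(X,z)\mapsto(X,\theta(X,z))$ is a $C^1$ diffeomorphism from $\mathcal{S}$ onto $\{-1<z<H(X)\}$ by applying the inverse function theorem fiberwise in $z$.
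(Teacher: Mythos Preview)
Your approach is correct but differs from the paper's in how you achieve the non-degeneracy $\partial_z\theta\geq\kappa$. You and the paper agree on $\theta_2(X,z)=z+(z+1)\eta_0(X)$ and on using the harmonic extension $\tilde\theta_1$ of $\eta$ (your kernel $\sinh((z+1)\omega)/\sinh\omega$) to obtain the half-derivative Sobolev gain. Where you introduce a Littlewood--Paley split to tame $\partial_z\theta_1$, the paper instead sets
\[
\theta_1(X,z)=(1+z)\,\tilde\theta_1(X,\epsilon z)
\]
for a small parameter $\epsilon\in(0,1)$: the factor $(1+z)$ restores the boundary condition at $z=-1$ lost by the $z$-rescaling, and writing $\tilde\theta_1(X,\epsilon z)=\eta(X)+\epsilon\int_0^z\partial_z\tilde\theta_1(X,\epsilon\zeta)\,d\zeta$ yields in one line
\[
\partial_z\theta(X,z)=1+H(X)+O\big(\epsilon\,\|\partial_z\tilde\theta_1\|_{L^\infty(\mathcal{S})}\big),
\]
so choosing $\epsilon$ small (depending on the finite quantity $\|\partial_z\tilde\theta_1\|_{L^\infty}$) suffices. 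This is lighter than your frequency decomposition---one parameter, no Bernstein---though both constructions tie their auxiliary parameter ($\epsilon$, respectively $N$) to Sobolev data of $\eta$, which is harmless for the later applications.

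One small caveat in your version: the claim that $\|P_{\leq N}\eta\|_{L^\infty}$ is ``controlled by $\|\eta\|_{L^\infty}$'' is not quite what you need, since the smooth frequency cutoff has $L^\infty$ operator norm strictly larger than $1$, and you require the sharper bound $\|\partial_z\theta_1\|_{L^\infty}\leq\|\eta\|_{L^\infty}+\tilde\kappa/2$. Write instead $\|P_{\leq N}\eta\|_{L^\infty}\leq\|\eta\|_{L^\infty}+\|P_{>N}\eta\|_{L^\infty}$ and absorb the second term into the high-frequency smallness $CN^{-\varepsilon}\|\eta\|_{H^{s_0}}$ that you already control; then your argument closes.
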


\begin{rem}
\label{remtheta}
As we shall see in the proof, $\theta_{1}$  is linear in $\eta$ and $\theta_{2}$ affine in $\eta_{0}$, 
consequently, 
 because of the decomposition \eqref{dectheta}
we also have the property that 
$D \theta(H)\cdot (h_{1}+ h_{2})=\theta_{1}(h_{1}) +\big( \theta_{2}(h_{2})-z\big)$
if $ h_{1}$ is in some Sobolev space  and $h_{2}\in \mathcal{W}^k$.
 Moreover, we also deduce that 
 for $n\geq 2$, $D ^n\theta=0$. 
\end{rem}

As we shall see below the same idea as in \cite{L1} can be used.  Our
situation is slightly different since the surface is made of a Sobolev part  and a smooth
non-decaying part while the bottom is flat. Moreover, we have  taken into account
 the presence of time derivatives.

\begin{proof}[Proof of Lemma \ref{harmonic}] 
Let $\tilde{\theta}_1$ be defined on $\mathcal{S}$ as the (well-defined) solution of the elliptic problem
$$
\Delta\tilde{\theta}_1=0,\quad \tilde{\theta}_1(X,-1)=0,\quad \tilde{\theta}_1(X,0)=\eta(X).
$$
Then by  standard elliptic regularity $\tilde{\theta}_1\in H^{s+\frac{1}{2}}(\R^2)$ if $\eta \in H^s(\mathbb{R}^2)$
and  hence since the time is only a parameter in the problem, $ \langle D \rangle^m \tilde{ \theta}_{1} \in {H}^{
k }(\mathcal{S})$  if  $ \langle \pa \rangle^m  \eta  \in {H}^{k- {1 \over 2 }}(\mathbb{R}^2)$. 
Observe that the dependence of $\tilde{\theta}_1$ with respect to $\eta$ is linear.
Next we consider  the function $\theta_1$ defined   on $\mathcal{S}$ by
$\theta_1(X,z)=(1+z)\tilde{\theta}_{1}(X,\epsilon z)$, where $\epsilon\in (0,1)$ is a
small number to be fixed later.  We also consider the function $\theta_2$ defined on $\mathcal{S}$ by
$
\theta_2(X,z)=\eta_0(X)+(1+\eta_0(X))z.
$
Then the map $\theta\equiv \theta_1+\theta_2$ satisfies the required
properties, provided $\epsilon$  is  small enough. Indeed 
\begin{equation*}
\pa_z\theta(X,z)=1+ H(X)+\epsilon \,z\,\pa_z\tilde{\theta}_{1}(X,\epsilon z)+
\epsilon\int_{0}^{z}\pa_z\tilde{\theta}_1(X,\epsilon \zeta)d\zeta.
\end{equation*}
Therefore for  $\epsilon\ll 1$, we can achieve
(\ref{nondegen}) since $1- | H |_{L^\infty}\geq \tilde{\kappa}>0$.

The claimed expression for the Frechet derivatives of $\theta(H)$ in Remark \ref{remtheta} follows
directly from the construction.
This completes the proof of Lemma~\ref{harmonic}.
\end{proof}
\begin{rem}\label{IPP}
Let us observe that the map $\theta$ satisfies
$\pa_x\theta(X,-1)=\pa_{y}\theta(X,-1)=0$, a fact which is useful in
integration by parts arguments over $\mathcal{S}$.
\end{rem}

We next express the Dirichlet-Neumann operator  in terms of a solution of a PDE defined
on $\R^2\times(0,1)$ with a domain flattened by the map constructed in  Lemma~\ref{harmonic}.
For $u(X)$ a given function on $\R^2$, if $\phi^u$ is defined on the domain 
$$
\{(X,z)\in\R^2\times\R\,:\, -1<z< H(X)= \eta(X)+ \eta_{0}(X)\}
$$
and is such that $\phi^u(X,\eta(X))=u(X)$ and $\partial_z\phi^u(X,-1)=0$ then we
can define a function $\psi^u$ on the flat domain $\mathcal{S}$ by 
$$
\psi^{u}(X,z)=\phi^u(X,\theta(X,z)),\quad (X,z)\in \R^2\times [-1,0]
$$
and we have that $\psi^u(X,0)=u(X)$, $\partial_z\psi^u(X,-1)=0$.
Next, if $\phi$ solves the problem
$$
(\partial_x^2+\partial_y^2+\partial_z^2)\phi=F
$$
on $\{-1<z< H(X)\}$ then $\psi(X,z)=\phi(X,\theta(X,z))$ solves
\begin{equation}\label{psiu}
{\rm div}_{X,z}(g(X,z)\nabla_{X,z}\psi(X,z))=\pa_z\theta(X,z)F(X,\theta(X,z))
\end{equation}
on $\mathcal{S}$, where $g$ is defined by
\begin{equation}\label{g_def}
g(X,z)\equiv
\left( \begin{array}{ccc} \pa_z\theta(X,z) & 0 &  -\pa_x\theta(X,z)
\\
0 & \pa_z\theta(X,z) & -\pa_y\theta(X,z)
\\
-\pa_x\theta(X,z) &  -\pa_y\theta(X,z) &
\frac{1+(\pa_x\theta(X,z))^2+(\pa_y\theta(X,z))^2}{\pa_z\theta(X,z)}
\end{array}\right),  \quad (X, z) \in \mathcal{S}.
\end{equation}
Note that our notation is slightly different from the one of Section \ref{sectionGepsk}.

Consequently, 
if $\phi^u$ solves
$$
(\partial_x^2+\partial_y^2+\partial_z^2)\phi=0,\quad X\in\R^2,\quad
-1<z< H(X),\quad
\phi(X, H(X))=u(X),\,\, \partial_{z}\phi(X,-1)=0
$$
then $\psi^u(X,z)=\phi^u(X,\theta(X,z))$ solves
\begin{equation}\label{metric_bis}
{\rm div}_{X,z}(g(X,z)\nabla_{X,z}\psi(X,z))=0, \quad (x, z)\in \mathcal{S},\qquad
\partial_{z}\psi(X,-1)=0,\quad \psi(X,0)=u(X)\,.
\end{equation}
We observe (see Remark~\ref{IPP}) that  if $\psi$ and $\phi$ are smooth enough, decaying at infinity in $X$ and are
such that $\phi(X,0)=0$ and $\pa_{z}\psi(X,-1)=0$ then
$$
\int_{{\mathcal{S}}}{\rm div}_{X,z}(g(X,z)\nabla_{X,z}\psi(X,z))\phi(X,z)dXdz= - \int_{{\mathcal{S}}}
g(X,z)\nabla_{X,z}\psi(X,z)\cdot\nabla_{X,z}\phi(X,z)dXdz\,.
$$
Coming back to the definition of the Dirichlet-Neumann operator, 
using the Green formula and a change of variable  justified  by Lemma~\ref{harmonic}, we can infer the identity
\begin{equation}\label{vajno}
(G[\eta+ \eta_{0}](u),v)=\int_{-1}^0\int_{\R^2}g(X,z)\nabla_{X,z}\psi^u(X,z)\cdot\nabla_{X,z}{\bf v}(X,z)dXdz,
\end{equation}
where ${\bf v}(X,z)$ is such that ${\bf v}(X,0)=v(X)$.
The identity (\ref{vajno}) will be used frequently in the sequel.

Let us denote by $P$ the elliptic operator defined by
$$P\psi\equiv{\rm div}_{X,z}(g(X,z)\nabla_{X,z}\psi(X,z)),$$ where $\theta$ is defined by Lemma~\ref{harmonic}. 
As  before, the   proof of Proposition \ref{pak} will follow from the study of
the elliptic operator $P$.

At first,  in view of Lemma \ref{harmonic}, we shall establish a useful decomposition
 of $g$ with a part  which has sharp Sobolev regularity and a smooth part.

\begin{lem}
\label{metricest}
There exists a decomposition
$ g= g_{1}+ g_{2}$ such that we have
\begin{eqnarray}
\label{dkg1}
& & \| \langle  D\rangle^m  g_{1}\|_{{H}^k(\mathcal{S})}
\leq 
\omega\big( \| \langle \pa \rangle^m \eta\|_{{H}^{ k + {1 \over 2 }}(\R^2)}+
\| \eta_0\|_{\mathcal{W}^{m+ k + 1}(\R^2)}\big), \quad m \geq 2, \, k=0, \, 1, \,  2,
\\
\label{dkg2}& &  \|  g_{2} \|_{\mathcal{W}^k(\mathcal{S})} \leq \omega \big(\|\eta_0\|_{\mathcal{W}^{k+1}(\R^2)}\big), \quad \forall k.
\end{eqnarray}

\end{lem}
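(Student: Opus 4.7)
The plan is to read off a decomposition directly from the splitting $\theta=\theta_1+\theta_2$ given by Lemma~\ref{harmonic}: set $g_2$ to be the matrix obtained by substituting $\theta_2$ in place of $\theta$ in the expression \eqref{g_def}, and then define $g_1 := g - g_2$. Since $\theta_1$ is linear in $\eta$ and $\theta_2$ is affine in $\eta_0$, and since $\theta_2$ is independent of $\eta$, this isolates all the contributions coming from $\eta$ inside $g_1$, while $g_2$ depends only on $\eta_0$ and is smooth and non-decaying.

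The estimate \eqref{dkg2} on $g_2$ is then essentially tautological: every entry of $g_2$ is a smooth rational function of $\partial_X\theta_2$ and $\partial_z\theta_2$, whose denominator $\partial_z\theta_2$ is bounded below by the non-degeneracy bound \eqref{nondegen} (applied to $\theta$ with $\eta\equiv 0$), and whose regularity in $\mathcal{W}^k(\mathcal{S})$ is controlled by $\|\eta_0\|_{\mathcal{W}^{k+1}(\R^2)}$ using the chain rule and the elementary bound from Lemma~\ref{harmonic}.

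For the Sobolev estimate \eqref{dkg1} on $g_1$, the five linear entries of the matrix $g$ (the diagonal $\partial_z\theta$ entries and the off-diagonal $-\partial_x\theta, -\partial_y\theta$ entries) split into $\theta_1$ and $\theta_2$ parts additively, and the $\theta_1$ part contributes directly to $g_1$ with regularity controlled by $\|\langle D\rangle^m \theta_1\|_{H^{k+1}(\mathcal{S})} \leq C\|\langle\partial\rangle^m\eta\|_{H^{k+1/2}(\R^2)}$, using Lemma~\ref{harmonic}. The only non-trivial entry is the bottom-right one, $(1+|\nabla_X\theta|^2)/\partial_z\theta$. For this I would write
\[
\frac{1+|\nabla_X\theta|^2}{\partial_z\theta}-\frac{1+|\nabla_X\theta_2|^2}{\partial_z\theta_2}
= \frac{2\nabla_X\theta_2\cdot\nabla_X\theta_1+|\nabla_X\theta_1|^2}{\partial_z\theta_2}
-\frac{\partial_z\theta_1\bigl(1+|\nabla_X\theta|^2\bigr)}{\partial_z\theta\,\partial_z\theta_2}
\]
using the identity $1/\partial_z\theta = 1/\partial_z\theta_2 - \partial_z\theta_1/(\partial_z\theta_2\,\partial_z\theta)$. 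Each term on the right is a product of a factor that is at least linear in $\theta_1$ (and hence carries Sobolev regularity) with a factor that is a smooth function of $\theta$ or $\theta_2$; the denominators are bounded below uniformly thanks to \eqref{nondegen}.

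The main remaining work is to combine these products correctly in $H^k(\mathcal{S})$ for $k=0,1,2$. Here I would invoke the product estimates of Lemma~\ref{prodS} to handle terms like $\nabla_X\theta_2\cdot\nabla_X\theta_1/\partial_z\theta_2$, and a Moser-type composition estimate for terms involving the non-polynomial factor $1/(\partial_z\theta\,\partial_z\theta_2)$ (which is a smooth function $F(\partial\theta)$ with $F$ bounded along with all derivatives on the relevant range, by \eqref{nondegen}). The composition estimate produces the $\omega(\cdot)$ prefactor, with the $\mathcal{W}^{m+k+1}(\R^2)$ control on $\eta_0$ coming from the fact that up to $k$ derivatives in $(X,z)$ can fall on $\theta_2$ and that we subsequently need control in $\mathcal{W}^{m+1}$ of the result for the product estimates to close. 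The step I expect to be the most delicate is the bookkeeping in this last composition estimate, making sure that time derivatives and spatial derivatives (encoded jointly in $\langle D\rangle^m$) are counted correctly so as to match the precise index $k+1/2$ on the $\eta$-side of \eqref{dkg1}.
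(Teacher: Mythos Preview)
Your proposal is correct and follows essentially the same approach as the paper: the same decomposition $g_2 = g(\theta_2)$, $g_1 = g - g_2$, the same identity $1/\partial_z\theta - 1/\partial_z\theta_2 = -\partial_z\theta_1/(\partial_z\theta\,\partial_z\theta_2)$, and the same toolkit (Lemma~\ref{harmonic} for the regularity of $\theta_1,\theta_2$, Lemma~\ref{prodS} for products, and the lower bound \eqref{nondegen} for the denominators). Your treatment is in fact slightly more explicit than the paper's, which only works out one representative term in detail.
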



\begin{proof}[Proof of Lemma \ref{metricest}]
We set
$$
g_{2}(X,z)\equiv
\left( \begin{array}{ccc} \pa_z\theta_{2}(X,z) & 0 &  -\pa_x\theta_{2}(X,z)
\\
0 & \pa_z\theta_{2}(X,z) & -\pa_y\theta_{2}(X,z)
\\
-\pa_x\theta_{2}(X,z) &  -\pa_y\theta_{2}(X,z) &
\frac{1+(\pa_x\theta_{2}(X,z))^2+(\pa_y\theta_{2}(X,z))^2}{\pa_z\theta_{2}(X,z)}
\end{array}\right),  \quad (X, z) \in \mathcal{S}$$
and $g_{1}= g-g_{2}$.
The estimate of $g_{2}$ is an easy consequence of Lemma \ref{harmonic}. Indeed, 
note that   since $\partial_{z} \theta_{2}= 1 + \eta_{0}$,  we have that
\beq
\label{dztheta2}
 | \partial_{z} \theta_{2} | \geq \kappa >0.
 \end{equation}
 
Next, most of the terms arising in $g_{1}$  can also  be estimated
 by using Lemma \ref{harmonic}. The nonlinear terms can be estimated by using 
 Lemma \ref{prodS}. For example, let us estimate
  $$   {1 \over \partial_{z} \theta_{2}  } -   {1 \over \partial_{z} \theta }  = 
{ \partial_{z} \theta_{1} \over \partial_{z} \theta_{2} \partial_{z} \theta }.$$
   At first, by using the second estimate of Lemma \ref{prodS} and \eqref{nondegen}, \eqref{dztheta2}, we get
$$ \big\| \langle  D \rangle^m    \big( { \partial_{z} \theta_{1} \over \partial_{z} \theta_{2} \partial_{z} \theta} 
 \big)\big\|_{{H}^k(\mathcal{S})}
  \leq \omega\big(  \|  \theta_{2}\|_{\mathcal{W}^{m+k+ 1}(\mathcal{S})}
   \big) \Big( \| \langle D \rangle^m \theta_{1} \|_{H^{k+1}(\mathcal{S})} +  \sum_{|\alpha| + |\beta| = m, 
   \beta \neq 0 } \|D^\alpha   \partial_{z} \theta_{1} D^\beta { 1 \over  \partial_{z} \theta_{1}}
     \big\|_{H^k(\mathcal{S})} \Big).$$ 
    Next, we can use the first estimate of Lemma \ref{prodS}  to get 
      $$  
     \big\|  \langle D \rangle^m  \big( { \partial_{z} \theta_{1} \over \partial_{z} \theta_{2} \partial_{z} \theta} 
 \big)\big\|_{{H}^k(\mathcal{S})}
  \leq \omega\big( \|  \theta_{2}\|_{\mathcal{W}^{m+k+ 1}(\mathcal{S})} +  \| \langle D \rangle^m \theta_{1} \|_{H^{k+1}(\mathcal{S})}
   \big) \|\langle  D \rangle^m  \theta_{1} \|_{{H}^{k+ 1 }(\mathcal{S})} $$
   and the result follows by using 
         Lemma   \ref{harmonic}.
    The other terms can be handled in a similar way.

 This ends the proof of Lemma \ref{metricest}.
\end{proof}
The next step will be to study the elliptic equation $Pu= \nabla_{X,z}\cdot F$.
We shall make use of the following elliptic regularity result.
\begin{lem}\label{ell-reg}
For $m\geq 2$,  and $F$ such that $(F_{3})_{/Z=-1}=0$, then the solution  of
\begin{equation}\label{div}
Pu= \nabla_{X,z}\cdot F,\quad (X,z )\in \mathcal{S}, \quad u(X,0)=\partial_{z}u(X,-1)=0,
\end{equation}
satisfies the estimate
\beq
\label{divest}
\|\langle  D \rangle^m u\|_{{H}^k(\mathcal{S})}
\leq\omega\big(\|\langle D  \rangle^m g_{1}\|_{{H}^{k-1}(\mathcal{S})} + \| 
g_{2} \|_{\mathcal{W}^{m+k-1}(\mathcal{S})} \big)
\| \langle D \rangle^m  F\|_{{H}^{k-1}(\mathcal{S})}, \quad k=1, \, 2, 3.
\eeq
\end{lem}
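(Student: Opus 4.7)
The strategy is the standard one for elliptic regularity on a strip, combined with induction on the number of time derivatives so that we can absorb commutators into divergence-form source terms. We rely crucially on two ingredients: (i) the uniform ellipticity of $g$, namely $g_{33}(X,z) = \partial_z\theta_2 + \partial_z\theta_1 + (\ldots)/\partial_z\theta$ is bounded below by some $\kappa>0$ (a consequence of \eqref{nondegen} and \eqref{dztheta2} provided the Sobolev part $\theta_1$ is small, which is  encoded in the monotone function $\omega$), together with the coercivity $g\xi\cdot\xi\geq c|\xi|^2$; and (ii) the product estimates in Lemma \ref{prodS}, which give Moser-type bounds on $g=g_1+g_2$ in the form $\|(\langle D\rangle^m g)\,v\|_{H^{k-1}}\leq \omega(\cdots)\|\langle D\rangle^m v\|_{H^{k-1}}$.

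\emph{Step 1 ($k=1$, no time derivative).} Multiplying $Pu=\nabla_{X,z}\cdot F$ by $u$ and integrating over $\mathcal{S}$, the boundary terms vanish thanks to $u(X,0)=0$, $\partial_z u(X,-1)=0$ and $(F_3)_{|z=-1}=0$, so coercivity of $g$ and the Poincar\'e inequality (available since $u$ vanishes on the top face) give $\|u\|_{H^1(\mathcal{S})}\leq C\|F\|_{L^2(\mathcal{S})}$, with constant depending only on $\|g\|_{L^\infty}$.

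\emph{Step 2 (time derivatives, $k=1$).} For $\alpha=(\alpha_0,0,0,0)$ with $\alpha_0\leq m$, applying $\partial_t^{\alpha_0}$ to the equation yields
\[
P(\partial_t^{\alpha_0} u)=\nabla_{X,z}\cdot F_{\alpha_0},\quad F_{\alpha_0}\equiv \partial_t^{\alpha_0}F-\sum_{0<\beta\leq\alpha_0}\tbinom{\alpha_0}{\beta}(\partial_t^\beta g)\nabla_{X,z}\partial_t^{\alpha_0-\beta}u,
\]
and $\partial_t^{\alpha_0}u$ still satisfies the boundary conditions. Lemma \ref{prodS} (for $k=0$) controls the commutator by $\omega\bigl(\|\langle D\rangle^m g_1\|_{L^2}+\|g_2\|_{\mathcal{W}^{m-1}}\bigr)\|\langle D\rangle^m u\|_{L^2}$ in terms of lower-order (in $\alpha_0$) quantities. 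Step 1 applied to each $\partial_t^{\alpha_0}u$ and induction on $\alpha_0$ then yield the $k=1$ case. The estimates also provide similarly tangential $X$-derivative bounds, since $\partial_X$ also preserves the boundary conditions: applying $\partial^\alpha$ with $|\alpha|\leq m$ gives $P(\partial^\alpha u)=\nabla_{X,z}\cdot F^{(1)}_\alpha$ with $F^{(1)}_\alpha$ differing from $\partial^\alpha F$ by terms rewritten in divergence form modulo lower-order commutators.

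\emph{Step 3 ($k=2,3$: normal regularity).} For higher spatial regularity one cannot merely differentiate in $X$ (and $t$) since the strip has boundaries; we trade the missing $\partial_z$ derivatives via the equation. Writing out $P u = \partial_z(g_{33}\partial_z u)+R$, where $R$ collects tangential derivatives and cross terms, and using $g_{33}\geq\kappa>0$, we solve for $\partial_z^2 u$:
\[
\partial_z^2 u=\frac{1}{g_{33}}\Bigl(\nabla_{X,z}\cdot F-(\partial_z g_{33})\partial_z u-R\Bigr).
\]
Combined with Step 2 (which handles all $\partial^\alpha\partial_X^j u$ with at most one $\partial_z$) and Lemma \ref{prodS} (for $k=1$) to tame the nonlinearity in $1/g_{33}$, this gives $\|\langle D\rangle^m u\|_{H^2(\mathcal{S})}\leq \omega(\cdots)\|\langle D\rangle^m F\|_{H^1(\mathcal{S})}$. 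The $k=3$ case follows by the same trick: one more tangential differentiation followed by solving algebraically for $\partial_z^3 u$ from the differentiated equation, the Moser bound of Lemma \ref{prodS} with $k=2$ ensuring that all products of the form $(\langle D\rangle^m g)\,(\langle D\rangle^m v)$ appearing in the commutators and the tangential source $F^{(1)}_\alpha$ are controlled in $H^2(\mathcal{S})$.

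The main technical obstacle is the careful bookkeeping in Step 2–3: each time one commutes a derivative past $P$, one creates products of high-order derivatives of $g$ against derivatives of $u$, and one must (a) rewrite the commutator in divergence form so that the $k=1$ estimate of Step 1 applies, and (b) invoke the right $H^{k-1}$ product estimate from Lemma \ref{prodS} with just enough derivatives on $g_1$ and $g_2$ to close. The split $g=g_1+g_2$ is precisely what allows us to treat $g_1$ (finite Sobolev) and $g_2$ (smooth but non-decaying) with different norms, and to bound the final constant by the stated function $\omega$.
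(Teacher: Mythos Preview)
Your sketch is correct and follows essentially the same strategy as the paper: coercivity of $g$ for the base energy estimate, induction on the number of time derivatives with the commutator $[\partial_t^l,g]\nabla_{X,z}u$ rewritten as a divergence-form source, and product estimates of Lemma~\ref{prodS} type to close. The only organizational difference is that the paper treats the purely spatial case $l=0$ as a black box (citing \cite{L1}, Theorem~2.9, for $\|u\|_{H^{m+k}(\mathcal{S})}\le\omega(\cdots)\|F\|_{H^{m+k-1}(\mathcal{S})}$) and then inducts solely on the number of time derivatives, whereas you rebuild the spatial regularity from scratch via tangential differentiation plus algebraic recovery of $\partial_z^2 u$ from the equation; note that this recovery must be iterated up to $\partial_z^{m+k}u$, not just $\partial_z^2,\partial_z^3$, which your Step~3 leaves somewhat implicit.
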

Before giving the proof  Lemma~\ref{ell-reg}, we state a corollary  which is our basic tool in the proof of
Proposition~\ref{pak}.
\begin{cor}\label{usful}
For $m \geq 2$,  and $F$ such that $(F_{3})_{/Z=-1}=0$, then   the solution of
$$
Pu=\nabla_{X,z}\cdot F, \quad (X,z) \in \mathcal{S}, \quad u(X,0)=\partial_{z}u(X,-1)=0
$$
satisfies the estimate 
$$
\|\langle D \rangle^m u\|_{{H}^k(\mathcal{S})}
\leq\omega\big( \| \langle \pa \rangle^m  \eta \|_{{H}^{k- {1 \over 2 }}(\R^2)} + \| 
\eta_{0} \|_{\mathcal{W}^{m+k }(\R^2)} \big)
\| \langle D\rangle^m F\|_{{H}^{k-1}(\mathcal{S})}, \, k=1, \, 2, 3.
$$
\end{cor}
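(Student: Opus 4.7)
The plan is to combine Lemma \ref{ell-reg} directly with the estimates on the decomposition $g = g_{1} + g_{2}$ supplied by Lemma \ref{metricest}. First I would invoke Lemma \ref{ell-reg} to obtain
$$
\|\langle D \rangle^m u\|_{H^k(\mathcal{S})} \leq \omega\bigl(\|\langle D\rangle^m g_{1}\|_{H^{k-1}(\mathcal{S})} + \|g_{2}\|_{\mathcal{W}^{m+k-1}(\mathcal{S})}\bigr)\, \|\langle D\rangle^m F\|_{H^{k-1}(\mathcal{S})}.
$$
The hypothesis $(F_{3})_{/z=-1}=0$ together with the homogeneous boundary conditions on $u$ ensures applicability. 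This reduces the proof to bounding the two norms of $g_{1}$ and $g_{2}$ in terms of $\eta$ and $\eta_{0}$.

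Next, for $k \in \{1,2,3\}$, so that $k-1 \in \{0,1,2\}$ lies in the range permitted by Lemma \ref{metricest}, I would apply \eqref{dkg1} with $k$ replaced by $k-1$, yielding
$$
\|\langle D\rangle^m g_{1}\|_{H^{k-1}(\mathcal{S})} \leq \omega\bigl(\|\langle \pa \rangle^m \eta\|_{H^{k-\frac{1}{2}}(\R^2)} + \|\eta_{0}\|_{\mathcal{W}^{m+k}(\R^2)}\bigr),
$$
and \eqref{dkg2} with index $m+k-1$, yielding
$$
\|g_{2}\|_{\mathcal{W}^{m+k-1}(\mathcal{S})} \leq \omega\bigl(\|\eta_{0}\|_{\mathcal{W}^{m+k}(\R^2)}\bigr).
$$

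Finally, since $\omega$ was agreed to denote a generic continuous non-decreasing function that may change from line to line, substituting the above two bounds into the estimate coming from Lemma \ref{ell-reg} and merging the resulting composition into a single $\omega$ on the right-hand side produces precisely the claimed inequality. No genuine obstacle arises: the statement is pure bookkeeping built on Lemma \ref{ell-reg} and Lemma \ref{metricest}, and the only detail worth flagging is why $k$ is restricted to $\{1,2,3\}$, namely that the index $k-1$ must fall within the admissible range of Lemma \ref{metricest}.
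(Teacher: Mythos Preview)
Your proposal is correct and follows exactly the same approach as the paper, which simply states that it suffices to combine Lemma~\ref{ell-reg} and Lemma~\ref{metricest}. You have in fact spelled out the bookkeeping more carefully than the paper does, including the observation about why $k$ is restricted to $\{1,2,3\}$.
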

\begin{proof}[Proof of Corollary~\ref{usful}] It  suffices to combine  Lemma~\ref{ell-reg} 
and  Lemma \ref{metricest}.
\end{proof}
\begin{proof}[Proof of Lemma~\ref{ell-reg}]
We need to 
estimate $\|\partial_{t}^l u \|_{H^{m+k - l}}$ for $l \in [0, m]$ where $H^k$  stands for  the standard
Sobolev space in the strip. We shall reason by induction on $l$.

When $l=0$, the estimate of $ \|u \|_{H^{m+k}}$ is  the usual elliptic regularity estimate
(note that thanks to Lemma \ref{harmonic}, the matrix $g$ is positive definite thus $P$ is indeed
an elliptic operator).  The needed  estimate was actually  established in \cite{L1}  Theorem 2.9.
 We thus  already  have  that
\beq
\label{lannesest} \|u\|_{H^{m+k}} \leq \omega\big(  \|g_{1}\|_{H^{m+k-1}} +  \|g_{2}\|_{W^{m+k-1, \infty}} \big)
 \| F \|_{H^{m+ k-1}}.
 \eeq
 Now, let us assume that $\|\partial_{t}^j u \|_{H^{m+k - j}}$ is estimated for $j \leq l-1$. Then,
  we can apply $\partial_{t}^l$ to \eqref{div} to get the equation
  $$  P \partial_{t}^l u = \partial_{t}^l \nabla_{X,z}\cdot F -  \nabla_{X, z} \cdot \big(  [\partial_{t}^l, g ] \nabla_{X,z} u \big), \quad
   (X,z) \in \mathcal{S}, \quad  \partial_{t}^l u(X,0)= 0, \, \partial_{z}\partial_{t}^l u(X,-1)=0.$$
   Consequently, we can use again \eqref{lannesest} to get that 
  \beq
  \label{modif0} \| \partial_{t}^lu\|_{H^{m+k - l }} \leq \omega\big(  \|g_{1}\|_{H^{m+k-1}} +  \|g_{2}\|_{W^{m+k-1, \infty}} \big)
\big( \|  \partial_{t}^l F \|_{H^{m-l + k- 1}} +  \big\|  [\partial_{t}^l, g ] \nabla_{X,z} u\big\|
_{H^{m-l + k-1}}
\big).\eeq
 To estimate the last term in the  right hand side,  the only
  difficulty is to estimate the terms involving the commutator with  $g_{1}$. In this case,  we need to estimate
    $ \| \nabla_{X,z}^{\gamma_{1}} \partial_{t}^{l-j} g_{1}  \, \nabla_{X,z}^{\gamma_{2}}  \partial_{t}^{j} 
     \nabla_{X,z }u \|_{H^{k-1}(\mathcal{S})}$
     for $ j \leq l-1$ and $ | \gamma_{1} | + |\gamma_{2} | \leq m-l$.
             As in the proof of Lemma \ref{prodS} we find
    \beq
    \label{modif}  \| \nabla_{X,z}^{\gamma_{1}} \partial_{t}^{l-j} g_{1}  \, \nabla_{X,z}^{\gamma_{2}}  \partial_{t}^{j} 
     \nabla_{X,z }u \|_{H^{k-1}(\mathcal{S})} \leq C  \| \langle D \rangle^{m} g_{1} \|_{{H}^{k-1}(\mathcal{S})}\,
      \|  \partial_{t}^j u \|_{H^{ k+ m- j }(\mathcal{S})}.\eeq
      Indeed, when $k= 3$, this estimate is straightforward since $H^2$ is an algebra.
       Let us explain the proof when $k=1$. As in the proof of Lemma \ref{prodS}, we can write
    $$  \| \nabla_{X,z}^{\gamma_{1}} \partial_{t}^{l-j} g_{1}  \, \nabla_{X,z}^{\gamma_{2}}  \partial_{t}^{j} 
     \nabla_{X,z }u \|_{L^2 (\mathcal{S}) }
      \leq   C   \| \nabla_{X,z}^{\gamma_{1}} \partial_{t}^{l-j} g_{1} \|_{H^{1}(\mathcal{S})} \,
      \| \nabla_{X,z}^{\gamma_{2}}  \partial_{t}^{j} 
     \nabla_{X,z }u \|_{H^{1}(\mathcal{S})}$$
     and therefore  \eqref{modif} follows except when  $j=0$ and $\gamma_{2}=0$. In this case,
     we just write
     $$    \| \nabla_{X,z}^{\gamma_{1}} \partial_{t}^{l} g_{1}  \,  
     \nabla_{X,z }u \|_{L^2 (\mathcal{S}) } \leq C
      \|  \langle D \rangle^m g_{1} \|_{L^2(\mathcal{S})} \, \| \nabla_{X,z} u \|_{L^\infty(\mathcal{S})}$$
       and the result follows by Sobolev embedding since $m \geq 2$. The proof of \eqref{modif}
        when $k=2$ follows the same lines.
        
                 We can use  the induction assumption and \eqref{modif0}, \eqref{modif} to conclude 
                 since  $j \leq l-1$.
        This ends the proof of Lemma \ref{ell-reg}.
\end{proof}

Let $\psi^u$ be defined as a solution of (\ref{metric_bis}). We have the following bounds for $\psi^u$.
\begin{lem}\label{lema_psi}
For $m \geq 2$,  we have the estimate,
\begin{equation*}
\|\langle D \rangle^m \psi^u\|_{{H}^k(\mathcal{S})}
\leq \omega( \| \langle \pa \rangle^m  \eta \|_{{H}^{k- {1 \over 2 }}(\mathbb{R}^2)}
+ \| \eta_{0} \|_{\mathcal{W}^{m+k}(\mathbb{R}^2 )}\big)\| \langle \pa \rangle^m  u\|_{{H}^{ k - {1 \over 2 } }(\R^2)}, 
\quad k=1, \, 2, \,3.
\end{equation*}
\end{lem}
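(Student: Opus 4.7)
The plan is to reduce the estimate for $\psi^u$ to an application of Corollary \ref{usful} by subtracting an explicit lift of the boundary data, following the same splitting strategy as in the proof of Proposition \ref{DirNeum} (cf. \eqref{dec}). Write $\psi^u = u^H + u^r$, where $u^H$ solves the flat problem
\[
(\partial_x^2+\partial_y^2+\partial_z^2)u^H=0,\quad (X,z)\in\mathcal{S},\qquad \partial_z u^H(X,-1)=0,\quad u^H(X,0)=u(X),
\]
and the remainder $u^r$ satisfies
\[
P u^r = -P u^H = -\nabla_{X,z}\cdot\big((g-I)\nabla_{X,z} u^H\big)=\nabla_{X,z}\cdot F,\quad u^r(X,0)=0,\ \partial_z u^r(X,-1)=0,
\]
with $F\equiv -(g-I)\nabla_{X,z}u^H$.

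First I would handle $u^H$ via its explicit Fourier representation $\widehat{u^H}(\xi,z)=\cosh(|\xi|(z+1))/\cosh(|\xi|)\,\hat u(\xi)$ with $\xi\in\R^2$, exactly as in \eqref{uHhat}. The same Parseval computation that gave \eqref{uH1}-\eqref{uH2} (with $k=0$ and upgraded to $\R^2$) yields $\|\partial^\alpha u^H\|_{L^2(\mathcal{S})}\lesssim\|u\|_{H^{|\alpha|-1/2}(\R^2)}$ and $\|\partial^\alpha\partial_z u^H\|_{L^2(\mathcal{S})}\lesssim\|u\|_{H^{|\alpha|+1/2}(\R^2)}$, and higher $z$-derivatives are recovered from $\partial_z^2 u^H=-(\partial_x^2+\partial_y^2)u^H$. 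Summing over $|\alpha|\le m$ and $k$ vertical derivatives gives $\|\langle D\rangle^m u^H\|_{H^k(\mathcal{S})}\leq C\,\|\langle \partial\rangle^m u\|_{H^{k-1/2}(\R^2)}$ for $k=1,2,3$.

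Next, to bound $u^r$ I would apply Corollary \ref{usful}, which requires $(F_3)_{|z=-1}=0$. This holds because $\theta(\cdot,-1)\equiv -1$ forces $\partial_x\theta(X,-1)=\partial_y\theta(X,-1)=0$, so the bottom row of $g-I$ vanishes at $z=-1$ except in the $(3,3)$ entry, and this entry multiplies $\partial_z u^H(X,-1)=0$; thus $F_3|_{z=-1}=0$. Corollary \ref{usful} then gives
\[
\|\langle D\rangle^m u^r\|_{H^k(\mathcal{S})}\leq \omega\big(\|\langle\partial\rangle^m\eta\|_{H^{k-1/2}}+\|\eta_0\|_{\mathcal{W}^{m+k}}\big)\,\|\langle D\rangle^m F\|_{H^{k-1}(\mathcal{S})}.
\]
To estimate $F$, I would split $g-I=g_1+(g_2-I)$ and use the two parts of Lemma \ref{prodS}: the $g_1$-piece is controlled by $\|\langle D\rangle^m g_1\|_{H^{k-1}}\|\langle D\rangle^m\nabla_{X,z} u^H\|_{H^{k-1}}$, while the $(g_2-I)$-piece is controlled by $\|g_2\|_{\mathcal{W}^{m+k-1}}\|\langle D\rangle^m\nabla_{X,z}u^H\|_{H^{k-1}}$. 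Plugging in the bounds from Lemma \ref{metricest} for the metric factors and the $u^H$-bound from the previous paragraph for $\nabla_{X,z}u^H$ yields the desired estimate.

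The main technical point I expect is not any single calculation but the bookkeeping in the mixed Sobolev/$\mathcal{W}$ product estimates when the surface is the sum of a Sobolev part $\eta$ and a non-decaying smooth background $\eta_0$: one must avoid charging $\eta_0$ in Sobolev norms while still using the algebra-type estimates of Lemma \ref{prodS} in $H^{k-1}(\mathcal{S})$ for $k\in\{1,2,3\}$. The only case requiring attention is $k=1$, where $H^0=L^2$ is not an algebra; there, as in the proof of Lemma \ref{prodS}, at least one factor must be placed in $L^\infty$ through a Sobolev embedding (using $m\geq 2$), which is why the regularity budget $\|\eta_0\|_{\mathcal{W}^{m+k}}$ is needed rather than just $\|\eta_0\|_{\mathcal{W}^{m+k-1}}$. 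Combining the two estimates for $u^H$ and $u^r$ then closes the proof for $k=1,2,3$.
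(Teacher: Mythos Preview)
Your proof is correct and follows essentially the same route as the paper: the same splitting $\psi^u=u^H+u^r$, the same Fourier bound on $u^H$, and the same application of Corollary~\ref{usful} to $u^r$ combined with Lemma~\ref{prodS} and Lemma~\ref{metricest} to control $F$. The only cosmetic difference is that you write $F=-(g-I)\nabla_{X,z}u^H$ (exploiting $\Delta_{X,z}u^H=0$) and check the bottom boundary condition on $F_3$ explicitly, whereas the paper simply takes $F=-g\nabla_{X,z}u^H$; both choices work for the same reason, namely Remark~\ref{IPP} and $\partial_z u^H(\cdot,-1)=0$.
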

Note that we state here a regularity result for all the derivatives of $\psi^u$  whereas in Lemma
 \ref{vHlem} we  could  consider  only tangential derivatives and  one normal derivative.
  This difference is important in order to get an optimal estimate in terms of the regularity of the surface.

\begin{proof}
We split $\psi^u$ as $\psi^u=u^H+u^r$, where $u^H$ is defined via its Fourier transform by
\begin{equation}\label{fourier}
\hat{u}^H(\xi, z)= \frac{ \cosh \big( |\xi| (z+1) \big) }{ \cosh |\xi|  } 
\hat{u}(\xi), \quad \xi \in \mathbb{R}^2, \, z \in (-1, 0).
\end{equation}
As in the proof of Lemma~\ref{uHlem}, we get the bound
\begin{equation}\label{flat}
\|\langle  D \rangle^m  u^H\|_{{H}^k(\mathcal{S})}\leq 
C\| \langle \pa \rangle^m  u\|_{{H}^{k - {1 \over 2 }}(\R^2)}.
\end{equation}
Since $u^r$ solves $P(u^r)=-P(u^H)$ with homogeneous boundary conditions, by  using Corollary~\ref{usful}, we get
$$ 
\|\langle D \rangle^m u^r\|_{{H}^k(\mathcal{S})}
\leq  \omega( \| \langle \pa \rangle^m  \eta \|_{{H}^{ k - {1 \over 2 } }(\mathbb{R}^2)}
+ \| \eta_{0} \|_{\mathcal{W}^{m+k}(\mathbb{R}^2 )}\big)
\|  \langle  D  \rangle^m   \big(g \nabla_{X,z}u^H\big)\|_{{H}^{k- 1 }(\mathcal{S})}.
$$
Next,  by using Lemma \ref{metricest}, we can  write
$$ \|  \langle  D  \rangle^m   \big(g \nabla_{X,z}u^H\big)\|_{{H}^{k-1}(\mathcal{S})}
\leq \| \langle D  \rangle^m  \big(g_{1} \nabla_{X,z}u^H\big)\|_{{H}^{k-1}(\mathcal{S})}
+ \omega( \|\eta_{0}\|_{\mathcal{W}^{m+k}})  \| \langle D  \rangle^m u^H \|_{{H}^{k}
(\mathcal{S})}.$$
 From Lemma  \ref{prodS}  we  infer
$$    \| \langle D  \rangle^m  \big(g_{1} \nabla_{X,z}u^H\big)\|_{{H}^{k-1}(\mathcal{S})}
\lesssim 
\|  \langle D \rangle^m g_{1} \|_{{H}^{k-1} (\mathcal{S})} 
  \| \langle D  \rangle^m u^H \|_{{H}^k
(\mathcal{S})}.$$
This yields by using \eqref{metricest} 
 $$  \|\langle D  \rangle^m u^r\|_{{H}^k(\mathcal{S})}
\leq   \omega( \|  \langle \pa \rangle^m  \eta \|_{{H}^{ k - {1 \over 2 }}(\mathbb{R}^2)}
+ \| \eta_{0} \|_{\mathcal{W}^{m+k}(\mathbb{R}^2 )}\big)  \| \langle  D  \rangle^m u^H \|_{{H}^k
(\mathcal{S})}$$
and hence   Lemma~\ref{lema_psi}   follows by  combining this estimate with \eqref{flat}.

\end{proof}
\bigskip

We are now in position to  give the proof of (\ref{rennes}).  
We first prove it for $\sigma = -1/2, \, 1/2$  and $3/2$.
Let us write
$$
\partial^\alpha=\partial_t^j\partial_X^\beta,\quad j+|\beta|=|\alpha|.
$$
We need to evaluate the quantity
$$
\|\partial_t^j\partial_X^\beta \big(G[\eta+\eta_0]u \big)\|_{H^{\sigma }}=
\|\partial_t^j\partial_X^\beta\Lambda^\sigma \big(
G[\eta+\eta_0]u \big)\|_{L^2}.
$$
By duality, we write for $v\in \mathcal{S}(\mathbb{R}^2)$,
\begin{eqnarray*}
(\partial_t^j\partial_X^\beta\Lambda^\sigma  \big(G[\eta+\eta_0]u\big),v) & =
&(-1)^{|\beta|}\partial_t^j(G[\eta+\eta_0]u, \partial_X^\beta\Lambda^\sigma   v)
\\
& = &
(-1)^{|\beta|}\partial_t^j\int_{\mathcal{S}}g(X,z)\nabla_{X,z}(\psi^u)\cdot
\nabla_{X,z}(\partial_X^\beta\Lambda^\sigma {\bf  v})\,dXdz
\\
& = &
\int_{\mathcal{S}}\Lambda^{\sigma + {1\over 2 }} \pa^\alpha(g(X,z)\nabla_{X,z}\psi^u)\cdot\nabla_{X,z}
\Lambda^{- {1 \over 2 } }{\bf v}\,dXdz,
\end{eqnarray*}
where ${\bf v}$ is defined by
\begin{equation}\label{bf_v}
{\bf v}(x,y,z)=\frac{ \cosh \big( \sqrt{D_x^2 + D_y^2} (z+1) \big) }{ \cosh \sqrt{D_x^2+ D_y^2} }(v).
\end{equation}
Since we have by using (\ref{flat}) that 
$$
\|\nabla_{X,z} \Lambda^{ - {1 \over 2 } }{\bf v}\|_{L^2({\mathcal {S}})}\leq C\|v\|_{L^2(\R^2)}\,,
$$
we  get from  Cauchy-Schwarz that
$$ \Big| \int_{\mathcal{S}}\Lambda^{\sigma + {1\over 2 }} \pa^\alpha(g(X,z)\nabla_{X,z}\psi^u)\cdot\nabla_{X,z}
 \Lambda^{- {1 \over 2 }}{\bf v}\,dXdz
\Big| \leq   C  \| \langle D \rangle^m  \big( g(X,z)\nabla_{X,z}\psi^u \big) \|_{{H}^{ \sigma + {1\over 2 } }(\mathcal{S})}
\|v\|_{L^2(\R^2)}
$$
and thus, we can use  again Lemma \ref{prodS} and  Lemma \ref{metricest}
 (note that $\sigma + 1/2 \in \mathbb{N}$)   to get
  that 
\begin{eqnarray*}
& &  \Big| \int_{\mathcal{S}}\Lambda^{ \sigma + {1\over 2 }} \pa^\alpha(g(X,z)\nabla_{X,z}\psi^u)\cdot\nabla_{X,z} \Lambda^{- {1 \over 2 } }{\bf v}\,dXdz
\Big|  \\  & &\leq   C\big( \| \langle D \rangle ^m g_{1}\|_{{H}^{\sigma+ {1\over 2 } }(\mathcal{S})}+ \| g_{2}\|_{\mathcal{W}^{m+\sigma+ {1\over 2 } }(\mathcal{S})}
\big) \| \langle  D  \rangle^m \psi^u \|_{{H}^{\sigma + {3 \over 2 } }(\mathcal{S})}  \|v\|_{L^2(\R^2)}\\
& &  \leq    \omega\big( \|  \langle \pa \rangle^m  \eta \|_{{H}^{\sigma+ 1 }(\R^2)} +
\| \eta_{0} \|_{\mathcal{W}^{m+\sigma + {3 \over 2 }}(\R^2)} \big) \| \langle D  \rangle^m \psi^u \|_{{H}^{\sigma + {3 \over 2 }}(\mathcal{S})} \|v\|_{L^2(\R^2)}.
\end{eqnarray*}
Consequently, we get by using Lemma \ref{lema_psi} with $k= \sigma + 3/2$ to get  that
\begin{multline*}
\Big| \int_{\mathcal{S}}\Lambda^{\sigma + {1\over 2  }}   \pa^\alpha(g(X,z)\nabla_{X,z}\psi^u)\cdot\nabla_{X,z}
 \Lambda^{- {1 \over 2 } }{\bf v}\,dXdz
\Big|  \\
\leq    \omega\big( \|  \langle \pa \rangle^m  \eta \|_{{H}^{\sigma+ 1 }(\R^2)} +
\| \eta_{0} \|_{\mathcal{W}^{m+\sigma + {3 \over 2 }}(\R^2)} \big) \| \langle \pa \rangle^m  u \|_{{H}^{\sigma + 1}(\R^2)}
  \|v\|_{ L^2 (\R^2)}
  \end{multline*}
  and hence, we find that
  $$ \| \partial^\alpha \big( G[\eta + \eta_{0}] u \big) \|_{H^{ \sigma }(\mathbb{R}^2)}
   \leq   \omega\big( \|  \langle \pa \rangle^m  \eta \|_{{H}^{\sigma+ 1 }(\R^2)} +
\| \eta_{0} \|_{\mathcal{W}^{m+\sigma + {3 \over 2 }}(\R^2)} \big) \|
 \langle \pa \rangle^m  u \|_{{H}^{\sigma + 1 }(\R^2)}.$$
This proves \eqref{rennes} for $\sigma= -1/2, \,  1/2 $ and  $3/2$ and actually the refined version \eqref{rennesrefined}.

     To get the $H^1$ estimate,  it suffices to interpolate between the  $H^{1\over 2}$ 
      estimate and the $H^{3\over 2}$ estimate. More precisely, 
       we define the linear  operator $A$ acting on the tensor  $(\partial^\alpha u)_{|\alpha | \leq m}$ as
        $A  (\partial^\alpha u)_{|\alpha | \leq m} = \big(\partial^\alpha (G[\eta + \eta_{0}] \cdot u)\big)_{|\alpha | \leq m}.$
         Since this operator  maps continuously $H^{3 \over 2} $ in $H^{1 \over 2}$
          and $H^{5\over 2}$ into $H^{  {3 \over 2 } }$, it also maps  continuously $H^2$ in $H^1$.
        This ends the proof of \eqref{rennes}

Let us now turn to the proof of the commutator estimate (\ref{rennes2}). For $v\in H^{\frac{1}{2}}(\R^2)$, we can write
\begin{multline}
\label{comg}
([\partial^\alpha, G[\eta + \eta_{0}]](u),v)=\int_{\mathcal{S}}g\nabla_{X,z}(\partial^\alpha\psi^u-\psi^{\partial^\alpha
u})\cdot \nabla_{X,z}{\bf v}\,dXdz \\ +\int_{\mathcal{S}}[\partial^\alpha,g]\nabla_{X,z}\psi^u\cdot\nabla_{X,z}{\bf v}\,dXdz,
\end{multline}
where ${\bf v}$ is again defined by (\ref{bf_v}). We have that
\begin{equation}\label{iut}
P(\partial^\alpha\psi^u-\psi^{\partial^\alpha u})=
[P,\partial^\alpha]\psi^u={\rm div}_{X,z}([g,\pa^\alpha]\nabla_{X,z}\psi^u)
\end{equation}
and moreover  $\partial^\alpha\psi^u-\psi^{\partial^\alpha u}$ satisfies homogeneous
boundary conditions. Multiplying (\ref{iut}) by
$\partial^\alpha\psi^u-\psi^{\partial^\alpha u}$ and integrating over
$\mathcal{S}$ yields
$$
\|\nabla_{X,z}(\partial^\alpha\psi^u-\psi^{\partial^\alpha u})\|_{L^2(\mathcal{S})}\leq 
C\|[g,\pa^\alpha]\nabla_{X,z}\psi^u\|_{L^2(\mathcal{S})}\,.
$$
To estimate the commutator, we need
to estimate $\|\pa^\beta g_{1} \pa^\gamma \nabla_{X,z} \psi^u \|_{L^2(\mathcal{S})}$
for $|\beta|+ |\gamma| \leq m, $ $|\gamma | \neq m$.
 Again, when $|\gamma | \leq m-2$, we can use  the Sobolev embedding $
  H^1(\mathcal{S})  \subset L^4(\mathcal{S})$ while when $|\gamma|=m-1$,
   we put  $\nabla^\beta g $ in $L^\infty$. This yields (since $m \geq 2$)
$$ \|\pa^\beta g \pa^\gamma \nabla_{X,z} \psi^u \|_{L^2(\mathcal{S})}
\leq \omega \big(   \| \langle  D \rangle^m  g_{1}\|_{{H}^1} + \|g_{2}\|_{\mathcal{W}^m} \big)
 \| \langle D \rangle^{m-1} \psi^u \|_{{H}^{1}(\mathcal{S})}$$
and hence,  we obtain from Lemma \ref{metricest} and Lemma \ref{lema_psi} that
$$
\|[g,\pa^\alpha]\nabla_{X,z}\psi^u\|_{L^2(\mathcal{S})}
\leq \underline{\omega}
\| \langle D \rangle^{m-1}\psi^u\|_{{H}^{1}}\leq
\underline{\omega}
\| \langle \pa  \rangle^{m-1} u\|_{{H}^{1 \over 2 }(\R^2)}.
$$
Consequently, we can use \eqref{comg} and the above estimates to get  from Cauchy-Schwarz that
$$
|([\partial^\alpha, G[\eta+ \eta_{0}]](u),v)|\leq \underline{\omega}\|\langle \pa \rangle^{m-1} u\|_{{H}^{{1\over 2}}(\R^2)}
\|v\|_{H^{\frac{1}{2}}(\R^2)}\,.
$$
This proves (\ref{rennes2}) by duality.

Let us now turn to the proof of the bounds on the Frechet derivative of $G[\eta+\eta_0]u$.
We only consider the case $n=1$, the case $n>1$ can be handled by applying a
straightforward induction argument (see \cite{Alvarez-Lannes} for similar analysis).
Moreover, we focus on the case  $l=0$ which is the most interesting one since we need
a sharp estimate with respect to the regularity of $h$ in this case.
By duality, we write for $v\in  \mathcal{S}(\mathbb{R}^2)$ and $k=0,1$ (we shall take $k = \sigma + 1/2$)
\begin{eqnarray*}
(\partial^\alpha\Lambda^{k}
D_{\eta}G[\eta+\eta_0](u)\cdot h,v) 
& = &
\int_{\mathcal{S}}\Lambda^{k}\partial^\alpha
(
g\,\nabla_{X,z}
(D_{\eta}\psi^u\cdot h)
)
\cdot\nabla_{X,z}{\bf v}\,dXdz
\\
& & +
\int_{\mathcal{S}}
\Lambda^{k}\partial^\alpha
((D_{\eta} g\cdot h)\nabla_{X,z}\psi^u)\cdot\nabla_{X,z}{\bf v}\,dXdz
\\
& \equiv &
J_1+J_2,
\end{eqnarray*}
where ${\bf v}$ is defined by (\ref{bf_v}). 
Using the Cauchy-Schwarz inequality, we obtain the bound
$$
J_1\leq C\| \langle D \rangle^m 
(g(X,z)\nabla_{X,z}(D_{\eta}\psi^u\cdot h))\|_{{H}^k(\mathcal{S})}\|v\|_{H^{\frac{1}{2}}}\,.
$$
For $k=0, 1$ and $m\geq 2$,  by using Lemma \ref{prodS} and Lemma \ref{metricest} we find
\begin{eqnarray}
\nonumber
\| \langle D \rangle^m 
(g(X,z)\nabla_{X,z}(D_{\eta}\psi^u\cdot h))\|_{{H}^k(\mathcal{S})}
& \leq &\omega \big( \| \langle D \rangle^m   g_{1}\|_{{H}^{k}}
+ \| g_{2} \|_{\mathcal{W}^{m+k}} \big)\,\,
\|\langle D \rangle^m (D_{\eta}\psi^u\cdot h)\|_{{H}^{k+1}(\mathcal{S})}\
\\
\label{zvezda}
& \leq &
\underline{\omega}\, \| \langle D \rangle^m (D_{\eta}\psi^u\cdot
h)\|_{{H}^{k+ 1 }(\mathcal{S})}\,.
\end{eqnarray}
Next, we need to  evaluate
$\| \langle D \rangle^m (D_{\eta}\psi^u\cdot h)\|_{{H}^{k+ 1 }(\mathcal{S})}$. 
For that purpose, we observe that $D_{\eta}\psi^u\cdot h$ solves the problem
$$
P(D_{\eta}\psi^u\cdot h)=-{\rm div}(D_{\eta}g\cdot h\,\,\nabla_{X,z}\psi^u)
$$
on $\mathcal{S}$ with homogeneous boundary conditions.
Thus, using Corollary~\ref{usful}, we get in particular  that 
\begin{equation}\label{deux_cas}
\| \langle D \rangle^m 
D_{\eta}\psi^u\cdot h\|_{{H}^{k+1 }(\mathcal{S})}\leq 
\underline{\omega}\,\| \langle D \rangle^m \, 
(D_{\eta}g\cdot h\,\,\nabla_{X,z}
\psi^u) \|_{{H}^k (\mathcal{S})}.
\end{equation}
To estimate  the right hand side of (\ref{deux_cas}), we use again Lemma \ref{prodS}, we find
\begin{eqnarray}\label{kolia1}
& &\| \langle D  \rangle^m\, 
(D_{\eta}g\cdot h\,\,\nabla_{X,z}
\psi^u) \|_{{H}^k (\mathcal{S})}  \lesssim \ \| \langle D \rangle^m   \big( D_{\eta}g \cdot h \big)   \|_{{H}^k(\mathcal{S})}
\|\langle D  \rangle^m \nabla_{X,z}\psi^u\|_{{H}^k(\mathcal{S}) }.
\end{eqnarray}
Using Lemma~\ref{lema_psi}, we get
\begin{equation}\label{kolia2}
\|\langle D  \rangle^m \nabla_{X,z}\psi^u\|_{{H}^k(\mathcal{S}) }
\leq  \underline{\omega}\| \,  \langle \pa  \rangle^m  u\|_{{H}^{ k + {1 \over 2 } }(\R^2)}\,.
\end{equation}
In order to  finish  the estimate for $J_1$, it remains to evaluate 
$    \| \langle  D \rangle^m   D_{\eta}g \cdot h  \|_{{H}^k(\mathcal{S})}$
(we recall that in this situation $D_{\eta}g_{2}\cdot h=0$).
Coming back to the definition of $g$ in terms of $\theta$ an by using
Remark~\ref{remtheta}, we get the bound
\begin{eqnarray}\label{kolia3}
& &\| \langle  D \rangle^m  \big( D_{\eta}g \cdot h \big)  \|_{{H}^k(\mathcal{S})}
\leq
\underline{\omega} \, \| \langle \pa \rangle^m h\|_{{H}^{ k+ {1 \over 2 }}(\mathcal{S})}\,.
\end{eqnarray}
Combining the above estimates yields the bound
$$
J_1\leq\underline{\omega} \, \|  
\langle \pa \rangle^m u\|_{{H}^{k + {1 \over 2 } }(\R^2)} \,\|
 \langle \pa  \rangle^m  h\|_{{H}^{ k + { 1 \over 2 }}(\mathbb{R}^2)} \, \| v \|_{H^{1 \over 2 }(\mathbb{R}^2)}\,.
$$
This ends the analysis of the contribution of $J_1$. Let us now turn to the analysis of $J_2$.
 By using again  the Cauchy-Schwarz inequality, we obtain
$$
J_2\leq C\| \langle D \rangle^m 
((D_{\eta}g(X,z)\cdot h)\nabla_{X,z}\psi^u)\|_{{H}^k(\mathcal{S})}\
\|v\|_{H^{\frac{1}{2}}(\mathbb{R}^2) }\,.
$$
Next,  by using  again Lemma \ref{prodS}, we get 
$$
\|\langle D \rangle^m 
((D_{\eta}g(X,z)\cdot h)\nabla_{X,z}\psi^u)\|_{{H}^k(\mathcal{S})}\leq
C\|\langle D \rangle^m  D_{\eta}g\cdot h  \|_{{H}^k(\mathcal{S})}\,\,
\|\langle D \rangle^m  \psi^u\|_{{H}^{k+ 1 }(\mathcal{S})}
$$
and then we can conclude the bound of $J_2$ as we did in the analysis of
$J_1$, see (\ref{kolia1}), (\ref{kolia2}), (\ref{kolia3}). 
We have thus proven that
$$| (\partial^\alpha\Lambda^{k}
D_{\eta}G[\eta+\eta_0](u)\cdot h,v)  | \leq 
\underline{\omega}\|  \langle \pa  \rangle^m 
u\|_{{H}^{k + {1 \over 2 }}(\R)^2} \,\|  \langle \pa \rangle^m  h\|_{{H}^{ k + {1\over 2 }}(\mathbb{R}^2)}
 \,\|v \|_{H^{1 \over 2 }(\mathbb{R}^2) }.$$
From this, we  deduce that
$$  \| \partial^\alpha D_{\eta}G[\eta+\eta_0](u)\cdot h \|_{H^{\sigma}}
\leq 
\underline{\omega}\|  \langle \pa \rangle^m 
u\|_{{H}^{\sigma + 1 }(\R)^2} \,\|  \langle \pa \rangle^m  h\|_{{H}^{\sigma + 1 }(\mathbb{R}^2)}.$$
for $\sigma= -1/2,\, 1/2$ with  $\sigma = k - 1/2$.
This yields the desired estimate 
 for  $\sigma= -1/2,\, 1/2$
and actually the refined version \eqref{San} stated in Remark
 \ref{Sanremark}. 

It remains to study  the case $\sigma=1$. Again, we start from  

\begin{eqnarray*}
(\partial^\alpha\Lambda \big(
D_{\eta}G[\eta+\eta_0](u)\cdot h \big),v) 
& = &
\int_{\mathcal{S}}\Lambda \partial^\alpha
(
g\,\nabla_{X,z}
(D_{\eta}\psi^u\cdot h)
)
\cdot\nabla_{X,z}{\bf v}\,dXdz
\\
& & +
\int_{\mathcal{S}}
\Lambda\partial^\alpha
((D_{\eta} g\cdot h)\nabla_{X,z}\psi^u)\cdot\nabla_{X,z}{\bf v}\,dXdz
\\
& \equiv &
\tilde{J}_1+\tilde{J}_2.
\end{eqnarray*}
 We first   obtain  for $\tilde{J}_{1}$ that 
\begin{eqnarray}
\label{frech1}
\tilde{J}_1 &\leq  & C\| \Lambda \partial^\alpha \big
(g(X,z)\nabla_{X,z}(D_{\eta}\psi^u\cdot h)\big)\|_{H^{1\over 2 }(\mathcal{S})}
\| \nabla_{X,z}  \Lambda^{- {1 \over 2 } }{\bf v} \|_{L^{{2}}(\mathcal{S})} \\
\nonumber & \leq &   C\|  \partial^\alpha 
(g(X,z)\nabla_{X,z}(D_{\eta}\psi^u\cdot h))\|_{H^{3\over 2 }(\mathcal{S})}
\|v\|_{
L^2(\R^2)}\,.
\end{eqnarray}
To estimate this term, we shall use the following classical lemma about products
in Sobolev spaces
\begin{lem}
\label{LP}
We have the estimates
\beq
\label{LP1} \| u v \|_{H^{3 \over 2}(\mathcal{S})}   \leq  C_{\sigma} \|u \|_{\mathcal{C}^\sigma(\mathcal{S})}\,
\|v \|_{H^{ 3 \over 2}(\mathcal{S})} \eeq
 for every $\sigma \in (3/2, 2)$,
\beq
\label{LP2}
 \| D^\alpha u  D^\beta v \|_{H^{3\over 2}(\mathcal{S})} 
  \leq C_{m}\| \langle D \rangle^m u \|_{H^{3 \over 2}(\mathcal{S})} \| \langle D \rangle^m v \|_{H^{3\over2}
  (\mathcal{S})}
\eeq
 for $ |\alpha | + |\beta | \leq m$, $m \geq 2$.
 \end{lem}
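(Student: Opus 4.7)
The plan is to derive both estimates from standard Sobolev product machinery on the three-dimensional strip $\mathcal{S}$, treating the time variable as an additional spatial parameter so that $\langle D \rangle^m$ behaves like derivatives in a four-dimensional setting, but all estimates reduce (after freezing $t$) to product estimates in $H^s(\mathcal{S})$ with $\dim \mathcal{S}=3$.

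First I would establish \eqref{LP1}. This is the classical fact that multiplication by a function in $\mathcal{C}^\sigma$ with $\sigma > 3/2 = \dim \mathcal{S}/2$ is bounded on $H^{3/2}(\mathcal{S})$. The cleanest route is Bony's paraproduct decomposition $uv = T_u v + T_v u + R(u,v)$: the paraproduct $T_u v$ is bounded on $H^{3/2}$ by $C\|u\|_{L^\infty}\|v\|_{H^{3/2}}$, while the symmetric paraproduct $T_v u$ and the remainder $R(u,v)$ are controlled by $C\|u\|_{\mathcal{C}^\sigma}\|v\|_{H^{3/2}}$ once $\sigma > 3/2$ (the positive regularity of $u$ absorbs the loss in the high-high interactions). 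Working on the strip rather than on $\mathbb{R}^3$ poses no real difficulty: one fixes a total Stein extension $E: H^s(\mathcal{S})\to H^s(\mathbb{R}^3)$ and a similar extension for the H\"older norms, applies the estimate on $\mathbb{R}^3$, and restricts back to $\mathcal{S}$.

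Next I would deduce \eqref{LP2} from \eqref{LP1} together with the algebra property of $H^{5/2}(\mathcal{S})$, which holds because $5/2 > \dim \mathcal{S}/2$. Without loss of generality $|\alpha|\leq |\beta|$, so $|\alpha|\leq m/2$. Two cases arise:
\begin{itemize}
\item If $|\beta|\leq m-1$, then both $D^\alpha u$ and $D^\beta v$ still admit at least one further derivative in $H^{3/2}$, hence lie in $H^{5/2}(\mathcal{S})$, with norms bounded by $\|\langle D\rangle^m u\|_{H^{3/2}}$ and $\|\langle D\rangle^m v\|_{H^{3/2}}$ respectively. The algebra inequality $\|fg\|_{H^{5/2}}\leq C\|f\|_{H^{5/2}}\|g\|_{H^{5/2}}$ combined with the continuous injection $H^{5/2}\hookrightarrow H^{3/2}$ yields the desired bound.
\item If $|\beta|=m$ (so $|\alpha|=0$), then $u$ belongs (after freezing $t$ if needed) to $H^{m+3/2}(\mathcal{S})\hookrightarrow \mathcal{C}^{\sigma}(\mathcal{S})$ for some $\sigma\in(3/2,2)$ whenever $m\geq 2$, and one applies \eqref{LP1} directly with the roles of $u$ and $D^\beta v$.
\end{itemize}

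The only delicate point is the bookkeeping caused by the inclusion of time derivatives inside $\langle D\rangle^m$, which makes the space $\{\,u : \|\langle D\rangle^m u\|_{H^{3/2}}<\infty\,\}$ anisotropic; but since \eqref{LP1} is applied at fixed $t$, and since controlling $\|\langle D\rangle^m u\|_{H^{3/2}(\mathcal{S})}$ controls in particular $\|D^\gamma u\|_{H^{3/2}(\mathcal{S})}$ for each multi-index $|\gamma|\leq m$, this issue is purely notational. Consequently the main (and essentially only) technical step is the paraproduct estimate underlying \eqref{LP1}; the rest is a case split that rests on the algebra structure of $H^{5/2}(\mathcal{S})$ and on the Sobolev embeddings $H^{m+3/2}\hookrightarrow \mathcal{C}^{\sigma}$ for $m\geq 2$.
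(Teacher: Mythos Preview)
Your proposal is correct, and for \eqref{LP2} it follows exactly the paper's strategy: the same case split on $|\beta|\le m-1$ versus $|\beta|=m$, using a Sobolev algebra property in the first case (the paper uses $H^2(\mathcal{S})$ rather than $H^{5/2}$, but the idea is identical) and \eqref{LP1} plus Sobolev embedding in the second.

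For \eqref{LP1} the routes diverge. You invoke Bony's paraproduct decomposition on $\mathbb{R}^3$ via a Stein extension; the paper instead gives a completely elementary argument based on the Gagliardo double-integral characterization of $H^{1/2}(\mathcal{S})$. Concretely, the paper first proves the auxiliary bound $\|uv\|_{H^{1/2}}\le C_\beta\|u\|_{\mathcal{C}^\beta}\|v\|_{H^{1/2}}$ for $\beta\in(1/2,1)$ by splitting $|u(Y)v(Y)-u(Y')v(Y')|$ in the seminorm and integrating directly; it then reduces $H^{3/2}$ to $H^{1/2}$ by distributing one derivative, writing $\|uv\|_{H^{3/2}}\lesssim \|uv\|_{H^1}+\|u\,\nabla_{X,z}v\|_{H^{1/2}}+\|(\nabla_{X,z}u)\,v\|_{H^{1/2}}$ and applying the auxiliary estimate to each piece (note $\nabla_{X,z}u\in\mathcal{C}^{\sigma-1}$ with $\sigma-1\in(1/2,1)$). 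Your paraproduct approach is more systematic and extends at once to other fractional indices, but it imports Littlewood--Paley machinery and an extension operator; the paper's approach is self-contained, works directly on the strip, and makes transparent why the threshold $\sigma>3/2$ appears (one derivative goes on $u$, and the remaining H\"older exponent must exceed $1/2$).
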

\begin{proof}
 The first  estimate is  an easy consequence of the fact that
 \beq
 \label{sobfrac} \| f \|_{H^{1 \over 2}(\mathcal{S})}^2 =  \| f \|_{L^2 (\mathcal{S})}^2 + \int_{\mathcal{S}} \int_{\mathcal{S}}
  { |f(Y)- f(Y') |^2 \over | Y- Y' |^{ 4 } } \, dY dY'.\eeq
  Indeed, let us first prove that 
 \beq
 \label{prodprelim}
  \| u v \|_{H^{1 \over 2}(\mathcal{S})}   \leq  C_{\beta} \|u \|_{\mathcal{C}^\beta(\mathcal{S})}\,
\|v \|_{H^{ 1 \over 2}(\mathcal{S})}
\eeq
as soon as $\beta \in (1/2, 1)$.
From \eqref{sobfrac},  the term involving the $L^2$ norm can be easily estimated, 
 for the other term, we write
\begin{eqnarray*}
  \| u v \|_{H^{1 \over 2}(\mathcal{S})}^2  &  \lesssim &  
\| u \|_{L^\infty}^2 \|v \|_{L^2}^2  + \int_{\mathcal{S}} \int_{\mathcal{S}}
  { |u(Y)- u(Y') |^2\,  |v(Y') |^2 \over | Y- Y' |^{ 4 } } \, dY dY' \\
   & &  + \int_{\mathcal{S}} \int_{\mathcal{S}}
  { |v(Y)- v(Y') |^2\,  |u(Y)| ^2 \over | Y- Y' |^{ 4 } } \, dY dY'.
  \end{eqnarray*}
   The second integral is obviously bounded by $ \|u\|_{L^\infty}^2 \|v\|_{H^{1 \over 2}}^2$.
    To estimate the first one, we use that
  \begin{eqnarray*}  \int_{\mathcal{S}}
  { |u(Y)- u(Y') |^2\,  \over | Y- Y' |^{ 4 } } \, dY
   & = &  \int_{|Y- Y'|\geq 1 }
  { |u(Y)- u(Y') |^2\,  \over | Y- Y' |^{ 4 } } \, dY + \int_{|Y- Y'|\leq 1 }
  { |u(Y)- u(Y') |^2\,  \over | Y- Y' |^{ 4 } }\, dY   \\
  &\lesssim  & \|u\|_{L^\infty}^2 + \|u \|_{\mathcal{C}^\beta}^2
  \end{eqnarray*}
   and hence we find 
$$ \int_{\mathcal{S}} \int_{\mathcal{S}}
  { |u(Y)- u(Y') |^2\,  |v(Y') |^2 \over | Y- Y' |^{ 4 } } \, dY dY' \lesssim  \|v \|_{L^2}^2\,  \|u \|_{\mathcal{C}^\beta}^2.$$
   This proves \eqref{prodprelim}.
 To get \eqref{LP1}, it suffices to use that
$$ \| u v \|_{H^{3 \over 2} } \leq  \|u v \|_{H^1 }+  \|u \, \nabla_{X,z}v \|_{H^{1 \over 2}} + \|\nabla_{X,z} u \,v \|_{H^{1 \over 2}} $$
 and to apply \eqref{prodprelim} to the second and  the third term.

 To prove the second estimate, it suffices to consider the case $| \alpha | \leq | \beta|$.
  When $| \beta | \leq m-1$, we write since $H^2({\mathcal{S})}$ is an algebra   the crude estimate 
 \begin{eqnarray*}
 \| D^\alpha u  D^\beta v \|_{H^{3\over 2}} & \lesssim &  \| D^\alpha u  D^\beta v \|_{H^2}
  \lesssim  \| D^\alpha u \|_{H^2}\, \|  D^\beta v \|_{H^2} \\ &  \lesssim &   \| \langle D \rangle^m u \|_{H^1} \,
   \| \langle D \rangle^m v \|_{H^1}
    \lesssim   \| \langle D \rangle^m u \|_{H^{3 \over 2 }} \,
   \| \langle D \rangle^m v \|_{H^{3\over 2}} .
   \end{eqnarray*}
   When   $|\beta| =m$ and thus $|\alpha |=0$, we use \eqref{LP1} to get
   $$ \| u D^\beta v \|_{H^{3 \over 2}} \lesssim \|u \|_{\mathcal{C}^\sigma} \, \| \langle D \rangle^m v \|_{H^{3\over2}}
    \lesssim \| \langle D \rangle^m u \|_{H^{3 \over 2}} \| \langle D \rangle^m v \|_{H^{3\over2}}.$$
    Note that the last estimate is a consequence of the Sobolev embedding and the fact that
     $m \geq 2$.
   This ends the proof of Lemma \ref{LP}.

  \end{proof}

  Let us come back to the estimate \eqref{frech1} of $\tilde{J}_{1}$.
   By using \eqref{LP2}, we get 
 \begin{multline}\label{gH32}
 \|  \partial^\alpha 
(g(X,z)\nabla_{X,z}(D_{\eta}\psi^u\cdot h))\|_{H^{3\over 2 }(\mathcal{S})} 
\\
\leq C\big( \|\langle D \rangle^m g_{1} \|_{H^{3 \over 2 } (\mathcal{S}) } +
\| g_{2} \|_{\mathcal{W}^{m+2 } (\mathcal{S}) } \big)  \, \| \langle D \rangle^m 
\nabla_{X,z}(D_{\eta}\psi^u\cdot h))\|_{H^{3\over 2 }(\mathcal{S})}
 \end{multline} 
and thus by using Lemma \ref{metricest}, we find
$$  
\tilde{J}_1 \leq \underline{\omega}\,  \| \langle D \rangle^m 
\nabla_{X,z}(D_{\eta}\psi^u\cdot h))\|_{H^{3\over 2 }(\mathcal{S})}
\|v\|_{L^2(\R^2)}\,.$$
Next, as already observed, we have
$$ P  (D_{\eta}\psi^u\cdot h) = - \nabla_{X,z}\cdot \big( D_{\eta}g \cdot h \, \nabla_{X,z} \psi^u\big):=
 \nabla_{X,z}\cdot  H. $$
From Corollary \ref{usful}, we have in particular
$$  \|\langle D \rangle^m \big( D_{\eta}\psi^u\cdot h\big) \|_{H^2(\mathcal{S})} \leq
\omega\big( \| \langle D \rangle^m  \eta \|_{{H}^{3\over 2 }(\R^2)} + \|\eta_{0}\|_{\mathcal{W}^{m+2}(\R^2)}
\big) \| \langle D \rangle^m H\|_{H^1(\mathcal{S}) }$$
and  $$   \|\langle D \rangle^m \big( D_{\eta}\psi^u\cdot h\big) \|_{H^3(\mathcal{S})} \leq
\omega\big( \|\langle D \rangle^m  \eta \|_{{H}^{5 \over 2 }(\R^2)} + \|\eta_{0}\|_{\mathcal{W}^{m+3}(\R^2)}
\big) \| \langle D \rangle^m H\|_{H^2(\mathcal{S}) }.$$
Consequently, we can interpolate between the two estimates to get
\beq
\label{interpolest}  \|\langle D \rangle^m \big( D_{\eta}\psi^u\cdot h\big) \|_{H^{5 \over 2}(\mathcal{S})} \leq
\underline{\omega} \, \| \langle D \rangle^m H\|_{H^{3\over 2} (\mathcal{S}) }.\eeq
Therefore, we infer
$$ \tilde{J}_1 \leq \underline{\omega}\,  \| \langle D \rangle^m 
(D_{\eta}\psi^u\cdot h))\|_{H^{5\over 2 }(\mathcal{S})}
\|v\|_{L^{2}(\R^2)} \leq \underline{\omega} \, \| \langle D \rangle^m \big( D_{\eta}g \cdot h \, \nabla_{X,z} \psi^u  \big)\|_{H^{3 \over 2}
(\mathcal{S})} \,\|v\|_{L^2(\R^2)}.$$
By using \eqref{LP2} in Lemma \ref{LP},  we obtain
$$ \| \langle D \rangle^m \big( D_{\eta}g \cdot h \, \nabla_{X,z} \psi^u  \big)\|_{H^{3 \over 2}
(\mathcal{S})} \leq \| \langle D \rangle^m \big(   D_{\eta}g \cdot h \big) \|_{H^{3\over 2}(\mathcal{S})}  \,  \| \langle D \rangle^m  \nabla_{X,z} \psi^u \|_{H^{3\over 2}(\mathcal{S})}.$$
Since $D_{\eta }g \cdot h $ has roughly the regularity of $ \nabla_{X,z} \theta_{1}(h)$
(by using the definition of $g$ and Remark~\ref{remtheta}), we find
$$ \| \langle D \rangle^m \big(   D_{\eta}g \cdot h \big) \|_{H^{3\over 2}(\mathcal{S})} \leq \underline{\omega} \,
\| \langle D \rangle^m \theta_{1}(h) \|_{H^{ 5\over 2}(\mathcal{S})} \leq \underline{\omega}\,
 \| \langle \partial \rangle^m  h  \|_{H^2(\mathbb{R}^2)} 
    \leq \underline{\omega} \,
 \| \langle \partial \rangle^{m+ 1 }  h  \|_{H^1(\mathbb{R}^2)} 
$$
 where  the intermediate estimate comes from a  new application of Lemma~\ref{harmonic}.
 Finally, from Lemma \ref{lema_psi},  we get
 $$  \| \langle D \rangle^m \nabla_{X,z} \psi^u \|_{H^1(\mathcal{S})} \leq \underline{\omega} \,  \| \langle \pa \rangle^m
  u \|_{H^{3\over 2 }(\R^2)}, \quad  \| \langle D \rangle^m \nabla_{X,z} \psi^u \|_{H^2
  (\mathcal{S})} \leq \underline{\omega} \,  \| \langle \pa \rangle^m
  u \|_{H^{5\over 2}(\R^2)}.
  $$
  Consequently, we can interpolate between the two estimates to get
\beq
 \| \langle D \rangle^m \nabla_{X,z} \psi^u \|_{H^{3\over 2 }(\mathcal{S})} \leq \underline{\omega}  \, \| \langle \pa \rangle^m
  u \|_{H^{ 2 }(\R^2)}.\eeq
  We thus obtain that
$$ \tilde{J}_{1} \leq \underline{\omega}  \,  \| \langle \partial \rangle^{m+ 1 }  h  \|_{H^1(\mathbb{R}^2)} 
\| \langle \pa \rangle^m
  u \|_{H^{ 2 }(\R^2)} \, \|v \|_{L^2(\R^2) }.$$
  By the same kind of argument, we obtain a similar estimate for $\tilde{J}_{2}$ and therefore, 
   we get
$$ 
\| \partial^\alpha \big( G[\eta + \eta_{0}] u \big) \|_{H^1(\R^2)}
    \leq     \underline{\omega}   \,  \| \langle \partial \rangle^{m+ 1 }  h  \|_{H^1(\mathbb{R}^2)} 
\| \langle \pa \rangle^m
  u \|_{H^{ 2 }(\R^2)}.
$$

This completes the proof of Proposition~\ref{pak}.
\end{proof}
We shall also need to estimate the Dirichlet-Neumann operator in the
case when it acts on a  smooth function  that does not belong to a Sobolev space. 
\begin{prop}\label{pak_bis}
For every $m\geq 0$ and $\mu \in (0, 1)$, we have the estimates  
\begin{equation}\label{maximum}
\|\langle \partial \rangle^m  G[\eta+\eta_0]u\|_{\mathcal{C}^{1+ \mu}(\R^2) }\leq 
\omega\big(\|\langle\partial\rangle^{m+3} \eta\|_{H^1}+\| \langle \pa \rangle^m \eta_0\|_{{\mathcal C}^{2+\mu}}\big)
\| \langle \pa \rangle^m u\|_{{\mathcal C}^{2+\mu }}\,
\end{equation}
and
\begin{multline}\label{maximum_pak}
\|\langle \partial \rangle^m  \big(D_{\eta}^nG[ \eta + \eta_0]u\cdot \big(h_{1}, \cdots, h_{n}\big) \big) \|_{\mathcal{C}^{1+ \mu } }\leq 
\omega\big(  \|\langle\partial\rangle^{m+3} \eta\|_{H^1} +      \| \langle \pa \rangle^m \eta_0\|_{{\mathcal C}^{2+\mu }}\big) \\
\Big(
\| \langle \pa \rangle^m h_{1}\|_{{\mathcal C}^{2 + \mu}}\cdots \| \langle \pa \rangle^m h_{n}\|_{{\mathcal C}^{2 + \mu}} \Big)
\| \langle \pa \rangle^m u \|_{{\mathcal C}^{2+\mu}}\,.
\end{multline}
Moreover, for $n>l\geq 0$,
\begin{multline}\label{maximum_tris}
\|\langle \partial \rangle^m 
\big(
D_{\eta}^n G[\eta+\eta_0]u\cdot(h_1,\cdots,h_n)\big)\|_{H^1}\leq \omega\big(\|\langle\partial\rangle^{m+3} \eta\|_{H^1}+
\|\eta_0\|_{{\mathcal W}^{m+ 3}}\big)
\\
\times\Big(\prod_{j=1}^l\| h_j\|_{{\mathcal W}^{m+3}}\Big)
\Big(\prod_{j=l+1}^n\|\langle \partial\rangle^{m+1} h_j\|_{H^{1}}\Big)\|u\|_{{\mathcal W}^{m+3}}
\end{multline}
(the first product is considered $1$ in the case $l=0$).
\end{prop}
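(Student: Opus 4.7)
The strategy parallels the proof of Proposition~\ref{pak}: we use the same flattening diffeomorphism $\theta$ from Lemma~\ref{harmonic} to transport the problem to the strip $\mathcal{S}$, where $\psi^u(X,z)=\phi^u(X,\theta(X,z))$ solves the elliptic equation $P\psi^u=0$ with $\psi^u(\cdot,0)=u$ and $\partial_z\psi^u(\cdot,-1)=0$. The Dirichlet-Neumann operator is then read off from $\nabla_{X,z}\psi^u$ at $z=0$, so the $\mathcal{C}^{1+\mu}$ estimate \eqref{maximum} is reduced, via the trace theorem, to obtaining $\mathcal{C}^{2+\mu}$ control of $\psi^u$ near $z=0$ (with time derivatives included). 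The crucial difference with Proposition~\ref{pak} is that $u$ is no longer Sobolev, so we must replace Sobolev elliptic regularity by Schauder regularity.

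The first step is a Hölder analog of Corollary~\ref{usful}: for the divergence-form equation $Pw=\nabla_{X,z}\cdot F$ on $\mathcal{S}$ with mixed Dirichlet/Neumann boundary conditions, the standard Schauder theory (see e.g.\ Gilbarg-Trudinger) gives
\[
\|\langle D\rangle^m w\|_{\mathcal{C}^{2+\mu}(\mathcal{S})}\leq \omega\bigl(\|\langle D\rangle^m g_1\|_{\mathcal{C}^{1+\mu}(\mathcal{S})}+\|g_2\|_{\mathcal{W}^{m+1}(\mathcal{S})}\bigr)\,\|\langle D\rangle^m F\|_{\mathcal{C}^{1+\mu}(\mathcal{S})},
\]
once we know the matrix $g$ is uniformly elliptic (which follows from \eqref{nondegen}). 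The $\mathcal{C}^{1+\mu}$ regularity of $g_1$ is controlled by $\|\langle\pa\rangle^{m+3}\eta\|_{H^1}$ through Lemma~\ref{metricest} together with the Sobolev embedding $H^2(\R^2)\hookrightarrow \mathcal{C}^\mu(\R^2)$ (plus one derivative), which is the reason why we lose three derivatives on $\eta$ in Sobolev scale. One then splits $\psi^u=u^H+u^r$ with $u^H$ the Poisson extension on the flat strip; the Fourier representation \eqref{uHhat} (with $k=0$) together with the maximum principle gives
\[
\|\langle D\rangle^m u^H\|_{\mathcal{C}^{2+\mu}(\mathcal{S})}\lesssim \|\langle\pa\rangle^m u\|_{\mathcal{C}^{2+\mu}},
\]
and $u^r$ solves $Pu^r=-Pu^H$ with homogeneous boundary data, so the Schauder estimate above applies and yields the same bound for $\psi^u$. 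Differentiating the equation in $\eta$ as in the proof of Proposition~\ref{pak}, the source terms for $D_\eta^n\psi^u\cdot(h_1,\dots,h_n)$ involve products of $D_\eta^j g$ (controlled by $\|\langle\pa\rangle^m h_j\|_{\mathcal{C}^{2+\mu}}$ thanks to Remark~\ref{remtheta}) and lower-order Frechet derivatives of $\psi^u$; an induction gives \eqref{maximum_pak}.

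The main obstacle is \eqref{maximum_tris}, where the target space is Sobolev $H^1$ but the ``base" function $u$ is only in $\mathcal{W}^{m+3}$ (and at least one $h_j$, $j\leq l$, is also only in $\mathcal{W}^{m+3}$, while the remaining $h_j$, $j>l$, are in $\langle\pa\rangle^{m+1}H^1$). The point is that in all the source terms arising from differentiating $P(D_\eta^n\psi^u\cdot(h_1,\dots,h_n))$, there is \emph{at least one} factor $h_{j_0}$ with $j_0>l$ that carries $H^1$ Sobolev regularity, and this factor can always be chosen to be the one on which one places the $L^2$-type norm while all other factors (including $\psi^u$ itself, via the $u^H+u^r$ decomposition and the $\mathcal{C}^{2+\mu}$ bound above) are placed in $L^\infty$. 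Concretely, by duality against $v\in H^{1/2}(\R^2)$ extended through \eqref{bf_v}, we reduce to bounding
\[
\int_{\mathcal{S}}\Lambda\,\pa^\alpha\bigl(g\nabla_{X,z}(D_\eta^n\psi^u\cdot(h_1,\dots,h_n))\bigr)\cdot \nabla_{X,z}\Lambda^{-1/2}{\bf v}\,dXdz
\]
(plus analogous terms containing $D_\eta^j g\cdot h_{j_0}$). Lemma~\ref{LP} and the Cauchy-Schwarz inequality reduce matters to an $H^{3/2}(\mathcal{S})$ bound on the interior quantity. The $\mathcal{C}^{2+\mu}$ estimates established in the first two paragraphs handle all the $\mathcal{W}^{m+3}$-factors in $L^\infty$, while Lemma~\ref{lema_psi} applied to the interior problem solved by $D_\eta^n\psi^u\cdot(h_1,\dots,h_n)$ (with Sobolev data coming only from the distinguished factor $h_{j_0}$) controls the remaining $L^2$-type norm. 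The bookkeeping is identical in spirit to the end of Proposition~\ref{pak}, but one has to keep track separately of ``Hölder slots'' and ``Sobolev slots''; this is the technically subtle step alluded to in the introduction.
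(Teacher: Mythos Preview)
Your sketch is correct and follows essentially the same approach as the paper: Schauder regularity on the flattened strip (the paper's Lemma~\ref{lemschaud}), the splitting $\psi^u=u^H+u^r$ (the paper's Lemma~\ref{lema_psi_bis}), and for \eqref{maximum_tris} the duality argument combined with Lemma~\ref{LP} and the interpolated elliptic bound \eqref{interpolest}, placing $\psi^u$ in H\"older (via Lemma~\ref{lema_psi_bis}) and the distinguished $h_{j_0}$ in Sobolev. Two minor points of presentation: the H\"older bound on $u^H$ comes from Fourier multiplier mapping properties on H\"older spaces rather than the maximum principle (the latter is used only for existence of $\psi^u$ in $L^\infty$), and the paper states the Schauder lemma for a non-divergence source $Pu=F$ with $F\in\mathcal{C}^\mu$ rather than your divergence form, but these are equivalent here.
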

This proposition will be used when basically $(\eta_{0}, u)$ is the solitary wave $ (\eta_{\eps}, \varphi_{\eps})$
and thus the way the estimates depend on the regularity of these functions is not very important
 for our purpose.  The important fact   is again that the estimates involve at
  most  the norm $\| \langle \pa \rangle^{m+ 3 } \eta  \|_{H^1}$.
   The estimate \eqref{maximum_tris} will be very useful to estimate terms like
    $$ \big(G[\eta+ \eta_{\eps}] - G [\eta_{\eps}]\big) \varphi_{\eps}
     = \int_{0}^1 D_{\eta} G [\eta + s\eta_{\eps}] \varphi_{\eps} \cdot \eta\, ds$$
     since it gives an $H^1$ estimate of this term as soon as $\eta$ is in some Sobolev space. 
\begin{proof}[Proof of Proposition \ref{pak_bis}]
For $u\in L^\infty$, we define by $\psi^u$ the (well-defined)
solution of the elliptic boundary value problem
\begin{equation}\label{metric_tris}
{\rm div}_{X,z}(g(X,z)\nabla_{X,z}\psi(X,z))=0, \quad (X, z)\in \mathcal{S},\qquad
\partial_{z}\psi(X,-1)=0,\quad \psi(X,0)=u(X)\,.
\end{equation}
The existence of (weak) solutions of (\ref{metric_tris}) can be obtained by
using the $L^\infty$ a-priori bound coming from the maximum  principle (see
\cite{GT}, Chapters~2-6). Observe that thanks to the homogeneous boundary
condition on $z=-1$ the maximum of $\psi$  is necessarily  reached on the boundary $z=0$.
One may also obtain the well-posedness of (\ref{metric_tris}), by
Sobolev type arguments. Namely, one may approach the problem on   $\mathcal{S}$ by 
problems on compact 
domains where the  maximum principle holds, get solutions on these domains by the Sobolev theory  
and then pass to the limit by using the uniform $L^\infty$ estimate. 

The next step is to obtain  regularity estimates  for
$\psi^u$. This will be  a consequence of the following elliptic regularity result:
\begin{lem}
\label{lemschaud}
For $m \geq 0$ and $\mu \in (0, 1)$, the $L^\infty$ solution of 
$$P u = F, \quad (X,z) \in \mathcal{S}, \quad u(X,0)=0, \,  \partial_{z}u (X,-1)= 0$$
 satisfies the estimate
$$ \| \langle D \rangle^m  u \|_{\mathcal{C}^{2 + \mu  }(\mathcal{S})}
 \leq \omega \big(  \|\langle\partial\rangle^{m+3} \eta\|_{H^1(\mathbb{R}^2)}
  + \|\langle \pa \rangle^m \eta_0\|_{{\mathcal C}^{ 2 +
\mu }(\R^2) }\big)
\| \langle D  \rangle^m F\|_{{\mathcal C}^{\mu}(\mathcal{S})}\,.$$
\end{lem}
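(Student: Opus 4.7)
The plan is to combine the classical Schauder estimate for divergence-form elliptic equations with a maximum-principle $L^\infty$ bound, and then propagate to higher $m$ by an induction on tangential derivatives in exactly the spirit of Lemma~\ref{ell-reg}. First I would translate the hypothesis on $\eta$ into a pointwise Hölder bound: Sobolev embedding in $\mathbb{R}^2$ gives $H^{m+4}(\mathbb{R}^2)\hookrightarrow \mathcal{C}^{m+2+\mu'}(\mathbb{R}^2)$ for some $\mu'>\mu$, so $\|\langle\partial\rangle^{m+3}\eta\|_{H^1}$ controls $\|\langle\partial\rangle^{m+1}\eta\|_{\mathcal{C}^{2+\mu}}$. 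Combined with the pointwise hypothesis on $\eta_0$ and the boundary Schauder estimate for the harmonic extension $\tilde\theta_1$, this gives uniform bounds $\|\langle D\rangle^m\theta\|_{\mathcal{C}^{2+\mu}(\mathcal{S})}\leq \omega(\|\langle\partial\rangle^{m+3}\eta\|_{H^1}+\|\langle\partial\rangle^m\eta_0\|_{\mathcal{C}^{2+\mu}})$, and consequently, via the explicit formula \eqref{g_def}, the coefficient bound
\[
 \|\langle D\rangle^m g\|_{\mathcal{C}^\mu(\mathcal{S})}\leq \omega\bigl(\|\langle\partial\rangle^{m+3}\eta\|_{H^1}+\|\langle\partial\rangle^m\eta_0\|_{\mathcal{C}^{2+\mu}}\bigr),
\]
with $g$ still uniformly positive definite thanks to \eqref{nondegen}.

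For the base case $m=0$, I would apply the standard interior and boundary Schauder estimates (see e.g.\ Gilbarg--Trudinger, Ch.~6) for the divergence-form equation $Pu=F$. At the upper boundary $z=0$ this is the classical Dirichlet case; at $z=-1$, since $\partial_x\theta(X,-1)=\partial_y\theta(X,-1)=0$ (Remark~\ref{IPP}), $g$ is diagonal there and the conormal condition reduces to the genuine Neumann condition $\partial_z u(X,-1)=0$, which is also covered by the classical theory. Uniformity in $X\in\mathbb{R}^2$ is obtained by covering the strip with unit cylinders and using the translation-invariance of the coefficient norm. The maximum principle furnishes $\|u\|_{L^\infty(\mathcal{S})}\leq C\|F\|_{L^\infty(\mathcal{S})}$ (since $u$ vanishes on top and the flux through the bottom is zero), so the Schauder estimate produces
\[
 \|u\|_{\mathcal{C}^{2+\mu}(\mathcal{S})}\leq \omega(\|g\|_{\mathcal{C}^\mu})\bigl(\|F\|_{\mathcal{C}^\mu(\mathcal{S})}+\|u\|_{L^\infty(\mathcal{S})}\bigr)\leq \omega(\dots)\,\|F\|_{\mathcal{C}^\mu(\mathcal{S})}.
\]

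For $m\geq 1$ I would induct on the number of tangential derivatives, as in Lemma~\ref{ell-reg}. If $\alpha=(\alpha_0,\alpha_1,\alpha_2,0)$ with $|\alpha|\leq m$, applying $D^\alpha$ to $Pu=F$ yields $P(D^\alpha u)=D^\alpha F-\nabla_{X,z}\cdot([D^\alpha,g]\nabla_{X,z}u)$ with $D^\alpha u$ still satisfying the homogeneous mixed boundary conditions. The commutator expands into products of derivatives of $g$ (of order $\leq m$, hence in $\mathcal{C}^\mu$) with derivatives of $u$ of strictly lower order in the tangential direction, all controlled in $\mathcal{C}^{1+\mu}(\mathcal{S})$ by the inductive hypothesis; thus its divergence is controlled in $\mathcal{C}^\mu$ by the claimed right-hand side. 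Applying the base-case Schauder estimate to this equation for $D^\alpha u$ then closes the induction on tangential derivatives. The remaining normal derivatives $\partial_z^j u$ with $j\geq 1$ are recovered algebraically from the equation: writing $Pu=F$ as $g_{33}\partial_z^2 u=F-(\text{terms with at most one }\partial_z)$ and using $g_{33}\geq\kappa>0$ lets us trade $\partial_z^{j+1}$ for one $\partial_X$ or $\partial_t$ at each step, reducing to the already-controlled tangential derivatives.

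The main technical obstacle is keeping the Schauder estimate uniform in $X\in\mathbb{R}^2$ and handling both boundary types at once in divergence form with the non-diagonal coefficient matrix $g$; this is overcome by localizing (standard cylinders since $g$ is $\mathcal{C}^\mu$-bounded uniformly) and by exploiting that $g$ becomes diagonal at $z=-1$, so that each local boundary problem is a textbook Dirichlet or pure-Neumann model. A secondary but purely bookkeeping issue is that our notation $\langle D\rangle^m$ mixes space and time derivatives, but since the operator $P$ depends on $t$ only parametrically, time derivatives behave exactly like tangential space derivatives in the induction.
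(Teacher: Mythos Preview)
Your proposal is correct and follows essentially the same strategy as the paper: translate the hypothesis on $\eta$ into a H\"older bound on the metric $g$ via Sobolev embedding, invoke the classical Schauder estimate for the base case, and propagate to higher $m$ by commuting tangential derivatives through $P$. The paper streamlines the argument slightly by quoting the higher-order Schauder estimate $\|u\|_{\mathcal{C}^{2+k+\mu}}\leq \omega(\|g\|_{\mathcal{C}^{1+k+\mu}})\|F\|_{\mathcal{C}^{k+\mu}}$ directly (so only an induction on \emph{time} derivatives remains), and it lifts $\eta$ to the strip in Sobolev via Lemma~\ref{harmonic} before embedding in $3$D, rather than embedding in $2$D first and then applying Schauder for the harmonic extension as you do; both routes give the same bound on $\|\langle D\rangle^m\theta\|_{\mathcal{C}^{2+\mu}}$. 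One small slip: your stated coefficient bound $\|\langle D\rangle^m g\|_{\mathcal{C}^\mu}$ is weaker than what you actually have and need---since $g$ is built from first derivatives of $\theta$ and you control $\|\langle D\rangle^m\theta\|_{\mathcal{C}^{2+\mu}}$, you in fact get $\|\langle D\rangle^m g\|_{\mathcal{C}^{1+\mu}}$, which is exactly what is required to write $P$ in non-divergence form and to close the commutator estimate in your induction step.
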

\begin{proof}
When no spatial derivatives are involved, we have the following classical Schauder
 elliptic regularity result for $u$ (we refer to \cite{GT} for example):
$$
\|   u \|_{\mathcal{C}^{2 + k+  \mu  }(\mathcal{S})}
 \leq \omega \big(  \|g \|_{{\mathcal C}^{  1 + k  +
\mu }(\mathcal{S})) }\big)
\|   F\|_{{\mathcal C}^{ k + \mu }(\mathcal{S})}\,
$$
for every integer $k$.  By an induction on the number of time  derivatives involved,
 we easily deduce from this estimate that 
\beq
\label{schaudspace}
\|    \langle D \rangle^m u \|_{\mathcal{C}^{2 +  \mu  }(\mathcal{S})}
 \leq \omega \big(  \| \langle D \rangle^m g \|_{{\mathcal C}^{  1  +
\mu }(\mathcal{S})) }\big)
\|   \langle D \rangle^m  F\|_{{\mathcal C}^{  \mu }(\mathcal{S})}.
\eeq
To conclude, we first notice from the definition of $g$ that
\beq
\label{ginfty}   \| \langle D \rangle^m g \|_{{\mathcal C}^{  1  +
\mu }(\mathcal{S})) }  \leq \omega\big( \| \langle D \rangle^m \theta_{1} \|_{{\mathcal C}^{  2  +
\mu }(\mathcal{S})) } +  \| \langle D \rangle^m \theta_{2} \|_{{\mathcal C}^{  2  +
\mu }(\mathcal{S})) }\big). \eeq
From the explicit expression of $\theta_{2}$, we obviously have
\beq
\label{theta1infty}  \| \langle D \rangle^m \theta_{2} \|_{{\mathcal C}^{  2  +
\mu }(\mathcal{S}) } \leq  C
\|\langle \pa \rangle^m \eta_0\|_{{\mathcal C}^{ 2 +
\mu }(\R^2) }\eeq
 and moreover,  by Sobolev embedding and Lemma \ref{harmonic}, we have for every  $s>3/2$
\beq
\label{theta2infty}   \| \langle D \rangle^m \theta_{1} \|_{{\mathcal C}^{  2  +
\mu }(\mathcal{S}) }  \leq  C  \| \langle D \rangle^m \theta_{1} \|_{{H}^{  2  + s + 
\mu }(\mathcal{S}) }  \leq  C   \| \langle \pa  \rangle^m \eta  \|_{{H}^{  2  + s + 
\mu - {1 \over 2 } }(\R^2) } \leq   C   \| \langle \pa  \rangle^m \eta  \|_{{H}^{   4  }(\R^2) }\eeq
 since one  can always choose $s$ sufficiently close to $3/2$
   to have $ 2 + \mu +  s - 1/2 <4$. This ends the proof of Lemma \ref{lemschaud}.

\end{proof}

We can now  estimate $\psi^u$.
\begin{lem}\label{lema_psi_bis}
For every $m \geq 0$ and $\mu \in (0, 1)$, we have the estimate 
\begin{equation*}
\|\langle D \rangle^m  \psi^u\|_{\mathcal{C}^{2 + \mu }(\mathcal{S})}
\leq
\omega\big(\|\langle\partial\rangle^{m+3} \eta\|_{H^1(\mathbb{R}^2)}+\|\langle \pa \rangle^m \eta_0\|_{{\mathcal C}^{ 2 +
\mu }(\R^2) }\big)
\| \langle \pa \rangle^m u\|_{{\mathcal C}^{2+\mu}(\R^2)}\,.
\end{equation*}
\end{lem}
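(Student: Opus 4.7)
The plan is to mimic the Sobolev argument of Lemma~\ref{lema_psi}, replacing the $H^s$ elliptic bound of Corollary~\ref{usful} by the Schauder bound of Lemma~\ref{lemschaud}. More precisely, I would split
\[
\psi^u = u^H + u^r,
\]
where $u^H$ is the harmonic extension defined by the Fourier formula \eqref{fourier} (so $u^H$ solves $\Delta_{X,z}u^H=0$ on $\mathcal{S}$ with the prescribed boundary data $u^H(X,0)=u$, $\partial_z u^H(X,-1)=0$), and $u^r$ solves $P(u^r) = -P(u^H)$ on $\mathcal{S}$ with homogeneous boundary conditions $u^r(X,0)=0$, $\partial_z u^r(X,-1)=0$.

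For $u^H$, the first step is to establish
\[
\|\langle D\rangle^m u^H\|_{\mathcal{C}^{2+\mu}(\mathcal{S})} \leq C\,\|\langle\partial\rangle^m u\|_{\mathcal{C}^{2+\mu}(\mathbb{R}^2)},
\]
which is a standard maximum-principle / Poisson-kernel estimate for harmonic extensions in a flat strip (and the horizontal and time derivatives commute with the Fourier representation~\eqref{fourier}). The normal derivatives can be recovered from tangential ones via the equation $\partial_z^2 u^H = -(\partial_x^2+\partial_y^2)u^H$, which preserves the $\mathcal{C}^{\mu}$ norm.

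For $u^r$, since $u^H$ is harmonic we can rewrite
\[
P(u^H) = \nabla_{X,z}\cdot\bigl((g-I)\nabla_{X,z}u^H\bigr),
\]
so that $u^r$ solves an elliptic equation of the form treated in Lemma~\ref{lemschaud}. Applying that lemma and classical Hölder product estimates yields
\[
\|\langle D\rangle^m u^r\|_{\mathcal{C}^{2+\mu}(\mathcal{S})}
\leq \omega\bigl(\|\langle D\rangle^m (g-I)\|_{\mathcal{C}^{1+\mu}(\mathcal{S})}\bigr)\,
\|\langle D\rangle^m u^H\|_{\mathcal{C}^{2+\mu}(\mathcal{S})}.
\]
Here $g-I$ is built from the derivatives of $\theta-z = \theta_1(\eta) + (\theta_2(\eta_0)-z)$, and the required Hölder control splits accordingly: the $\theta_2$-piece is directly bounded by $\|\langle\partial\rangle^m\eta_0\|_{\mathcal{C}^{2+\mu}}$ using the explicit formula $\theta_2(X,z)=\eta_0(X)+(1+\eta_0(X))z$, while the $\theta_1$-piece is controlled, as in \eqref{theta2infty}, by a Sobolev embedding $H^{s}(\mathcal{S})\hookrightarrow \mathcal{C}^{2+\mu}(\mathcal{S})$ with $s$ slightly larger than $2+\mu+3/2$, combined with the $H^{s-1/2}$ estimate of Lemma~\ref{harmonic}; since $2+\mu+3/2-1/2 < 4$, this gives $\|\langle D\rangle^m\theta_1\|_{\mathcal{C}^{2+\mu}(\mathcal{S})}\lesssim \|\langle\partial\rangle^{m+3}\eta\|_{H^1}$. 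Combining the two displays closes the proof.

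The only genuinely delicate point is the Hölder product/commutator arithmetic needed for the estimate on $P(u^H)$: one must argue that each factor in $\nabla_{X,z}\cdot((g-I)\nabla_{X,z}u^H)$ lives in $\mathcal{C}^{\mu}$, which forces $(g-I)$ to be controlled in $\mathcal{C}^{1+\mu}$ and, after distributing $\langle D\rangle^m$, to keep track of all intermediate products. This is standard but tedious, and it is the reason why the $H^1$-norm of $\langle\partial\rangle^{m+3}\eta$ appears rather than a Hölder norm of fewer derivatives—the extra three derivatives are precisely those lost in passing through $H^{s}(\mathcal{S})\hookrightarrow\mathcal{C}^{2+\mu}(\mathcal{S})$ via Sobolev embedding on a two-dimensional base.
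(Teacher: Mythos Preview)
Your proof is correct and follows essentially the same route as the paper: the same splitting $\psi^u=u^H+u^r$, the H\"older bound on $u^H$ via Fourier multiplier theory, the application of Lemma~\ref{lemschaud} to $u^r$, and the control of the metric via the estimates \eqref{ginfty}--\eqref{theta2infty}. The only cosmetic difference is that you write $P(u^H)=\nabla_{X,z}\cdot((g-I)\nabla_{X,z}u^H)$ using the harmonicity of $u^H$, whereas the paper simply bounds $\|\langle D\rangle^m Pu^H\|_{\mathcal{C}^\mu}$ directly in terms of $\|\langle D\rangle^m g\|_{\mathcal{C}^{1+\mu}}$ and $\|\langle D\rangle^m u^H\|_{\mathcal{C}^{2+\mu}}$; this amounts to the same estimate.
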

\begin{proof}
Again we consider the splitting $\psi^u=u^H+u^r$, where $u^H$ is defined by
(\ref{fourier}).  By standard properties of Fourier multipliers in H\"older spaces, we get
\beq
\label{uHinfty}
\|\langle D \rangle^m   u^H \|_{\mathcal{C}^s(\mathcal{S})}\leq
\| \langle \pa \rangle^m u\|_{{\mathcal C}^{s}(\R^2)}\,
\eeq
for every $s \geq 0 $ which is not an integer. 
Next,  since  $u^r$ solves the elliptic equation $Pu^r = -Pu^H$ with homogeneous
boundary conditions, we get by using Lemma \ref{lemschaud} that 
$$
\|\langle D \rangle^{m}   u^r \|_{\mathcal{C}^{ 2 + \mu }(\mathcal{S})} \leq 
\omega\big(\|\langle\partial\rangle^{m+3} \eta\|_{H^1(\mathbb{R}^2)}+\|\langle \pa \rangle^m \eta_0\|_{{\mathcal C}^{ 2 +
\mu }(\R^2) }\big)
\| \langle D \rangle^m P u^H \|_{{\mathcal C}^{\mu}(\mathcal{S})}.
$$
Furthermore,  since we have
$$  \| \langle D \rangle^m P u^H \|_{{\mathcal C}^{\mu}(\mathcal{S})}
\leq \omega \big( \| \langle D \rangle^m g \|_{\mathcal{C}^{1+ \mu } (\mathcal{S})} \big)
 \| \langle D \rangle^m u^H\|_{{\mathcal C}^{ 2 + \mu}(\mathcal{S})}, $$
  we get the  claimed estimate  by using \eqref{ginfty}, \eqref{theta1infty}, \eqref{theta2infty}
   and \eqref{uHinfty}.
\end{proof}
After these preliminaries, we can  get  (\ref{maximum}). Indeed, 
observe that in terms of $\psi^u$, the  Dirichlet-Neumann operator   reads
$$
(G[\eta+ \eta_{0}]u)(X)=\frac{1 + (\pa_x \theta(X,0) )^2 + (\pa_y\theta(X,0))^2}{\pa_z\theta(X,0)}\pa_z\psi^u(X,0)
-\nabla_{X}\theta(X,0)\cdot\nabla_{X}\psi^u(X,0)\,
$$
with 
$ \theta(X,0)= \eta + \eta_{0}.$
Consequently, we get
$$ \|  \langle \pa \rangle^m G[\eta+ \eta_{0} ] u \|_{\mathcal{C}^{1 + \mu }(\mathbb{R}^2)}  \leq \omega\big( \|
\langle \pa \rangle^m  \theta \|_{ \mathcal{C}^{2 + \mu }(\mathcal{S})} \big) \| \langle \pa \rangle^m  \psi^u \|_{\mathcal{C}^{2 + \mu}(\mathcal{S})}$$
 and hence  (\ref{maximum}) is a consequence of Lemma~\ref{lema_psi_bis}
and \eqref{theta1infty}, \eqref{theta2infty}.

The proof of (\ref{maximum_pak}) can be obtained   in the same way. 
This is left to the reader.

Let us finally give the proof of
(\ref{maximum_tris}). We shall only give the proof for $n=1$ since the
argument for $n>1$ follows by a direct induction argument and we focus on the case
that $h$ is in a Sobolev space since it is the one for which we  really need optimal regularity.
 The case that $h$ is in an H\"older space is covered by  \eqref{maximum_pak}.

Coming back to (\ref{vajno}), we obtain
\begin{eqnarray*}
(\partial^\alpha\Lambda^{\frac{3}{2}}
D_{\eta}G[\eta+\eta_0](u)\cdot h,v) 
& = & 
\int_{\mathcal{S}}\partial^\alpha
\Lambda^{\frac{3}{2}}
(g\,\nabla_{X,z}(D_{\eta}\psi^u\cdot h))\cdot\nabla_{X,z}{\bf v}dXdz
\\
& &+
\int_{\mathcal{S}}\partial^\alpha\Lambda^{\frac{3}{2}}
((D_{\eta}g\cdot h)\nabla_{X,z}\psi^u)\cdot\nabla_{X,z}{\bf v}\,dXdz
\\ 
& \equiv & J_1+J_2,
\end{eqnarray*}
where ${\bf v}$ is defined by (\ref{bf_v}) with $v\in H^{1/2}$.
Using the Cauchy-Schwarz inequality, we get
$$
J_1\leq C\|\langle\partial\rangle^m\Lambda^{\frac{3}{2}}(g\,\nabla_{X,z}(D_{\eta}\psi^u\cdot h))\|_{L^2}\|v\|_{H^{\frac{1}{2}}}\,.
$$
Next (see (\ref{gH32})), we can write
$$
\|\langle\partial\rangle^m\Lambda^{\frac{3}{2}}(g\,\nabla_{X,z}(D_{\eta}\psi^u\cdot
h))\|_{L^2}\leq\omega\big( \|\langle \pa \rangle^m \eta \|_{H^2} + \| \eta_{0} \|_{\mathcal{W}^{m+3 }}  \big)
\|\langle\partial\rangle^m\Lambda^{\frac{3}{2}}(\nabla_{X,z}(D_{\eta}\psi^u\cdot h))\|_{L^2}\,.
$$
Recall that $D_{\eta}\psi^u\cdot h$ solves the problem
$$
P(D_{\eta}\psi^u\cdot h)=-{\rm div}(D_{\eta}g\cdot h\,\,\nabla_{X,z}\psi^u)
$$
on $\mathcal{S}$ with homogeneous boundary conditions. By using  \eqref{interpolest}, we
infer 
$$
\|\langle\partial\rangle^m\Lambda^{\frac{3}{2}}\nabla_{X,z}(D_{\eta}\psi^u\cdot
h)\|_{L^2(\mathcal{S})}\leq  \omega(  \big( \|\langle \pa \rangle^m \eta \|_{H^{5\over 2 }} + \| \eta_{0} \|_{\mathcal{W}^{m+3 }}  \big)
\|\langle D \rangle^m\Lambda^{\frac{3}{2}} \big( D_{\eta}g\cdot h\, \nabla_{X,z}\psi^u \big)\|_{L^2(\mathcal{S})}\,.
$$
Using Lemma~\ref{LP}, we obtain 
$$
\|\langle D \rangle^m\Lambda^{\frac{3}{2}}\big((D_{\eta}g\cdot
h \,\nabla_{X,z}\psi^u \big)\|_{L^2(\mathcal{S})}
\leq
C\|\langle D \rangle^m \nabla_{X,z}\psi^u\|_{C^\sigma(\mathcal{S})}
\| \langle D \rangle^m\Lambda^{\frac{3}{2}}(D_{\eta}g\cdot
h)\|_{L^2(\mathcal{S})},
$$
provided $\sigma>3/2$. 
Coming back to the definition of $g$, thanks to  Lemma~\ref{harmonic} and Remark~\ref{remtheta}, we get
$$
\||\langle D \rangle^m\Lambda^{\frac{3}{2}}(D_{\eta}g\cdot
h)\|_{L^2(\mathcal{S})}
\leq
  \omega(  \big( \|\langle \pa \rangle^m \eta \|_{H^{5\over 2 }} + \| \eta_{0} \|_{\mathcal{W}^{m+3 }}  \big)\,\|\langle \partial\rangle^{m+1} h\|_{H^{1}(\mathbb{R}^2)}.
$$
Next, using Lemma~\ref{lema_psi_bis} with $\mu = \sigma -1$, we get
$$
\|\langle D \rangle^m \nabla_{X,z}\psi^u\|_{C^\sigma(\mathcal{S})}\leq
\omega \big( \| \langle \pa \rangle^{m+3} \eta \|_{H^1} + \|\langle \pa \rangle^m
 \eta_{0}\|_{\mathcal{W}^{m+3}}\big)  \| \langle  \pa \rangle^m u\|_{{\mathcal C}^{2 + \mu }}\,.
$$
Collecting the above bounds, we arrive at
$$
J_1\leq   \omega \big( \| \langle \pa \rangle^{m+3} \eta \|_{H^1} + \|\langle \pa \rangle^m
 \eta_{0}\|_{\mathcal{W}^{m+3}}\big)
\|\langle \partial\rangle^{m+1} h\|_{H^{1}}\| \langle \pa \rangle^m u\|_{{\mathcal C}^{2+\mu}}\|v\|_{H^{\frac{1}{2}}}\,.
$$
The estimate for $J_2$ is very similar and thus will be omitted.
This completes the proof of Proposition~\ref{pak_bis}. Note that we get a slightly better
result than stated.
\end{proof}
In our energy estimates, we shall also  use the following lemma.
\begin{lem}[see Proposition~3.4 of \cite{Alvarez-Lannes}]
\label{G}
There exists  $c>0$ such that for every $\eta\in W^{1,\infty}(\R^2)$  with 
 $ 1 - \|\eta\|_{L^\infty}\geq\delta$ for some $\delta>0$ we have 
$$
(G[\eta]v,v)\geq c(1+\|\eta\|_{W^{1,\infty}(\R^2)})^{-2}
\Big\|\frac{|\nabla|}{(1+|\nabla|)^{\frac{1}{2}}}v\Big\|_{L^2(\R^2)}^2 \, \quad \forall  v\in H^{1 \over 2}(\R^2)
$$
 and
 $$ \big(G[\eta]v, w\big) \leq \omega(\|\eta\|_{W^{1,\infty}(\R^2)}) \|v \|_{H^{1 \over 2 }(\mathbb{R}^2 )  } \, \| w \|_{H^{1 \over 2 }
 (\mathbb{R}^2)}, \, \quad \forall v, \, w \in H^{1 \over 2 }(\mathbb{R}^2).$$
\end{lem}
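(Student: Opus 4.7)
The plan is to mimic the flattening/trace argument already used in the proof of \eqref{DNm} and \eqref{DNC} of Proposition~\ref{DirNeum}, but now in two horizontal dimensions and with a careful tracking of the $W^{1,\infty}$ dependence, since this is exactly what the proposition asserts. First I would introduce the diffeomorphism $\Phi:\mathcal{S}\to\{-h<z<\eta(X)\}$, $\Phi(X,z)=(X,\sigma(X,z))$ with $\sigma(X,z)=-h+(h+\eta(X))(z+1)$, so that $\partial_z\sigma=h+\eta\geq h\delta>0$ and $\nabla_X\sigma=(z+1)\nabla_X\eta$. Setting $\psi^v(X,z)\equiv\phi^v(X,\sigma(X,z))$ where $\phi^v$ is the harmonic extension of $v$ with $\partial_z\phi^v(\cdot,-h)=0$, a direct computation (as in \eqref{beltrami}--\eqref{DNnew}) yields
\begin{equation*}
(G[\eta]v,w)=\int_{\mathcal{S}}\bigl(g^{-1}\nabla_{X,z}\psi^v\cdot\overline{\nabla_{X,z}\mathbf{w}}\bigr)(\det g)^{1/2}\,dX\,dz,
\end{equation*}
for any $H^1(\mathcal{S})$ extension $\mathbf{w}$ with $\mathbf{w}(X,0)=w(X)$, where the entries of $g^{-1}$ and $(\det g)^{1/2}$ are polynomial in $\nabla\eta$ and $(h+\eta)^{\pm 1}$. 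In particular $|g^{-1}|,(\det g)^{1/2}\leq\omega(\|\eta\|_{W^{1,\infty}})$ and, since $g$ is the inverse of $g^{-1}$, one checks explicitly that $g\geq c(1+\|\eta\|_{W^{1,\infty}})^{-2}\mathrm{Id}$.

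For the upper bound, I would apply Cauchy--Schwarz to the above identity to obtain
\begin{equation*}
|(G[\eta]v,w)|\leq\omega(\|\eta\|_{W^{1,\infty}})\|\nabla_{X,z}\psi^v\|_{L^2(\mathcal{S})}\|\nabla_{X,z}\mathbf{w}\|_{L^2(\mathcal{S})},
\end{equation*}
then choose $\mathbf{w}=w^H$ the flat harmonic extension as in \eqref{uHhat}, so that $\|\nabla w^H\|_{L^2(\mathcal{S})}\lesssim\|w\|_{H^{1/2}(\mathbb{R}^2)}$ by the 2D analogue of Lemma~\ref{uHlem}. For $\psi^v$ I would write $\psi^v=v^H+v^r$ where $v^r$ solves the flattened equation with homogeneous Dirichlet data on $z=0$ and Neumann on $z=-1$; the standard energy estimate (testing against $v^r$) gives $\|\nabla v^r\|_{L^2}\leq\omega(\|\eta\|_{W^{1,\infty}})\|\nabla v^H\|_{L^2}$ and hence $\|\nabla_{X,z}\psi^v\|_{L^2(\mathcal{S})}\leq\omega(\|\eta\|_{W^{1,\infty}})\|v\|_{H^{1/2}}$, proving the second estimate.

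For the lower bound, I would use the trace identity already exploited at the end of the proof of Proposition~\ref{DirNeum}. Picking a smooth cut-off $\chi(z)$ with $\chi(0)=1$, $\chi(-1)=0$ and setting $\Psi=\chi\psi^v$, one has $\hat\Psi(\xi,-1)=0$, so
\begin{equation*}
|\hat v(\xi)|^2=|\hat\Psi(\xi,0)|^2\leq 2\int_{-1}^0|\hat\Psi(\xi,z)|\,|\partial_z\hat\Psi(\xi,z)|\,dz.
\end{equation*}
Multiplying by $|\xi|^2/(1+|\xi|)$ and using $2ab\leq(1+|\xi|)a^2+(1+|\xi|)^{-1}b^2$ followed by integration over $\xi$ and Parseval yields
\begin{equation*}
\Bigl\|\tfrac{|\nabla|}{(1+|\nabla|)^{1/2}}v\Bigr\|_{L^2(\mathbb{R}^2)}^2\lesssim\|\nabla_{X,z}\psi^v\|_{L^2(\mathcal{S})}^2.
\end{equation*}
Combining this with the coercivity bound $g\geq c(1+\|\eta\|_{W^{1,\infty}})^{-2}\mathrm{Id}$ and the identity above with $\mathbf{w}=\psi^v$ gives
\begin{equation*}
(G[\eta]v,v)\geq c(1+\|\eta\|_{W^{1,\infty}})^{-2}\|\nabla_{X,z}\psi^v\|_{L^2(\mathcal{S})}^2\geq c'(1+\|\eta\|_{W^{1,\infty}})^{-2}\Bigl\|\tfrac{|\nabla|}{(1+|\nabla|)^{1/2}}v\Bigr\|_{L^2}^2,
\end{equation*}
which is the first claim.

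The main technical obstacle is the quantitative tracking of the $(1+\|\eta\|_{W^{1,\infty}})^{-2}$ prefactor in the coercivity $g\geq c(1+\|\eta\|_{W^{1,\infty}})^{-2}\mathrm{Id}$: one has to show that the spectral minimum of $g$ degenerates at most quadratically in $\|\nabla\eta\|_{L^\infty}$, which follows from an explicit computation of the eigenvalues of the $3\times 3$ symmetric matrix $g$ given in terms of $\nabla_X\sigma$ and $\partial_z\sigma$ but requires a careful algebraic verification. Everything else is a routine adaptation of the one-transverse-frequency estimates already carried out earlier in the paper, so I would simply refer to \cite{Alvarez-Lannes} for the full details.
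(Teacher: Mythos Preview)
Your proposal is correct and takes essentially the same approach as the paper: the paper does not write out a proof of Lemma~\ref{G} at all, but simply remarks that ``we have given previously the proof of \eqref{DNm} \eqref{DNC} which are very close estimates'' and cites \cite{Alvarez-Lannes}. Your write-up is precisely this adaptation of the Proposition~\ref{DirNeum} argument (flattening, Green formula, decomposition $\psi^v=v^H+v^r$ for the upper bound, trace identity with cutoff for the lower bound), carried out in two horizontal dimensions with the $W^{1,\infty}$ dependence tracked explicitly, so there is nothing to add.
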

Note that we have given previously  the proof  of \eqref{DNm}  \eqref{DNC} which are  very  close estimates.



\subsection{Derivation of the quasilinear form}\label{quasin}
The aim of this subsection is to  isolate a principal part  which behaves as a quasilinear
symmetrizable hyperbolic  like system and a remainder which behaves  as a  semi-linear  term  after applying
a sufficient amount of derivatives to the equation.
Let us explain more precisely the strategy. We can consider the water waves system
under an abstract form 
$$ \partial_{t} U= \mathcal{F}(U).$$
When  applying the operator $\partial^\alpha$,   for $|\alpha|$ to be chosen, to the system, we find
\beq
\label{expl1} \partial_{t} \partial^\alpha  U = J\Lambda[ U] \cdot \partial^\alpha U + \mathcal{R}(U)
\eeq
where $ \partial_{t} - J\Lambda[U]$ is the linearized equation  about $U$ and
$\mathcal{R}(U)$  involves some lower order  commutators.  Let us set $U_{\alpha}= \partial^\alpha U$.
If we consider only the principal part in the equation for $U_{\alpha}$, 
in view of the skew symmetry of $J$ and the symmetry of $\Lambda[U]$, one expects 
to get energy estimates by taking the scalar product of the equation with  $\Lambda[U] U_{\alpha}$
and then  by reiterating the same process for higher order derivatives of $U_{\alpha}$.
The energy norm associated to $\Lambda[U]$ will be the $X^0$ norm and 
thus, we expect  to control the norm $\|U_{\alpha}(t) \|_{X^k}$.

A  good  "quasilinear structure"  for  $\partial^\alpha U$ which easily yields an energy estimate arises
for \eqref{expl1} if 
 :
\begin{enumerate}
\item[i)]  the norm  of the remainder  $\| \langle \partial \rangle^k  \mathcal{R}(U)  \|_{X^0}$
can be estimated in terms of   $\|U \|_{X^{k+|\alpha|}}$ when $k$ is sufficiently large. In this case, we shall say that
this term behaves as a semilinear term;
\item[ii)] 
the  estimate of the commutator $[\langle \partial \rangle^k, J\Lambda[U] ] V$
 in the energy space $X^0$ involves   at most the norm $\|U\|_{X^{k+|\alpha|}}$
  when $k$ is sufficiently large.
     \end{enumerate}

If  $\Lambda$ were  a first order operator (this arises classically for the usual quasilinear wave 
 equation rewritten as a first order system), by using the above second property,   we  expect 
   the commutator estimate
$$ \| [\langle \partial \rangle^k, J\Lambda[U] ] V\|_{X^0} \lesssim \|U\|_{X^{k+|\alpha|}} \|V\|_{X^k} $$
which allows to get an a priori  estimate for $U$ under the form
$$ \|U(t) \|_{X^{k+|\alpha|}} \lesssim   \|U(0) \|_{X^{k+|\alpha|}}  + \int_{0}^t \|U(\tau)\|_{X^{k+|\alpha|}}^2\, d\tau.$$
This  is a good without loss  estimate which can be easily combined with an approximation
argument (for example the vanishing viscosity method) in order to get  a local existence result
and to  prove Theorem \ref{main}
when considering a perturbation of  $V^{a}$  by using the Gronwall lemma.
Note that in this situation, there is no need to use time and space derivatives simultaneously.

As  already pointed out in the introduction, in our situation (which is formally close to the one  of higher order wave equations),  in order to close the energy estimate, 
the commutator  $[\langle \partial \rangle^k, J\Lambda[U ]  ]V$
cannot be considered  as harmless (or semilinear) since  its $X^0$ norm involves  an  $X^m$ norm of 
 $V$ with $m>k.$ Moreover, for the same reason, the term $\mathcal{R}$ in \eqref{expl1}
  cannot contain only semi-linear terms.
   Nevertheless, the above considerations can be  generalized to our framework. 
             The equation for $\partial^\alpha U$
    can be written under  the following more precise  form:
\beq
\label{expl2}
\partial_{t} \partial^\alpha  U = J \Big( \Lambda[ U] \cdot \partial^\alpha U  + \mathcal{Q}[U]\cdot(\partial^\beta U)_{ |\beta | \leq |\alpha|} \Big) + \mathcal{R}(U) 
\eeq  
where $\mathcal{Q}(U)\cdot$ is a linear operator acting on the tensor $(\partial^\beta U)_{ |\beta | \leq |\alpha|}$
 which is of lower order than $\Lambda$ but of too high order to be incorporated in  the  semilinear terms.
We shall prove that  $\Lambda$ and $\mathcal{R}$ match the above properties i) and ii)  for $|\alpha |=3$.
The energy estimate for \eqref{expl2} will  then be obtained by proving that the subprincipal term  $\mathcal{Q}$, 
 and the higher order part of the commutators $\big[ \partial^\beta, \Lambda[U] \big]$
  can be incorporated as harmless lower order terms in the energy. For this argument, it is important
   to use space and time derivatives simultaneously.

\subsubsection{Analysis of  \eqref{ww}}
Let us first denote by $U=(\eta,\varphi)$ a solution of \eqref{ww}.
We first focus on the first equation of \eqref{ww} which reads
\beq\label{first}
\pa_{t} \eta = \pa_{x} \eta +  G[\eta]\varphi.
\eeq  
Let $\mathcal{I}= \{t, x, y \}$.   We first  notice that
for $k \in \mathcal{I}$,  $\partial_{k} \eta$ solves the equation
\beq
\label{0der}
\partial_{t} \partial_{k} \eta =   \pa_{x} \pa_{k} \eta+ G[\eta] \partial_{k} \varphi + DG[\eta] \varphi \cdot \partial_{k}
\eta
\eeq
and thus by using  Lemma \ref{DN'}, we find
\beq\label{1der}
\partial_{t} \partial_{k} \eta = \pa_{x} \pa_{k} \eta+G(\pa_k\varphi-Z\pa_{k}\eta)-\nabla\cdot (\pa_k\eta\, v),
\eeq
where we shall  use for short hands throughout this section the notation
$$ 
G= G[\eta] , \quad v =  v[\eta, \varphi]= \nabla \varphi - Z \nabla \eta, \quad 
Z= Z[\eta, \varphi]=  { G[\eta]\varphi + \nabla \eta \cdot \nabla \varphi \over 1 +  |\nabla \eta |^2 }
$$
(the notation for $Z$ was already introduced in Lemma~\ref{DN'}).
Next, as in \cite{Iguchi}, we can derive an equation
for $\partial_{ijk} \eta $ for $i, \, j, \, k \in \mathcal{ I}$ by applying two more derivatives
to \eqref{0der}.  We find 
\begin{equation}\label{eqeta}
\pa_{t} \pa_{ijk} \eta = \pa_{x} \pa_{ijk}  \eta  + G \pa_{ijk}\varphi 
-G(Z\partial_{ijk} \eta)-\nabla\cdot(v\partial_{ijk} \eta)+
\mathcal{Q}_{1}^{ijk}[\eta, \varphi]+\mathcal{R}_{1}^{ijk}[\eta, \varphi],
\end{equation}
where
\beq
\label{Q1eq}
\mathcal{Q}_{1}^{ijk}[\eta, \varphi]= \sum_{\mathcal{\sigma} } 
D_{\eta}G[\eta] \partial_{\mathcal{\sigma}(i) \mathcal{\sigma}(j) }\varphi \cdot \pa_{\mathcal{\sigma}(k)} \eta
\eeq
the sum being taken on the circular permutations $\sigma$ of  the set $\{i,j,k\}$
is the subprincipal part of the equation which must be handled with some care
and $\mathcal{R}_{1}^{ijk}$  is under the form
\beq
\label{R1ijk} \mathcal{R}_{1}^{ijk}[\eta, \varphi] = \sum D^n_\eta G[\eta] \partial^\gamma \varphi \cdot\big( \partial^{\beta_{1}} \eta,
\cdots, \partial^{\beta_{n}} \eta \big)\eeq
where  the sum is taken on indices $n\in \mathbb{N}^*$, $\beta_{i}\in \mathbb{N}^3$, $\gamma \in \mathbb{N}^3$  which verify
\beq
\label{indices}  1 \leq n \leq 3,  \quad | \beta_{1}|+ \cdots + |\beta_{n}| + |\gamma| =  3, \quad
|\gamma | \leq 1, \quad  1 \leq | \beta_{i}| < 3 , \, \forall\, i.
\eeq
We shall prove below  that this term behaves as a semi-linear term and thus it  is not necessary
to write down a more precise formula for it.

Let us now study  the second equation of \eqref{ww}. If we apply the operator 
$\partial_{k}$ to the second equation of \eqref{ww}, we get by using the previous notation
\beq
\label{equation2}
\pa_{t } \pa_{k}\varphi = \partial_{x} \partial_{k} \varphi - v \cdot \nabla \partial_{k} \varphi + ZG\pa_{k} \varphi+
ZD_{\eta}G\varphi\cdot\pa_k\eta+Zv\cdot\nabla\pa_k\eta
+ \beta \nabla \cdot \big( A(\nabla \eta) \nabla \pa_{k} \eta \big) -\alpha \pa_{k} \eta, 
\end{equation}
  where the matrix $A(V)$ is given by
$$ 
A(V)=  { {\rm Id} \over   (1 +  |V|^2 )^{\frac{1}{2}}} - { V \otimes V \over { (1+ |V|^2 )^{3 \over 2 } } }.
$$

We now find that $\partial_{ijk} \varphi$ solves
\begin{multline}\label{eqphi}
\pa_{t} \partial_{ijk} \varphi =  \partial_{x} \partial_{ijk} \varphi - v \cdot \nabla \pa_{ijk}\varphi
+  Z G\big( \partial_{ijk} \varphi - Z \partial_{ijk} \eta) - Z (\nabla \cdot v) \pa_{ijk} \eta 
\\
+
\beta \nabla \cdot \big( A(\nabla \eta) \nabla \pa_{ijk} \eta \big)
- \alpha \pa_{ijk} \eta 
+
\mathcal{Q}_{2}^{ijk}[\eta, \varphi]+ \mathcal{R}^{ijk}_{2}[\eta, \varphi],
\end{multline}
where 
\begin{equation}
\label{Q2eq}
\mathcal{Q}_{2}^{ijk}[\eta, \varphi] =  \sum_{\sigma}
\beta\,\nabla\cdot\Big(  DA(\nabla \eta) \cdot(\pa_{\sigma(i)}\nabla \eta, \pa_{\sigma(j) \sigma(k)}\nabla \eta)  \Big)
\end{equation}
the sum being taken on the circular permutations of $\{i,j,k\}$ and  
\begin{eqnarray}
\label{R2ijk}
\mathcal{R}_{2}^{ijk}[\eta, \varphi] 
& = & -[\partial_{ij}, v] \cdot \nabla \pa_{k} \varphi + [\partial_{ij},ZG]\partial_{k} \varphi  
+[\pa_{ij},ZD_{\eta}G\varphi]\cdot\pa_{k}\eta
\\
\nonumber
& &
+ [\partial_{ij},Z v]\cdot\nabla \partial_{k} \eta  + \beta \nabla \cdot  \big( D^2A( \nabla \eta) \cdot
( \nabla \pa_{k} \eta, \nabla \pa_{j} \eta, \nabla \pa_{i}\eta\big) \big).
 \end{eqnarray}
Again, as we shall see below,  $\mathcal{R}_{2}^{ijk}$ can be considered as a semi-linear term
while $\mathcal{Q}_{2}^{ijk}$ must be handled with care.

Now, let us  set  $U_{ijk}= (\partial_{ijk}\eta, \partial_{ijk}\varphi)^t$,
then \eqref{eqeta}, \eqref{eqphi} can be written under the abstract  form
\beq\label{Uquasi}
\pa_{t} U_{ijk} =  J  
\big( 
\Lambda[\eta, \varphi] U_{ijk}+\mathcal{Q}^{ijk}[\eta, \varphi]\big)+ \mathcal{R}^{ijk}[\eta, \varphi]
\eeq
where $\mathcal{R}^{ijk}[\eta, \varphi]=(\mathcal{R}^{ijk}_1[\eta,
\varphi],\mathcal{R}^{ijk}_2[\eta, \varphi])^t$, $\mathcal{Q}^{ijk}[\eta, \varphi]=(-\mathcal{Q}^{ijk}_2[\eta,
\varphi],\mathcal{Q}^{ijk}_1[\eta, \varphi])^t$
and $\Lambda[\eta,\varphi]$ is the linearized about $(\eta,\varphi)$ operator. Namely
\begin{equation*}
\Lambda[\eta, \varphi]  =   
\left( 
\begin{array}{cc} 
- \beta\nabla \cdot( A(\nabla \eta)\nabla\,\cdot\,) + \alpha  + Z G \big(Z\cdot\big) + Z  \nabla \cdot v & 
v \cdot  \nabla - \partial_{x} - Z G \\
  - \nabla \cdot( v \,\cdot\,) - G\big( Z\cdot\big)+\partial_x  & G 
  \end{array} \right),
\end{equation*}
where $Z= Z[\eta, \varphi]$, $G=G[\eta]$, $v= v[\eta, \varphi]$.

By using Proposition~\ref{pak} (with $\eta^{a}=\varphi^{a}= 0$ for the
moment), 
a lengthy but straightforward computation
shows that for $k$ sufficiently large, we have the estimate
$$ \|\langle\pa\rangle^{k} \mathcal{R}_{1}^{ijk} \|_{H^1}
+ \| \langle\pa\rangle^{k} \mathcal{R}_{2}^{ijk} \|_{H^{1 \over 2 } } \leq 
\omega\big( \|\langle\pa\rangle^{k+ 3 } \eta \|_{H^1} +
\| \langle\pa\rangle^{k+ 3 } \varphi \|_{H^{1 \over 2 } } \big)  \big(      
 \|\langle\pa\rangle^{k+ 3 } \eta \|_{H^1} +
\|\langle\pa\rangle^{k+ 3 } \varphi \|_{H^{1 \over 2 } } \big)$$
which  indicates that we can consider $\mathcal{R}^{ijk}$ 
as a semi-linear term. 
We shall not prove this estimate now  since  we need to work in a more general framework
where $U$ is  taken as a perturbation of
$V^{a}$ which  is not in $H^s$.  In the next section we
describe the structure of this problem. The more general estimate that we shall prove below implies
the above claimed  estimate just  by taking $V^{a}=0$ in the following estimates.
\subsubsection{Analysis of  \eqref{Ueqinstab}}
\label{analysisof}
Recall that
$V^{a}=(\eta^{a},\varphi^{a})=Q+\delta U^{a}$, where $U^{a}$ is the approximate solution given by
 Proposition \ref{Uap}.  Coming back to the analysis of \eqref{Ueqinstab},
we have that thanks to  Proposition~\ref{Uap}, with the notations introduced
in the previous section, 
$U_{ijk}= (\partial_{ijk}\eta, \partial_{ijk}\varphi)^t$
solves
\begin{equation}\label{Ueq}
\pa_{t} U_{ijk} =  J  
\Big( \Lambda^\delta  U_{ijk} 
+(\mathcal{Q}^{ijk})^\delta -(\mathcal{Q}^{ijk})^{a}\Big)
+\mathcal{G}^{ijk}[\eta, \varphi] - \pa_{ijk} R^{ap},\quad U(0)=0,
\end{equation}
where the claimed  semi-linear term is now given by 
\begin{equation}
\label{defG}
\mathcal{G}^{ijk}[\eta, \varphi]=\mathcal{G}^{ijk}_1[\eta, \varphi]+\mathcal{G}^{ijk}_2[\eta, \varphi],
\end{equation}
where
$$
\mathcal{G}^{ijk}_1[\eta, \varphi]=
\mathcal{R}^{ijk}[\eta+ \eta^{a}, \varphi+ \varphi^{a}] - \mathcal{R}^{ijk}[\eta^{a}, \varphi^{a}]
= (\mathcal{R}^{ijk})^\delta - (\mathcal{R}^{ijk})^{a}
$$
and
$$
\mathcal{G}^{ijk}_2[\eta, \varphi]=
J\big( \Lambda [\eta+ \eta^{a}, \varphi+ \varphi^{a}]  -  \Lambda [ \eta^{a},  \varphi^{a}]
 \big) \partial_{ijk}V^{a} = J\big( \Lambda^\delta   -  \Lambda^{a}
 \big) \partial_{ijk}V^{a} \,.
$$
The terms $\pa_{ijk} R^{ap}$ enjoys the bounds provided by Proposition~\ref{Uap}.
It will also  be useful to  use the shorter notation
\begin{equation}\label{lionia}
\pa_{t} U_{ijk} =  J  ( \Lambda^\delta  U_{ijk}+(\mathcal{Q}^{ijk})^\delta -(\mathcal{Q}^{ijk})^{a}
)+F_{ijk}\,,
\end{equation}
for \eqref{Ueq}
where
\beq
\label{lioniaF}
F_{ijk}=\mathcal{G}^{ijk}[\eta, \varphi] - \pa_{ijk} R^{ap}\,.
\eeq

In order to perform our energy estimates, we shall use the canonical form of \eqref{lionia}
identified in \cite{L1} (in the absence of surface tension).
We set $W_{ijk}=P U_{ijk}$, where
$$
P\equiv\left( \begin{array}{cc} 1 & 0\\-Z[U+V^{a}] & 1\end{array} \right) = 
\left( \begin{array}{cc} 1 & 0\\-Z^\delta & 1\end{array} \right)
\,.
$$
We find  that $W_{ijk}$ solves the problem
\begin{equation}\label{W}
\pa_{t} W_{ijk} =  J  \big( L^\delta  W_{ijk}-JPJ\big((\mathcal{Q}^{ijk})^\delta -(\mathcal{Q}^{ijk})^{a}\big)\big)
+PF_{ijk},
\end{equation}
where
\begin{eqnarray*}
L[\eta, \varphi]  =
\left( \begin{array}{cc} - {\mathcal P}(\nabla\eta)+(v\cdot\nabla Z)+(\partial_{t}-\partial_x)Z &
  v \cdot  \nabla - \partial_{x} \\
  - \nabla \cdot( v \cdot) +\partial_x  & G 
  \end{array} \right),
\end{eqnarray*}
with $Z=Z[\eta,\varphi]$, $v=v[\eta, \varphi]$, $G=G[\eta]$ and
${\mathcal P}(\nabla\eta)$  defined by
$
{\mathcal P}(\nabla\eta)\equiv \beta\nabla\cdot(A(\nabla\eta)\nabla\cdot)-\alpha.
$
\subsection{Estimates on the semi-linear terms}\label{semin}
In this section we estimate the semilinear term $\mathcal{G}^{ijk}[\eta,
\varphi]$ arising in (\ref{Ueq}). In the estimate below $V^{a}$ will be
evaluated in ${\mathcal W}^{m+S}$  with $S$ sufficiently large  since we will make use of
Propositions~\ref{pak} and \ref{pak_bis}. Here is the main result of this section.
\begin{prop}\label{main_sect}
For $m\geq 2$, and $S\geq 5$, we have the estimate
$$
\|\mathcal{G}^{ijk}[\eta, \varphi]\|_{X^m}
\leq \omega\big(\|V^{a}\|_{{\mathcal W}^{m+S}}+\|U\|_{X^{m+3}}\big)\|U\|_{X^{m+3}}.
$$
\end{prop}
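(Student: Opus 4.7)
The plan is to split $\mathcal{G}^{ijk} = \mathcal{G}^{ijk}_1 + \mathcal{G}^{ijk}_2$ and bound each piece separately in the $X^m$ norm, relying essentially on Propositions~\ref{pak} and \ref{pak_bis} for the Dirichlet--Neumann terms and on standard Moser/Gagliardo--Nirenberg product estimates for the surface-tension and purely algebraic terms. Throughout, I will systematically exploit the structural fact encoded in \eqref{indices}: in every summand arising in $\mathcal{R}_1^{ijk}$ and $\mathcal{R}_2^{ijk}$, no individual factor carries more than two derivatives, so that after applying $\partial^\alpha$ with $|\alpha|\leq m$ the ``most differentiated'' factor carries at most $m+2$ derivatives, comfortably within the $X^{m+3}$ ceiling.

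First I would treat $\mathcal{G}^{ijk}_1 = (\mathcal{R}^{ijk})^\delta - (\mathcal{R}^{ijk})^{a}$ by writing it, via the fundamental theorem of calculus, as $\int_0^1 D\mathcal{R}^{ijk}[V^a + sU]\cdot U\, ds$. This produces a sum of multilinear expressions whose arguments are either $U$ (in Sobolev spaces, placed in the ``sensitive'' slot of Proposition~\ref{pak}) or $V^a + sU$ (the bulk of which is $V^a$, placed in $\mathcal W$-slots via the last product in \eqref{rennes1,5} or via Proposition~\ref{pak_bis} when the solitary-wave component $\varphi_\varepsilon$ appears as the Dirichlet datum). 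The multilinear structure $D^n_\eta G[\eta]\,\partial^\gamma\varphi\cdot(\partial^{\beta_1}\eta,\dots,\partial^{\beta_n}\eta)$ is tailor-made for this: after differentiating by $\partial^\alpha$, $|\alpha|\leq m$, I distribute the $\alpha$-derivatives by Leibniz, then in each resulting term I place the one factor that might end up with $m+2$ derivatives into the $u$-slot of \eqref{rennes1,5} (costing $H^{\sigma+1}$ with $\sigma=1/2$), and the remaining factors (carrying at most $m+1$ derivatives) into the $h_j$-slots in either $\mathcal W^{m+3}$ or $\langle\partial\rangle^{m+1}H^1$. This yields an $H^1$ bound for the first component and an $H^{1/2}$ bound for the second.

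Next I would handle $\mathcal{G}^{ijk}_2 = J(\Lambda^\delta - \Lambda^a)\partial_{ijk}V^a$ in the same spirit. Each coefficient of $\Lambda$ is a smooth (nonlinear) function of $\eta$, $\nabla\eta$ and $(Z,v,G[\eta]\varphi)$; the difference $\Lambda^\delta-\Lambda^a$ is therefore, by Taylor expansion, a sum of expressions involving $U$ multiplied by smooth functions of the state evaluated at an intermediate point. The key point is that $\partial_{ijk}V^a$ is merely in $\mathcal W^{m+S-3}$, which, for $S\geq 5$, still allows a standard product-in-$H^s$ estimate; the contributions where a Dirichlet--Neumann operator hits $\partial_{ijk}\varphi^a$ (e.g.\ $(Z^\delta - Z^a)G^\delta$, or $G^\delta - G^a$ applied to $\partial_{ijk}\varphi^a$) are exactly the situation governed by Proposition~\ref{pak_bis}, since $\varphi^a$ is not in a Sobolev space. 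The estimate $\|\langle\partial\rangle^m(D_\eta G[\eta+\eta^a]\varphi^a\cdot U_1)\|_{H^1}$ from \eqref{maximum_tris} is precisely what is needed here, and it feeds back only $\|U_1\|_{\langle\partial\rangle^{m+1}H^1}$, well below $\|U\|_{X^{m+3}}$.

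The step I expect to be most delicate is the fully nonlinear surface-tension contribution inside $\mathcal{R}_2^{ijk}$, namely $\beta\nabla\cdot\bigl(D^2A(\nabla\eta)\cdot(\nabla\partial_i\eta,\nabla\partial_j\eta,\nabla\partial_k\eta)\bigr)$, after taking the difference $(\cdot)^\delta - (\cdot)^a$ and then applying $\partial^\alpha$, $|\alpha|\leq m$. This is a quintuple product (a smooth tensor $D^2A$ composed with $\nabla\eta$, times three copies of second derivatives of $\eta$), and the $H^{1/2}$ target is reached by first discarding the outer divergence and establishing an $H^{3/2}$ bound for the argument. Here I would use the composition estimate $\|F(\nabla\eta^\delta) - F(\nabla\eta^a)\|_{H^s} \leq \omega(\|\nabla\eta\|_{L^\infty}+\|\nabla\eta^a\|_{L^\infty})\|\nabla\eta\|_{H^s}$ for smooth $F$, combined with the Moser product inequality, crucially using the fact that each of the three factors $\nabla\partial_i\eta$, $\nabla\partial_j\eta$, $\nabla\partial_k\eta$ has at most $m+2\leq m+3$ derivatives after differentiating by $\partial^\alpha$ and distributing, so only one of them needs to be estimated in $H^{3/2}$ (controlled by $X^{m+3}$) while the others can be put in $L^\infty$ via Sobolev embedding in dimension two, at the cost of the function $\omega(\|V^a\|_{\mathcal W^{m+S}}+\|U\|_{X^{m+3}})$. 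Collecting the contributions from all the commutators in \eqref{R2ijk} in the same fashion and summing over $i,j,k\in\mathcal I$ and $|\alpha|\leq m$ yields the advertised bound.
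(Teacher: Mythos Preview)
Your proposal is essentially correct and follows the paper's strategy: split $\mathcal{G}^{ijk}=\mathcal{G}_1^{ijk}+\mathcal{G}_2^{ijk}$, exploit the derivative count \eqref{indices}, and feed each piece into Proposition~\ref{pak} (when a Sobolev $\varphi$ sits in the Dirichlet slot) or Proposition~\ref{pak_bis} (when $\varphi^a$ does). Two remarks on presentation. First, the paper does not write $(\mathcal{R}^{ijk})^\delta-(\mathcal{R}^{ijk})^a$ as a single $\int_0^1 D\mathcal{R}^{ijk}[V^a+sU]\cdot U\,ds$; instead it expands multilinearly, isolating terms $I_1,I_2,I_3,\mathcal{J}$ where exactly one argument is switched from $V^a$ to $U$ (and reserves the Taylor integral only for the ``pure $V^a$'' remainder $I_1$). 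Your global-FTC version is equivalent but forces you to split each mixed argument $V^a+sU$ anyway before applying \eqref{rennes1,5} or \eqref{maximum_tris}, so the paper's decomposition is just the explicit form of what you would have to do. Second, be careful with the phrase ``place the one factor that might end up with $m+2$ derivatives into the $u$-slot'': in \eqref{rennes1,5} the $u$-slot is not freely assignable---it is always the Dirichlet datum $\partial^\gamma\varphi$, while the $h_j$-slots are the $\partial^{\beta_i}\eta$'s. The bookkeeping still closes (since $|\gamma|\le1$ gives $\|\langle\partial\rangle^{m+1}\varphi\|_{H^{\sigma+1}}$ and $|\beta_i|\le2$ gives $\|\langle\partial\rangle^{m+3}\eta\|_{H^1}$), but the constraint is structural. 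Finally, for the commutator terms in $\mathcal{R}_2^{ijk}$ the paper first isolates the estimates $\|\langle\partial\rangle^{m+2}(Z^\delta-Z^a)\|_{H^{1/2}}$ and $\|\langle\partial\rangle^{m+2}(v^\delta-v^a)\|_{H^{1/2}}\le\overline\omega_{m,S}\|U\|_{X^{m+3}}$ as a separate lemma; you implicitly use these, and making them explicit streamlines the treatment of $[\partial_{ij},ZG]$, $[\partial_{ij},v]$, etc.
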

Since there are many terms that we need to estimate, we shall split the proof
of Proposition \ref{main_sect} in many Propositions and Lemmas.
To save place, we shall  use the notation
$$ \overline{\omega}_{m,S}= \omega\big(\|V^{a}\|_{{\mathcal W}^{m+S}}+\|U\|_{X^{m+3}}\big).$$
Our first result  towards the proof of Proposition \ref{main_sect} is:
\begin{prop}\label{R_1}
For $m \geq 2$ and $S \geq 5$, we have
\beq
\label{estR1}
\big\|\langle\partial\rangle^m
\big(({\mathcal R}^{ijk}_1)^\delta-({\mathcal R}^{ijk}_1)^{a}\big)
\big\|_{H^1}\leq \overline{\omega}_{m,S}
\|U\|_{X^{m+3}}.
\eeq
\end{prop}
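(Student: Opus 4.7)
My plan is to write the difference via the fundamental theorem of calculus,
\begin{equation*}
(\mathcal{R}_1^{ijk})^\delta-(\mathcal{R}_1^{ijk})^a=\int_0^1\frac{d}{ds}\,\mathcal{R}_1^{ijk}\big[\eta^a+s\eta,\varphi^a+s\varphi\big]\,ds,
\end{equation*}
so that it suffices to bound $\|\langle\partial\rangle^m(\cdot)\|_{H^1}$ of the contributions arising when $\partial_s$ hits one factor in a generic summand
\begin{equation*}
D^n_\eta G[\eta^a+s\eta]\,(\partial^\gamma(\varphi^a+s\varphi))\cdot\big(\partial^{\beta_1}(\eta^a+s\eta),\ldots,\partial^{\beta_n}(\eta^a+s\eta)\big),
\end{equation*}
with indices constrained by \eqref{indices}. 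Three families of terms result: (a) an additional Frechet derivative $D^{n+1}_\eta G$ contracted with the direction $\eta$; (b) the slot $\partial^\gamma\varphi^a$ replaced by $\partial^\gamma\varphi$; (c) one $\partial^{\beta_i}(\eta^a+s\eta)$ replaced by $\partial^{\beta_i}\eta$.

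The semilinear character comes from the constraints $|\gamma|\leq 1$ and $|\beta_i|\leq 2$ combined with $m\geq 2$. After applying $\langle\partial\rangle^m$ and distributing the derivatives by Leibniz, the factor carrying the $U$-contribution has total order at most $m+2$ on an $\eta$-slot or $m+1$ on the $\varphi$-slot, hence it is controlled in a Sobolev norm by $\|U\|_{X^{m+3}}$. Every remaining factor involves only $V^a$ and, after a Leibniz split, receives at most $m+2$ derivatives, which are placed in $L^\infty$ via $H^2\hookrightarrow L^\infty$ and the bound $\|V^a\|_{\mathcal{W}^{m+S}}$ with $S\geq 5$ large enough to absorb the losses in the embedding.

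To close each estimate rigorously I would invoke the mixed Sobolev/H\"older bounds of Section~\ref{DirNeumf}. For families (a) and (c), Proposition~\ref{pak_bis}, and specifically inequality \eqref{maximum_tris}, is applied with $l$ chosen equal to the number of $V^a$-type $h_j$'s so that the single Sobolev factor sits in the $H^1$-slot of the estimate and the smooth $V^a$-factors sit in the $\mathcal{W}^{m+3}$-slot. For family (b), when $|\gamma|=1$ the same estimate applies directly with $\partial^\gamma\varphi$ playing the role of the Sobolev $u$; when $\gamma=0$ one first converts an extra derivative onto $\varphi$ via an integration by parts against the outer factor and then proceeds as before. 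Summing the finitely many resulting terms and integrating in $s$ yields \eqref{estR1}.

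The main obstacle is bookkeeping rather than any new analytic idea: one must verify, for every admissible tuple $(n,\gamma,\beta_1,\ldots,\beta_n)$ satisfying \eqref{indices} and every choice of which factor the $s$-derivative hits, that the split produced by \eqref{maximum_tris} can be arranged so that precisely one factor is measured in a Sobolev norm controlled by $\|U\|_{X^{m+3}}$ and the remaining smooth factors are measured in $\mathcal{W}^{m+3}$. The tightest case is $n=3$, which forces $|\beta_i|=1$ for all $i$ and $\gamma=0$: here three $\partial_k\eta$-type factors share the $m$ outer derivatives, and one checks that at most one of them can need more than $[m/2]+1$ derivatives, so that Sobolev embedding with $m\geq 2$ puts the other two in $L^\infty$ as required.
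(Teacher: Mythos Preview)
Your overall strategy---writing the difference via the fundamental theorem of calculus and then invoking the mixed Sobolev/H\"older estimates of Propositions~\ref{pak} and~\ref{pak_bis}---is the same idea as the paper's. The paper organizes it slightly differently: it first expands multilinearly into three pieces $I_1,I_2,I_3$ and applies the Taylor formula only to $I_1$ (where all visible arguments come from $V^a$), whereas you apply the fundamental theorem of calculus to the whole expression at once. Both routes produce the same families of terms.

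There is, however, a concrete misidentification of which estimate applies where. In \eqref{maximum_tris} the function $u$ on which $G$ acts must lie in $\mathcal{W}^{m+3}$; the only Sobolev slots are among the directions $h_j$. Hence for your family~(b), where $u=\partial^\gamma\varphi$ is a Sobolev function, \eqref{maximum_tris} does \emph{not} apply. The correct tool is \eqref{rennes1,5} from Proposition~\ref{pak}, which is precisely the estimate for Sobolev $u$; this also renders your integration-by-parts remark for $\gamma=0$ unnecessary (and in any case there is no ``outer factor'' to integrate against when estimating an $H^1$ norm).

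The same issue recurs in families~(a) and~(c). After a single $\partial_s$, the undifferentiated arguments are still of the mixed form $\partial^{\beta_j}(\eta^a+s\eta)$ and $\partial^\gamma(\varphi^a+s\varphi)$, so your assertion that ``every remaining factor involves only $V^a$'' is false; a further multilinear splitting is required before either proposition can be invoked. When the $\varphi$-piece survives in the $u$-slot you must use \eqref{rennes1,5}; when the $\varphi^a$-piece survives you use \eqref{maximum_tris}, placing the guaranteed Sobolev direction ($\eta$ in~(a), $\partial^{\beta_i}\eta$ in~(c)) among the $h_j$ with $j>l$. Once this is done the argument closes exactly as in the paper. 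Your final paragraph about the ``tightest case $n=3$'' and hand-made product estimates is not needed: the product/tame structure is already built into the two propositions.
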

\begin{proof}
From the definition of $\mathcal{R}_{1}^{ijk},$  we have to estimate the $H^1$ norm
of  terms like
$$ \langle \partial \rangle^m \Big( D^n_\eta G^\delta\partial^{\gamma}(\varphi^{a}+
\varphi) \cdot \big( \partial^{\beta_{1}}(\eta^{a}+ \eta), \cdots, \partial^{\beta_n}(\eta^{a}+ \eta)
\big) -  D^n_\eta  G^{a}\partial^{\gamma}\varphi^{a} \cdot \big( \partial^{\beta_{1}}\eta^{a}, \cdots,\partial^{\beta_n}\eta^{a}
\big) \Big)$$
where $n$, $\gamma$ and $\beta_{i}$ satisfies the constraints \eqref{indices}.
By multilinearity and symmetry, we need to estimate three types of terms:
$$ I_{1}= 
D^n_\eta G^\delta\partial^{\gamma}\varphi^{a} \cdot \big( \partial^{\beta_{1}}\eta^{a}, \cdots, \partial^{\beta_n}\eta^{a}
\big) -  D^n_\eta G^{a}\partial^{\gamma}\varphi^{a} \cdot \big( \partial^{\beta_{1}}\eta^{a}, \cdots,\partial^{\beta_n}\eta^{a}
\big), $$
$$ I_{2}= D^n_\eta G^\delta\partial^{\gamma}\varphi\cdot \big( \partial^{\overline{\beta}_{1}}\eta^{a}, \cdots,
\partial^{\overline{\beta}_{l} }\eta^{a}, \partial^{\overline{\beta}_{l+1} } \eta, \cdots,  \partial^{\overline{\beta}_n}\eta \big)$$
and 
$$ I_{3} =  D^n_\eta G^\delta\partial^{\gamma}\varphi^{a}\cdot \big( \partial^{\overline{\beta}_{1}}\eta^{a}, \cdots,
\partial^{\overline{\beta}_{l}} \eta^{a}, \partial^{\overline{\beta}_{l+1} } \eta, \cdots,  \partial^{\overline{\beta}_n}\eta \big)$$
with $0 \leq l  \leq  n$ for $I_{2}$, $ 0 \leq  l \leq n-1$ for $I_{3}$ and $\overline{\beta}_{i}= \beta_{\sigma(i)}$ for some permutation $\sigma$
of $\{1\cdots, n\}.$
In particular, from \eqref{indices}, we have  that $|\overline{\beta}_{i}|\leq 2$ and $|\gamma | \leq 1$.
From  the second estimate of Proposition \ref{pak}, we immediately get that 
$\|\langle \partial \rangle^m I_{2} \|_{H^1}$ satisfies the claimed estimate \eqref{estR1} that
 is
$$  \|\langle \partial \rangle^m I_{2} \|_{H^1} \leq 
\overline{\omega}_{m,S}
\|U\|_{X^{m+3}}.$$
In a similar way, the estimate
$$   \|\langle \partial \rangle^m I_{3} \|_{H^1} \leq 
\overline{\omega}_{m,S}
\|U\|_{X^{m+3}}$$
follows from  Proposition \ref{pak_bis} estimate \eqref{maximum_tris}.

For the estimate of $I_{1}$, we first write
\begin{eqnarray}
\label{Itaylor}
I_1 &=& \int_0^1\frac{d}{ds}
\Big(
D^n_{\eta}G[\eta^{a}+s\eta]\partial^\gamma \varphi^{a}\cdot\big(\pa^{\beta_{1}}\eta^{a}, \cdots
, \pa^{\beta_{n}} \eta^{a}\big)
\Big)ds \\
\nonumber&=&
\int_0^1
D^{n+1}_{\eta}G[\eta^{a}+s\eta]\partial^\gamma \varphi^{a}\cdot(\pa^{\beta_{1}}\eta^{a},\cdots, \pa^{\beta_{n}}\eta^{a}, \eta\big) ds\,.
\end{eqnarray}
From Proposition~\ref{pak_bis} estimate \eqref{maximum_tris} 
(with $\eta$ being the only function in $H^s$) and \eqref{indices}, we thus also obtain that 
$$
\|\langle\partial\rangle^m I_1\|_{H^1}\leq \overline{\omega}_{m,S} \|\langle\partial\rangle^{m+1}\eta\|_{H^{1}}.
$$
This ends  the proof of Proposition~\ref{R_1}.
\end{proof}
\begin{rem}
Note that the important point in the proof of Proposition~\ref{R_1} is that
$\eta$ does not appear with more than two derivatives in the directions of the
Frechet derivatives of $G$ and that $\varphi$ does not appear with more than
two derivatives, i.e. we are in the scope of applicability of
Propositions~\ref{pak} and Proposition~\ref{pak_bis}.
\end{rem}
We shall now turn to the estimate of  the terms involving $\mathcal{R}_{2}^{ijk}$. 
Towards this, we shall use  very often the following product estimates:
\begin{prop}\label{anr}
For $m\geq 2$, $\sigma = 1/2$ or $\sigma=1$, we have
\begin{eqnarray}
\label{anr1}
\|\langle \partial \rangle^m(uv)\|_{H^{\sigma}(\R^2)}
& \leq & C \|\langle \partial\rangle^m u\|_{H^{\sigma}(\R^2)}\|\langle \partial\rangle^m v\|_{H^{\sigma}(\R^2)}\,,
\\
\label{anr2}
\|\langle \partial \rangle ^m(uv)\|_{H^{\sigma}(\R^2)}
& \leq & 
C \|\langle \partial\rangle^{m} u\|_{W^{1,\infty}(\R^2)}\|\langle \partial \rangle^m v\|_{H^{\sigma}(\R^2)}\,.
\end{eqnarray}
\end{prop}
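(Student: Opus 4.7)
The plan is to expand $\partial^\alpha(uv)$ for each multi-index $|\alpha|\le m$ by the Leibniz rule into a sum of products $\partial^\beta u\cdot \partial^{\alpha-\beta}v$, and then control the $H^\sigma$ norm of each such term using Moser/Kato--Ponce type estimates specific to two space dimensions. The key structural observation is that, since $|\beta|+|\gamma|\le m$, by symmetry at least one of the multi-indices, say $|\beta|$, satisfies $|\beta|\le m/2\le m-1$ (using $m\ge 2$); hence $\partial^\beta u$ carries ``few'' derivatives and can be placed in an $L^\infty$-type space via Sobolev embedding in $\R^2$, while $\partial^\gamma v$ keeps up to $m$ derivatives and is kept in $H^\sigma$.

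More precisely, for the first inequality, since $m-|\beta|+\sigma>1$ for $\sigma\in\{\tfrac12,1\}$ and $|\beta|\le m/2$, the embedding $H^{m-|\beta|+\sigma}(\R^2)\hookrightarrow L^\infty(\R^2)$ yields $\|\partial^\beta u\|_{L^\infty}\le C\|\langle\partial\rangle^m u\|_{H^\sigma}$, and similarly $\|\nabla\partial^\beta u\|_{L^\infty}\le C\|\langle\partial\rangle^m u\|_{H^\sigma}$ (valid on a slightly more restricted range of $|\beta|$, which is still contained in the range allowed by $m\ge 2$). For $\sigma=1$ one then just invokes the classical Leibniz bound $\|\partial^\beta u\cdot\partial^\gamma v\|_{H^1}\lesssim \|\partial^\beta u\|_{W^{1,\infty}}\|\partial^\gamma v\|_{H^1}$ and sums over $\beta+\gamma=\alpha$. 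For $\sigma=\tfrac12$ one may either invoke the fractional Leibniz (Kato--Ponce) estimate
\[
\|\Lambda^{1/2}(fg)\|_{L^2}\lesssim \|\Lambda^{1/2}f\|_{L^4}\|g\|_{L^4}+\|f\|_{L^4}\|\Lambda^{1/2}g\|_{L^4},
\]
combined with the 2D embedding $H^{1/2}(\R^2)\hookrightarrow L^4(\R^2)$, or, alternatively, interpolate the endpoint cases $\sigma=0$ (which follows from H\"older together with $H^1(\R^2)\hookrightarrow L^4(\R^2)$) and $\sigma=1$ (just treated).

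The second inequality is easier and does not require any embedding at all: the hypothesis $\|\langle\partial\rangle^m u\|_{W^{1,\infty}}<\infty$ means that all derivatives of $u$ of order $\le m$ together with one extra spatial derivative are already in $L^\infty$, so one only needs a product estimate of the form $\|fg\|_{H^\sigma}\lesssim \|f\|_{W^{1,\infty}}\|g\|_{H^\sigma}$ applied termwise in the Leibniz expansion; this is routine for $\sigma=1$ and follows by interpolation (or by direct paradifferential/Kato--Ponce manipulation) for $\sigma=\tfrac12$. The main obstacle, as always in such product estimates, is the fractional exponent $\sigma=\tfrac12$: $H^{1/2}(\R^2)$ is not an algebra, so one cannot simply take products pointwise, and the argument requires one of the two workarounds described above (fractional Leibniz plus the $L^4$ embedding, or interpolation between $L^2$ and $H^1$). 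Once this is dealt with, the proposition follows by summing the bounds over all $|\alpha|\le m$.
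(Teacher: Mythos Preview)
Your approach is essentially the paper's (the proof is omitted there, with a pointer to the analogous product lemmas on the strip $\mathcal{S}$). There is one slip worth flagging: the parenthetical claim that the bound $\|\nabla\partial^\beta u\|_{L^\infty}\le C\|\langle\partial\rangle^m u\|_{H^\sigma}$ holds on a range ``still contained in the range allowed by $m\ge 2$'' fails at the endpoint $m=2$, $|\beta|=|\gamma|=1$. Indeed, $\|\langle\partial\rangle^2 u\|_{H^1}$ only controls $\nabla\partial^\beta u$ in $H^{1}(\R^2)$, and $H^{1}(\R^2)\not\hookrightarrow L^\infty$. The remedy is exactly the $L^4\times L^4\to L^2$ argument you already invoke for $\sigma=0$: bound $\|(\nabla\partial^\beta u)\,\partial^\gamma v\|_{L^2}\le \|\nabla\partial^\beta u\|_{L^4}\|\partial^\gamma v\|_{L^4}$ and use $H^{1/2}(\R^2)\hookrightarrow L^4(\R^2)$. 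This is precisely how the paper's strip product lemma handles its borderline term, so the fix is well within the spirit of your sketch.

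For $\sigma=\tfrac12$, your Kato--Ponce or interpolation route differs slightly from the paper's model argument, which instead uses the Gagliardo double-integral form of the $H^{1/2}$ seminorm to show directly that any H\"older function of exponent $>\tfrac12$ acts as a multiplier on $H^{1/2}$, and then feeds in Sobolev embedding for the factor carrying fewer derivatives. Both approaches work; the Gagliardo computation is elementary and avoids fractional Leibniz machinery, while your interpolation between the integer endpoints $\sigma=0$ and $\sigma=1$ is the quickest once those cases are (correctly) established.
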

Note that in the second estimate $u$ and $v$ do not play symmetric parts.
As in Lemma \ref{LP}, a sharper but not  needed estimate holds by replacing
$  \|\langle \partial\rangle^{m} u\|_{W^{1, \infty}(\R^2)}$
with 
$ \|\langle \partial\rangle^{m} u\|_{\mathcal{C}^\alpha} $   for $\alpha >1/2$ when $\sigma= 1 /2 $.

The proof of Proposition \ref{anr} is very similar to the ones of Lemma \ref{prodS} and Lemma \ref{LP}
 and hence will be omitted.
In the next proposition, we evaluate the contribution of ${\mathcal R}^{ijk}_2$.
\begin{prop}\label{R_2}
For $m\geq 2$, $S \geq 5 $, we have the estimate
$$
\big\|\langle\partial\rangle^{m}\big(({\mathcal R}^{ijk}_2)^\delta-({\mathcal R}^{ijk}_2)^{a}\big)
\big\|_{H^{\frac{1}{2}}}\leq\overline{\omega}_{m,S}\|U\|_{X^{m+3}}
$$
\end{prop}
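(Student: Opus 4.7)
The strategy mirrors that of Proposition~\ref{R_1}: we treat each of the five terms entering $\mathcal{R}_2^{ijk}$ separately, expand the difference $(\cdot)^\delta - (\cdot)^a$ by multilinearity and the fundamental theorem of calculus so that every resulting piece contains at least one factor from $U=(\eta,\varphi)$ (or a derived quantity like $v^\delta-v^a$, $Z^\delta-Z^a$, $G^\delta-G^a$), and close each estimate using the product rules of Proposition~\ref{anr} in $H^{1/2}$, the Dirichlet--Neumann bounds of Propositions~\ref{pak} and \ref{pak_bis}, and the commutator bound \eqref{rennes2}. The point of the commutator structure $[\partial_{ij},\cdot]$ is that it always kills one of the derivatives on the right-most factor: expanding schematically $[\partial_{ij},f]g = \sum_{\gamma<ij} c_{\gamma}\,\partial^{\gamma}f\cdot \partial^{ij-\gamma}g$, no factor carries more than $m+3$ derivatives in total, which is exactly the budget provided by $\|U\|_{X^{m+3}}$.

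For the purely local terms $[\partial_{ij},v]\cdot\nabla\partial_k\varphi$, $[\partial_{ij},Zv]\cdot\nabla\partial_k\eta$ and the surface-tension term $\beta\nabla\cdot(D^2A(\nabla\eta)\cdot(\nabla\partial_k\eta,\nabla\partial_j\eta,\nabla\partial_i\eta))$, we split the difference $(\cdot)^\delta-(\cdot)^a$ into a piece where $v^\delta-v^a$ (resp.\ $Z^\delta-Z^a$, resp.\ $(D^2A)^\delta-(D^2A)^a$) appears against $V^a$-type factors, and a piece where $v^a$ (resp.\ $Z^a$, $(D^2A)^a$) multiplies factors built from $U$. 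In the first type the $U$-dependent factor is written via a Taylor remainder $(\cdot)^\delta-(\cdot)^a = \int_0^1 D(\cdot)[V^a+sU]\cdot U\,ds$, which puts $\eta$ or $\varphi$ explicitly in the Sobolev scale, and the remaining factors are estimated in $\mathcal{W}^{m+S}$ via $V^a$; in the second type only $U$ is in Sobolev. In both cases \eqref{anr1}--\eqref{anr2} together with the fact that all factors involve at most $\langle\partial\rangle^{m+2}$ applied to $v,Z,\nabla\eta$ (hence via Proposition~\ref{pak} and simple commutator arithmetic, at most $\|U\|_{X^{m+3}}$) yield the claim.

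The two nonlocal contributions $[\partial_{ij},ZG]\partial_k\varphi$ and $[\partial_{ij},ZD_\eta G\varphi]\cdot\partial_k\eta$ are the most delicate, and form the true obstacle. For the first we write $[\partial_{ij},ZG]=Z[\partial_{ij},G]+[\partial_{ij},Z]\,G$; the $Z[\partial_{ij},G]$ piece, differenced between $\delta$ and $a$, is handled by the commutator estimate \eqref{rennes2} (which precisely gives an $H^{1/2}$ bound against $\|\langle\partial\rangle^{m-1}\cdot\|_{H^{1/2}}$, compatible with the $X^{m+3}$ budget), combined with the fact that $G^\delta-G^a=\int_0^1 D_\eta G[\eta^a+s\eta]\cdot\eta\,ds$ can be estimated by \eqref{rennes1,5} or \eqref{maximum_tris} depending on which factor is in Sobolev. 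The $[\partial_{ij},Z]G$ piece is local in the commutator and uses only \eqref{rennes} and \eqref{anr1}. The second commutator is handled analogously, expanding $D_\eta G\varphi$ via Lemma~\ref{DN'} and using \eqref{rennes1,5}--\eqref{maximum_tris} to distribute the loss of half a derivative correctly; here one has to be careful that the argument $\varphi^a+\varphi$ is never differentiated more than three times, so that Proposition~\ref{pak} with $\sigma=1/2$ (together with the $\varphi^a$-only pieces dealt with by \eqref{maximum_tris}) still applies. Summing the five contributions and absorbing constants in $\overline{\omega}_{m,S}$ gives the desired bound.
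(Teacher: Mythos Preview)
Your overall plan---split $\mathcal{R}_2^{ijk}$ into its five constituents, unfold $(\cdot)^\delta-(\cdot)^a$ by multilinearity and Taylor expansion, and close with the product rules of Proposition~\ref{anr} and the Dirichlet--Neumann bounds---is exactly the paper's strategy, and for the purely local pieces (the $v$, $Zv$, and $D^2A$ terms) it goes through as you describe.

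There is, however, a genuine gap in your handling of the nonlocal commutator $[\partial_{ij},ZG]\partial_k\varphi$. You write $[\partial_{ij},ZG]=Z[\partial_{ij},G]+[\partial_{ij},Z]G$ and propose to control the first piece by \eqref{rennes2}, claiming it ``precisely gives an $H^{1/2}$ bound''. It does not: \eqref{rennes2} reads
\[
\|[\partial^{\alpha},G[\eta+\eta_0]]u\|_{H^{-1/2}}\le\underline{\omega}\,\|\langle\partial\rangle^{m-1}u\|_{H^{1/2}},\qquad |\alpha|\le m,
\]
with the left-hand side in $H^{-1/2}$, not $H^{1/2}$. Since the target norm here is $\|\langle\partial\rangle^m(\cdot)\|_{H^{1/2}}$, invoking \eqref{rennes2} leaves you one full derivative short, and multiplying by $Z$ does not recover it. The paper therefore does \emph{not} use \eqref{rennes2} at this point. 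Instead (Lemma~\ref{comh}) it expands the full commutator $[\partial_{ij},ZG]\partial_k\varphi$ directly as a sum of explicit Fr\'echet-derivative terms
\[
I=\partial^{\gamma_0}Z\cdot D_\eta^n G[\eta]\,\partial_k\partial^\beta\varphi\cdot(\partial^{\gamma_1}\eta,\dots,\partial^{\gamma_n}\eta),\qquad |\gamma_i|\le 2,\ |\beta|\le 1,
\]
then splits $I^\delta-I^a$ telescopically and bounds each piece in $H^{1/2}$ via \eqref{rennes1,5} with $\sigma=1/2$ (when $\varphi$ is in the Sobolev scale) or \eqref{maximum_tris} (when $\varphi^a$ is the argument). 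The companion commutator $[\partial_{ij},ZD_\eta G\varphi]\cdot\partial_k\eta$ is reduced to the same form. A separate preliminary step you do not isolate but which is essential (Lemma~\ref{annr}) is the pair of estimates $\|\langle\partial\rangle^{m+2}(Z^\delta-Z^a)\|_{H^{1/2}}\le\overline{\omega}_{m,S}\|U\|_{X^{m+3}}$ and $\|\langle\partial\rangle^m(\partial^\gamma Z^\delta\,\psi)\|_{H^{1/2}}\le\overline{\omega}_{m,S}\|\langle\partial\rangle^m\psi\|_{H^{1/2}}$ for $|\gamma|\le 2$; these are what allow the $Z$-prefactor to be stripped off cleanly in the decomposition of $I^\delta-I^a$.
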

\subsubsection*{Proof of Proposition \ref{R_2}}
We shall  first establish  the following lemma.
\begin{lem}\label{annr}
For $m\geq 2$, $S\geq 5$,  we have
\begin{equation}
\label{annr1}
\|\langle\partial\rangle^{m+2}(v^\delta-v^{a})\|_{H^{\frac{1}{2}}}+
\|\langle\partial\rangle^{m+2}(Z^\delta-Z^{a})\|_{H^{\frac{1}{2}}}
\leq
\overline{\omega}_{m,S} \|U\|_{X^{m+3}}
\end{equation}
and for $|\gamma | \leq 2$, we also have
\begin{equation}
\label{annr2}
\big\|\langle\partial\rangle^{m}\big(  \partial^\gamma v^\delta\,  \psi\big)\big\|_{H^{\frac{1}{2}}}+
\big\|\langle\partial\rangle^{m}\big( \partial^\gamma Z^\delta \,\psi \big) \big\|_{H^{\frac{1}{2}}}
\leq\overline{\omega}_{m,S}\|\langle\partial\rangle^{m}\psi\|_{H^{\frac{1}{2}}}.
\end{equation}
\end{lem}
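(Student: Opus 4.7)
The plan is to handle \eqref{annr1} first; the bound \eqref{annr2} will then follow by combining \eqref{annr1} with the product estimates of Proposition \ref{anr}. For \eqref{annr1}, the estimate on $v^\delta - v^a$ reduces, via the identity
$$
v^\delta - v^a = \nabla \varphi - (Z^\delta - Z^a)\nabla\eta^a - Z^\delta \nabla\eta,
$$
an application of Proposition \ref{anr}, and an a priori $\mathcal{W}^{m+2}$ bound on $Z^\delta$ (obtained from Proposition \ref{pak_bis} \eqref{maximum} together with Sobolev embedding using $\|U\|_{X^{m+3}}$), to the bound on $Z^\delta - Z^a$, which is the crux.

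To estimate $Z^\delta - Z^a$ I would write $Z = N/D$ with $N = G[\eta]\varphi + \nabla\eta \cdot \nabla\varphi$ and $D = 1 + |\nabla\eta|^2$, so that
$$
Z^\delta - Z^a = \frac{N^\delta - N^a}{D^\delta} + N^a\, \frac{D^a - D^\delta}{D^\delta D^a}.
$$
The polynomial contributions $\nabla(\eta^a+\eta)\cdot\nabla(\varphi^a+\varphi) - \nabla\eta^a\cdot\nabla\varphi^a$ and $D^\delta - D^a = 2\nabla\eta^a\cdot\nabla\eta + |\nabla\eta|^2$, together with the division by $D^\delta D^a$ (a standard Moser-type composition argument), are treated directly by the product estimates of Proposition \ref{anr}. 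The delicate Dirichlet--Neumann contribution is split as
$$
G^\delta(\varphi^a + \varphi) - G^a \varphi^a \;=\; G^\delta\varphi \;+\; \int_0^1 D_\eta G[\eta^a + s\eta]\varphi^a \cdot \eta \, ds.
$$
For $G^\delta\varphi = G[\eta^a + \eta]\varphi$, Proposition \ref{pak} \eqref{rennes} applied with $m$ replaced by $m+2$, $\sigma = 1/2$ and $\eta_0 = \eta^a$ furnishes the desired $H^{1/2}$ bound, the $\eta$-dependence on the right-hand side being absorbed into $\overline{\omega}_{m,S}$ as noted in Remark \ref{Sanremark}. For the second piece, in which $\varphi^a$ is smooth but not Sobolev while $\eta$ is Sobolev, this is precisely the situation Proposition \ref{pak_bis} \eqref{maximum_tris} (with $n = 1$, $l = 0$) was designed for: applied with $m \leftarrow m+2$, $u = \varphi^a \in \mathcal{W}^{m+S}$ and $h_1 = \eta$, it gives a bound in $\langle\partial\rangle^{m+2}H^1$ (hence in $H^{1/2}$) by $\overline{\omega}_{m,S} \|\langle\partial\rangle^{m+3}\eta\|_{H^1} \leq \overline{\omega}_{m,S}\|U\|_{X^{m+3}}$.

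For \eqref{annr2}, the function $\psi$ is only controlled in $\langle\partial\rangle^m H^{1/2}$, so we cannot afford more than $m$ derivatives on it. Here I would split $v^\delta = v^a + (v^\delta - v^a)$ and likewise for $Z^\delta$. The solitary-wave piece $\partial^\gamma v^a \cdot \psi$ is handled by \eqref{anr2}, using $\|\langle\partial\rangle^{m+3} v^a\|_{L^\infty} \leq \omega(\|V^a\|_{\mathcal{W}^{m+S}})$. The Sobolev piece $\partial^\gamma(v^\delta - v^a)\cdot \psi$ is handled by \eqref{anr1} with $\sigma = 1/2$, which reduces it to $\|\langle\partial\rangle^{m+2}(v^\delta - v^a)\|_{H^{1/2}}\|\langle\partial\rangle^m\psi\|_{H^{1/2}}$; the first factor is exactly \eqref{annr1} and is absorbed (together with the factor $\|U\|_{X^{m+3}}$) into $\overline{\omega}_{m,S}$.

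The main obstacle is the asymmetric term $(G^\delta - G^a)\varphi^a$: $\varphi^a$ is smooth but not decaying, while $\eta$ carries the Sobolev regularity, so Proposition \ref{pak} cannot be applied directly and one must rely on Proposition \ref{pak_bis} instead. Matching the derivative budget so that precisely $\|\langle\partial\rangle^{m+3}\eta\|_{H^1}$ (and nothing requiring more than the $m+3$ Sobolev derivatives controlled by $\|U\|_{X^{m+3}}$) appears on the right-hand side is the delicate bookkeeping step, and dictates the choice $n=1$, $l=0$ when invoking \eqref{maximum_tris}.
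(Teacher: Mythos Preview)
Your overall strategy matches the paper: isolate the Dirichlet--Neumann piece of $Z^\delta-Z^a$, split it as $G^\delta\varphi + (G^\delta-G^a)\varphi^a$, and then deduce the $v$ estimate and \eqref{annr2} by writing $Z^\delta=Z^a+(Z^\delta-Z^a)$. However, your invocation of \eqref{maximum_tris} with $m\leftarrow m+2$ does not close. With that shift the prefactor becomes $\omega\big(\|\langle\partial\rangle^{(m+2)+3}\eta\|_{H^1}+\|\eta^a\|_{\mathcal W^{m+5}}\big)$, i.e.\ it contains $\|\langle\partial\rangle^{m+5}\eta\|_{H^1}$, which is \emph{not} controlled by $\overline{\omega}_{m,S}$ (through $\|U\|_{X^{m+3}}$ you only have $\|\langle\partial\rangle^{m+3}\eta\|_{H^1}$). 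The paper avoids this by first distributing two derivatives by hand: write $\langle\partial\rangle^{m+2}=\langle\partial\rangle^m\partial^\alpha$ with $|\alpha|=2$ and expand $\partial^\alpha\big(D_\eta G[s\eta+\eta^a]\varphi^a\cdot\eta\big)$ via the chain rule into sums of $D_\eta^n G[s\eta+\eta^a]\partial^\beta\varphi^a\cdot(\partial^{\gamma_1}\eta,\partial^{\gamma_2}h_2,\dots)$ with $|\gamma_i|\le 2$ and $h_i\in\{\eta,\eta^a\}$; then apply \eqref{maximum_tris} at level $m$, so that the $\omega$-factor stays at $\|\langle\partial\rangle^{m+3}\eta\|_{H^1}$ and the two extra derivatives sit harmlessly on $\varphi^a$, $\eta^a$ (absorbed in $\mathcal W^{m+5}$, hence $S\ge 5$) or on the Sobolev argument (giving at most $\|\langle\partial\rangle^{m+3}\eta\|_{H^1}$). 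The paper uses the same device for $G^\delta\varphi$, though for that term your direct route via \eqref{rennesrefined} is also legitimate.

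A related smaller gap: your ``$\mathcal W^{m+2}$ bound on $Z^\delta$ via Sobolev embedding'' for handling $Z^\delta\nabla\eta$ fails for the same reason --- the Sobolev component $Z^\delta-Z^a$ cannot be put in $W^{1,\infty}$ at level $m+2$ from $\|U\|_{X^{m+3}}$ alone. The fix is again the splitting $Z^\delta\nabla\eta=Z^a\nabla\eta+(Z^\delta-Z^a)\nabla\eta$, using \eqref{anr2} on the first piece (with $Z^a$ in $\mathcal W$ via Proposition~\ref{pak_bis}) and \eqref{anr1} on the second (both factors in $\langle\partial\rangle^{m+2}H^{1/2}$).
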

\begin{proof}
We start with the estimates involving $Z$.
Write $Z=Z_1+Z_2$, where $Z_1=(1+|\nabla\eta|^2)^{-1}G[\eta]\varphi$. 
A straightforward application of Proposition~\ref{anr} implies that
\begin{equation}\label{resol}
\|\langle\partial\rangle^{m+2}(Z_2^\delta-Z_2^{a})\|_{H^{\frac{1}{2}}}
\leq\overline{\omega}_{m,S} \|U\|_{X^{m+3}}\,.
\end{equation}
We shall not detail the proof of the inequality (\ref{resol}). Instead we shall
estimate in detail the contribution of $Z_1$ whose  proof is very similar and contains
 additional difficulties  coming from the presence of the Dirichlet-Neumann operator. 
Set $w[\eta]\equiv (1+|\nabla\eta|^2)^{-1}$. Then we can write
\begin{eqnarray}
\label{Z1dec}
Z_1^\delta -Z_1^{a}& = & 
(w^\delta-w^{a})G^{a}\varphi^{a}
+w^\delta(G^\delta-G^{a})\varphi^{a}
+w^\delta G^\delta \varphi\,.
\end{eqnarray}
To estimate the first term in the decomposition \eqref{Z1dec}, we use  Proposition~\ref{anr} to obtain
\beq
\|\langle\partial\rangle^{m+2}\big((w^\delta-w^{a})G^{a}\varphi^{a}\big)\|_{H^{\frac{1}{2}}}
\leq
C\|\langle\partial\rangle^{m+2}(w^\delta-w^{a})\|_{H^{\frac{1}{2}}}
\|\langle\partial\rangle^{m+2}(G^{a}\varphi^{a})\|_{W^{1, \infty}}\,.
\eeq
Next, since
$$
w^\delta -w^{a}=-\frac{|\nabla\eta+\nabla\eta^{a}|^2-|\nabla \eta^{a}|^2}
{(1+|\nabla \eta^{a}|^2)(1+|\nabla\eta+\nabla\eta^{a}|^2)}
$$
we obtain  after several  applications of Proposition~\ref{anr} that:
\begin{equation}\label{kore_0}
\|\langle\partial\rangle^{m+2}(w^\delta -w^{a})\|_{H^{\frac{1}{2}}}
\leq \overline{\omega}_{m,S} \|U\|_{X^{m+3}}\,.
\end{equation}
Since, by  using the estimate \eqref{maximum} of
Proposition~\ref{pak_bis}, we also have that for $S\geq 5$
$$
\|\langle\partial\rangle^{m+2}(G[\eta^{a}]\varphi^{a})\|_{W^{1, \infty}}
\leq 
\omega\big(\|V^{a}\|_{{\mathcal
W}^{m+S}}\big)
$$
we get  that 
\beq
\label{Z11}
\|\langle\partial\rangle^{m+2}\big((w^\delta -w^{a})G^{a}\varphi^{a}\big)\|_{H^{\frac{1}{2}}}
\leq\overline{\omega}_{m,S}\|U\|_{X^{m+3}}\,.
\eeq
Next, to estimate the second  term in \eqref{Z1dec}, we first  observe that for every $\psi$ smooth enough,   the bound
\begin{equation}\label{kore}
\|\langle\partial\rangle^{m+2}\big(w^\delta \psi\big)\|_{H^{\frac{1}{2}}}
\leq
\overline{\omega}_{m,S}\|\langle\partial\rangle^{m+2}\psi\|_{H^{\frac{1}{2}}}\,
\end{equation}
 holds. 
Indeed, it suffices to write
$w^\delta=(w^\delta -w^{a})+w^{a}$ and to use
Proposition~\ref{anr} and (\ref{kore_0}).
By  using (\ref{kore}), we thus obtain 
\begin{equation}\label{kore_1}
\|
\langle\partial\rangle^{m+2}(
w^\delta (G^\delta -G^{a})\varphi^{a}
)\|_{H^{\frac{1}{2}}}
\leq \overline{\omega}_{m,S}\|\langle\partial\rangle^{m+2}
((G^\delta -G^{a})\varphi^{a})\|_{H^{\frac{1}{2}}}\,.
\end{equation}
To estimate the last above term, we use that for $|\alpha |=2$
$$
\pa^\alpha \big( (G^\delta -G^{a})\varphi^{a} \big)=
\pa^\alpha\Big(\int_0^1 D_{\eta}G[s\eta+\eta^{a}]\varphi^{a}\cdot\eta ds\Big)\,
$$
 and we notice that 
 $ \partial^\alpha\big( D_{\eta}G[s\eta+\eta^{a}]\varphi^{a}\cdot\eta \big)$
  can be expanded as a sum of terms under the form
  $$ \int_{0}^1  D_{\eta}^n G[\eta^a + s \eta] \partial^\beta \varphi^a\cdot \big(\partial^{\gamma_{1}} \eta,  \partial^{\gamma_{2}}h_{1},
   \cdots, \partial^{\gamma_{n}} h_{n-1}\big)\, ds$$
    where $n \geq 1$,   the   $h_{i}$ may be $\eta$ or $\eta^a$ and $| \gamma_{i}| \leq 2.$
Since $\partial^{\gamma_{1}} \eta$ belongs to the Sobolev scale and 
 there is never  more than two
derivatives of  $\eta$ involved  in the above expression, we can  again use the estimate \eqref{maximum_tris} of  Proposition~\ref{pak_bis} to
infer that
$$
\|\langle\partial\rangle^{m+2}
((G^\delta -G^{a})\varphi^{a})\|_{H^{\frac{1}{2}}}
\leq
\overline{\omega}_{m,S} \|U\|_{X^{m+3}}\,.
$$
Coming back to (\ref{kore_1}), we have   thus proven that 
\beq
\label{Z12}
\|\langle\partial\rangle^{m+2}(w^\delta (G^\delta -G^{a})\varphi^{a}
)\|_{H^{\frac{1}{2}}}\leq\overline{\omega}_{m,S} \|U\|_{X^{m+3}}\,.
\eeq
Finally, to estimate the last  term in \eqref{Z1dec}, we first  write from 
another use of (\ref{kore}) that  
\begin{equation*}
\|\langle\partial\rangle^{m+2}(w^\delta G^\delta \varphi)\|_{H^{\frac{1}{2}}}
\\
\leq \overline{\omega}_{m,S} \|\langle\partial\rangle^{m+2}
(G^\delta\varphi)\|_{H^{\frac{1}{2}}}\,.
\end{equation*}
Since for $|\alpha |= 2$, we can decompose
$ \partial^\alpha \big( G[\eta + \eta^{a}]\varphi\big)$ as a sum of terms of the form
$$ D^n_{\eta}G[\eta+ \eta^{a}] \partial^\gamma \varphi \cdot(h_{1}, \cdots h_{n})$$
with $h_{i}= \partial^{\beta_i} \eta$ or $ \partial^{\beta_i} \eta^{a}$ and
$|\gamma| \leq 2$, $|\beta_{i}| \leq 2$, we can use Proposition~\ref{pak} to obtain 
$$
\|\langle\partial\rangle^{m+2}(G^\delta \varphi)\|_{H^{\frac{1}{2}}}
\leq
\overline{\omega}_{m,S}\|\langle \partial \rangle^{m+ 3} \varphi\|_{H^{1 \over 2}}
$$
and therefore, we obtain  that 
\beq
\label{Z13}
\|\langle\partial\rangle^{m+2}(w^\delta G^\delta \varphi)\|_{H^{\frac{1}{2}}} \leq 
\overline{\omega}_{m,S} \|U\|_{X^{m+3}}.
\eeq
By using   the decomposition  \eqref{Z1dec}  and the estimates \eqref{Z11}, \eqref{Z12}, \eqref{Z13},
we find  that
$$
\|\langle\partial\rangle^{m+2}(Z_1^\delta-Z_1^{a})\|_{H^{\frac{1}{2}}}
\leq
\overline{\omega}_{m,S} \,\|U\|_{X^{m+3}}
$$
which in turn taking into account (\ref{resol}) gives
\begin{equation}\label{resol_0}
\|\langle\partial\rangle^{m+2}(Z^\delta - Z^{a})\|_{H^{\frac{1}{2}}}
\leq
\overline{\omega}_{m,S}\,\|U\|_{X^{m+3}}\,.
\end{equation}
This proves the first estimate for $Z$ in Lemma \ref{annr}. 
Next, since 
can write for $| \gamma | \leq 2$ that 
$$
\|\langle\partial\rangle^{m}( \partial^\gamma Z^\delta \, \psi )\|_{H^{\frac{1}{2}}}
\leq
\|\langle\partial\rangle^{m}\big( \partial^\gamma (Z^\delta -Z^{a})\psi \big)\|_{H^{\frac{1}{2}}}
+
\|\langle\partial\rangle^{m}(\partial^\gamma Z^{a}\,\psi)\|_{H^{\frac{1}{2}}},
$$
by  using (\ref{resol_0}) together with Proposition~\ref{anr} and
Proposition~\ref{pak_bis} (to bound $Z^a$) we also find
\begin{equation}\label{resol_1}
\|\langle\partial\rangle^{m}(\partial^\gamma Z^\delta \,  \psi)\|_{H^{\frac{1}{2}}}
\leq
\overline{\omega}_{m,S}\,
\|\langle\partial\rangle^{m}\psi\|_{H^{\frac{1}{2}}}\,
\end{equation}
and hence the second estimate for $Z$ is proven.

It remains to prove the claimed estimates for $v$. Let us 
recall that $v[\eta,\varphi]=\nabla\varphi-Z[\eta,\varphi]\nabla\eta$.
Therefore we can write
$$
v^\delta-v^{a}=\nabla\varphi-
\Big((Z^\delta-Z^{a})\nabla\eta^{a}
+Z^\delta \nabla\eta
\Big).
$$
Consequently, by  using (\ref{resol_0}), (\ref{resol_1}) and Lemma~\ref{anr}, we
infer 
\begin{equation}\label{resol_2}
\|\langle\partial\rangle^{m+2}(v^\delta-v^{a})\|_{H^{\frac{1}{2}}}
\leq\overline{\omega}_{m,S}\,\|U\|_{X^{m+3}}.
\end{equation}
Finally, we can write  that 
$$
\|\langle\partial\rangle^{m}(\partial^\gamma \, v^\delta \psi)\|_{H^{\frac{1}{2}}}
\leq
\|\langle\partial\rangle^{m}( \partial^\gamma (v^\delta-v^{a})\psi)\|_{H^{\frac{1}{2}}}
+
\|\langle\partial\rangle^{m}(\partial^\gamma v^{a}\psi)\|_{H^{\frac{1}{2}}}
$$
and hence, by  using (\ref{resol_2}) together with Proposition~\ref{anr} and
Proposition~\ref{pak_bis} (to bound $v^a$)  we get  the bound
\begin{equation}\label{resol_3}
\|\langle\partial\rangle^{m}(\partial^\gamma v^\delta \,  \psi)\|_{H^{\frac{1}{2}}}
\leq
\overline{\omega}_{m,S}\,
\|\langle\partial\rangle^{m}\psi\|_{H^{\frac{1}{2}}}\,.
\end{equation}
This ends  the proof of Lemma~\ref{annr}.
\end{proof}
The result of Lemma~\ref{annr} will be one of the main tool when estimating 
${\mathcal R}^{ijk}_2[V^{a}+U]-{\mathcal R}^{ijk}_2[V^{a}]$. We start by the
estimate of the contribution of the following   terms  in the definition \eqref{R2ijk} of  ${\mathcal R}^{ijk}_2$.
\begin{lem}
\label{comh}
For $S \geq 5, $ $m \geq 2 $, we have the following estimates:
\begin{equation}\label{sh}
\big\|\langle\partial\rangle^m\big(
[\partial_{ij},v^\delta ]\cdot \partial_k\nabla\varphi^\delta
-
[\partial_{ij},v^{a}]\cdot \partial_k\nabla\varphi^{a}\big)
\big\|_{H^{\frac{1}{2}}}
\leq
\overline{\omega}_{m, S}\, \|U\|_{X^{m+3}},
\end{equation}
\begin{equation}
\label{ch}
\big\|\langle\partial\rangle^m\big([\partial_{ij},Z^\delta G^\delta](\partial_{k}\varphi^\delta)-
[\partial_{ij},Z^{a}G^{a}](\partial_{k}\varphi^{a})\big)\big\|_{H^{\frac{1}{2}}}
\leq \overline{\omega}_{m, S}\,\|U\|_{X^{m+3}},
\end{equation}
\begin{equation}
\label{lh}
\big\|\langle\partial\rangle^m\big(
[\partial_{ij},Z^\delta v^\delta]\cdot\nabla\partial_{k} \eta^\delta
-[\partial_{ij},Z^{a} v^{a}]\cdot\nabla\partial_{k}\eta^{a}
\big)\big\|_{H^{\frac{1}{2}}}
\leq
\overline{\omega}_{m,S} \|U\|_{X^{m+3}},
\end{equation}
\begin{equation}
\label{dh}
\big\|\langle\partial\rangle^m\big(
\big[\partial_{ij},Z^\delta D_{\eta}G^\delta \varphi^\delta \big]\cdot\partial_k \eta^\delta
-
\big[\partial_{ij},Z^{a}D_{\eta}G^{a}\varphi^{a}\big]\cdot\partial_k\eta^{a}
\big)\Big\|_{H^{\frac{1}{2}}}
\leq
\overline{\omega}_{m,S}\|U\|_{X^{m+3}},
\end{equation}
where we again use the notation
$ \overline{\omega}_{m,S}= \omega\big(\|V^{a}\|_{{\mathcal W}^{m+S}}+\|U\|_{X^{m+3}}\big).$
\end{lem}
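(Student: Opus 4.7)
The plan is to treat all four estimates by a common ``telescoping'' decomposition of the difference of commutators. Writing schematically each term as $[\pa_{ij},A^\delta]B^\delta-[\pa_{ij},A^{a}]B^{a}$, I would decompose
\[
[\pa_{ij},A^\delta]B^\delta-[\pa_{ij},A^{a}]B^{a}= [\pa_{ij},A^\delta-A^{a}]B^\delta+[\pa_{ij},A^{a}](B^\delta-B^{a}),
\]
so that in each piece one factor is either the full quantity $A^\delta,B^\delta$ (for which we already know how to distribute derivatives thanks to Lemma \ref{annr} and Propositions \ref{pak}, \ref{pak_bis}) while the other factor is a small difference controllable by $\|U\|_{X^{m+3}}$. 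Next I would expand each commutator $[\pa_{ij},\cdot]$ by Leibniz to convert it into a finite sum of products $\pa^{\alpha}(\text{coefficient})\cdot\pa^{\beta}(\text{argument})$ with $1\le|\alpha|\le 2$ and $|\alpha|+|\beta|=2$. Applying $\langle\partial\rangle^{m}$ to such products and using the tame product estimates of Proposition \ref{anr} reduces everything to Sobolev norms already controlled by Lemma \ref{annr}, Proposition \ref{pak}, Proposition \ref{pak_bis}, and the $\mathcal{W}^{m+S}$-bound on $V^{a}$.

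For (sh) and (lh) this scheme is essentially direct: since the differences $v^\delta-v^{a}$ and $Z^\delta-Z^{a}$ are controlled in $H^{1/2}$ with $m+2$ derivatives by Lemma \ref{annr}, and since the small factor in the second half of the splitting is $\pa^\beta\nabla\varphi$ (resp.\ $\pa^\beta\nabla\eta$) — which belongs to a Sobolev space and contributes $\|U\|_{X^{m+3}}$ — the two estimates (\ref{sh}) and (\ref{lh}) follow from Proposition \ref{anr} combined with the fact that the ``big'' factor in each product can always be absorbed either by $\mathcal{W}^{m+S}$-bounds on $V^{a}$ or by the second bound of Lemma \ref{annr} (namely $\|\langle\partial\rangle^{m}(\pa^\gamma v^\delta\psi)\|_{H^{1/2}}\le \overline{\omega}_{m,S}\|\langle\partial\rangle^{m}\psi\|_{H^{1/2}}$, and the analogous one for $Z^\delta$). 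For the product $Zv$ appearing in (lh) one first rewrites $Z^\delta v^\delta-Z^{a} v^{a}=(Z^\delta-Z^{a})v^\delta+Z^{a}(v^\delta-v^{a})$ before applying the same reduction.

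The two remaining estimates (\ref{ch}) and (\ref{dh}) are genuinely more delicate because the inner operator contains the Dirichlet–Neumann operator. I would start by writing $[\pa_{ij},Z^{\cdot}G^{\cdot}]=[\pa_{ij},Z^{\cdot}]G^{\cdot}+Z^{\cdot}[\pa_{ij},G^{\cdot}]$ and similarly $[\pa_{ij},Z^{\cdot}D_{\eta}G^{\cdot}\varphi^{\cdot}]=[\pa_{ij},Z^{\cdot}]D_{\eta}G^{\cdot}\varphi^{\cdot}+Z^{\cdot}[\pa_{ij},D_{\eta}G^{\cdot}\varphi^{\cdot}]$. The ``$[\pa_{ij},Z^{\cdot}]\cdot$'' part reduces to products of derivatives of $Z$ against derivatives of $G\varphi$ (resp.\ $D_\eta G\varphi\cdot\eta$), each of which is handled as in the previous paragraph by combining Lemma \ref{annr} with the $H^{1/2}$ mapping properties of $G$ and of $D_\eta G$ provided by Propositions \ref{pak} and \ref{pak_bis} (including the fundamental theorem of calculus identity $G^\delta-G^{a}=\int_0^1 D_\eta G[\eta^a+s\eta]\varphi\cdot\eta\,ds$ to express $G^\delta-G^{a}$ as a Frechet derivative acting on $\eta\in H^{s}$). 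The genuinely non-local piece $Z^{\cdot}[\pa_{ij},G^{\cdot}]u$ is absorbed using the commutator estimate (\ref{rennes2}): after multiplying by $Z^\delta$ and applying $\langle\partial\rangle^{m}$, one converts this term into commutators $[\pa^\gamma,G^\delta]$ with $|\gamma|\le m+2$ composed with a smooth multiplication by $Z^\delta$, and Lemma \ref{annr} allows one to move the $Z^\delta$ factor outside without loss of regularity. The corresponding piece in (\ref{dh}) is estimated the same way after replacing (\ref{rennes2}) with the commutator of $\pa^\alpha$ with $D_\eta G\cdot h$, which is controlled by differentiating the identity (\ref{rennes1,5}) in $\eta$.

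The main technical obstacle will be the piece $Z^{\delta}[\pa_{ij},G^{\delta}]u-Z^{a}[\pa_{ij},G^{a}]u$ in (\ref{ch}) (and its analogue in (\ref{dh})): the commutator $[\pa_{ij},G]$ is a non-local first order operator whose difference between $\eta^{a}+\eta$ and $\eta^{a}$ requires combining (\ref{rennes2}) with a Taylor expansion via $D_\eta G$, while keeping careful track of which factors live in Sobolev spaces (controlled by $\|U\|_{X^{m+3}}$) and which live only in $\mathcal{W}^{m+S}$. Matching the $H^{1/2}$-regularity index on the left-hand side with the $H^{-1/2}$ output of (\ref{rennes2}) forces one to distribute exactly two derivatives onto the coefficient (the factor of $Z^\delta$ or $\pa\eta$) and to exploit the two ``spare'' derivatives available in the $X^{m+3}$ norm of $U$; this bookkeeping is what ultimately explains the loss of three derivatives in the right-hand side of all four estimates.
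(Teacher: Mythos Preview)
Your telescoping strategy and the treatment of \eqref{sh}, \eqref{lh} are essentially the paper's approach: those two estimates are indeed straightforward consequences of Lemma~\ref{annr} and the product estimates, and the paper leaves them to the reader for exactly that reason.

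The gap is in your handling of the non-local piece $Z^\delta[\partial_{ij},G^\delta]\partial_k\varphi$ (and its analogue in \eqref{dh}). You propose to control it via the commutator estimate \eqref{rennes2}, but that estimate only yields an $H^{-1/2}$ bound, whereas you need $H^{1/2}$. Your closing remark about ``distributing exactly two derivatives onto the coefficient'' does not close this one-derivative mismatch: in the worst Leibniz term all $m$ derivatives from $\langle\partial\rangle^m$ land on the commutator, leaving $Z^\delta$ undifferentiated, so there is nothing to absorb the missing derivative. Moreover, invoking \eqref{rennes2} at level $m+2$ would force $\underline{\omega}$ to depend on $\|\langle\partial\rangle^{m+2}\eta\|_{H^{5/2}}$, which is one derivative beyond what $\|U\|_{X^{m+3}}$ controls.

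The paper does not treat $[\partial_{ij},G]$ as a black-box commutator at all. Instead it expands $\partial_{ij}(ZG\partial_k\varphi)$ explicitly via the chain rule for $G[\eta]$, writing every term as
\[
\partial^{\gamma_0}Z\; D_\eta^n G[\eta]\,\partial_k\partial^\beta\varphi\cdot\bigl(\partial^{\gamma_1}\eta,\dots,\partial^{\gamma_n}\eta\bigr),\qquad |\gamma_i|\le 2,\ |\beta|\le 1.
\]
The difference $I^\delta-I^{a}$ is then split into four pieces (one for the difference of the $Z$-factor, one for $G^\delta-G^{a}$ via the Taylor formula, one for the $\varphi$-slot, and one for the $\eta$-slots). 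Each piece is estimated in $H^{1/2}$ directly by combining \eqref{annr2} with the $\sigma=1/2$ case of the \emph{mapping} bounds \eqref{rennes1,5} (when $\varphi$ is the Sobolev factor) or \eqref{maximum_tris} (when $\varphi^{a}$ is the smooth non-Sobolev factor). The point is that these propositions already give $H^{1/2}$ output with exactly the derivative count you need ($\langle\partial\rangle^{m+3}\varphi$ in $H^{1/2}$ and $\langle\partial\rangle^{m+3}\eta$ in $H^1$), so no commutator estimate for $G$ is required. The same expansion handles \eqref{dh} after noting that $[\partial_{ij},Z D_\eta G\varphi]\cdot\partial_k\eta$ produces terms of the same shape with one of the $\partial^{\gamma_i}\eta$ replaced by $\partial^\gamma\partial_k\eta$, $|\gamma|\le 1$.
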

Note that $v$ and $Z$ both satisfy the estimates of Lemma \ref{annr} and
that the Dirichlet-Neumann operator  $G$ acts as a first order operator in space.
Consequently, the statements  \eqref{ch} and \eqref{sh} are formally  very close.
Since the Dirichlet-Neumann operator which has a nonlinear dependence
in the surface  is much more difficult to deal with 
than the  simple operator $\nabla$, we shall only focus on the proof of  (\ref{ch}).
The proof of   \eqref{sh} which is simpler  and follows the same lines is left to the reader. 
In the same way, since because of Lemma \ref{annr},   $Zv$ behaves as $v$ and $Z$,    the proof of \eqref{lh} which  is very similar and simpler than the one of \eqref{ch}  is left to the reader.
Note that  the proof of \eqref{lh} is even simpler since $\eta$ is smoother than $\varphi$:
we are allowed  to put it in $H^1$. 
\begin{proof}[Proof of Lemma \ref{comh}]
As already explained, we focus on the proof of \eqref{ch} and \eqref{dh}.
Let us start with the proof of \eqref{ch}.
The commutator $[\partial_{ij}, ZG]\partial_{k}\varphi$ can be expanded as a sum of terms under the form
\beq
\label{comh1} I[U]= \partial^{\gamma_{0}} Z \, D^n_{\eta}G[\eta] \partial_{k} \partial^\beta \varphi \cdot
\big(\partial^{\gamma_{1}} \eta, \cdots, \partial^{\gamma_{n}} \eta \big) \eeq
where the indices satisfy the constraints
\beq
\label{indices2}
\forall i, \, 0 \leq i \leq n, \,    |\gamma_{i}|
\leq  2, \quad | \beta | \leq 1.
\eeq  To get  (\ref{ch}), we thus  need to estimate
$\| \langle \partial \rangle^m \big( I^\delta - I^{a} \big) \|_{H^{1 \over 2 }}.$
Towards this, we expand
\begin{eqnarray}
\nonumber
I^\delta - I^{a} & = &   \big(\partial^{\gamma_{0}} Z^\delta - \partial^{\gamma_{0}} Z^{a}\big) 
D_{\eta}^n G^{a} \partial_{k} \partial^\beta \varphi^{a}\cdot \big( \partial^{\gamma_{1}} \eta^{a}, \cdots,
\partial^{\gamma_{n}} \eta^{a} \big) \\
\nonumber  & & +  \partial^{\gamma_{0}} Z^\delta \big( D_{\eta}^n G^\delta - D_{\eta}^n G^{a}\big)   \partial_{k} \partial^\beta \varphi^{a}\cdot \big( \partial^{\gamma_{1}} \eta^{a}, \cdots,
\partial^{\gamma_{n}} \eta^{a} \big) \\
\nonumber & & + \partial^{\gamma_{0}} Z^\delta D_{\eta}^n G^\delta \partial_{k} \partial^\beta \varphi \cdot
\big( \partial^{\gamma_{1}} \eta^{a}, \cdots,
\partial^{\gamma_{n}} \eta^{a} \big) 
\\
& & \nonumber 
+ \mathcal{J} \\
\label{Id-Ia}   & \equiv & I_{1}+ I_{2} + I_{3} + \mathcal{J}
    \end{eqnarray}
where $\mathcal{J}$ is  a sum of terms under the form
$$ \mathcal{J}_{l}=  \partial^{\gamma_{0}} Z^\delta D_{\eta}^n G^\delta \partial_{k}\varphi^{\delta}
\cdot \big( \partial^{\overline{\gamma}_{1}} \eta^{a}, \cdots, \partial^{\overline{\gamma}_{l} } \eta^{a},
 \partial^{\overline{\gamma}_{l+1}} \eta, \cdots 
\partial^{ \overline{\gamma }_{n} } \eta \big),$$
where the $\overline{\gamma}_{i}$ are obtained from the $\gamma_{i}$ by a permutation
 and $l$ is such that $l \leq n-1$.

By using    \eqref{anr2},   \eqref{maximum_pak}, \eqref{annr1} and the fact that $ |\gamma_{0}|\leq 2$, 
we obtain
\begin{eqnarray*}
\|\langle \partial \rangle^m I_{1} \|_{H^{1 \over 2 } }
&   \leq &   C \|\langle \partial \rangle^{m}   D_{\eta}^n G^{a} \partial_{k} \partial^\beta \varphi^{a}\cdot \big( \partial^{\gamma_{1}} \eta^{a}, \cdots,
\partial^{\gamma_{n}} \eta^{a} \big) \|_{W^{1,\infty}} \|\langle \partial \rangle^{m+ 2 } \big(
Z^\delta - Z^{a} \big) \|_{H^{1 \over 2 } } \\
& & 
 \leq  \overline{\omega}_{m, S} \|U\|_{X^{m+3}}.
\end{eqnarray*}
Next,   since  we have from  \eqref{annr2}  that
$$ 
\| \langle \partial \rangle^m \big(   \partial^{\gamma_{0}} Z^\delta
D_{\eta}^n G^\delta \partial_{k} \partial^\beta \varphi \cdot
\big( \partial^{\gamma_{1}} \eta^{a}, \cdots,
\partial^{\gamma_{n}} \eta^{a} \big) \big) \|_{H^{1 \over 2 } }
 \leq \overline{\omega}_{m, S} \|  \langle \partial \rangle^{m } \big(
  D_{\eta}^n G^\delta \partial_{k} \partial^\beta \varphi \cdot
\big( \partial^{\gamma_{1}} \eta^{a}, \cdots,
\partial^{\gamma_{n}} \eta^{a} \big) \big) \|_{H^{1 \over 2} },$$
 we get  by using \eqref{rennes1,5} with $l=n$ that
$$   \|  \langle \partial \rangle^{m } \big(
  D_{\eta}^n G^\delta \partial_{k} \partial^\beta \varphi \cdot
\big( \partial^{\gamma_{1}} \eta^{a}, \cdots,
\partial^{\gamma_{n}} \eta^{a} \big) \big) \|_{H^{1 \over 2} } \leq 
 \overline{\omega}_{m,S}  \| \langle \partial \rangle^{m+ 1} 
  \partial_{k}\partial^\beta  \varphi \|_{H^{1 \over 2 }}
   \leq   \overline{\omega}_{m,S}  \| \langle \partial \rangle^{m+ 3}
   \varphi \|_{H^{1 \over 2 }}.$$
This yields  by using \eqref{indices2} that 
$$ \| \langle \partial \rangle^m I_{3} \|_{H^{1 \over 2}}
 \leq \overline{\omega}_{m,S}  \| \langle \partial \rangle^{m+ 3} \varphi \|_{H^{1 \over 2 }}
  \leq   \overline{\omega}_{m,S} \|U \|_{X^{m+ 3}}.$$
In a similar way, we can use \eqref{annr2} to get
$$ \| \langle \partial \rangle^m  I_{2} \|_{H^{1 \over 2} }
\leq  \overline{\omega}_{m, S} \| \langle \partial \rangle^m \big(
\big( D_{\eta}^n G^\delta  - D_{\eta}^n G^{a} \big)\partial_{k} \partial^\beta \varphi^{a} \cdot
\big( \partial^{\gamma_{1}} \eta^{a}, \cdots,
\partial^{\gamma_{n}} \eta^{a} \big)  \big) \|_{H^1 }.$$
Therefore,  by a new use of the Taylor formula as in \eqref{Itaylor} and  \eqref{maximum_tris}
 we  infer
$$ 
\| \langle \partial \rangle^m  I_{2} \|_{H^{1 \over 2} }
\leq  \overline{\omega}_{m, S} \|\langle \partial \rangle^{m+1}\eta \|_{H^1}\leq   \overline{\omega}_{m, S}  \|U\|_{X^{m+3}}.
$$
Finally, to estimate $\mathcal{J}_{l}$,  since $\varphi^\delta = \varphi + \varphi^{a}$,  we can  also use successively \eqref{annr2} 
 and  \eqref{rennes1,5} for the part involving $\varphi$  or 
    \eqref{maximum_tris}
 since $l \leq n-1$ for the part involving $\varphi^{a}$ to get
$$ \|\langle \partial \rangle^m \mathcal{J}_{l} \|_{H^{1 \over 2}} \leq   \overline{\omega}_{m, S}  \|U\|_{X^{m+3}}.$$

 Consequently, summarizing the previous estimates and \eqref{Id-Ia}, we have proven that
\beq
\label{eId-Ia}
\| \langle \partial \rangle^m \big( I^\delta - I^{a}\big)  \|_{H^{1 \over 2 } }\leq \overline{\omega}_{m,S}
\| U \|_{X^{m+ 3 }}.\eeq
This ends the proof of  \eqref{ch}.

Let us turn to the proof of \eqref{dh}.  The commutator $ \big[ \partial_{ij}, Z D_{\eta} G \varphi \big]
\cdot \partial_{k} \eta$ can be expanded in a sum of terms under the form
$$ \partial^{\gamma_{0}} Z  D_{\eta}^n G[\eta] \partial^\beta \varphi \cdot \big(
\partial^{\gamma_{1}} \eta , \cdots, \partial^{\gamma_{n-1}}\eta, \partial^{\gamma} \partial_{k} \eta \big)$$
where the indices satisfy the constraints
$$ n \geq 1, \quad  |\gamma_{0}| + |\beta| + |\gamma_{1}|+\cdots +  |\gamma_{n-1}| + |\gamma|=2, \quad
|\gamma | \leq 1.$$
Consequently, in view of \eqref{comh1}, \eqref{indices2}, we see that 
the number of derivatives  on $\varphi$ is the same as in \eqref{comh1} and that  
$1 + |\gamma |    \leq 2 $. Consequently, \eqref{dh} also follows from \eqref{eId-Ia}. 
This ends the proof of Lemma \ref{comh}.
\end{proof}
To prove Proposition \ref{R_2}, we shall also need the following statement.
\begin{lem}\label{nh}
For $m\geq 2$ and $S\geq 5$,
\begin{multline*}
\big\|\langle\partial\rangle^m
\big(
\nabla \cdot  \big( D^2A( \nabla \eta^\delta) \cdot \big(
\nabla \pa_{k} \eta^\delta, \nabla \pa_{j} \eta^\delta , \nabla \pa_{i} \eta^\delta \big)\big)
-
\nabla \cdot  \big( D^2A( \nabla \eta^{a}) \cdot
( \nabla \pa_{k} \eta^{a}, \nabla \pa_{j} \eta^{a}, \nabla \pa_{i}\eta^{a}\big)\big)\big)
\big\|_{H^{\frac{1}{2}}}
\\
\leq
\overline{\omega}_{m,S} \|U\|_{X^{m+3}}.
\end{multline*}
\end{lem}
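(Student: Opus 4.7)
The plan is to reduce everything to the algebraic product estimates of Proposition~\ref{anr}, exploiting the fact that $A$ is a smooth function of $\nabla\eta$ and that no non-local operator is present in this particular term (unlike in the analogous commutators with $G[\eta]$ treated in Lemma~\ref{comh}). I first expand the difference by multilinearity, writing $\eta = \eta^\delta - \eta^a$ and
\begin{multline*}
D^2A(\nabla\eta^\delta)\cdot(\nabla\partial_i\eta^\delta,\nabla\partial_j\eta^\delta,\nabla\partial_k\eta^\delta)
- D^2A(\nabla\eta^a)\cdot(\nabla\partial_i\eta^a,\nabla\partial_j\eta^a,\nabla\partial_k\eta^a) \\
=\bigl(D^2A(\nabla\eta^\delta)-D^2A(\nabla\eta^a)\bigr)\cdot(\nabla\partial_i\eta^\delta,\nabla\partial_j\eta^\delta,\nabla\partial_k\eta^\delta) \\
+ D^2A(\nabla\eta^a)\cdot(\nabla\partial_i\eta,\nabla\partial_j\eta^\delta,\nabla\partial_k\eta^\delta)
+ D^2A(\nabla\eta^a)\cdot(\nabla\partial_i\eta^a,\nabla\partial_j\eta,\nabla\partial_k\eta^\delta) \\
+ D^2A(\nabla\eta^a)\cdot(\nabla\partial_i\eta^a,\nabla\partial_j\eta^a,\nabla\partial_k\eta),
\end{multline*}
and then I apply the Taylor formula
$$
D^2A(\nabla\eta^\delta)-D^2A(\nabla\eta^a)=\int_{0}^{1}D^3A(\nabla\eta^a+s\nabla\eta)\cdot\nabla\eta\,ds
$$
to produce an explicit factor of $\nabla\eta$ in the first term as well. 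Thus every term in the decomposition carries at least one explicit factor coming from $\eta$.

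Next I apply $\nabla\cdot$ and $\langle\partial\rangle^m$ and distribute the derivatives by the Leibniz rule. Each resulting monomial is a product of three kinds of factors: (i) a smooth composition of $A$-derivatives with $\nabla\eta^a$ and $\nabla\eta$, which is bounded in $W^{k,\infty}$ for any $k$ by $\omega(\|V^a\|_{\mathcal{W}^{m+S}}+\|U\|_{X^{m+3}})$ (the control on $\nabla\eta$ in $L^\infty$ coming from the Sobolev embedding $H^{m+3}\hookrightarrow W^{1,\infty}$, since $m+3\geq 5$); (ii) factors of the form $\partial^\beta\nabla\partial_\ell\eta^a$, which are bounded in $W^{1,\infty}$ by $\omega(\|V^a\|_{\mathcal{W}^{m+S}})$ for $S\geq 5$; and (iii) exactly one factor of the form $\partial^\gamma\nabla\eta$ or $\partial^\gamma\nabla\partial_\ell\eta$, placed in $H^{1/2}$. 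Putting the first two in $W^{1,\infty}$ and the last in $H^{1/2}$, estimate \eqref{anr2} of Proposition~\ref{anr} yields the desired bound.

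The only point requiring any verification is the derivative count on the $\eta$-factor: after applying $\langle\partial\rangle^m\,\nabla\cdot$ and distributing, the total number of derivatives falling on the $\eta$-factor is at most $m+3$, achieved in the worst case when the outer $\nabla\cdot$ and all of $\langle\partial\rangle^m$ land on a term of type $\nabla\partial_\ell\eta$. This gives an $H^{1/2}$ norm of $\partial^\alpha\eta$ with $|\alpha|\le m+3$, which is controlled by $\|\langle\partial\rangle^{m+3}\eta\|_{H^1}\leq\|U\|_{X^{m+3}}$.

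I do not expect any genuine obstacle here: in contrast with Lemma~\ref{comh}, the surface-tension term $\beta\nabla\cdot(A(\nabla\eta)\nabla\cdot)$ is a local differential operator with smooth symbol in $\nabla\eta$, and consequently the analysis needs only Moser-type product estimates and Sobolev embedding, with no appeal to the Dirichlet--Neumann calculus of Propositions~\ref{pak} and \ref{pak_bis}. This estimate is therefore the most elementary of the semi-linear bounds entering the proof of Proposition~\ref{main_sect}.
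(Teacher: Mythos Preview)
Your proposal is correct and follows essentially the same route as the paper: a telescoping decomposition to isolate at least one explicit factor of $\eta$, the Taylor formula for the $D^2A(\nabla\eta^\delta)-D^2A(\nabla\eta^a)$ piece, a derivative count showing at most $m+3$ derivatives hit $\eta$, and Moser/Proposition~\ref{anr} estimates. The only (immaterial) differences are that the paper telescopes from the other end---placing $\eta^a$ in the three slots of the first term and keeping the coefficient $D^2A(\nabla\eta^\delta)$ in the remaining terms---and that your phrase ``exactly one factor of the form $\partial^\gamma\nabla\eta$'' is a slight overstatement, since after expanding $\eta^\delta=\eta^a+\eta$ several $\eta$-factors may appear; but you already handle this implicitly by noting the $L^\infty$ control on $\nabla\eta$ via Sobolev, and one may also simply iterate \eqref{anr1}.
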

\begin{proof}
We  first expand
$$
D^2A( \nabla (\eta+\eta^{a})) \cdot( \nabla \pa_{k} (\eta+\eta^{a}), \nabla \pa_{j} (\eta+\eta^{a}), \nabla
\pa_{i}(\eta+\eta^{a})\big)
-
D^2A( \nabla \eta^{a}) \cdot
( \nabla \pa_{k} \eta^{a}, \nabla \pa_{j} \eta^{a}, \nabla \pa_{i}\eta^{a}\big)
$$
as
\begin{eqnarray*}
&& (D^2A( \nabla (\eta+\eta^{a}))-D^2A( \nabla \eta^{a}))
\cdot
( \nabla \pa_{k} \eta^{a}, \nabla \pa_{j} \eta^{a}, \nabla
\pa_{i}\eta^{a}\big)
\\
&+& 
D^2A( \nabla (\eta+\eta^{a}))\cdot
( \nabla \pa_{k} \eta, \nabla \pa_{j} \eta^{a}, \nabla \pa_{i}\eta^{a}\big)
\\
&+&
D^2A( \nabla (\eta+\eta^{a}))\cdot
( \nabla \pa_{k} (\eta+\eta^{a}), \nabla \pa_{j} \eta, \nabla \pa_{i}\eta^{a}\big)
\\
&+& 
D^2A( \nabla (\eta+\eta^{a}))\cdot
( \nabla \pa_{k} (\eta+\eta^{a}), \nabla \pa_{j} (\eta+\eta^{a}), \nabla \pa_{i}\eta\big)\,
\end{eqnarray*}
 and we rewrite the first term  as 
\begin{eqnarray*}
& &(D^2A( \nabla (\eta+\eta^{a}))-D^2A( \nabla \eta^{a}))
\cdot
( \nabla \pa_{k} \eta^{a}, \nabla \pa_{j} \eta^{a}, \nabla
\pa_{i}\eta^{a}\big)
\\
& &= 
\int_{0}^{1}
D^3A( \nabla (s\eta+\eta^{a}))\cdot
( \eta,\nabla \pa_{k} \eta^{a}, \nabla \pa_{j} \eta^{a}, \nabla \pa_{i}\eta^{a}\big)ds.
\end{eqnarray*}
Let us observe that in all the above terms, we have at most two derivatives
of $\eta$ involved. We can then  apply $\nabla$ to these terms  which implies
that in  the end we have  at  most  three derivative of $\eta$ involved. Since 
 we need an  $H^{\frac{1}{2}}$  estimate after
applying $\langle\partial\rangle^m$, we can complete the  proof of Lemma~\ref{nh} by 
coming back to the definition of $A$ and using the classical Moser type estimates.
\end{proof}
\subsubsection*{End of the proof of Proposition \ref{R_2}}
The combination of Lemma~\ref{comh}
and Lemma~\ref{nh} completes the proof of Proposition~\ref{R_2}.

Note that   Proposition~\ref{R_1} and Proposition~\ref{R_2} lead to the
following statement.
\begin{cor}\label{stef1}
For $m\geq 2$ and $S\geq 5$, we have 
$$
\|\mathcal{G}^{ijk}_1[\eta, \varphi]\|_{X^m}
\leq \omega\big(\|V^{a}\|_{{\mathcal W}^{m+S}}+\|U\|_{X^{m+3}}\big)\|U\|_{X^{m+3}}\,.
$$
\end{cor}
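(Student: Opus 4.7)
The plan is very short: this corollary is an immediate repackaging of the two preceding propositions. Recalling that $\mathcal{G}^{ijk}_1[\eta,\varphi]=(\mathcal{R}^{ijk})^{\delta}-(\mathcal{R}^{ijk})^{a}$ is a two-component object whose first component is $(\mathcal{R}^{ijk}_1)^{\delta}-(\mathcal{R}^{ijk}_1)^{a}$ (coming from the $\eta$-equation) and whose second component is $(\mathcal{R}^{ijk}_2)^{\delta}-(\mathcal{R}^{ijk}_2)^{a}$ (coming from the $\varphi$-equation), and that by the very definition of the $X^m$ norm
$$
\|W\|_{X^m}^{2}\;\simeq\;\|\langle\partial\rangle^m W_1\|_{H^1}^{2}+\|\langle\partial\rangle^m W_2\|_{H^{1/2}}^{2},
$$
it suffices to control the two components separately in the respective norms.

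First, I would invoke Proposition~\ref{R_1}, which gives
$$
\|\langle\partial\rangle^m\bigl((\mathcal{R}^{ijk}_1)^{\delta}-(\mathcal{R}^{ijk}_1)^{a}\bigr)\|_{H^{1}}\;\leq\;\overline{\omega}_{m,S}\,\|U\|_{X^{m+3}},
$$
for $m\geq 2$ and $S\geq 5$. Next, I would invoke Proposition~\ref{R_2}, which furnishes the companion bound
$$
\|\langle\partial\rangle^m\bigl((\mathcal{R}^{ijk}_2)^{\delta}-(\mathcal{R}^{ijk}_2)^{a}\bigr)\|_{H^{1/2}}\;\leq\;\overline{\omega}_{m,S}\,\|U\|_{X^{m+3}},
$$
under the same assumptions on $m$ and $S$. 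Squaring and adding the two inequalities yields the desired estimate, where $\overline{\omega}_{m,S}=\omega\bigl(\|V^{a}\|_{{\mathcal W}^{m+S}}+\|U\|_{X^{m+3}}\bigr)$.

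There is no real obstacle here: all the work has already been done in Propositions~\ref{R_1} and \ref{R_2}. The only thing worth emphasizing in the write-up is the pairing of norms (first component in $H^1$, second in $H^{1/2}$), which matches exactly the splitting $\mathcal{R}^{ijk}=(\mathcal{R}^{ijk}_1,\mathcal{R}^{ijk}_2)^t$ coming from the kinematic and Bernoulli equations respectively, and which is why the two propositions line up precisely with the definition of $\|\cdot\|_{X^m}$.
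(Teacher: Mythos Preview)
Your proposal is correct and is exactly the argument the paper intends: the corollary is stated immediately after Propositions~\ref{R_1} and \ref{R_2} with the remark that these two propositions ``lead to'' it, and no further proof is given. Your only addition is to spell out explicitly that the $X^m$ norm splits into the $H^1$ norm on the first component and the $H^{1/2}$ norm on the second, which is precisely what makes the two propositions fit together.
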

To end the proof of Proposition \ref{main_sect}, it remains to estimate $\mathcal{G}^{ijk}_{2}$.
We have the following statement.
\begin{prop}\label{shaud}
For $m\geq 2$ and $S\geq 5$, we have the estimate
$$
\|\mathcal{G}^{ijk}_2[\eta, \varphi]\|_{X^m}\leq  \overline{\omega}_{m,S}\|U\|_{X^{m+3}}\,.
$$
\end{prop}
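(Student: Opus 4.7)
The strategy is to expand $\Lambda^\delta-\Lambda^a$ by a Taylor formula in the perturbation $U=(\eta,\varphi)$, and then estimate each resulting term when applied to the smooth (but non-decaying) function $\partial_{ijk}V^a$. Writing
$$
\Lambda^\delta-\Lambda^a=\int_0^1 D\Lambda[V^a+sU]\cdot U\, ds,
$$
and inspecting the entries of $\Lambda[\eta,\varphi]$, the resulting operator is a finite sum of structural types: (i) local terms arising from $A(\nabla\eta)$ and from the coefficients $v[\eta,\varphi]$, $Z[\eta,\varphi]$; (ii) Dirichlet--Neumann terms of the form $(G^\delta-G^a)\psi$ and $(Z^\delta G^\delta (Z^\delta\cdot)-Z^a G^a(Z^a\cdot))\psi$, acting on components of $\partial_{ijk}V^a$. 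By Proposition \ref{Uap}, $\partial_{ijk}V^a\in \mathcal{W}^{m+S-3}_t$, which is smooth but not in the Sobolev scale, so every estimate must place $\partial_{ijk}V^a$ in an $L^\infty$-type norm and put the Sobolev regularity on the perturbation factors coming from $U$.

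For the local terms (type (i)), I would simply apply the tame product estimates of Proposition \ref{anr} together with the estimates on $v^\delta-v^a$ and $Z^\delta-Z^a$ provided by Lemma \ref{annr}. For instance, the contribution of $\beta\nabla\cdot\bigl((A(\nabla\eta^\delta)-A(\nabla\eta^a))\nabla\partial_{ijk}\eta^a\bigr)$ is, after a Taylor expansion of $A$, of the form $\nabla\cdot\bigl(R(\nabla\eta^a,\nabla\eta)\nabla\eta\cdot\nabla\partial_{ijk}\eta^a\bigr)$ with $R$ smooth; applying $\langle\partial\rangle^m$ and using \eqref{anr2} places the $\mathcal{W}^{m+S}$ norm of $V^a$ against the $X^{m+3}$ norm of $U$, giving the required bound in $H^{1/2}$. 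The entries involving $v\cdot\nabla$, $Z\nabla\cdot v$ and the coefficient $Z\partial_x v_\varepsilon$ type terms are handled in exactly the same manner using \eqref{annr1}--\eqref{annr2}.

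The main obstacle is type (ii), and it is here that the estimates of Proposition \ref{pak_bis} are essential. The typical worst term is $(G^\delta-G^a)\partial_{ijk}\varphi^a$, which by the fundamental theorem of calculus equals
$$
\int_0^1 D_\eta G[\eta^a+s\eta]\,\partial_{ijk}\varphi^a\cdot\eta\, ds.
$$
Here $\partial_{ijk}\varphi^a$ is not in any Sobolev space but lies in $\mathcal{W}^{m+S-3}$, while $\eta$ belongs to the Sobolev scale. This is exactly the configuration of \eqref{maximum_tris} with $n=1$, $l=0$: one obtains an $H^1$ bound by
$$
\|\langle\partial\rangle^m D_\eta G[\eta^a+s\eta]\partial_{ijk}\varphi^a\cdot\eta\|_{H^1}\leq \omega\bigl(\|\langle\partial\rangle^{m+3}\eta\|_{H^1}+\|V^a\|_{\mathcal{W}^{m+S}}\bigr)\|\langle\partial\rangle^{m+1}\eta\|_{H^1}\|\partial_{ijk}\varphi^a\|_{\mathcal{W}^{m+3}},
$$
which is exactly of the form $\overline{\omega}_{m,S}\|U\|_{X^{m+3}}$. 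The same scheme applies to $(G^\delta-G^a)(Z^a\partial_{ijk}\eta^a)$ (placing the smooth factor $Z^a\partial_{ijk}\eta^a$ in $\mathcal{W}$ via Proposition \ref{pak_bis}), to $G^\delta((Z^\delta-Z^a)\partial_{ijk}\eta^a)$ (via \eqref{rennes} and \eqref{annr1}), and to their symmetric $Z\cdot G(\cdot)$ counterparts. For the first component of $\mathcal{G}^{ijk}_2$, which requires an $H^1$ bound, one uses \eqref{maximum_tris}; for the second component, which needs only an $H^{1/2}$ bound, one may use the $H^{1/2}$ estimate \eqref{rennes1,5} with the same scheme.

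Assembling the estimates of type (i) and (ii) over the finitely many structural terms in the expansion of $\Lambda^\delta-\Lambda^a$, summing on $i,j,k$, and absorbing all smooth factors of $V^a$ into $\omega(\|V^a\|_{\mathcal{W}^{m+S}}+\|U\|_{X^{m+3}})$ yields Proposition \ref{shaud}. The delicate point throughout is that none of the terms contains more than two derivatives on the Sobolev factor inside a Frechet derivative of $G$ and no more than three derivatives on the Sobolev component of $\varphi$, so that the hypotheses of Proposition \ref{pak_bis} (which determine the requirement $S\geq 5$) are met.
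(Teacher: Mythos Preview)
Your plan is correct and follows the same route as the paper: the paper's proof simply writes out the two components of $(\Lambda^\delta-\Lambda^a)\partial_{ijk}V^a$ explicitly (the functions $I$ and $J$ in \eqref{ll1}--\eqref{ll2}) and then remarks that every term is of the same type as those already handled in Lemmas~\ref{comh} and~\ref{nh}, so that Propositions~\ref{pak}, \ref{pak_bis} and Lemma~\ref{annr} suffice. Your Taylor-formula framing and classification into ``local'' and ``Dirichlet--Neumann'' terms is exactly this, spelled out in more detail.

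One small correction: for the Dirichlet--Neumann contributions in the \emph{second} component where the operator acts on the non-Sobolev function $\partial_{ijk}\varphi^a$ (e.g.\ the piece $Z^a(G^\delta-G^a)\partial_{ijk}\varphi^a$ coming from $Z G\partial_{ijk}\varphi^a$), the estimate \eqref{rennes1,5} is not applicable, since it requires the argument $u$ to lie in the Sobolev scale. You must use \eqref{maximum_tris} from Proposition~\ref{pak_bis} there as well, exactly as you do for the first component; its $H^1$ bound of course controls the needed $H^{1/2}$. Your citation of \eqref{rennes} and \eqref{rennes1,5} is correct only for those pieces, such as $G^\delta\bigl((Z^\delta-Z^a)\partial_{ijk}\eta^a\bigr)$, in which the argument of $G$ genuinely lies in a Sobolev space.
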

\begin{proof}
We need to show that
\begin{equation}\label{ll1}
\|\langle\pa\rangle^m(I[U+V^{a}]-I[V^{a}])\|_{H^{1}}\leq  \overline{\omega}_{m,S} \|U\|_{X^{m+3}}
\end{equation}
and
\begin{equation}\label{ll2}
\|\langle\pa\rangle^m(J[U+V^{a}]-J[V^{a}])\|_{H^{\frac{1}{2}}}\leq  \overline{\omega}_{m,S}  \|U\|_{X^{m+3}},
\end{equation}
where
$$
I[\eta,\varphi]=-\nabla\cdot\big(v[\eta,\varphi]\pa_{ijk}\eta^{a}\big)-G[\eta]\big(Z[\eta,\varphi]\pa_{ijk}\eta^{a}\big)+
G[\eta]\pa_{ijk}\varphi^{a}
$$
and
\begin{eqnarray*}
J[\eta,\varphi] & = &\beta\nabla\cdot\big(A(\nabla\eta)\nabla\pa_{ijk}\eta^{a}\big)-
Z[\eta,\varphi]G[\eta]\big(Z[\eta,\varphi]\pa_{ijk}\eta^{a}\big)
\\
& &
-Z[\eta,\varphi]\, \nabla\cdot
v[\eta,\varphi]\,\pa_{ijk}\eta^{a}-v[\eta,\varphi]\cdot\nabla\pa_{ijk}\varphi^{a}+
Z[\eta,\varphi]G[\eta]\pa_{ijk}\varphi^{a}\,.
\end{eqnarray*}
Again, we  observe that to get (\ref{ll1}) and (\ref{ll2}), it suffices
to use again Proposition~\ref{pak}, Proposition~\ref{pak_bis} and Lemma~\ref{annr}.
All the terms that we have to handle are very similar to the ones that  we  have
estimated in the proofs of Lemma \ref{comh} and Lemma \ref{nh}, consequently, we shall not
give more details.
This ends  the proof of Proposition~\ref{shaud}.
\end{proof}
\subsubsection*{End of the proof of Proposition \ref{main_sect} }
It suffices to 
combine the statements of  Corollary~\ref{stef1} and Proposition~\ref{shaud} 
in view of \eqref{defG}.
\subsection{Estimates of the subprincipal terms}
In this subsection, we turn to the estimates on the subprincipal term of 
\eqref{Ueq} namely the term
$(\mathcal{Q}^{ijk})^\delta - (\mathcal{Q}^{ijk})^a$.
\begin{prop} \label{Q}
For $m \geq 2$ and $S\geq 5$, we have 
\beq\label{Q1}
\| \langle \partial \rangle^m \big( (\mathcal{Q}_1^{ijk})^\delta -(\mathcal{Q}_1^{ijk})^{a} \big)\|_{H^{-\frac{1}{2}}}
\leq  \overline{\omega}_{m,S}\| U\|_{X^{m+2}}\,.
\eeq
and
\begin{equation}\label{Q2}
\| \langle \partial  \rangle^m \big((\mathcal{Q}_2^{ijk})^\delta -(\mathcal{Q}_2^{ijk})^{a}\big)\|_{H^{-1}}
\leq  \overline{\omega}_{m ,S} \| U\|_{X^{m+2}}\,.
\end{equation}
\end{prop}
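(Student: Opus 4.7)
The plan is to expand each difference multilinearly into terms carrying at least one perturbation factor, and then apply the estimates of Propositions~\ref{pak} and \ref{pak_bis} with the weaker Sobolev exponent on the left-hand side, which exactly absorbs the one derivative lost in the operator $D_{\eta}G$ (resp.\ in the divergence). Concretely, writing $\eta^\delta=\eta^a+\eta$ and $\varphi^\delta=\varphi^a+\varphi$ and using the multilinearity of the map $(\eta,\varphi)\mapsto D_{\eta}G[\eta]u\cdot h$, we expand a generic summand of $\mathcal{Q}_1^{ijk}$ as
$$
D_{\eta}G^\delta \partial_{ij}\varphi^\delta\cdot\partial_{k}\eta^\delta-D_{\eta}G^a \partial_{ij}\varphi^a\cdot\partial_{k}\eta^a
=T_1+T_2+T_3,
$$
where $T_1=D_{\eta}G^\delta \partial_{ij}\varphi\cdot\partial_{k}\eta^\delta$ (contains $\varphi$), $T_2=D_{\eta}G^\delta \partial_{ij}\varphi^a\cdot\partial_{k}\eta$ (contains $\eta$), and $T_3=(D_{\eta}G^\delta-D_{\eta}G^a)\partial_{ij}\varphi^a\cdot\partial_{k}\eta^a$.

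The key ingredient for $\mathcal{Q}_1$ is the estimate \eqref{rennes1,5} of Proposition~\ref{pak} with $n=1$, $l=0$ and $\sigma=-1/2$, which gives
$$
\|\langle\partial\rangle^m(D_{\eta}G[\eta]u\cdot h)\|_{H^{-1/2}}\leq \underline{\omega}\,\|\langle\partial\rangle^m u\|_{H^{1/2}}\,\|\langle\partial\rangle^{m+1}h\|_{H^1}.
$$
With $u=\partial_{ij}\varphi$, $h=\partial_{k}\eta$ one controls $u$ by $\|U\|_{X^{m+2}}$ (namely $\varphi$ with $m+2$ derivatives in $H^{1/2}$) and $h$ by $\|U\|_{X^{m+2}}$ (namely $\eta$ with $m+2$ derivatives in $H^1$), which gives the bound for $T_1$ when $\partial_k\eta^\delta$ is replaced by $\partial_k\eta$; when it is $\partial_k\eta^a$ the $\mathcal{W}^{m+S}$ bound applies. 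For $T_2$ the function $\partial_{ij}\varphi^a$ lies in $\mathcal{W}^{m+S}$ but not in any Sobolev space, so we invoke instead \eqref{maximum_tris} of Proposition~\ref{pak_bis} with $n=1$, $l=0$, putting $\eta$ in $H^1$ and $\varphi^a$ in $\mathcal{W}^{m+3}$ (after using $H^1\hookrightarrow H^{-1/2}$). For $T_3$ we write the fundamental-theorem-of-calculus expansion
$$
T_3=\int_0^1 D_{\eta}^2G[\eta^a+s\eta]\partial_{ij}\varphi^a\cdot(\partial_{k}\eta^a,\eta)\,ds,
$$
and apply \eqref{maximum_tris} with $n=2$, $l=1$ (placing $\eta$ in $H^1$, and both $\partial_{k}\eta^a$ and $\varphi^a$ in $\mathcal{W}^{m+3}$). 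Summing these estimates over the circular permutations and over the three types of terms proves \eqref{Q1}.

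For $\mathcal{Q}_2^{ijk}$ the analysis is purely classical: $\mathcal{Q}_2^{ijk}=\beta\nabla\cdot F[\eta]$ with $F$ trilinear in derivatives of $\eta$ up to order three (the highest-order factor being $\partial^2\nabla\eta$, matched with the smoother factors $DA(\nabla\eta)$ and $\partial\nabla\eta$). Using $\|\nabla\cdot G\|_{H^{-1}}\leq\|G\|_{L^2}$ reduces the bound to $\|\langle\partial\rangle^m(F^\delta-F^a)\|_{L^2}$. Expanding the difference multilinearly, every resulting term carries one perturbation factor in $\eta$; the worst case involves $m+3$ derivatives on $\eta$ in $L^2$, which is exactly $\|U\|_{X^{m+2}}$, the other factors being bounded in $L^\infty$ by Moser-type product estimates (parallel to Proposition~\ref{anr}) and by the $\mathcal{W}^{m+S}$ norm of $V^a$. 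The rational function $A$ is handled through its Taylor expansion about $\nabla\eta^a$, producing an integral remainder of the same form.

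The only technical subtlety is the term $T_2$: since $\varphi^a$ is not in the Sobolev scale (the solitary wave component $\varphi_\eps$ is only bounded, not decaying), the plain Proposition~\ref{pak} is unusable there, and one must invoke the mixed Sobolev/Hölder bound \eqref{maximum_tris} of Proposition~\ref{pak_bis}. This is the main obstacle; once it is recognized, all remaining steps are routine applications of \eqref{rennes1,5}, \eqref{maximum_tris}, and standard product inequalities, together with the Taylor expansion trick already used in the proof of Proposition~\ref{R_1}. The loss of $1/2$ (resp.\ $1$) derivative on the left-hand side is what allows us to only demand $\|U\|_{X^{m+2}}$ rather than $\|U\|_{X^{m+3}}$ as in Proposition~\ref{main_sect}.
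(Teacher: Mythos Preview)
Your proposal is correct and follows essentially the same approach as the paper's own proof. The paper uses a slightly different ordering of the multilinear expansion (it groups the terms according to which factor carries the perturbation $\eta$ rather than $\varphi$ first), but the ingredients are identical: Proposition~\ref{pak} (specifically \eqref{rennes1,5} with $\sigma=-1/2$) for the terms with $\varphi$ in the Sobolev slot, Proposition~\ref{pak_bis} (specifically \eqref{maximum_tris}) for the terms where $\varphi^a$ must be kept in $\mathcal{W}$, the Taylor integral for the operator difference $D_\eta G^\delta-D_\eta G^a$, and for \eqref{Q2} the reduction $\|\nabla\cdot G\|_{H^{-1}}\le\|G\|_{L^2}$ followed by Moser-type product estimates. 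You have also correctly identified the main technical point, namely that $\varphi^a$ is not in $H^s(\R^2)$ and therefore forces the use of \eqref{maximum_tris}.
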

\begin{proof}
We first prove \eqref{Q1}.
Thanks to \eqref{Q1eq}, we need to estimate 
$\| \langle \partial \rangle^m I_{ijk} \|_{H^{-\frac{1}{2}}}$
with 
$$
I_{ijk}= D_\eta G^\delta \pa_{jk}\varphi^\delta \cdot\pa_i \eta^\delta-
D_\eta G^{a}\pa_{jk}\varphi^{a}\cdot\pa_i\eta^{a}
$$
(and similar expression by cycle change of $i,j,k$).  We can expand this expression 
as
$$
(D_\eta G^\delta -D_\eta G^{a})\pa_{jk}\varphi^{a}\cdot\pa_i\eta^{a}
+
D_\eta G^\delta \pa_{jk}\varphi\cdot\pa_i\eta^{a}
+
D_\eta G^\delta\pa_{jk}\varphi^\delta\cdot\pa_i\eta\,.
$$
Now we observe that each term in the above decomposition is in the scope of
applicability of Proposition~\ref{pak} or Proposition~\ref{pak_bis}.
Indeed the first term can be estimated by invoking Proposition~\ref{pak_bis}
after writing
$$
(D_\eta G^\delta -D_\eta
G^{a})\pa_{jk}\varphi^{a}\cdot\pa_i\eta^{a}
=\int_{0}^1D_\eta^2  G[s\eta+\eta^{a}]\pa_{jk}\varphi^{a}\cdot(\pa_i\eta^{a},\eta)ds.
$$
Observe that in this place we can afford to crudely estimate  the
$H^{-\frac{1}{2}}$  norm by the $H^1$ norm  controlled by Proposition~\ref{pak_bis}.
The second term can be estimated in $H^{-\frac{1}{2}}$ by invoking  Proposition~\ref{pak}.
The third term can be written as
\begin{equation}\label{hhrr}
D_\eta G^\delta\pa_{jk}\varphi\cdot\pa_i\eta+D_\eta G^\delta \pa_{jk}\varphi^{a}\cdot\pa_i\eta.
\end{equation}
Then the first term   in the decomposition (\ref{hhrr}) can be estimated in
$H^{-\frac{1}{2}}$ by using Proposition~\ref{pak} while the second term is in the scope of applicability
of Proposition~\ref{pak_bis}, again after crudely   estimating   the 
$H^{-\frac{1}{2}}$ norm by the $H^1$ norm. This ends  the proof of \eqref{Q1}.

Let us turn to the proof of \eqref{Q2}.  By using \eqref{Q2eq}, 
a duality argument and an integration by parts, we obtain that the issue is  to evaluate
\begin{equation}\label{maik}
\| \langle \partial \rangle^m
(DA(\nabla(\eta^\delta)\cdot(\pa_i\nabla\eta^\delta ,\pa_{jk}\nabla \eta^\delta )-
DA(\nabla(\eta^{a}))\cdot(\pa_i\nabla\eta^{a},\pa_{jk}\nabla\eta^{a})))\|_{L^2}
\end{equation}
(and similar expression by cycle change of $i,j,k$).
For that purpose, it suffices to observe that in the definition of the
expression (\ref{maik}) there are at most $m+3$ derivatives of $\eta$ involved and that at least one of them
is a spatial derivative. Then the estimate of the $L^2$ norm of (\ref{maik}) follows from
the Moser type estimates of Proposition \ref{anr}  after some decompositions in the spirit of the proof of
Lemma~\ref{nh}. This completes the proof of  Proposition~\ref{Q}.
\end{proof}

\subsection{The quadratic form associated to $L^\delta$}
The last step before turning to the proof of the energy estimates is  the study of the quadratic form
associated to the operator $L^\delta= L[U^\delta]$ which arises as the main part of \eqref{W}.

\begin{prop}
\label{LdeltaQ}
We have the estimates
\beq
\label{minor}
(L^\delta W,W)+\|W\|_{L^2}^2\geq { 1 \over  \overline{\omega}_{2,5}}\|W\|_{X^0}^2\,
\eeq
\beq
\label{major}
|(L^\delta W,V)|\leq
\overline{\omega}_{2,5}\|W\|_{X^0}\|V\|_{X^0}\,.
\eeq 
Moreover 
for $m\geq 2$ and $S\geq 5$ if $\partial^\alpha$ a space-time derivative such that $|\alpha|\leq m$, we have
\beq\label{major2}
\big|\big(\big[\partial^\alpha,L^\delta \big]W,V\big)\big|\leq
\overline{\omega}_{m,S}\|W\|_{X^{k-1}}\|V\|_{X^0}\,.
\eeq
\end{prop}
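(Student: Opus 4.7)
The strategy is to treat $L^\delta$ as a $2\times 2$ block operator and estimate block by block, exploiting: (i) the uniform positive-definiteness of the matrix $A(\nabla\eta^\delta)$ inside the top-left entry, (ii) the positivity and sharp bounds of the Dirichlet--Neumann operator $G^\delta=G[\eta^\delta]$ provided by Lemma~\ref{G}, and (iii) the symmetry $B^\star=-\nabla\cdot(v^\delta\,\cdot)+\partial_x$ of the off-diagonal entry $B=v^\delta\cdot\nabla-\partial_x$, which allows integration by parts in $(L^\delta W,V)$. Throughout, the coefficients $v^\delta,Z^\delta,\nabla\eta^\delta$ and their derivatives are controlled in $L^\infty$ by $\overline{\omega}_{2,5}$ via Sobolev embedding applied to $\|U\|_{X^{m+3}}$ together with Proposition~\ref{pak_bis} applied to $V^{a}$; their Sobolev norms at order $m$ are controlled by $\overline{\omega}_{m,S}$ thanks to the analysis already carried out in Proposition~\ref{main_sect}.

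For \eqref{major}, the contribution of the top-left entry is handled by integrating by parts once: $(-\mathcal{P}(\nabla\eta^\delta)W_1,V_1)=\beta(A(\nabla\eta^\delta)\nabla W_1,\nabla V_1)+\alpha(W_1,V_1)$, which is bounded by $C\|W_1\|_{H^1}\|V_1\|_{H^1}$ with constant depending on $\|A(\nabla\eta^\delta)\|_{L^\infty}$. The lower-order zeroth-order multiplier $(v^\delta\cdot\nabla Z^\delta)+(\partial_t-\partial_x)Z^\delta$ is in $L^\infty$, again by $\overline{\omega}_{2,5}$. The off-diagonal entries contribute $((v^\delta-e_x)\cdot\nabla W_2,V_1)$ and its symmetric counterpart, which are estimated by integrating the gradient across: $|(W_2,-\nabla\cdot((v^\delta-e_x)V_1))|\leq \|W_2\|_{H^{1/2}}\|(v^\delta-e_x)V_1\|_{H^{1/2}}\leq \overline{\omega}_{2,5}\|W_2\|_{H^{1/2}}\|V_1\|_{H^1}$. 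Finally the bottom-right block $(G^\delta W_2,V_2)$ is bounded by $\overline{\omega}_{2,5}\|W_2\|_{H^{1/2}}\|V_2\|_{H^{1/2}}$ using the upper bound of Lemma~\ref{G}.

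For \eqref{minor}, the diagonal terms are the source of coercivity. First, $(-\mathcal{P}(\nabla\eta^\delta)W_1,W_1)=\beta(A(\nabla\eta^\delta)\nabla W_1,\nabla W_1)+\alpha\|W_1\|_{L^2}^2\geq c\|W_1\|_{H^1}^2$, since the eigenvalues of $A(V)$ are bounded below by a positive constant when $|V|$ is bounded. Second, $(G^\delta W_2,W_2)\geq c\big\||\nabla|(1+|\nabla|)^{-1/2}W_2\big\|_{L^2}^2$ by Lemma~\ref{G}, which is equivalent to $\|W_2\|_{H^{1/2}}^2-C\|W_2\|_{L^2}^2$. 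The zeroth-order lower-order multiplier in the $(1,1)$ entry contributes at most $C\|W_1\|_{L^2}^2$ and is absorbed into the $\|W\|_{L^2}^2$ of the statement. The off-diagonal cross terms are bounded as in the previous step by $C\|W_1\|_{H^1}\|W_2\|_{H^{1/2}}$, which, via Young's inequality, absorbs into a small fraction of the diagonal coercive term; this yields \eqref{minor}.

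For \eqref{major2}, expand the commutator blockwise. Commuting $\partial^\alpha$ with the second-order entry produces $-\beta\nabla\cdot\big([\partial^\alpha,A(\nabla\eta^\delta)]\nabla W_1\big)$ plus a zeroth-order remainder; pairing with $V_1$ and integrating by parts once yields $\beta\big([\partial^\alpha,A(\nabla\eta^\delta)]\nabla W_1,\nabla V_1\big)$, which by Moser-type product estimates of Proposition~\ref{anr} is controlled by $\overline{\omega}_{m,S}\|W_1\|_{H^m}\|V_1\|_{H^1}\leq \overline{\omega}_{m,S}\|W\|_{X^{m-1}}\|V\|_{X^0}$. Commutators with the multiplication operators by $v^\delta,Z^\delta,\partial_x v^\delta$ etc.\ follow from the same product estimates. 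The main technical point is the commutator with the bottom-right entry $G^\delta$: here we invoke the 2D commutator estimate \eqref{rennes2} of Proposition~\ref{pak}, which gives $\|[\partial^\alpha,G^\delta]W_2\|_{H^{-1/2}}\leq \overline{\omega}_{m,S}\|\langle\partial\rangle^{m-1}W_2\|_{H^{1/2}}$; pairing in $H^{-1/2}\times H^{1/2}$ duality against $V_2$ concludes. The hard part in this step is precisely the commutator with $G^\delta$, since the Dirichlet--Neumann operator depends nonlinearly on the surface; everything else reduces to routine Moser calculus. However, the heavy lifting has already been done in Section~\ref{DirNeumf}, so \eqref{major2} follows by assembling these ingredients.
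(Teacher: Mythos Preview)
Your blockwise strategy is the same as the paper's and the treatment of \eqref{major} and \eqref{major2} is essentially correct: integrating by parts once for the $\mathcal P^\delta$-block, using Lemma~\ref{G} for the $G^\delta$-block, and invoking the commutator bound \eqref{rennes2} for $[\partial^\alpha,G^\delta]$ are exactly the right moves.

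There is, however, a genuine gap in your argument for \eqref{minor}. You bound the off-diagonal contribution by $C\|W_1\|_{H^1}\|W_2\|_{H^{1/2}}$ and then claim Young's inequality absorbs it into ``a small fraction of the diagonal coercive term.'' This does not work: the coercive constants coming from Lemma~\ref{P} and Lemma~\ref{G} are of size $1/\overline{\omega}_{2,5}$, while the constant $C$ in front of the cross term (which carries $v^\delta$) is of size $\overline{\omega}_{2,5}$. Young's inequality applied to $\overline{\omega}\,\|W_1\|_{H^1}\|W_2\|_{H^{1/2}}$ always leaves at least one of the two pieces with a coefficient $\gtrsim\overline{\omega}$, which cannot be absorbed by $\tfrac{1}{\overline{\omega}}\|W\|_{X^0}^2$.

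The fix is to avoid the $H^{1/2}$-pairing for the cross term and use the cruder $L^2$-pairing instead: since $\partial_x W_1-\nabla\cdot(v^\delta W_1)\in L^2$ with norm $\lesssim\overline{\omega}\,\|W_1\|_{H^1}$, one has
\[
\big|\,2\big(\partial_x W_1-\nabla\cdot(v^\delta W_1),W_2\big)\,\big|
\;\le\; \overline{\omega}\,\|W_1\|_{H^1}\|W_2\|_{L^2}
\;\le\; \tfrac{1}{2\overline{\omega}}\|W_1\|_{H^1}^2+\overline{\omega}^{\,3}\|W_2\|_{L^2}^2.
\]
The first term is now genuinely a small fraction of the coercive $\tfrac{1}{\overline{\omega}}\|W_1\|_{H^1}^2$, and the second term is the reason the statement carries the extra $+\|W\|_{L^2}^2$ on the left-hand side (after renaming $\overline{\omega}$). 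This is precisely how the paper handles it; once you make this correction, your argument goes through.
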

\begin{proof}
To prove \eqref{minor}, 
as a preliminary, we shall first check the positivity 
of  the second order  operator $\mathcal{P}= - \nabla \cdot\big( A  \nabla \big)+ \alpha$.
This is an elliptic operator as given by: 
\begin{lem}\label{P}
There exists $c>0$ such that for every $u\in H^1(\R^2)$,
\begin{eqnarray*}
\big( - \nabla\cdot(A(\nabla\eta)\nabla u)+\alpha u,u\big) & \geq & 
(1+\|\nabla \eta\|_{L^{\infty}(\R^2)})^{-3}\|\nabla u\|^2+\alpha\|u\|^2
\\
& \geq & c(1+\|\nabla \eta\|_{L^{\infty}(\R^2)})^{-3}\|u\|_{H^1(\R^2)}^2\,.
\end{eqnarray*}
\end{lem}
\begin{proof}[Proof of Lemma \ref{P}]
Since
$$
A(\nabla\eta)=
(1+|\nabla\eta|^2)^{-\frac{3}{2}}
\left(\begin{array}{cc} 
1+(\partial_y\eta)^2 & -\partial_x\eta\partial_y\eta
\\
-\partial_x\eta\partial_y\eta &  1+(\partial_x\eta)^2 
\end{array}\right),
$$
the statement follows from an integration by parts  and  the inequality
$$
(\partial_y\eta)^2(\partial_x u)^2+(\partial_x\eta)^2(\partial_y u)^2
-2(\partial_x u)(\partial_y u)(\partial_x\eta)(\partial_y\eta)\geq 0.
$$
This completes the proof of Lemma~\ref{P}.
\end{proof}
\bigskip
Note that 
 by Sobolev embedding, we have that
\beq
\label{lsob} \|\nabla \eta^\delta \|_{L^\infty} \lesssim \|\eta \|_{H^4} + \|\nabla \eta^a  \|_{L^\infty}
\leq  \bar{\omega}_{0, 1}.
\eeq
Consequently, 
with the notation $W=(W_1,W_2)$,  we can use  
 Lemma~\ref{G} and  Lemma~\ref{P},  to get 
\begin{multline*}
(L^\delta W,W)
\geq\frac{1}{
\overline{\omega}_{2,5}}
\Big(\|W_1\|_{H^1}^2+\Big\|\frac{| \nabla |}{(1+|\nabla|)^{\frac{1}{2}}}W_2\Big\|_{L^2}^2\Big)
\\
-\overline{\omega}_{2,5 }\|W_1\|_{L^2}^2+2\Big(\partial_x W_1-\nabla\cdot(W_1 v^\delta),W_2\Big).
\end{multline*}
Therefore (using in particular inequality (\ref{Young})) we get 
\begin{equation*}
(L^\delta W,W)\geq
\frac{1}{\overline{\omega}_{2,5}}\Big(\|W_1\|_{H^1}^2+\| |\nabla |^{\frac{1}{2}}W_2\|_{L^2}^2\Big)
-\overline{\omega}_{2,5}\|W\|_{L^2}^2\,.
\end{equation*}
Using that $\|W_2\|_{H^{1/2}}\approx (\|W_2\|_{L^2}+\||\nabla |^{\frac{1}{2}}W_2\|_{L^2})$, we obtain that
\begin{equation*}
(L^\delta W,W)\geq \frac{1}{\overline{\omega}_{2,5}}\Big(\|W_1\|_{H^1}^2+\|W_2\|_{H^{\frac{1}{2}}}^2\Big)
-\overline{\omega}_{2,5}\|W\|_{L^2}^2\,
\end{equation*}
which in turn implies the claimed inequality.

To prove \eqref{major}, let us  write
$W=(W_1,W_2)\in H^1\times H^{\frac{1}{2}}$ and $V=(V_1,V_2)\in H^1\times H^{\frac{1}{2}}$,  then
we have 
\begin{eqnarray*}
(L^\delta W,V) & = & ({\mathcal P}^\delta W_1,V_1)+(G^\delta W_2,V_2)+\big(((v^\delta\cdot \nabla Z^\delta)+(\pa_t-\pa_x)Z^\delta)\,
W_1,V_1\big)
\\
&&
-(\nabla\cdot(v^\delta W_1)-\pa_x W_1,V_2)-(\nabla\cdot(v^\delta V_1)-\pa_x V_1,W_2)\,.
\end{eqnarray*}
We now estimate each term in the above decomposition.
Coming back to the definition of ${\mathcal P}$, using an integration by parts
and  \eqref{lsob} we have 
$$
|({\mathcal P}^\delta W_1,V_1)|\leq 
\overline{\omega}_{2, 5}\|W_1\|_{H^1}\|V_1\|_{H^1}\,.
$$
Next using Lemma~\ref{G}, we also have that 
$$
|(G^\delta W_2,V_2)|
\leq
\overline{\omega}_{2,5}\|W_2\|_{H^\frac{1}{2}}\|V_2\|_{H^\frac{1}{2}}.
$$ 
Next, we can use the Sobolev embedding and Lemma~\ref{annr} to get
$$
\big|\big(((v^\delta \cdot \nabla Z^\delta )+(\pa_t-\pa_x)Z^\delta)\,W_1,V_1\big)\big|\leq
\overline{\omega}_{2, 5} \|W_1\|_{H^1}\|V_1\|_{H^1}.
$$
The last two terms in the decomposition can be estimated by invoking the
inequality
$$
|(\nabla\cdot(v^\delta V_1)-\pa_x V_1,W_2)|
\leq
\overline{\omega}_{2,5}\|V_1\|_{H^1}\|W_2\|_{H^\frac{1}{2}}
$$
which is a direct consequence of the Sobolev embedding and Lemma~\ref{annr}.
This completes the proof of the first inequality in our statement.
Let us now turn to the proof of the second inequality.
We can first write
\begin{eqnarray*}
([\pa^\alpha,L^\delta ]W,V) & \leq & |([\pa^\alpha,{\mathcal
P^\delta}]W_1,V_1)|
+
|([\pa^\alpha, G^\delta]W_2,V_2)|
\\
&&
+|\big([\pa^\alpha,((v^\delta\cdot \nabla Z^\delta)+(\pa_t-\pa_x)Z^\delta)]
W_1,V_1\big)|
\\
&&
+|(\nabla\cdot([\pa^\alpha, v^\delta]W_1),V_2)| + |(\nabla\cdot([\pa^\alpha,v^\delta]V_1),W_2)|\,.
\end{eqnarray*}
Now each term in the above decomposition can be estimated  as in the
proof of the first inequality by taking the advantage of the commutator structure and thus by using
Lemma \ref{comh}.
This completes the proof of  Proposition~\ref{LdeltaQ}.
\end{proof}
\subsection{Energy estimates for the main part of \eqref{Ueq}}\label{energien}
Recall also that $U(0)=0$. The goal of this section is to prove the following statement.
\begin{prop}\label{energy}
For $m\geq 2$, $S \geq 5$,  a smooth solution  of (\ref{lionia}) satisfy the estimate
\begin{multline*}
\| U(t)\|^2_{X^{m+3}} \leq \omega\Big(\|R^{ap}\|_{{X}^{m+3}_t}+\|V^{a}\|_{{\mathcal W}^{m+S}_t}+\|U\|_{X^{m+3}_t}\Big)
\\
\times\Big( \| R^{ap}\|^2_{{X}^{m+3}_t}
+\int_{0}^t\big(\|U(\tau)\|^2_{X^{m+3}}+
\|F(\tau)\|^2_{X^{m+3}}\big)d\tau\Big).
\end{multline*}
\end{prop}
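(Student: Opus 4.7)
The plan is to perform a weighted energy estimate on the equation \eqref{W} satisfied by $W_{ijk}=PU_{ijk}$ using the quadratic form associated with $L^\delta$, then sum over all triples $(i,j,k)\in\mathcal{I}^3$ and all space-time derivatives $\partial^\alpha$, $|\alpha|\leq m$. Since $P$ and $P^{-1}$ are smooth bounded matrices with entries controlled by $\bar{\omega}_{m,S}$ (through $Z^\delta$), working with $W_{ijk}$ or $U_{ijk}$ is equivalent at the level of $X^k$ norms up to harmless factors, and $P F_{ijk}$ satisfies the same bounds as $F_{ijk}$ by Proposition~\ref{main_sect}.

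For a fixed multi-index $\alpha$ with $|\alpha|\leq m$, applying $\partial^\alpha$ to \eqref{W} and setting $W^\alpha=\partial^\alpha W_{ijk}$ yields
\[
\partial_t W^\alpha = J L^\delta W^\alpha + J[\partial^\alpha,L^\delta]W_{ijk}-\partial^\alpha\bigl(J^2 P J((\mathcal{Q}^{ijk})^\delta-(\mathcal{Q}^{ijk})^a)\bigr)+\partial^\alpha(PF_{ijk}).
\]
Taking the $L^2\times L^2$ scalar product with $L^\delta W^\alpha$, using ${\rm Re}(JU,U)=0$ (see \eqref{J0}) together with the symmetry of $L^\delta$, we get
\[
\frac{1}{2}\frac{d}{dt}(L^\delta W^\alpha,W^\alpha)=\frac{1}{2}((\partial_t L^\delta) W^\alpha,W^\alpha)+{\rm Re}\bigl(J[\partial^\alpha,L^\delta]W_{ijk}+\partial^\alpha(\cdots),\,L^\delta W^\alpha\bigr).
\]
The first term on the right is a lower order one which by direct inspection of the coefficients of $L^\delta$ and Lemma~\ref{annr} is bounded by $\bar{\omega}_{m,S}\|W^\alpha\|_{X^0}^2$. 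The commutator term is controlled by \eqref{major2} of Proposition~\ref{LdeltaQ}, giving $\bar{\omega}_{m,S}\|W_{ijk}\|_{X^{m-1}}\|W^\alpha\|_{X^0}$. For the contribution of the subprincipal term, since $L^\delta$ maps $X^0=H^1\times H^{1/2}$ continuously into $H^{-1}\times H^{-1/2}$, we can pair it with the $H^{-1}$ estimate of Proposition~\ref{Q} on $\mathcal{Q}_2$ and the $H^{-1/2}$ estimate on $\mathcal{Q}_1$, yielding a bound by $\bar{\omega}_{m,S}\|U\|_{X^{m+2}}\|W^\alpha\|_{X^0}$. Finally, the contribution of $\partial^\alpha(PF_{ijk})$ is bounded by $\bar{\omega}_{m,S}(\|U\|_{X^{m+3}}+\|R^{ap}\|_{X^{m+3}})\|W^\alpha\|_{X^0}$ thanks to Proposition~\ref{main_sect} and the bound on $R^{ap}$.

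Summing these estimates over $|\alpha|\leq m$ and over $(i,j,k)\in\mathcal{I}^3$, integrating in time on $[0,t]$ using $U(0)=0$, and invoking the coercivity estimate \eqref{minor} of Proposition~\ref{LdeltaQ} (which yields $\sum_{|\alpha|\leq m}\|W^\alpha\|_{X^0}^2\gtrsim \bar{\omega}_{m,S}^{-1}\|W_{ijk}\|_{X^m}^2-\bar{\omega}_{m,S}\|W_{ijk}\|_{X^{m-1}}^2$), we obtain an estimate of the form
\[
\|U(t)\|_{X^{m+3}}^2\leq \bar{\omega}_{m,S}\Bigl(\|R^{ap}\|^2_{X^{m+3}_t}+\int_0^t\bigl(\|U(\tau)\|^2_{X^{m+3}}+\|F(\tau)\|^2_{X^{m+3}}+\|U(\tau)\|^2_{X^{m+2}}\bigr)d\tau\Bigr),
\]
where the $X^{m+2}$ term comes from the commutator/lower-order contributions. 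To close the argument, one runs the same type of estimate by induction on $k$ for $\|U\|_{X^{k}}$ with $k\leq m+3$, starting from $k\leq 3$ where the estimate of the $L^2$ norm of $U_{ijk}$ directly follows by a Gronwall argument applied to the second component of $\partial_t U=\mathcal{F}^\delta-\mathcal{F}^a+R^{ap}$. This gives the announced bound. The main technical obstacle is the handling of the subprincipal term $(\mathcal{Q}^{ijk})^\delta-(\mathcal{Q}^{ijk})^a$: it is not a semilinear remainder, but pairing it against $L^\delta W^\alpha$ in $H^{-1}\times H^{-1/2}$ duality (rather than directly in $L^2$) is precisely what makes the weighted energy $\sum(L^\delta W^\alpha,W^\alpha)$ suitable; this is the payoff of using the quasilinear form \eqref{lionia} rather than the original system.
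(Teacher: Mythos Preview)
There is a genuine gap in your choice of multiplier. You take the scalar product of the differentiated equation with $L^\delta W^\alpha$ alone, and then claim that the commutator contribution $(J[\partial^\alpha,L^\delta]W_{ijk},L^\delta W^\alpha)$ is controlled by \eqref{major2} and that the subprincipal contribution is controlled by Proposition~\ref{Q} via the duality $H^{-1}\times H^{-1/2}$. Neither of these applications is valid. Estimate \eqref{major2} requires the second argument $V$ to lie in $X^0=H^1\times H^{1/2}$, but $-J L^\delta W^\alpha$ is only in $H^{-1/2}\times H^{-1}$ (you lose two derivatives on the first component through ${\mathcal P}^\delta$ and one on the second through $G^\delta$). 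Likewise, Proposition~\ref{Q} places $\partial^\alpha((\mathcal{Q}_1)^\delta-(\mathcal{Q}_1)^a)$ in $H^{-1/2}$ and $\partial^\alpha((\mathcal{Q}_2)^\delta-(\mathcal{Q}_2)^a)$ in $H^{-1}$; after the various $J$'s and $P$'s these land against the components of $L^\delta W^\alpha$ which are themselves in $H^{-1}$ and $H^{-1/2}$, and the pairing simply does not close --- you are one derivative short in each slot. Your final paragraph identifies exactly the right obstacle (the subprincipal term is \emph{not} semilinear) but the remedy you propose does not work.

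The paper's fix is to take the scalar product with the \emph{full} quantity
\[
\mathcal{M}=L^\delta\partial^\alpha W_{ijk}+[\partial^\alpha,L^\delta]W_{ijk}-\partial^\alpha JPJ\big((\mathcal{Q}^{ijk})^\delta-(\mathcal{Q}^{ijk})^a\big),
\]
so that the principal term on the right of \eqref{thoe} is exactly $J\mathcal{M}$ and ${\rm Re}(J\mathcal{M},\mathcal{M})=0$ kills all cross terms at once. The price is that the left side now produces, besides $\tfrac{1}{2}\tfrac{d}{dt}(L^\delta W^\alpha,W^\alpha)$, the additional pairings $(\partial_t W^\alpha,[\partial^\alpha,L^\delta]W_{ijk})$ and $(\partial_t W^\alpha,\partial^\alpha JPJ(\mathcal{Q}^\delta-\mathcal{Q}^a))$. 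These are rewritten as a total time derivative $\tfrac{d}{dt}\mathcal{I}^\alpha_{ijk}$ plus a remainder $\mathcal{J}^\alpha_{ijk}$; the point is that in $\mathcal{I}^\alpha_{ijk}$ the commutator and subprincipal terms are now paired against $W^\alpha\in X^0$, and \emph{this} is precisely where \eqref{major2} and Proposition~\ref{Q} legitimately apply (see Lemma~\ref{IJ}). The corrected energy thus contains the lower--order correction $\mathcal{I}^\alpha_{ijk}$, and further care (the weighted sums $E_{l,m}$ with parameter $\Gamma$, together with Lemma~\ref{L2} to trade pure time derivatives for space derivatives) is needed to recover coercivity despite \eqref{minor} only controlling $\|W\|_{X^0}^2$ modulo $\|W\|_{L^2}^2$.
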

\begin{proof}[Proof of Proposition~\ref{energy}]
It is more convenient to perform the energy estimate on the equation \eqref{W} satisfied
by $W_{ijk}= PU_{ijk}$.  Since thanks to 
Lemma~\ref{annr}, for $S\geq 5$, we have 
\begin{equation}\label{vio_0}
\|W_{ijk}\|_{X^m}\leq  \overline{\omega}_{m,S}\|U\|_{X^{m+3}}
\end{equation}
and
\begin{equation}\label{vio}
\|U\|_{X^{m+3}}\leq  \overline{\omega}_{m,S}
\Big(\sum_{i,j,k}\|W_{ijk}\|_{X^m}+\|U\|_{X^3}\Big),
\end{equation}
it is equivalent to estimate $W_{ijk}$ or $U_{ijk}$.
We thus   consider, for $|\alpha|\leq m$, the equation solved by
$\partial^\alpha W_{ijk}$. Namely
\begin{equation}\label{thoe}
\partial_t\partial^\alpha W_{ijk}=J\Big(L^\delta\partial^\alpha W_{ijk}+[\partial^\alpha,L^\delta]W_{ijk}-\partial^\alpha
JPJ\big((\mathcal{Q}^{ijk})^\delta -(\mathcal{Q}^{ijk})^{a}\big)\Big)+\partial^{\alpha}(PF_{ijk}).
\end{equation}
We take the $L^2$ scalar product of (\ref{thoe}) with
$$
{\mathcal M}\equiv L^\delta \partial^\alpha W_{ijk}+[\partial^\alpha,L^\delta ]W_{ijk}-\partial^\alpha
JPJ\big((\mathcal{Q}^{ijk})^\delta -(\mathcal{Q}^{ijk})^{a} \big).
$$
From the skew symmetry of $J$ and the symmetry of $L^\delta$, we get the identity
\begin{eqnarray}
\label{idenerg}
& & {d \over dt}
\Big( { 1 \over 2} \big( \partial^\alpha W_{ijk}, L^\delta \partial^\alpha W_{ijk}\big)
+ \mathcal{I}^{\alpha}_{ijk} \Big) 
   = \mathcal{J}^{\alpha}_{ijk}
\end{eqnarray}
where 
\begin{eqnarray}
\label{Idef} & &\mathcal{I}^{\alpha}_{ijk}=  \big(\partial^\alpha W_{ijk}, [\partial^\alpha, L^\delta ] W_{ijk}\big)
- \big( \partial^\alpha W_{ijk}, \partial^\alpha \big( JPJ\big( (\mathcal{Q}^{ijk})^\delta - (\mathcal{Q}^{ijk})^a
 \big) \big),\\
\label{Jdef}
& &   \mathcal{J}^{\alpha}_{ijk}  =  
{1 \over 2} \big(  \partial^\alpha W_{ijk} , [\partial_{t}, L^\delta ] \partial^\alpha W_{ijk}   \big)
+ \big( \partial^\alpha W_{ijk}, \partial_{t} [\partial^\alpha, L^\delta ] W_{ijk}\big)  \\
& &  \quad \quad    \nonumber -
 \big( \partial^\alpha W_{ijk}, \partial_{t} \partial^\alpha  \big(
 JPJ\big( (\mathcal{Q}^{ijk})^\delta - 
(\mathcal{Q}^{ijk})^a
 \big) \big)  
+\big( \mathcal{M}, \partial^\alpha(PF_{ijk}) \big).
\end{eqnarray}
For the proof of Proposition \ref{energy}, we shall estimate
separately all the terms arising in the energy identity (\ref{idenerg}) in the following sequence
of Lemmas.

We first  have the following estimate for $ \mathcal{I}^{\alpha}_{ijk}$  and $\mathcal{J}^{\alpha}_{ijk}$:
\begin{lem}\label{IJ}
For $m\geq 2$, $S\geq  5$, we have the estimates
\beq\label{Iest}
| \mathcal{J }^{\alpha}_{ijk} |\leq  \overline{\omega}_{m,S}\big( \| U\|_{X^{m+3}}\|F_{ijk}\|_{X^m}
+ \|U \|_{X^{m+ 3}}^2\big) ,\quad |\alpha|\leq m
\eeq
and
\beq
\label{Jest}
| \mathcal{I}^{\alpha}_{ijk} | \leq   \overline{\omega}_{m,S} \|U\|_{X^{m+ 3}} \|U\|_{X^{m+ 2} }, \quad
|\alpha | \leq m.
\eeq
\end{lem}
\begin{proof}
Let us start with the estimate of $\mathcal{I}^{\alpha}_{ijk}$. In view of \eqref{Idef}, we can write 
$$ \mathcal{I}^{\alpha}_{ijk}= I_{1}- I_{2}.$$
From \eqref{major2}, in Proposition \ref{LdeltaQ} and Cauchy-Schwarz, we immediately get
$$ |I_{1}| \leq  \overline{\omega}_{m,S} \|U\|_{X^{m+3 }}\|U \|_{X^{m+2}}.$$
To estimate the second term, we can expand
\begin{eqnarray*}
I_{2}& = & \big(\partial^\alpha W_{ijk}[1], \partial^{\alpha}
\big( (\mathcal{Q}_2^{ijk})^\delta  - (\mathcal{Q}_2^{ijk} )^{a} \big) \big)
- \big(\partial^\alpha W_{ijk}[1],\partial^\alpha
\big(Z^\delta \big ( ( \mathcal{Q}_1^{ijk})^\delta - ( \mathcal{Q}_1^{ijk})^{a} \big) \big)
\big) \\
&    & +  \Big( \partial^\alpha W_{ijk}[2],  \partial^\alpha\big( (\mathcal{Q}_{1}^{ijk})^\delta
- (\mathcal{Q}_{1}^{ijk})^a \big) \big) \Big) \\
& =  &  I_{2}^1 + I_{2}^2 + I_{2}^3
\end{eqnarray*}
by using the notation $W_{ijk}= (W_{ijk}[1], W_{ijk}[2])^t$.
A direct application of    Proposition  \ref{Q}  yields
$$ | I_{2}^1 | 
\leq  \|\partial^\alpha W^{ijk}[1] \|_{H^1} \| (\mathcal{Q}_2^{ijk})^\delta  - (\mathcal{Q}_2^{ijk} )^{a} 
\|_{H^{-1}}  \leq   \overline{\omega}_{m,S} \|U\|_{X^{m+ 3 }} \|U \|_{X^{m+ 2}}$$
and
$$ | I_{2}^3 | 
\leq  \|\partial^\alpha W^{ijk}[2] \|_{H^{1\over2}} \| (\mathcal{Q}_1^{ijk})^\delta  - (\mathcal{Q}_1^{ijk} )^{a} 
\|_{H^{-{1\over 2}}}  \leq   \overline{\omega}_{m,S} \|U\|_{X^{m+ 3 }} \|U \|_{X^{m+ 2}}.$$
Note that we have used \eqref{vio_0} for the second part of the estimates.   
For the estimate of $I_{2}^2$, we can use the following lemma:
\begin{lem}
\label{lempff}
For $l \geq 2$,  $l \leq m+ 1$,  $m \geq 2$ and  $S \geq 5$,    we have the estimates 
\beq
\label{Zdernier}
 \| \langle  \pa \rangle^{m+2} \big( Z^\delta - Z^a \big) \|_{H^{\sigma  }} \leq   
 \overline{\omega}_{m,S} \| \langle \pa \rangle^{m+2}  U \|_{H^{ \sigma+ 1 } }, 
  \quad \sigma = -{1 \over 2}, \, 0, \, {1 \over 2 }  \eeq
  and 
\beq
\label{Zdernierbis}
\big| \big( \partial^\beta  \big( Z^\delta  \big ( ( \mathcal{Q}_1^{ijk})^\delta - ( \mathcal{Q}_1^{ijk})^{a} \big) \big)
, F \big) \big| 
\leq \overline{\omega}_{m,S} \|U\|_{X^{l+ 2}} \|F \|_{H^{1 }}, \quad
 | \beta | \leq l.
\eeq
\end{lem}
Let us postpone the proof of this last lemma. By a direct application of it with $F= \partial^\alpha
 W^{ijk}[1]$, $\beta= \alpha$, $l=m$ and a new use of \eqref{vio_0}, we also  get 
$$ |I_{2}^2| \leq \overline{\omega}_{m, S} \|U\|_{X^{m+ 3}} \|U\|_{X^{m+ 2 }}.$$
This proves \eqref{Jest}.

Let us turn to  the estimate of $\mathcal{J}^{\alpha}_{ijk}$. In view of \eqref{Jdef}, we set
$$ \mathcal{J}^{\alpha}_{ijk} = J_{1}+ J_{2}-  J_{3}+ J_{4}.$$
By using \eqref{major2} in Proposition  \ref{LdeltaQ} and (\ref{vio_0}), we infer 
\begin{equation}\label{i1}
|J_1|\leq\ \overline{\omega}_{m,S}\|W_{ijk}\|_{X^m}^2
\leq  \overline{\omega}_{m,S}\|U\|_{X^{m+3}}^2\,.
\end{equation}
In order to estimate $J_{2}$, it suffices to write
$
\partial_{t}[\partial^\alpha,L^\delta]=[\pa_t\pa^\alpha,L^\delta ]+[L^\delta ,\pa_t]\pa^\alpha
$
and to apply \eqref{major2}. This yields
$$ |J_{2}| \leq \overline{\omega}_{m,S} \|U\|_{X^{m+3}}^2.$$
To estimate $J_{3}$, we first expand
\begin{eqnarray*}
J_{3} & = &  \big(\partial^\alpha W_{ijk}[1],  \partial_{t}\partial^{\alpha}
\big( (\mathcal{Q}_2^{ijk})^\delta  - (\mathcal{Q}_2^{ijk} )^{a} \big) \big) - 
\big( \partial^\alpha W_{ijk}[1],\partial_t \partial^{\alpha}
\big( Z^\delta \big (\mathcal{Q}_1^{ijk})^\delta -(\mathcal{Q}_1^{ijk})^{a}\big)\big)\big) \\
& & +
\big(\partial^\alpha 
W_{ijk}[2],\partial_t\partial^\alpha\big( (\mathcal{Q}_1^{ijk})^\delta - (\mathcal{Q}_1^{ijk})^{a}\big) \big)\\
&= &J_{3}^1 + J_{3}^2 + J_{3}^3.
\end{eqnarray*}
The estimate   of $J_{3}^1$ and $J_{3}^2$ follows  by a direct application of Proposition~\ref{Q}
(changing $m$ into $m+1$) while the estimate of $J_{3}^2$ is a consequence of 
the estimate \eqref{Zdernierbis}  of Lemma~\ref{lempff}.

It remains to estimate $J_{4}$. Let us write
\begin{eqnarray*}
\big( \mathcal{M}, \partial^\alpha(PF_{ijk}) \big)
& = & 
\big(L^\delta \partial^\alpha W_{ijk},  \partial^\alpha(PF_{ijk}) \big)
 +  \big( [\partial^\alpha, L^\delta ]W_{ijk}, \partial^\alpha(PF_{ijk}) \big) \\
 & &  
-\big( \partial^\alpha\big( JPJ\big( (\mathcal{Q}^{ijk})^\delta -
(\mathcal{Q}^{ijk})^a \big)  \big), \partial^\alpha(PF_{ijk})  \big). 
\end{eqnarray*}
We can estimate the two first terms by using  \eqref{major}, \eqref{major2} in  Proposition \ref{LdeltaQ},
while the estimate of the last term follows by using again Proposition~\ref{Q} and Lemma \ref{lempff}.
 This ends the proof of Lemma \ref{IJ}, it just remains to prove  Lemma \ref{lempff}. 
 \end{proof}
\begin{proof}[Proof of Lemma \ref{lempff}]
We start with the proof of \eqref{Zdernier}.  By using  the  same decomposition of $Z$
as in the proof of Lemma \ref{annr}  and in particular \eqref{Z1dec}, we see that
 the proof follows   from the following estimate on $G$:
 $$   \| \langle  \pa \rangle^{m+2} \big( G^\delta \varphi - G^a \varphi  \big) \|_{H^{\sigma }} \leq  
 \overline{\omega}_{m,S} \| \langle \pa \rangle^{m+2} \varphi\|_{H^{\sigma + 1  } }, \quad
  \sigma = - {1 \over 2}, \, 0, \, {1\over 2 }.$$
  Since 
  $$  G^\delta \varphi - G^a \varphi = \int_{0}^1 D_{\eta } G[\eta^a + s \eta]\varphi \cdot \eta\, ds$$
   the needed estimate
    for $\sigma = -1/2$ and $\sigma= 1/2$ is a consequence of the refined estimate \eqref{San} in Remark \ref{Sanremark}. The estimate for $\sigma =0$ follows by interpolation.

To prove \eqref{Zdernierbis}, we  first write for $\beta_{1}+\beta_{2}=\beta$
\begin{equation*}
|(\pa^{\beta_{1}}
\big(Z^\delta)\pa^{\beta_{2}}\big((\mathcal{Q}^{ijk}_1)^\delta -(\mathcal{Q}^{ijk}_1)^{a}\big),F\big)|
\leq
\|\pa^{\beta_{2}}\big( (\mathcal{Q}^{ijk}_1)^\delta - (\mathcal{Q}^{ijk}_1)^{a} \big)\|_{H^{-{1\over 2 }}}
\|\pa^{\beta_{1}}(Z^\delta)  F\|_{H^{1\over 2}}\,.
\end{equation*}
The first term in the right hand-side of the above inequality can be
estimated by using the estimate \eqref{Q1} of  Proposition~\ref{Q}  while for  the second one,
we use Lemma \ref{LP}
to get
\begin{eqnarray*}
\|\pa^{\beta_{1}}(Z^\delta)  F\|_{H^1} &  \leq &  \|\pa^{\beta_{1}}(Z^\delta - Z^a)  F\|_{H^1}+
\|\pa^{\beta_{1}}(Z^ a)  F\|_{H^1}  \\
&\leq &   \|\pa^{\beta_{1}}(Z^\delta - Z^a)  F\|_{H^1}+
\omega( \| V^a \|_{{\mathcal W}^{m+S}} ) \| F\|_{H^1}.
\end{eqnarray*}
Next,  we  
invoke the  inequality
$$
\|uv\|_{H^{1}(\R^2)}\leq C\|u\|_{H^{3\over 2 }(\R^2)}\|v\|_{H^{1}(\R^2)}
$$
to get
$$ \|\pa^{\beta_{1}}(Z^\delta - Z^a)  F\|_{H^{1}} \leq C \,  \|\pa^{\beta_{1}} (Z^\delta - Z^a) \|_{H^{3\over 2 }} \|F\|_{H^1}
\leq   C \,\| \langle \pa \rangle ^{l+1} (Z^\delta - Z^a) \|_{H^{1\over 2 }} \|F\|_{H^1}$$
and  hence
$$ \|\pa^{\beta_{1}}(Z^\delta - Z^a)  F\|_{H^1} \leq   \overline{\omega}_{m,S} \|U \|_{X^{m+3}} \|F\|_{H^1}
\leq   \overline{\omega}_{m,S}  \|F\|_{H^1} $$
thanks to  \eqref{Zdernier}.
This completes the proof of  Lemma~\ref{lempff}.
\end{proof}
We shall  consider in addition to the energy identity \eqref{idenerg} the following estimate:
\begin{lem}\label{L2}
Let $U$ be a solution of (\ref{Ueqinstab}) then for $m\geq 2$ and $S\geq 5$,
$$
\frac{d}{dt}\|U(t)\|_{X^3}^2
\leq  \omega\Big(\|R^{ap}\|_{X^3}+\|V^{a}\|_{{\mathcal W}^{m+S}_t}+\|U\|_{X^{m+3}_t}\Big)
\|U\|_{X^{m+3}}^2.
$$
Moreover, we also have the estimate
\beq
\label{dtfin}
\| \partial_{t}^{l}  U\|_{L^2}^2 \leq   
\overline{\omega}_{m,S}  \Big(\| \, |\nabla| \partial_{t}^{l-1} U\|_{X^{0}}^2 +  \|U\|_{X^{m+2}}^2  + \|R^{ap} \|_{X^m}^2\Big), \quad  4 \leq l \leq m+3.
\eeq
\end{lem}
This Lemma will be crucial in order to get  the claimed estimate of Proposition \ref{energy}
 from the identity \eqref{idenerg}. The first estimate will be used to compensate the fact
  that  $L^\delta$ does not control the low frequencies or equivalently the  $L^2$ norm (see \eqref{minor}).
  The second estimate will be used to prove that  by a suitable summation of  the identities \eqref{idenerg}
   we get a control of the $X^{m+3}$ norm of $W$.
    Note that this second estimate basically states that thanks to the equation, we can replace a time
     derivative by  space derivatives.
\begin{proof}
For the estimate of the $L^2$ norm of $U$, it suffices to multiply  (\ref{Ueqinstab}) by $U$, integrate over $\R^2$ and
use   very crude estimates of all the terms.  We  then proceed in a similar way by taking at most three
derivatives of the equation to get  the first claimed estimate.

To get the second estimate, we apply $\partial_{t}^{l-1}$  to \eqref{Ueq}.
We get  by taking the $L^2$ norm of  the obtained equation that
$$  \|\partial_{t}^l U\|_{L^2} \leq    \overline{\omega}_{m,S}
\Big( \|\partial_{t}^{l-1}\eta\|_{H^2} +\| \partial_{t}^{l-1} 
\varphi \|_{H^1} +   \|U\|_{X^{m+2}} \Big) +  \|R^{ap}\|_{X^m}.$$
 We  shall not give the  details of this estimate which can be obtained
  by using the same kind of estimates as  in  Propositions \ref{pak} and \ref{pak_bis} as   previously. 
Note that in particular the term involving $\varphi$ arises in this form by using \eqref{Zdernier}
for $\sigma =0$.
 The idea behind this estimate is simple, the equation allows to replace one time derivative
   by  one space derivative for $\varphi$ and two space derivatives for $\eta$.
Next, since we have that 
$$  \| \, | \nabla | \big( |\nabla | \partial_{t}^{l-1} 
\eta  \big)\|_{L^2} \leq C \| | \nabla|  \partial_{t}^{l-1} U \|_{X^0}$$
and  by standard interpolation  that 
\beq
\label{interpolfi} \| \nabla \partial_{t}^{l-1} 
 C \varphi \|_{L^2}^2 \leq \|\, |\nabla|^{1 \over 2 } \big( |\nabla
| \partial_{t}^{l-1} \big)\varphi ) \|_{L^2}  \,  \|\, |\nabla|^{1 \over 2} \partial_{t}^{l-1} \varphi \|_{L^2} 
\leq C  \|U\|_{X^{m+3}} \|U\|_{X^{m+2}},\eeq
the result follows.
This completes the proof of Lemma~\ref{L2}.
\end{proof}
To prove Proposition~\ref{energy}, in view of \eqref{idenerg}, 
we shall use  the energy
$$
E_{\alpha}(t)\equiv\sum_{i,j,k} \Big(\frac{1}{2}\big(\partial^\alpha W_{ijk},L^\delta \partial^\alpha W_{ijk}\big)
+ \mathcal{I}_{ijk}^\alpha \Big).
$$
We also define 
for   $m \geq 2$,  $ 1 \leq l\leq m$,  $\tau \in [0, t]$
$$ E_{l,m}(\tau)=  \sum_{ 1 \leq  |\alpha | \leq l\,\,  \alpha' \neq 0    } E_{\alpha}(\tau)
+  \Gamma \sum_{  1 \leq | \alpha | \leq l, \, \alpha'= 0} E_{\alpha}(\tau)$$
where $\alpha=(\alpha_0,\alpha')$ with $\alpha'= (\alpha_{1},\alpha_{2})$ and 
$\Gamma >0$ (which depends on $m$ and $t$)  will be carefully chosen.
The aim of the following sequence of lemmas is to  obtain the positivity of
the energy  in order to deduce an estimate from \eqref{idenerg}. We first  have the following :
\begin{lem}
\label{Elk}
There exists $\Gamma>0$ such that  for every $\tau \in [0, t]$, we have
$$   E_{l,m}(\tau) \geq  { 1 \over \overline{\omega}_{m,S,t}  } \|
U(\tau)\|_{X^{l+3}}^2  -   \overline{\omega}_{m,S,t}  \|U
(\tau)\|_{X^{ l +2}}^2
$$
where $\overline{\omega}_{m,S,t}= \omega\big(\|V^{a}\|_{{\mathcal W}^{m+S}_t}+\|U\|_{X^{m+3}_t}\big)$.
\end{lem}
\begin{proof}
We first consider the case that $\alpha' \neq 0$, i.e. $\partial^\alpha \neq \partial_{t}^l$.
In this case,  we have from  \eqref{minor} of Proposition  \ref{LdeltaQ}  and Lemma \ref{IJ} that
$$ E_{\alpha}(\tau) \geq { 1 \over \overline{\omega}_{m,S}  }
\| \partial^\alpha W (\tau)  \|_{X^0}^2 - \|\partial^\alpha W(\tau) \|_{L^2}^2 -  \overline{\omega}_{m, S}
\|U(\tau) \|_{X^{l+3}} \, \|U(\tau)\|_{X^{l+2}}.$$
Now, since $\partial^\alpha$ contains at least one space derivative, we can write for $|\beta |= |\alpha |-1$
that 
$$ 
\|\partial^\alpha W(\tau)\|_{L^2}^2 \lesssim  \|\nabla \partial^{\beta} W(\tau) \|_{L^2}^2 \lesssim    \|U(\tau)\|_{X^{l+ 3 } } \|U
(\tau)\|_{X^{l+ 2}} $$
by using again the interpolation inequality \eqref{interpolfi}.
Consequently, by using the Young inequality and  \eqref{vio}, we find that
\beq
\label{Ealpha}
  \sum_{ \alpha' \neq 0} E_{\alpha}(\tau)
\geq { 1 \over \overline{\omega}_{m,S,t}  } \sum_{\alpha' \neq 0 } \| \partial^\alpha  U(\tau)\|_{X^{0}}^2  -   \overline{\omega}_{m,S,t}  \|U
(\tau)\|_{X^{ l +2}} \|U
(\tau)\|_{X^{ l +3}} , \quad \forall \tau \in [0, t].
\eeq
Now, let us consider the case that $\partial^\alpha = \partial_{t}^l$ i.e. $\alpha' = 0$. By the same consideration
as above, we first get
$$  E_{ \alpha}(\tau)   \geq { 1 \over \overline{\omega}_{m,S, t}  }
\| \partial_{t}^l  W(\tau) \|_{X^0}^2 - \|\partial_{t}^l W(\tau)\|_{L^2}^2 -  \overline{\omega}_{m, S, t}
\|U(\tau) \|_{X^{l+3}} \, \|U(\tau)\|_{X^{l+2}}.$$
Next, from \eqref{vio_0} and  \eqref{dtfin}, we get
$$ 
\|\partial_{t}^l W(\tau)\|_{L^2}^2  \leq  \overline{\omega}_{m,S, t} \big( \sum_{\alpha' \neq 0 }
 \| \partial^\alpha  U(\tau)\|_{X^{0}}^2   + \|U(\tau)\|_{X^{m+2}}^2 \big).
$$
This yields 
\beq\label{Ealpha2}
E_{ \alpha}(\tau)   \geq { 1 \over \overline{\omega}_{m,S,t}  }
\| \partial_{t}^m  W \|_{X^0}^2 -   \overline{\omega}_{m, S,t} \big(  \sum_{\alpha' \neq 0 }  \| \partial^\alpha  U(\tau)\|_{X^{0}}^2  + 
\|U\|_{X^{l+2}}^2\big) , \quad \forall \tau \in[0, t].
\eeq
Consequently, we can add \eqref{Ealpha} times $\Gamma$  sufficiently large   to  \eqref{Ealpha2}
 and use the Young inequality  to  get the result.
This completes the proof of Lemma~\ref{Elk}.
\end{proof}
Let us  finally set  for $\tau \in [0, t]$
$$ E_{m}(\tau) = \sum_{ 1  \leq l \leq  m } \Gamma^{m-l} E_{l,m}(\tau) + \Gamma \|U(\tau)\|_{X^3}^2$$
for $\Gamma$ possibly  larger  to be chosen.
We have
\begin{lem}\label{Ek}
For every $t>0$, there exists $\Gamma$ such that for every $\tau\in[0, t]$, we have
$$ E_{m}(\tau) \geq { 1 \over \overline{ \omega}_{m,S,t}}\|U(\tau)\|_{X^{m+ 3}}^2.$$
\end{lem}
\begin{proof}
We get from Lemma \ref{Elk} that
\begin{multline*} \sum_{1 \leq l \leq m } \Gamma^{m-l} E_{l,m}(\tau) \geq   { 1 \over \overline{\omega}_{m, S,t} } \|U
(\tau)\|_{X^{m+3}}^2
+ \sum_{1 \leq l \leq m-1} \Big( { \Gamma^{m-l} \over  \overline{\omega}_{m, S,t} }  - \Gamma^{m-l - 1}
\overline{\omega}_{m, S,t} \Big) \|U(\tau)\|_{X^{l+3}} \\
 + \big(  \Gamma -   \overline{\omega}_{m, S,t} \big) \|U(\tau)\|_{X^3}^2.
 \end{multline*}
 Consequently, for $\Gamma$  so that $\Gamma \geq \overline{\omega}_{m, S,t}$
we get 
$$  E_{m}(\tau)\geq   { 1 \over \overline{\omega}_{m, S,t} } \|U(\tau)\|_{X^{m+3}}^2.$$
This completes the proof of Lemma~\ref{Ek}.    
\end{proof}
We are now in position to end the proof of Proposition~\ref{energy}.
By using the identity (\ref{idenerg}) and  Lemma \ref{IJ}, we get
$$
\Big|\frac{d}{dt}E_{m}(\tau)\Big|\leq \omega\Big(\|R^{ap}\|_{{X}^{m+S}_t}+\|V^{a}\|_{{\mathcal W}^{m+S}_t}+\|U\|_{X^{m+3}_t}\Big)
\Big(\|U(\tau)\|_{X^{m+3}}^2+\|F_{ijk}(\tau)\|_{X^m}^2\Big)\,,\quad 0\leq \tau\leq t\,.
$$
Moreover, from  the equation solved by $U$, we obtain that at the initial time
\begin{equation*}
|E_m(0)| \leq   
\omega\Big(\|R^{ap}\|_{{X}^{m+3}_0}+\|V^{a}\|_{{\mathcal W}^{m+S}_0}+\|U\|_{X^{m+3}_0}\Big)
\| R^{ap}\|^2_{{X}^{m+3}_0} \,.
\end{equation*}
Consequently, we can integrate in time for $\tau \in [0, t]$ and use Lemma \ref{Ek}
to end  the proof of Proposition~\ref{energy}.
\end{proof}
\subsection{Proof of the energy estimate: proof of  Theorem \ref{theoenergie}}
It suffices to combine  Proposition~\ref{energy} and Proposition~\ref{main_sect}.
\subsection{Final argument.  End of the proof of Theorem \ref{main}}\label{finn}
In this section we   complete the
proof of Theorem \ref{main}. 
From  Theorem \ref{theoenergie}, we have 
for the solution of (\ref{Ueq}) with  initial data $V^{a}(0)=Q+\delta U^{a}(0)$ that 
\begin{multline}\label{colect}
\| U(t)\|^2_{X^{m+3}} \leq 
\omega\Big(\|R^{ap}\|_{{X}^{m+3}_t}+\|V^{a}\|_{{\mathcal W}^{m+S}_t}+\|U\|_{X^{m+3}_t}\Big)
\\
\times\Big( \| R^{ap}\|^2_{{X}^{m+3}_t} 
+\int_{0}^t\big(\|U(\tau)\|^2_{X^{m+3}}+\|R^{ap}(\tau)\|_{X^{m+ 3}}^2\big)d\tau\Big).
\end{multline}
Using (\ref{colect}) and some standard arguments (see the next section), we can define local strong
solutions of the water waves equation with
data $Q+\delta U^{a}$. Note that for the argument providing this small time existence, the specific
structure of $R^{ap}$ is not of importance, one only needs to know that it
belongs to Sobolev spaces. We now show that the estimates on $R^{ap}$ provided by Proposition~\ref{Uap}
allow to extend the solution on much longer times (sufficiently long so that
we see the instability). Thanks to Proposition~\ref{Uap} (with $s$ changed into $m$
 and thus $m$ changed into $p$), we have the bounds
$$
\| R^{ap}(t)\|_{X^{m+3}}\leq
C_{M,m}\delta^{M+3}{ e^{(M+3)\sigma_0t} \over (1+ t )^{M+3 \over 2 p } }\,,
$$
provided $0\leq t\leq T^{\delta}$, $0\leq \delta<\delta_0\ll 1$ with
$T^\delta$ such that
$$
\frac{e^{\sigma_0T^\delta}}{(1+T^\delta)^{\frac{1}{2p}}}=\frac{\kappa}{\delta},
$$
where $\kappa\in (0,1)$ is a small number to be chosen later, independantly of
$\delta\in (0,\delta_0)$.
Coming back to (\ref{colect}), we infer that
\begin{equation}\label{emm}
\|U(t)\|^2_{X^{m+3}}\leq 
\omega\big(C+\|U\|_{X^{m+3}_t}+\kappa C_{M,m}  \big)
\big(
\int_{0}^t \|U(\tau)\|^2_{X^{m+3}}d\tau+
\frac{\delta^{2(M+3)}e^{2(M+3)\sigma_0 t}}
{
(1+t)^{\frac{M+3}{p}}
}
\big),
\end{equation}
as far as $0\leq t\leq T^\delta$.
Let us define $T^*$ as
$$ 
T^*= \sup \{ T\,:\,T\leq T^\delta,\,\, {\rm and}\,\,  \forall\, t \in [0,T], \|U\|_{X^{m+3}_T} \leq 1, \, 1 -  \| \eta \|_{L^\infty}
- \| \eta^a \|_{L^\infty}  >0\}.
$$
Observe that $T$ is well-defined, at least for $\delta\ll 1$.
Using (\ref{emm}), we obtain that for $0\leq t<T^{\star}$,
\begin{equation}\label{final1}
\|U(t)\|^2_{X^{m+3}}\leq \omega(C+\kappa C_{M,m})
\Big(\int_{0}^t \|U(\tau)\|^2_{X^{m+3}}d\tau+
\frac{\delta^{2(M+3)}e^{2(M+3)\sigma_0 t}}
{
(1+t)^{\frac{M+3}{p}}
}
\Big).
\end{equation}
We take an integer $M$ large enough so that $2(M+3)\sigma_0-\omega(C)\geq 20$. At this place
we fix the value of $M$. We then choose $\kappa$ small enough so that
$
1>\omega(C+\kappa C_{M,m})-\omega(C)\,.
$
Such a choice of $\kappa$ is possible thanks to the continuity
assumption on $\omega$.  We also observe that for $A\geq 1$ and $\rho\geq 0$,
there exists $C$ such that for every $t\geq 0$, we have the inequality
\begin{equation}\label{final2}
\int_{0}^t\frac{e^{A\tau}}{(1+\tau)^{\rho}}d\tau\leq C \frac{e^{At}}{(1+t)^{\rho}}\,.
\end{equation}
Thanks to (\ref{final1}), (\ref{final2}) and the choice of $M$ and $\kappa$,
we can apply a bootstrap argument and the Gronwall lemma, we infer that $U(t)$ is
defined for $t\in [0,T^\delta]$ and that 
\begin{equation}\label{krai}
\sup_{0\leq t\leq T^\delta}\|U(t)\|_{X^{m+3}}\leq C_{M,m}\kappa^{M+3}\,.
\end{equation}
The bound (\ref{krai}) implies in particular that
$$
\|U(T^{\delta})\|_{L^2(\R^2)}\leq C_{M,m}\kappa^{M+3}\,.
$$
Let $I$ be the time interval involved in the definition of $U^0$ (see (\ref{U0form})).
Let us fix $\theta\in C^\infty_0(\R)$ which equals one on $I$ and which
vanishes near zero. Let $\Pi$ be a Fourier multiplier on $\R^2_{\xi_1,\xi_2}$
with symbol $\theta(\xi_2)$ (i.e. cutting the zero frequency in $y$).
The map $\Pi$ is bounded on $L^2(\R^2)$. We also have that $\Pi(U^0)=U^0$.
Therefore, using Proposition~\ref{U0} and Proposition~\ref{Uap}, we obtain
that for every $t\geq 0$
\begin{eqnarray*}
\|\Pi(\delta U^{a}(t))\|_{L^2(\R^2)}& \geq & c\delta\frac{e^{\sigma_0 t}}{(1+t)^{\frac{1}{2p}}}
-\sum_{j=1}^{M+1}\delta^{j+1}\|\Pi(U^i(t))\|_{L^2(\R^2)}
\\
& \geq &
c\delta\frac{e^{\sigma_0 t}}{(1+t)^{\frac{1}{2p}}}
-C_{M}\sum_{j=1}^{M+1}\delta^{j+1}\frac{e^{(j+1)\sigma_0 t}}{(1+t)^{\frac{j+1}{2p}}}\,.
\end{eqnarray*}
Thus for $\kappa\ll 1$,
$$
\|\Pi\big(\delta U^{a}(T^\delta)\big)\|_{L^2(\R^2)}\geq \frac{c\kappa}{2},\quad \forall\,
t\geq 0.
$$
Observe that for every $a\in\R$, $\Pi(Q(\cdot-a))=0$.
Recall that the true solution $U^\delta$ with data $Q+\delta U^0(0)$ is
decomposed  as 
$
U^\delta=Q+\delta U^{a} +U.
$
In particular at time zero $U^\delta$ is $\delta$ close to $Q$ in any
$H^s(\R^2)$. On the other hand, for every $a\in\R$, we can write
\begin{eqnarray*}
\|U^\delta(T^\delta,\cdot)-Q(\cdot-a)\|_{L^2} &\geq &
c\|\Pi(U^\delta(T^\delta,\cdot)-Q(\cdot-a))\|_{L^2}
\\
& = &
c\|\Pi(U^\delta(T^\delta,\cdot)-Q(\cdot))\|_{L^2}
\\
& = &
c\|\Pi(\delta U^{a}(T^\delta) +U(T^\delta))\|_{L^2}
\\
& \geq &
c\kappa-C_{M}\kappa^{M+3}\geq \frac{c\kappa}{2},
\end{eqnarray*}
provided $\kappa\ll 1$ (independantly of $\delta$). This completes the proof of Theorem~\ref{main}. 
\section{Sketch of the existence proof}
\label{sketch}

In this section, we sketch an existence proof by a vanishing viscosity type method  for
\eqref{Ueqinstab}. The  local existence of a smooth solution for \eqref{ww} in Sobolev spaces
(which corresponds to the study of \eqref{Ueqinstab} when $V^a=0$ was 
already obtained in \cite{Zhang} by using the Nash-Moser iteration  scheme
   or in \cite{Ambrose-Masmoudi} by using a subtle  Lagrangian  type formulation of the problem,
    we also refer to the work \cite{Coutand}, \cite{Shatah} for the case with vorticity.

   The aim of the following is to sketch a simple proof which avoids the use of a complicated iteration 
   scheme. 

   For a  positive number  $\nu$, we consider the system
   \beq
   \label{nu12}
   \partial_{t} U= \mathcal{F}(U^\delta ) - \mathcal{F}(V^a) -  \nu\, \Delta^2 U -R^{ap}.
    \eeq  
    By standard arguments, for every  positive $\nu $, one can
     get a solution $U^{\nu}$ of \eqref{nu12} in $C([0, T^{\nu}], H^s)$
      for some $T^{\nu}>0$ when $s \geq s_{0}$ is sufficiently large ($s>3$).
     Indeed, we can use   Propositions \ref{pak}, \ref{pak_bis} and Remark \ref{San}
      in the classical  Sobolev framework (i.e. without the time derivative,
       since the time is only parameter in  these propositions, their statement
        remain obviously valid if one replace $\pa$ by $\nabla$).
      From these estimates, the local existence  for \eqref{nu12}  follows
      from Duhamel formula and the Banach fixed point Theorem.
      We also get that the solution can be continued as long as  the $H^{s_{0}}$ norm  for some $s_{0}$
       sufficiently large  
       remains bounded.

    The next step is to prove  that the existence time of $U^\nu$ is uniform in $\nu$ i.e.
     we have to prove  that for some $T>0$, we have  $T^\nu \geq T>0$, for every $\nu \in (0,1]$.
      Towards this, it suffices to prove that the $H^s$ norm  of $U^\nu$ cannot blow-up on $[0, T]$
       for some positive $T$ independent of $\nu$. The main idea is that the  estimate
        of Theorem \ref{theoenergie} still holds for \eqref{nu12}  uniformly in $\nu$.
     By using the same transformation as in Section \ref{analysisof}, we find that
      \eqref{W} is changed into 
          \beq
     \label{nutransform}
    \pa_{t} W_{ijk} =  J  \big( L^\delta  W_{ijk} 
    -JPJ\big((\mathcal{Q}^{ijk})^\delta -(\mathcal{Q}^{ijk})^{a}\big)\big)
+PF_{ijk},
 - \nu  \, \Delta^2 W^{ijk} + \mathcal{R}^\nu
\eeq
with $W_{ijk}= P U_{ijk} $ (for the proof of the energy estimate, we shall denote
$U^\nu$ by $U$ for the sake of clarity)
  and  the subprincipal term $\mathcal{Q}^{ijk}$ and $F$ which contains
     the semilinear terms $\mathcal{G}^{ijk}$
    are the same as previously (in particular, $F$ is given by \eqref{lioniaF}). In
     particular they still satisfy with $\omega$ independent of $\nu $ the estimates of Proposition
      \ref{Q} and Proposition \ref{main_sect}.   The only new terms that show up
       are the diffusion term $-\nu \, \Delta^2$ and  
        $\mathcal{R}^\nu$ which is defined as
  \beq
  \label{Rnu}
  \mathcal{R}^\nu =  \left(  \begin{array}{ll} 0 \\   \nu  [Z^\delta  ,
      \Delta^2] U_{ijk}[1] \end{array} \right)
=
\left(  \begin{array}{ll} 0 \\   \nu  [Z^\delta  ,
      \Delta^2] W_{ijk}[1] \end{array} \right).
  \eeq 
Our goal is to perform energy estimates, uniform in $\nu\in (0,1]$ to
\eqref{nutransform}. Such estimates follow the same lines as the energy
estimates we performed for $\nu=0$. One should take care of the terms coming
from the $\nu$ dependence. Our situation is slightly different from the
classical one because of the presence of $\mathcal{R}^\nu$ coming form the
transformation $W_{ijk}= P U_{ijk} $. The point is that, thanks to the
parabolic term $-\nu\Delta^2$, the contributions of
the terms containing $\nu$ can be put in norms containing two more derivatives
with respect the energy level. For instance,  the estimates on $Z^\delta$, we already
established are strong enough to control the contribution of
$\mathcal{R}^\nu$. More precisely, thanks to Lemma~\ref{annr}, we can estimate
$Z^\delta$ in norms containing two more derivatives with respect to the energy level.
This essentially explains the approach to energy estimates for the equation \eqref{nutransform}.

By applying $\partial^\alpha$ to \eqref{nutransform}, we deduce the energy
identity which is the straightforward generalization of \eqref{idenerg}
$$
{d \over dt}
\Big( { 1 \over 2} \big( \partial^\alpha W_{ijk}, L^\delta \partial^\alpha W_{ijk}\big)
 + \mathcal{I}^{\alpha}_{ijk} \Big)  =  \mathcal{J}^{\alpha}_{ijk} + \mathcal{D}^\nu
$$
where $\mathcal{I}^{\alpha}_{ijk}$ and $\mathcal{J}^\alpha_{ijk}$ are still given by \eqref{Idef}, \eqref{Jdef}
and
\begin{eqnarray*}
 \mathcal{D}^\nu  & = &   -\nu \,  \big(\Delta^2 \partial^\alpha W_{ijk}, L^\delta \partial^\alpha W_{ijk} \big)
   +  \big(\partial^\alpha \mathcal{R}^\nu, L^\delta \partial^\alpha W_{ijk}\big)
\\
& &
-\nu\big(\Delta^2 \partial^\alpha W_{ijk},[\partial^\alpha,L^\delta ]W_{ijk}\big)+
\big(\partial^\alpha \mathcal{R}^\nu,[\partial^\alpha,L^\delta ]W_{ijk}\big)
\\
& &
-\nu\big(\Delta^2 \partial^\alpha W_{ijk},\partial^\alpha JPJ\big((\mathcal{Q}^{ijk})^\delta
-(\mathcal{Q}^{ijk})^{a} \big)\big)
\\
& &
+
\big(\partial^\alpha \mathcal{R}^\nu,\partial^\alpha
JPJ\big((\mathcal{Q}^{ijk})^\delta -(\mathcal{Q}^{ijk})^{a} \big)\big).
\end{eqnarray*}
In view of the estimates of Section~\ref{energien}, it suffices to estimate $\mathcal{D}^\nu$ 
uniformly in $\nu$. Let us start with the estimate of the first term which
will give the parabolic regularisation term.
By using an integration by parts, we have
 \beq
 \label{fin0} - \nu \big( \Delta^2\partial^\alpha W_{ijk}, L^\delta\partial^\alpha W_{ijk}\big)
  =- \nu \big( \Delta \partial^\alpha W_{ijk}, L^\delta  \Delta \partial^\alpha W_{ijk} \big) - \nu \big( \Delta 
  \partial^\alpha W_{ijk}, \big[\Delta, L^\delta ]
   \partial^\alpha W_{ijk}\big).\eeq
By using estimates as in Proposition~\ref{LdeltaQ}, we get the estimate
   \begin{eqnarray}
   \label{fin1}
   & &- \nu \big( \Delta^2 \partial^\alpha W_{ijk}, L^\delta  \partial^\alpha W_{ijk}\big) \\
\nonumber   & & 
     \leq - {\nu \over \overline{\omega}_{2,5} } \|\Delta \partial^\alpha W_{ijk}\|_{X^0}^2
      + \nu\|\Delta \partial^\alpha W_{ijk}\|_{L^2}^2  + \nu\,
      \overline{\omega}_{l, S} 
\|
 \langle \nabla \rangle  \partial^\alpha W_{ijk}\|_{X^0} \|\Delta W_{ijk} \|_{X^l},
 \end{eqnarray}
provided $l\geq 2$ and $S\geq 5$. Next, from standard interpolation in Sobolev spaces, we infer  for every $\zeta>0$ 
$$
\nu \|\Delta \partial^\alpha W_{ijk}\|_{L^2}^2  + \nu\, \overline{\omega}_{l, S} \|
 \langle \nabla \rangle  \partial^\alpha W_{ijk}\|_{X^0} \|\Delta  W_{ijk} \|_{X^l} 
  \leq  \zeta  \, \nu \| \Delta W_{ijk} \|_{X^l}^2 + C(\zeta)\, \overline{\omega}_{l, S} \|  W_{ijk} \|_{X^l}^2
$$
for  some $C(\zeta)>0$ independent of $\nu\in(0,1] $   and hence, we can choose $\zeta$ sufficiently small 
to get
\begin{equation}\label{diss1}
\nu \|\Delta \partial^\alpha W_{ijk}\|_{L^2}^2  + \nu\, \overline{\omega}_{l, S} \|
 \langle \nabla \rangle  \partial^\alpha W_{ijk}\|_{X^0} \|\Delta W_{ijk} \|_{X^l} 
\leq  {\nu \over 4 \; \overline{\omega}_{2, 5}} \| \Delta W_{ijk} \|_{X^l}^2
  + \overline{\omega}_{l,S} \|U \|_{l+3}^2.
\end{equation}   
 This yields thanks to \eqref{fin1}
 \beq
 \label{fin2}
- \nu \big( \Delta^2 \partial^\alpha W_{ijk}, L^\delta  \partial^\alpha W_{ijk}\big) 
 \leq -  {3\nu  \over 4 \; \overline{\omega}_{2, 5}} \| \Delta  \partial^\alpha W_{ijk} \|_{X^0}^2
  + \overline{\omega}_{l,S} \|U \|_{l+3}^2.
 \eeq
  Next, we can study the second term in the right hand-side of the expression
  defining $\mathcal{D}^\nu$. From the form \eqref{Rnu} of  $\mathcal{R}^\nu$, we first  get
  \begin{eqnarray} 
\nonumber     \big(\partial^\alpha \mathcal{R}^\nu, L^\delta \partial^\alpha W_{ijk}\big) 
 &  = &  \nu \Big( \partial^\alpha\big( [Z^\delta, \Delta^2 ]
 W_{ijk}[1]\big),    
- \nabla \cdot\big( v^\delta \partial^\alpha W_{ijk} [1]\big) + \partial_{x} \partial^\alpha
   W_{ijk}[1] \Big)    \\
\nonumber    & &+ \nu \Big(\partial^\alpha\big( [Z^\delta, \Delta^2 ] 
W_{ijk}[1]\big), G^\delta \partial^\alpha W_{ijk}[2] \Big) \\
 \label{Rnudef}  & \equiv &  R^\nu_{1}+ R^\nu_{2}.
   \end{eqnarray}
Let us start with the estimate of $R^\nu _{2}$ which is  the most difficult term to handle. 
Expanding the commutator, we need to estimate
$$  
\nu \Big( \partial^\alpha\big( \nabla^\beta Z^\delta  \nabla^\gamma
W_{ijk}[1]\big), 
G^\delta \partial^\alpha W_{ijk}[2] \Big), \quad |\beta | +  | \gamma |  = 4, \quad |\gamma | \leq 3.
$$
Using integrations by parts, we redistribute the $4$ derivatives of
$\nabla^\beta$ and $\nabla^\gamma$ equally to both sides of the above scalar
product. In addition, we invoke Lemma~\ref{annr} to deal  with the $Z^\delta$ contribution.
This leads to
$$   
\nu  \big|  \Big( \partial^\alpha\big
( \nabla^\beta Z^\delta  \nabla^\gamma W_{ijk}[1]\big),
G^\delta \partial^\alpha W_{ijk}[2] \Big) 
\big|  \leq \zeta \,  \nu \| \Delta  W_{ijk}\|_{X^l}^2
  + C(\zeta) \overline{\omega}_{l,S}  \|U\|_{X^{l+ 3}}^2.$$
Coming back to \eqref{Rnudef}, the term $R_{1}^\nu$ can be estimated in a
similar way. Consequently, we find
\beq\label{comRnu}
\big| \big(\partial^\alpha \mathcal{R}^\nu, L^\delta \partial^\alpha W_{ijk}\big) \big| 
\leq  \zeta \,  \nu \,  \| \Delta   W_{ijk}\|_{X^l}^2 +C(\zeta) \overline{\omega}_{l,S}  \|U\|_{X^{l+ 3}}^2\,.
\eeq
The third and the fourth terms in the expression  defining $\mathcal{D}^\nu$
can be handled very similarly. Finally, the last two terms can be estimated by
using estimates like in Proposition~\ref{Q} (here one needs to revisit the
proof of Proposition~\ref{Q} and to follow more carefully the dependence in
$\overline{\omega}_{m,S}$). In summary, we get the following estimate 
\beq
\label{fin3}\mathcal{D}^\nu \leq 
  - {3\nu  \over 4 \; \overline{\omega}_{2, 5}} \| \Delta W_{ijk} \|_{X^l}^2
 +\zeta \,  \nu \,  \| \Delta W_{ijk}\|_{X^l}^2  + C(\zeta) \overline{\omega}_{l,S} \|U \|_{l+3}^2.
\eeq
From the energy identity, we can use Lemma \ref{IJ}, Lemma \ref{Elk} and Lemma \ref{Ek}
as in Section \ref{energien} to get the energy estimate 
\begin{eqnarray*}
& &  E_{l}(t) +  {\nu} \sum_{i,\,j,\,k} \int_{0}^t   \| \Delta W_{ijk}(\tau) \|_{X^l}^2d\tau
  \leq     \omega
 \Big(\|R^{ap}\|_{{X}^{l+3 }_t}+\|V^{a}\|_{{\mathcal{W}}^{l+ S}_t}+\|U\|_{X^{l+3}_t}\Big) \\
 & &  \hspace{5cm} \times  \Big( |E_{l}(0)| +
\int_{0}^t   \Big(  \zeta \,  \nu \, \sum_{i,\,j,\,k}  \| \Delta   W_{ijk}(\tau)\|_{X^l}^2
  +  C(\zeta)  \|U(\tau) \|_{X^{l+3}}^2\Big)d\tau \Big).
  \end{eqnarray*}
  By choosing  $\zeta<1/2$, we thus get the uniform estimate
\begin{multline*}
E_{l}(t) +  {\nu \over 2 }  \sum_{i,\,j,\,k }\int_{0}^t   \| \Delta W_{ijk}(\tau) \|_{X^l}^2d\tau
 \\
\leq    \omega
 \big(\|R^{ap}\|_{{X}^{l+3}_t}+\|V^{a}\|_{{\mathcal{W}}^{l+S}_t}+\|U\|_{X^{l+3}_t}\big) 
\big(|E_{l}(0)| + \int_{0}^t    \|U(\tau) \|_{ X^{l+3}}^2 d\tau \big).
\end{multline*}
In particular
$$
E_{l}(t) \leq    \omega
 \Big(\|R^{ap}\|_{{X}^{l+3}_t}+\|V^{a}\|_{{\mathcal{W}}^{l+S}_t}+\|U\|_{X^{l+3}_t}\Big) 
\Big(|E_{l}(0)| + 
\int_{0}^t    \|U(\tau) \|_{ X^{l+3}}^2 d\tau \Big).
$$

 From Lemma \ref{Ek} and a standard continuation argument, this yields
  that  $ \|U^\nu (t) \|_{X^{l+ 3}}$ is bounded uniformly in $\nu$ on an interval of time $[0, T]$
   independent  of $\nu$.  This allows to use  strong compactness arguments in a classical way
    in order to prove that a subsequence of $U^\nu$  converges locally strongly in $H^{s_{0}}$
     to a solution of \eqref{Ueqinstab}.

\bigskip
\bigskip
{\bf Acknowledgments.} We benefited from discussions with David Lannes and Jean-Claude Saut on 
the water waves equation. We are also grateful to Francis Nier for useful
discussion on the spectral theory issues of this paper.   

\end{document}